\documentclass[letterpaper,11pt]{article}

\usepackage[english]{babel}
\usepackage[utf8]{inputenc}

\usepackage[
  margin=1.2in,
  top=1in,
  bottom=1.2in
]{geometry}

\makeatletter
\g@addto@macro\normalsize{%
  \setlength\abovedisplayskip{7pt}%
  \setlength\belowdisplayskip{7pt}%
  \setlength\abovedisplayshortskip{7pt}%
  \setlength\belowdisplayshortskip{7pt}%
}
\makeatother

\usepackage{graphicx}
\usepackage{cancel}
\usepackage{enumerate}
\usepackage{scalerel}
\usepackage{stackengine}
\usepackage{graphics}
\usepackage{fancyhdr}
\usepackage{amssymb}
\usepackage{amsmath}
\usepackage{mdwlist}
\usepackage{etoolbox}
\usepackage{latexsym}
\usepackage{amsthm}
\usepackage{aliascnt}
\usepackage{multicol}
\usepackage{makeidx}
\usepackage{bm}
\usepackage{dsfont}
\usepackage{wrapfig}
\usepackage{mdframed}
\usepackage[dvipsnames]{xcolor}
\usepackage{mathtools}
\usepackage{bbm}
\usepackage{tikz}

\usetikzlibrary{matrix}
\usetikzlibrary{positioning}

\usepackage{hyperref}
\usepackage[capitalise]{cleveref}
\usepackage{titlesec}

\titleformat{\section}
  {\large\centering\bfseries}
  {\thesection.}
  {.7em}
  {}

\titlespacing*{\section}
  {0pt}
  {3.5ex plus 0ex minus 0ex}
  {1.5ex plus 0ex}

\titleformat{\subsection}
  {\centering\bfseries}
  {\thesubsection.}
  {.7em}
  {}

\titlespacing*{\subsection}
  {0pt}
  {3.5ex plus 0ex minus 0ex}
  {1.5ex plus 0ex}

\titleformat{\subsubsection}
  {\centering\bfseries}
  {\thesubsubsection.}
  {.7em}
  {}

\titlespacing*{\subsubsection}
  {0pt}
  {3.5ex plus 0ex minus 0ex}
  {1.5ex plus 0ex}

\addto\captionsenglish{%
}

\usepackage{titling}
\newtheorem{theorem}{Theorem}[section]

\newaliascnt{corollary}{theorem}
\newtheorem{corollary}[corollary]{Corollary}
\aliascntresetthe{corollary}

\newaliascnt{conjecture}{theorem}

\aliascntresetthe{conjecture}

\newaliascnt{question}{theorem}

\aliascntresetthe{question}

\newaliascnt{lemma}{theorem}
\newtheorem{lemma}[lemma]{Lemma}
\aliascntresetthe{lemma}

\newaliascnt{proposition}{theorem}
\newtheorem{proposition}[proposition]{Proposition}
\aliascntresetthe{proposition}

\newaliascnt{maincorollary}{maintheorem}

\aliascntresetthe{maincorollary}

\newaliascnt{mainquestion}{maintheorem}

\aliascntresetthe{mainquestion}

\newtheoremstyle{definition}
  {2mm}
  {2mm}
  {}
  {}
  {\bfseries}
  {.}
  {.5em}
  {}

\theoremstyle{definition}

\newaliascnt{definition}{theorem}
\newtheorem{definition}[definition]{Definition}
\aliascntresetthe{definition}

\newaliascnt{remark}{theorem}
\newtheorem{remark}[remark]{Remark}
\aliascntresetthe{remark}

\newtheorem*{remark*}{Remark}

\newaliascnt{example}{theorem}

\aliascntresetthe{example}

\theoremstyle{plain}
\newtheorem*{namedthm}{\namedthmname}



\crefname{theorem}{Theorem}{Theorems}
\Crefname{theorem}{Theorem}{Theorems}

\crefname{proposition}{Proposition}{Propositions}
\Crefname{proposition}{Proposition}{Propositions}

\crefname{lemma}{Lemma}{Lemmas}
\Crefname{lemma}{Lemma}{Lemmas}

\crefname{corollary}{Corollary}{Corollaries}
\Crefname{corollary}{Corollary}{Corollaries}

\crefname{conjecture}{Conjecture}{Conjectures}
\Crefname{conjecture}{Conjecture}{Conjectures}

\crefname{question}{Question}{Questions}
\Crefname{question}{Question}{Questions}

\crefname{definition}{Definition}{Definitions}
\Crefname{definition}{Definition}{Definitions}

\crefname{remark}{Remark}{Remarks}
\Crefname{remark}{Remark}{Remarks}

\crefname{example}{Example}{Examples}
\Crefname{example}{Example}{Examples}

\crefname{maintheorem}{Theorem}{Theorems}
\Crefname{maintheorem}{Theorem}{Theorems}

\crefname{maincorollary}{Corollary}{Corollaries}
\Crefname{maincorollary}{Corollary}{Corollaries}

\crefname{mainquestion}{Question}{Questions}
\Crefname{mainquestion}{Question}{Questions}

\newcommand{\supp}{\operatorname{supp}}
\newcommand{\HP}{\operatorname{HP}}
\newcommand{\gen}{\operatorname{gen}}

\def\R{{\mathbb R}}
\def\Z{{\mathbb Z}}
\def\C{{\mathbb C}}

\def\N{{\mathbb N}}

\def\A{{\mathbb A}}
\def\P{{\mathbb P}}

\def\T{{\mathbb T}}

\def\cF{{\mathcal F}}

\def\cI{{\mathcal I}}

\def\cO{{\mathcal O}}

\def\cM{{\mathcal M}}

\def\Xmt{(X,\mu,T)}

\newcommand{\norm}[1]{\left\lVert#1\right\rVert}

\newcommand{\nilbohr}[1]{%
  \operatorname{Nil_{#1}-Bohr}%
}

\newcommand{\diff}{\mathop{}\!\mathrm{d}}

\newcommand{\hknorm}[1]{%
  {\left\vert\kern-0.25ex
   \left\vert\kern-0.25ex
   \left\vert #1
   \right\vert\kern-0.25ex
   \right\vert\kern-0.25ex
   \right\vert}%
}

\newcommand{\1}{\ensuremath{\mathds{1}}}

\newcommand{\E}{\mathbb{E}}

\newcommand{\ind}[1]{\mathbbm{1}_{#1}}

\usepackage[normalem]{ulem}

\makeatletter
\def\thanks#1{%
  \begingroup
  \let\@thefnmark\relax
  \@footnotetext{#1}%
  \endgroup
}
\makeatother

\begin{document}
\author{{Felipe Hernández}~~and~~{Tristán Radi\'c}}
\date{\small \today}
\title{{\bfseries Infinite prime sumsets in structured and $U^k(\Phi)$-uniform sets}}
\thanks{The first author acknowledges OpenAI’s ChatGPT for Academic Researchers program for providing access to ChatGPT during the preparation of this work. The second author was partially supported by a Simons Foundation International grant administered by the Simons Foundation SFI-MPS-SDF-00025709 TR and the National Science Foundation grant DMS-2348315.}

\maketitle

\begin{abstract}
By introducing new ergodic-theoretic techniques in nilsystems, we determine which infinite sumset configurations occur in $U^k(\Phi)$-uniform and $\nilbohr{}$ sets. To be more precise, our first result associates the degree $k$ of a $U^k(\Phi)$-uniform set  with the variety of sumsets it  contains, solving a conjecture of Kra, Moreira, Richter and Robertson. Restricting to $\nilbohr{}$ sets we show the existence of infinite sumsets with summands in the shifted primes $\P-1$. As a consequence, we show that for any real polynomial $Q(n)$ with leading irrational coefficient of degree $k$, and any natural numbers $\ell_1, \cdots, \ell_k$ there is an infinite set $P\subset \P$ such that 
\begin{equation*}
     Q\Big(\sum_{p \in I} p\Big)  \in  U \pmod 1  \quad \text{ for all } I \subset P
     , |I| = \ell_1, \ldots, \ell_k.  
\end{equation*}
\end{abstract}

\tableofcontents

\section{Introduction}


In 1947, Vinogradov \cite{vinogradov1947method} proved that for any real polynomial $Q(n)$ with irrational leading coefficient, the sequence $(Q(p))_{p \in \P}$ is equidistributed $ \pmod 1$, where $\P$ denotes the set of prime numbers. In particular, for any non-empty interval $U \subset \T = \R / \Z$, there is an infinite set of primes $P \subset \P$, such that $Q(p) \in U \pmod 1$ for all $p \in P$. More recently, answering a longstanding question of Erd\H{o}s \cite{Erdos77}, Kra, Moreira, Richter and Robertson \cite{kmrr25} proved that for any set of positive density $A \subset \N$ and any $k \in \N$, there exists an infinite set $B \subset \N$ and a shift $t \in \N$ such that
\begin{equation} \label{eq kmrr}
    \Big\{ \sum_{b \in I} b \colon I \subset B, 1\leq |I|  \leq k \Big\}   \subset A -t. 
\end{equation}
This constitutes a density version of Hindman's theorem \cite{Hindman74}, where by parity obstructions the shift $t \in \N$ is necessary and depends on $k$.

In this work we give a joint generalization of these theorems by showing that there is an infinite set $P \subset \P$ for which not only $Q(p) \in U \pmod 1$, but also all the possible sums with at most $k = \deg (Q(n))$ distinct elements from $P$. More generally we get:
\begin{theorem} \label{main motivating thrm}
    Let $U \subset \T$ be a non-empty interval and $Q(n) $ be a polynomial of degree $k \geq 1$ with leading irrational coefficient. For any distinct natural numbers $\ell_1, \ldots, \ell_k$, there is $P \subset \P$ infinite such that 
\begin{equation*}
     Q\Big(\sum_{p \in I} p\Big)  \in  U \pmod 1  \quad \text{ for all } I \subset P
     , |I| = \ell_1, \ldots, \ell_k.  
\end{equation*}
\end{theorem}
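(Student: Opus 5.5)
The plan is to derive the theorem from the general result on infinite sumsets with summands in the shifted primes $\P-1$ inside $\nilbohr{}$ sets, after converting between primes and shifted primes and placing the polynomial condition inside a nilsystem. Since $Q$ has degree $k$ with irrational leading coefficient, the set
\[
A=\{n\in\N : Q(n)\in U \pmod 1\}
\]
is a $\nilbohr{k}$-type set. Concretely, $n\mapsto Q(n)\bmod 1$ is realized by a degree-$k$ unipotent affine transformation of $\T^k$ (a $k$-step nilsystem), evaluated along an orbit at the point determined by the coefficients of $Q$. Because $U$ is open, $A$ is the return-time set of this orbit to an open set. The leading coefficient is irrational, so by Weyl the relevant sub-torus orbit closure is uniquely ergodic and minimal, and that open set has positive measure.

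The configuration in the theorem involves sums $\sum_{p\in I}p$ with $|I|=\ell_j$. If $p=q+1$ with $q\in\P-1$, then $\sum_{p\in I}p=\sum_{q\in I}q+|I|$. So for each $j$ we need $\sum_{q\in I}q\in A-\ell_j$, i.e. $Q(\,\cdot+\ell_j)\in U$. For every fixed shift, $A-\ell_j$ is again a $\nilbohr{}$ set coming from the same nilsystem, namely the return set of the translated point $T^{\ell_j}x$. So the target is an infinite $P'\subset\P-1$ such that $\sum_{q\in I}q\in A-\ell_j$ for all $I\subset P'$ with $|I|=\ell_j$, $j=1,\dots,k$. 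Then $P=P'+1\subset\P$ works.

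To produce $P'$, I would follow the inductive scheme of the ergodic-theoretic results earlier in the paper. One chooses primes $q_1<q_2<\cdots$ (shifted) one at a time, maintaining the invariant that the point $T^{s}x$, for partial sums $s$ of already chosen elements with $|I|<\ell_j$, lies in an open set from which $\ell_j-|I|$ further steps along shifted primes can still land in $U$. The key input is the equidistribution (or recurrence) of $T^{q}y$ along $q\in\P-1$ in nilsystems, available from Green–Tao–Ziegler, in the form used in the paper's $\nilbohr{}$ theorem. The point is that along shifted primes there is no local obstruction: $q\equiv 0$ modulo small moduli is allowed. This is exactly why $\P-1$, rather than $\P$, is the natural summand set. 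It is also why degree $k$ matters: $k$ distinct sizes $\ell_1,\dots,\ell_k$ correspond to the $k$-step structure, matching the degree of $Q$ and the relation between $U^k$-degree and sumset variety established in the paper.

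The main obstacle is keeping infinitely many constraints simultaneously satisfiable. Each new $q_n$ must be compatible with all earlier subsets of all sizes $\ell_j$, which requires the relevant point to lie in a nonempty open set of a product (Host–Kra-type cube) nilsystem that is preserved under the inductive step. I would rely on the paper's main $\nilbohr{}$ sumset theorem here rather than reprove it, only checking its hypotheses:
\begin{enumerate}
\item the orbit point of the polynomial nilsystem must be generic for the product structures, which holds by the irrationality of the leading coefficient;
\item distinctness of the $\ell_j$, which is needed so that the constraints of different sizes do not collide.
\end{enumerate}
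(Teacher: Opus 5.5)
Your bookkeeping is fine: realizing $A=\{n\in\N: Q(n)\in U \pmod 1\}$ as return times in an affine nilsystem, passing to the translates $A-\ell_j$, and taking $P=P'+1$ all match the paper. The step that carries the argument, however, does not go through.

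\cref{main thrm nilborh} gives, for a single $\nilbohr{}$ set, an infinite $B\subset\P-1$ whose sums of all sizes $1,\dots,K$ lie in one translate $A-t$. The shift $t$ is produced by the proof (via \cref{cor to get shift t}) and cannot be prescribed. You need, for each size $\ell_j$, sums landing in the prescribed translate $A-\ell_j$. That is a shift-free statement, and it is false for general $\nilbohr{}$ sets. For example, take $Q(n)=n\alpha$, $U$ a short interval around $1/3$, and sizes $1,2$. One would need $p\alpha\in U$ and $(p+p')\alpha\in U$, which is impossible because $(p+p')\alpha\in U+U$ lies near $2/3$.

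So no check of that theorem's hypotheses can give your conclusion, and the two conditions you list are not the operative ones. Distinctness of the $\ell_j$ is harmless. Genericity of the base point, if you mean genericity of $(a,\dots,a)$ for the Hall--Petresco structure, is neither available nor needed. For $Q(n)=\alpha n^2$ one has $Q(2n)=4Q(n)$, so the $\tilde T$-orbit closure of $(a,a,a)$ is a proper subnilmanifold of the fiber $\{w\in\HP_2(X): w_0=a\}$, i.e.\ $\sigma_a\neq\rho_a$. Avoiding exactly this is why the paper works with $\rho_a^\circ$, which is shifted-prime progressive at every point.

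The missing idea is $U^k(\Phi)$-uniformity. Since $\deg Q=k$ and the leading coefficient is irrational, induction on the degree gives $\|\exp(2\pi i\, mQ(n))\|_{U^k(\Phi)}=0$ for every $m\neq 0$. Hence $A$ and each $A-\ell_j$ are $U^k(\Phi)$-uniform. In the nilsystem this means the open sets $E_j$ with $\{n: T^na\in E_j\}=A-\ell_j$ satisfy $\E(\1_{E_j}\mid Z_{k-1})=\mu(E_j)$.

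By the good projection property (\cref{prop good projection properties}), the projection of $\rho_a^\circ$ onto the $k$ coordinates $\ell_1,\dots,\ell_k$ only sees $Z_{k-1}$. So the cylinder $\{w: w_{\ell_j}\in E_j \text{ for } j=1,\dots,k\}$ has $\rho_a^\circ$-measure $\prod_j\mu(E_j)>0$ at the given point $a$, with no shift. Only then does shifted-prime progressiveness (\cref{uniformity-measures-in-primes} together with \cref{prime-EPs-implies-sumsets}) produce $B\subset\P-1$.

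This is \cref{main-theorem-2}, and the paper's proof of the statement consists of checking its two hypotheses: basic $\nilbohr{}$ and $U^k(\Phi)$-uniform. The case $k=1$ is handled separately via total ergodicity of the irrational rotation. This is also the precise content of ``degree $k$ matches $k$ sizes'': $k$ coordinates of $\rho_a^\circ$ reduce to $Z_{k-1}$, over which the return set is equidistributed. With $k+1$ sizes this fails, as the degree-one example above shows.

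Finally, your prime-by-prime induction driven by equidistribution of single orbits $T^qy$ along $q\in\P-1$ would not replace this. The construction needs joint recurrence of the whole configuration, i.e.\ progressiveness along $\P-1$ of a measure on the fiber over $a$ that gives positive mass to the target cylinder.
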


This theorem is a special case of \cref{main-theorem-2} concerning return times in nilsystems. Prior to proving the prime-restricted version in \cref{main motivating thrm}, we focus on the analogue where the summands range over $\mathbb{N}$. This is the content of \cref{main thrm uniform sets} that provides a positive answer to Conjecture 3.26 introduced by Kra, Moreira, Richter and Robertson in \cite{kra_Moreira_Richter_Roberson2025problems}. 

\cref{main thrm uniform sets} highlights classes of subsets of the natural numbers for which the shift $t \in \N$ in \eqref{eq kmrr} is not necessary. These classes are the $U^k(\Phi)$-uniform sets that we formally defined in \cref{sec local unif semi and f correspondence}. A $U^k(\Phi)$-uniform set is a set that avoids higher order parity obstruction arising from rotations and, more generally, from $(k-1)$-step nilsystems. 
As an example, the set $\{ n \in \N \colon Q(n) \in U \pmod 1\}$ from \cref{main motivating thrm} is $U^k(\Phi)$-uniform. For further examples of $U^k(\Phi)$-uniform sets, including sets coming from Hardy field functions and automatic sequences, we refer the reader to \cite[Sections 6 and 7]{radic2026Uniformity}. 

\cref{main thrm uniform sets} uncovers the relation between the degree $k$ of a $U^k (\Phi)$-uniform set, which measures its pseudo-random behavior, and the number of infinite sumsets it contains.

\begin{theorem} \label{main thrm uniform sets}
    Fix $k \geq 2$. Let $A_1, \ldots , A_k \subset \N$ be $U^k ( \Phi)$-uniform sets and  let  $\ell_1, \ell_2, \ldots, \ell_k$ be 
    distinct natural numbers. There exists $B \subset \N$ infinite such that
    \begin{equation}
        \Big\{ \sum_{b \in I} b \colon I \subset B, |I| = \ell_i \Big\}   \subset A_i \quad \text{ for all } i = 1, \ldots, k.
    \end{equation}
\end{theorem}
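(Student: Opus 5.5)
The plan is to follow the Kra--Moreira--Richter--Robertson dynamical strategy: translate the combinatorial statement into a statement about a measure-preserving system with special points, and then use $U^k(\Phi)$-uniformity to remove the obstructions that would otherwise force a shift $t$.

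\textbf{Step 1 (correspondence).} Using the correspondence principle for $U^k(\Phi)$-uniform sets (from \cref{sec local unif semi and f correspondence}), we encode the $k$-tuple $(A_1,\dots,A_k)$ as a point $a=(\1_{A_1},\dots,\1_{A_k})$ in $(\{0,1\}^k)^{\N}$ with the shift $T$. Along a subsequence of $\Phi$ we obtain an invariant measure $\mu$ for which $a$ is generic. Let $E_i=\{x: x(0)_i=1\}$, so that $A_i=\{n: T^n a\in E_i\}$. The uniformity assumption should give that the factor map to the $(k-1)$-step pronilfactor $Z_{k-1}$ sends $\1_{E_i}$ to the constant $d(A_i)>0$. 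Equivalently, $\E(\1_{E_i}\mid Z_{k-1})$ is constant.

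\textbf{Step 2 (Erd\H{o}s progressions).} Following KMRR, we reduce to finding a suitable progression. For each $\ell=\ell_i$ we need points $x_0=a, x_1,\dots$ in the orbit closure such that, roughly, $(T^{n_j}a, T^{n_j}x_1,\dots)\to(x_1,x_2,\dots)$ along some sequence $n_j$, with the relevant iterates landing in $E_i$. A diagonal construction then builds $B=\{b_1<b_2<\dots\}$ inductively: at each stage we choose $b_m$ so that every new sum of exactly $\ell_i$ elements lies in $A_i$. This works because the sets of good $b_m$ are open conditions around the points $x_j$, which are hit infinitely often.

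The sums of size exactly $\ell_i$ correspond to the $\ell_i$-th level of a ``tower'' $a\mapsto x_1\mapsto\cdots$. So we need a measure $\sigma$ on a product of copies of $X$, built as an iterated relatively independent self-joining over the pronilfactor. Under $\sigma$, the $\ell$-fold coordinate should charge $E_i$ exactly when $\ell=\ell_i$.

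\textbf{Step 3 (main obstacle: positivity without a shift).} We must show that $\sigma\bigl(\bigcap_i \{\text{level } \ell_i \in E_i\}\bigr)>0$. Without uniformity, the projection to the nilfactor produces a polynomial configuration: the level-$\ell$ coordinate over $Z_{k-1}$ sits at $\ell\cdot$(generator) in a nilrotation, and this could miss the supports. Here uniformity enters. Expanding each $\1_{E_i}=d(A_i)+f_i$ with $\E(f_i\mid Z_{k-1})=0$ gives a main term $\prod_i d(A_i)>0$. Every cross term involving some $f_i$ must then be shown to vanish.

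For this I would use that the joining's level maps $x\mapsto(\text{levels }\ell_1,\dots,\ell_k)$ are controlled by a Host--Kra seminorm of order $k$. The reason is that the $k$ distinct integers $\ell_1,\dots,\ell_k$ give a linear configuration of complexity at most $k-1$, and a van der Corput / PET induction bounds the cross terms by $\hknorm{f_i}_{U^k}=0$. The delicate point is that the relevant averages are over the nilsystem fibres and the KMRR limits, not over ergodic averages. So I expect the hardest part to be proving a characteristic-factor statement for the KMRR progression measure: level $\ell$ of the Erd\H{o}s progression, restricted to the configuration $\{\ell_1,\dots,\ell_k\}$, is characterized by $Z_{k-1}$.

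I would try to establish this by computing the progression measure explicitly on a $(k-1)$-step nilsystem model, via the nilsystem-of-the-form $(g^{\ell}\Gamma)$ description. I would then combine it with the uniformity of $\E(\1_{E_i}\mid Z_{k-1})$ and check that the $k$ distinct values $\ell_i$ place no algebraic constraint beyond what a degree-$(k-1)$ polynomial sees. Once positivity holds, Step 2 produces the infinite set $B$.
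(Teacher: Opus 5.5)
Your architecture matches the paper's. You use the correspondence principle, then KMRR's reduction of the sumset statement to a progressive measure $\tau$ with $\tau(\{a\}\times X^\ell)=1$ charging the cylinder at levels $\ell_1,\dots,\ell_k$, and finally positivity via the statement that those coordinates are characterized by $Z_{k-1}$ (the paper's good projection property). The gap is in Step 3, and it is exactly the difficulty the paper is built around.

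\textbf{Why Step 3 fails as planned.} The progressive measure KMRR attach to $a$ is the lift of $\sigma_z$, where $z=\pi_{\ell-1}(a)$ and $\ell=\max_i\ell_i$; $\sigma_z$ is the unique $\tilde T$-invariant measure on the orbit closure of $(z,\dots,z)$ in $\HP_\ell(Z_{\ell-1})$. Van der Corput/PET controls the averages $\frac1N\sum_n\prod_i T^{\ell_i n}f_i$ only in $L^2(m)$. So it gives your characteristic-factor statement for $\sigma_z$ only for $m$-a.e.\ $z$. The point $\pi_{\ell-1}(a)$ is handed to you by the correspondence principle and may be exceptional.

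At exceptional points the statement is false as soon as $\max_i\ell_i>k$. Take $T(x,y)=(x+\alpha,y+x)$ on $\T^2$ and $z=(\alpha/2,y_0)$. The second coordinates of $T^nz$ and $T^{3n}z$ are $u=y_0+n^2\alpha/2$ and $v=y_0+9n^2\alpha/2$, so $9u-v=8y_0$ for every $n$. Choose intervals $I_1,I_2$ with $(9I_1-8y_0)\cap I_2=\emptyset$ and put $E_i=\{y\in I_i\}$. Then $\E(\1_{E_i}\mid Z_1)=|I_i|$, and even the return-time sets $\{n: y_0+n^2\alpha/2\in I_i\}$ are $U^2(\Phi)$-uniform. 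Yet for $(\ell_1,\ell_2)=(1,3)$ we get $\sigma_z(\T^2\times E_1\times\T^2\times E_2)=0$. No explicit nilsystem computation can remove this constraint: it really holds on the diagonal configuration $b_1=b_2=b_3$, which is all $\sigma_z$ sees.

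\textbf{The missing idea.} Replace $\sigma_z$ by $\rho_z$, the continuous $\tilde T$-ergodic decomposition of $m_{\HP_\ell(Z_{\ell-1})}$. Since $\rho_z=\sigma_z$ for a.e.\ $z$, the a.e.\ characteristic-factor statement transfers to $\rho_z$. Continuity of $z\mapsto\rho_z$ and of $\E(f\mid Z_{k-1})$ (from the fully supported continuous disintegration over $Z_{k-1}$) then propagates it to every $z$, which gives positivity $\prod_i\mu(E_i)$ as you wanted.

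The price is that progressiveness of $\rho_z$ at every $z$ is no longer supplied by KMRR, and this is the real heart of the proof. It needs the fiberwise continuous ergodic decomposition: for every $z$ and $\rho_z$-a.e.\ $w$, the $T_\Delta$-ergodic component $\lambda_w$ is ergodic. This makes the progressiveness average $\lim_N\frac{1}{|\Phi_N|}\sum_{n\in\Phi_N}\int(\1\otimes f_1\otimes\cdots\otimes f_\ell)\,T_\Delta^n(f_1\otimes\cdots\otimes f_\ell\otimes\1)\,d\rho_z$ continuous in $z$. The uniform lower bound from the dense set where $\rho_z=\sigma_z$ then extends to all $z$.

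Your proposal never addresses progressiveness of your measure $\sigma$. If you tried to force positivity by taking a relatively independent joining over $Z_{k-1}$ instead, progressiveness would be exactly the unproved step.
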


This result was partially solved in \cite{kra_Moreira_Richter_Roberson2025problems} for the case $k=2$ and $\ell_1 =1$, $\ell_2 = 2$ and subsequently generalized in 
\cite{radic2026Uniformity} for any $k \geq 2$ and $\ell_i = i $ for all $i=1, \ldots, k$. Also, again for $k=2$ and $\ell_1 =1$, $\ell_2 = 2$, a polynomial version of the result appeared in \cite[Corollary 11.4]{ackelsberg2025infinitepolynomial}. A related result was previously proved in \cite{diNasso_Golbring_Jin_Leth_Lupini_Mahlburg2015sumset}. A general version of \cref{main thrm uniform sets} was open even when considering a single $U^k(\Phi)$-uniform set $A$ instead of various $A_1, \ldots, A_k$.


When willing to restrict the set of summands $B$, there are further obstructions one needs to address. For instance, if a set $S\subset \N$ and a F\o lner sequence $\Phi$ are such that for all $A \subset \N$ with $\diff_\Phi(A) >0$ there is $B \subset S$ infinite and $t \in \N$ such that $\{b + b' \colon b,b' \in B, b \neq b'\} \subset A-t$, then $S$ has positive density (see \cite[Proposition 3.2]{kra_Moreira_Richter_Roberson2025problems}). In particular, even after shifting by a constant $t \in \N$, it is impossible to generalize \cref{main motivating thrm} by replacing the structured set $A=\{ n \in \N \colon Q(n) \in U \pmod 1\}$ by an arbitrary set with positive density. 

In our next result, we show that $\nilbohr{}$ sets provide a suitable family to restrict the sumset to summands in the shifted primes $\P-1$. $\nilbohr{}$ sets are non-empty subsets of the natural numbers that contain the return times of a point $x \in X$ in a nilsystem $(X,T)$ to a non-empty open set $U \subset X$, that is a superset of $\{n \in \N \colon T^n x \in U\}$. These sets were introduced in \cite{host_kra2011nil_Bohr} and extensively studied for their combinatorial richness (see  \cite{bergelson_Leibman2018ipr,Huang_Shao_Ye_nilbohr_automorphy:2016,shao_teravainem2020bombieri}).
\begin{theorem} \label{main thrm nilborh}
    Let $A \subset \N$ be a $\nilbohr{}$ set and $k \in \N$. There exists $t \in \N $  and $B \subset \P-1$ infinite such that
    \begin{equation}
        \Big\{ \sum_{b \in I} b \colon I \subset B, |I| = 1, \ldots, k \Big\} \subset A -t.
    \end{equation}
\end{theorem}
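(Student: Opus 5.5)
We reduce \cref{main thrm nilborh} to a statement about return times in nilsystems, and then build $B$ greedily using equidistribution of nilsequences along the shifted primes. Write $A \supset \{n \in \N : T^n x_0 \in U\}$ with $(X=G/\Gamma, T)$ a nilsystem, $x_0 \in X$ and $U$ open and non-empty. Replacing $X$ by the orbit closure of $x_0$, we may assume $x_0$ has dense orbit, so $(X,T)$ is minimal and distal. The goal is then to find $t\in\N$ and an infinite sequence $b_1<b_2<\cdots$ in $\P-1$ such that
\[
T^{t+\sum_{b\in I} b} x_0 \in U \quad \text{for all finite } I\subset\{b_1,b_2,\dots\}, \ 1\le |I|\le k.
\]

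The basic mechanism is the following. Consider the diagonal system $(X^{m},T\times\cdots\times T)$ and a point $\vec y=(T^{s_1}x_0,\dots,T^{s_m}x_0)$, where $s_1,\dots,s_m$ are the partial sums $t+\sum_{b\in J}b$ already built from $J\subset\{b_1,\dots,b_n\}$ with $|J|\le k-1$. Adding a new element $b$ requires the single condition $(T\times\cdots\times T)^{b}\vec y \in U^{m}$, which enforces all new sums of size at most $k$ containing $b$. It also requires $T^{t+b}x_0\in U$ for $|I|=1$; this is included by adjoining the coordinate $T^{t}x_0$.

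For this to be solvable with $b\in\P-1$, we use the known fact (Green--Tao, and Frantzikinakis--Host--Kra) that for a nilsystem, $\{p-1 : p\in\P\}$ is a set of recurrence in the strong sense. More precisely, the averages of $F(S^{p-1}\vec y)$ over primes converge, after $W$-tricking, to an average over the orbit closure of $\vec y$ under $S$ restricted to suitable progressions. In particular, $S^{p-1}\vec y$ returns to any neighborhood of $\vec y$ for infinitely many primes $p$; this uses $p-1$ rather than $p$ so that no congruence obstruction occurs. So we want to maintain the invariant that $\vec y$ itself lies in a shrinking open set $V_n$. Concretely, we keep nested open sets $U\supset U_1\supset U_2\supset\cdots$ with $\overline{U_{j+1}}\subset U_j$. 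We require that every sum with $|J|=j$ satisfies $T^{t+\sum_J b}x_0\in U_{k-j}$, or rather in a neighborhood chosen so that adding one more return-near-identity step keeps us inside $U_{k-j-1}$. Since $T^{b}$ close to the identity on the relevant finite set of points means $T^{b}$ moves each $T^{s_i}x_0$ only slightly, uniform continuity lets us choose the nested sets so that the conditions propagate. The initial shift $t$ is chosen with $T^t x_0\in U_k$ (possible by minimality). Then one inductive step picks $b_{n+1}\in\P-1$, larger than $b_n$, with $S^{b_{n+1}}\vec y_n$ in a small product neighborhood of $\vec y_n$. Since there are only finitely many partial sums at each stage, the required neighborhoods are open and the recurrence result applies.

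The main obstacle is the recurrence step. Naive recurrence of $\vec y$ to itself is too weak: each added summand perturbs the point, and sums of size $j$ accumulate $j$ perturbations. Hence we need $T^{b}$ to act as an almost-isometry moving the points only a little, uniformly over all future steps, with errors summable like $\varepsilon 2^{-n}$. This is arranged by using distality/isometric extension structure, or more directly by working in $G$ and requiring that $\operatorname{dist}(T^{b}y,y)<\varepsilon_n$ with $\sum\varepsilon_n<\varepsilon$. We must also ensure that the prime-recurrence theorem gives returns for the specific non-generic point $\vec y_n$ in $X^{m}$, rather than almost everywhere. For this we rely on the pointwise equidistribution of polynomial orbits along $\P-1$ in nilmanifolds, which holds for every point. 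This is the step whose details we expect to be delicate.
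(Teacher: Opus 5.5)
Your argument is correct, and it takes a genuinely different and much more elementary route than the paper. The paper shows that the Hall--Petresco measure $\rho_a^\circ$ is shifted-prime progressive (\cref{uniformity-measures-in-primes}). That step needs Gowers-norm control of multiple averages weighted by $\Lambda'_{w,1}-1$ together with the Green--Tao--Ziegler inverse theory. The paper then turns this into a sumset through the KMRR lemma (\cref{prime-EPs-implies-sumsets}). It finds $t$ through \cref{cor to get shift t}, which rests on the fiberwise continuous ergodic decomposition.

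You instead choose $t$ with $T^tx_0\in U$, which reduces everything to recurrence, and you build $B$ greedily. The only dynamical input is that every point of a nilsystem (here $\vec y_n\in X^m$ under the diagonal map $S$) returns to each of its neighborhoods at infinitely many times $p-1$, $p\in\P$. Run with summable tolerances $\varepsilon 2^{-n}$ over all $J\subset\{1,\dots,n\}$, your construction even puts every finite sum of $B$ into $A-t$, which is stronger than the statement. Its limitation is that the targets must be neighborhoods of the points being moved; the free shift $t$ is exactly what provides this. The paper's progressive-measure machinery is what handles \cref{main-theorem-2} and the prime statements of \cref{theorem summary}. There, sums of different lengths must land in different sets that need not contain the base point (for instance near $z_i$ with only $\pi_{k-1}(z_i)=\pi_{k-1}(z)$), and recurrence alone gives nothing.

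Two remarks on your write-up. First, your ``main obstacle'' paragraph overstates the difficulty: naive recurrence of the current tuple is exactly enough.
\begin{itemize}
\item You only ever require $T^{b_{n+1}}$ to move points $y_J$, $J\subset\{1,\dots,n\}$, $|J|\le k-1$, that already exist.
\item Since $y_{J\cup\{n+1\}}=T^{b_{n+1}}y_J$, each $y_J$ lies within $|J|$ tolerances of $T^tx_0$.
\item A tolerance of $\varepsilon/k$ at every step, or the margins between your nested sets $U_j$, therefore suffices. No almost-isometry, distality or isometric-extension structure is involved.
\end{itemize}
Second, the recurrence step is standard rather than delicate, though ``equidistribution along $\P-1$'' is not the right phrase.
\begin{itemize}
\item Restrict to $p\equiv 1\pmod W$, so that $p-1=Wn$.
\item Apply \cref{uniformity-vmf}, which is uniform over the nilrotation, so the dependence of $S^W$ on $W$ is harmless. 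Use it on $F(S^{Wn}\vec y_n)$ with $F$ Lipschitz, $0\le F\le \1_V$ and $F(\vec y_n)=1$.
\item Note that $\lim_N\frac1N\sum_{n\le N}F(S^{Wn}\vec y_n)\ge \frac1r\int F\diff m_{Y_0}>0$ uniformly in $W$. Here $Y_0$ is the connected component of $\overline{O_S(\vec y_n)}$ containing $\vec y_n$, and $r$ is the number of components.
\item The uniform bound holds because $S^r$ is totally minimal on the connected nilmanifold $Y_0$, so $\overline{O_{S^W}(\vec y_n)}\supseteq Y_0$ for every $W$.
\end{itemize}
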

Notice that unlike \cref{main motivating thrm}, the summands in \cref{main thrm nilborh} are taken over $\P -1$ instead of $\P$ and that we also need a shift $t \in \N$. The result that allows us to deduce \cref{main motivating thrm} is somehow a combination of \cref{main thrm uniform sets} and \cref{main thrm nilborh} and it is stated in \cref{main-theorem-2}.

All the combinatorial results previously listed are proved in \cref{sec proofs} and are consequences of a better understanding of properties of nilsystems. This is the core of this work and we briefly describe it in the next section (for preliminary definitions see \cref{sec preliminaries}).

\subsection{New dynamic tools}

When studying arithmetic progressions in subsets of the natural numbers with positive density (i.e. Szemerédi's theorem \cite{Szemeredi75}) and its dynamic counterpart, this is, multiple ergodic averages (see Furstenberg \cite{Furstenberg77}), Host and Kra proved that nilsystems play a central role  by showing that inverse limit of nilsystems are {\em characteristic factors} of multiple ergodic averages \cite{Host_Kra_nonconventional_averages_nilmanifolds:2005}. Similarly, Kra, Moreira, Richter, and Robertson proved that in order to capture infinite sumset, after applying the Furstenberg correspondence principle, it is enough to understand measures constructed from nilsystems. With this, many of the tools developed to understand arithmetic progressions were successfully adapted to the study of infinite sumsets. This was systematically carried over in \cite{kmrr1,kmrr_BB,kmrr25} and generalized in various directions, notably \cite{ackelsberg_jamneshan2025equidistribution,DATI2025abelianBB,hernandez2025infinite,hernandez_kousek_radic2025density_Hindman,kousek_radic2025unrestrictedBB,kousek2025asymmetric}.

We separate our main contribution into two levels: conceptual and technical. The conceptual one consists of further exploiting tools that arise from the study of arithmetic progressions in nilsystems. In particular, we study the \emph{Hall-Petresco nilmanifold} and prove that its Haar measure is \emph{progressive} (see definitions below). The technical result is a new phenomenon in nilsystems that we call a \emph{fiberwise continuous ergodic decomposition} which we introduce to prove the progressive property. 

Expanding on the first point, in a non-abelian nilpotent Lie group $G$, the set of arithmetic progression $(g, gh, gh^2, \ldots, gh^\ell)$ for $g,h \in G$ is not a subgroup of $G^{\ell +1}$. The Hall-Petresco group $\HP_\ell(G) \subset G^{\ell+1}$ (named after the work of Hall \cite{hall1934HP} and Petresco \cite{petresco1954HP}) is the group generated by such arithmetic progressions. This subgroup has an algebraic description and its modern understanding has been notably developed by the work of Leibman \cite{leibman1998HP} (see also \cite[Chapter 15]{Host_Kra_nilpotent_structures_ergodic_theory:2018}).

For an ergodic nilsystem $(Z, m,T)$ with $Z=G/\Gamma$ and a natural number $\ell \geq 2$, the Hall-Petresco nilmanifold $\HP_\ell (Z)$ is defined as $\{ u (e_Z, \ldots, e_Z) \in Z^{\ell+1} \colon u \in \HP_\ell (G) \}$ with Haar measure $m_{\HP_\ell(Z)}$. Also, for a given $z \in Z$, we denote $\rho_z$ the Haar measure of the subnilmanifold of $\HP_\ell (Z)$ with fixed first coordinate  $  \{ w \in \HP_\ell(Z) \colon w_0 = z \} $. It was proved in \cite{ziegler2005nonconventional} (see also \cite{Bergelson_Host_Kra05}) that for every $f_0, f_1, \ldots, f_\ell \in C(X)$, every F\o lner sequence $\Phi$ and for $m$-almost every $z \in Z$, one has:
    \begin{equation}\label{theorem HP ergodic averages}
        \lim_{N \to \infty} \frac{1}{|\Phi_N|} \sum_{n \in \Phi_N} f_0(z) f_1(T^n z) f_2(T^{2n}z) \cdots f_\ell (T^{\ell n } z) = \int_{\HP_\ell(Z)} f_0 \otimes f_1 \otimes \cdots \otimes f_\ell \diff \rho_z. 
    \end{equation}
This illustrates that the measure $\rho_z$ naturally arises when studying multiple ergodic averages in nilsystems and therefore in general systems. Our main contribution is to prove that $\rho_z$ is progressive for all $z \in Z$, see \cref{thrm rho is unif progressive} (this concept was introduced in \cite{kmrr25} and we recall it in \cref{progressive measures}). For the discussion, any progressive measure $\tau \in \cM(Z^{\ell+1})$ with $\tau(\{z\} \times Z^\ell)=1$ fulfills that if $V_1, \ldots, V_\ell \subset Z$ are open sets with $\tau(X \times V_1 \times \cdots \times V_\ell) >0$ then there is an infinite $B \subset \N$ such that 
\begin{equation} \label{eq progressive measures intro}
    \Big\{ \sum_{b \in I} b \colon I \subset B, |I|=i\Big\} \subset \{ n \in \N \colon T^n z \in V_i \} \quad \text{ for all } i=1, \ldots, \ell. 
\end{equation}
Therefore, progressive measures are the fundamental tool for deriving infinite sumset properties from dynamics.

Notice that by properties of nilsystems, for all $z \in Z$, the averages on the left-hand side of \eqref{theorem HP ergodic averages} always define a measure that we denote $\sigma_z$. In other words $\sigma_z = \rho_z$ for $m$-almost every $z \in Z$, but whenever $s \geq 2$, the set of exceptions for this equality is non-empty in $s$-step nilsystems. From \cite{kmrr25} we know that $\sigma_z$ is progressive for all $z \in Z$ and therefore $\rho_z$ is progressive for almost every $z \in Z$. 
However, having the result for $\rho_z$ in a set of full measure is not enough to deduce the combinatorial results as it is illustrated in \cite[Section 8]{radic2026Uniformity}. Upgrading the result from \emph{almost every} to \emph{every} forces us to develop new technical results, and thus to obtain a better understanding of the measure $\rho_z$.    

Before explaining the technical results, the main advantage and key feature of the measure $\rho_z \in \cM(\HP_\ell (Z))$ is the \emph{good projection property} (which is not true for general $\sigma_z$, see \cite[Appendix A]{hernandez_kousek_radic2025density_Hindman}). The good projection property is explained in \cref{sec good projections}, but roughly speaking, assuming that $Z$ is an $(\ell-1)$-step nilsystem, it ensures that if one fixes $k \leq \ell$ coordinates and considers the measure resulting after projecting $\rho_z$ on those coordinates, then that measure depends only on the $(k-1)$-step pronilfactor $Z_{k-1}$. 
This allows us to handle an expression that a priori involves an $(\ell-1)$-step nilsystem and reduce it to an $(k-1)$-step nilsystem where we can use fundamental properties of $U^k(\Phi)$-uniform sets to conclude  \cref{main thrm uniform sets}.

To prove that $\rho_z$ is progressive for all $z \in Z$, we use that $z \mapsto \rho_z$ is the continuous ergodic decomposition of the Haar measure $m_{\HP_\ell (Z)}$, see \cref{sec cont ergodic decomp} for definitions. In fact, the measure $m_{\HP_\ell (Z)}$ is invariant under the joint action of $T_\Delta = T \times \cdots \times T$ and $\tilde T = Id \times T \times \cdots \times T^\ell$, so it has two distinct ergodic decompositions (one for each transformation).
The key new technical result, that is of independent interest, is the \emph{fiberwise continuous ergodic decomposition} (see \cref{theorem cross ergodic decompositions}) which ensures that for a nilsystem $(Y,m_Y,S,R)$ if $y \mapsto\mu_y$ and $y \mapsto\nu_y$ are continuous $S$-ergodic and respectively $R$-ergodic decompositions, then for every $y \in Y$ the set of $R$-generic points $z \in Y$ for $\nu_z$ has full $\mu_y$ measure. This generalizes \cite[Lemma 6.7]{kmrr1} where the existence of a continuous ergodic decomposition in nilsystem was established.
We derive this result using new properties of non-ergodic nilsystems obtained by the first author in \cite{hernandez2026nonergodicNil}, which we further develop in this paper.

After establishing that $\rho_z$ is progressive, using Gowers-uniformity properties of the von Mangoldt function $\Lambda(n)$ developed by Green-Tao \cite{Green_Tao10} and Green-Tao and Ziegler \cite{Green_Tao_Ziegler12}, we upgrade the result and prove that $\rho_z$ is $(\P-1)$-progressive. In other words, we prove that the set $B$ involved in \eqref{eq progressive measures intro} can be forced to take values in $\P-1$. 

Unlike the previous case, for shifted primes we cannot reduce the study of general systems to nilsystems, as it is no longer possible to exploit the flexibility of choosing different F\o lner sequences.  At a technical level, since the set shifted primes $\P - 1$ has zero density, we know that this reduction is impossible (see previous discussion). However, the results hold at the nilsystem level allowing us to deduce \cref{main thrm nilborh} for $\nilbohr{}$ sets.

To finish this section, we now state some of the consequences of our results in topological dynamics that are of independent interest. This extends second author's \cite[Theorem 4.6]{radic2026Uniformity} and for the full statement refer to \cref{theorem summary}. Here we highlight two equivalent conditions, the first one is a finite configuration related to the cube completion property in nilsystems exploited by Host, Kra and Maass in \cite{Host_Kra_Maass_nilstructure:2010} and the second one is a new infinite configuration arising from shifted primes. 
\begin{theorem} \label{theorem summary intro} 
    Let $k \geq 2$ be an integer, $(X,T)$  be a minimal nilsystem. For $x,y \in X$, the following are equivalent
    \begin{enumerate}
    \item For every neighborhood $V$ of $y$ there exists $b_1, \ldots, b_k \in \N$ distinct natural numbers such that 
        $$ \Big\{ \sum_{i \in J} b_i \colon   J \subset \{1,\ldots, k\}, \ J \neq \emptyset\} \subset \{ n \in \N \colon T^n x \in V \};$$
 
        \item   For every neighborhood $V$ of $y$ and distinct natural numbers $\ell_1,\cdots, \ell_k$, there exists an infinite set $B \subset \P-1$ such that 
    $$ \Big\{ \sum_{b \in I} b \colon I \subset B, |I|=\ell_1, \ldots, \ell_k\Big\} \subset \{ n \in \N \colon T^n x \in V \}.$$
\end{enumerate}
    
\end{theorem}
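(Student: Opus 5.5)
The implication (2)$\Rightarrow$(1) is direct. Given a neighborhood $V$ of $y$, apply (2) with $\ell_i = i$ for $i=1,\ldots,k$. This yields an infinite $B \subset \P-1$ whose sums of $i$ distinct elements, for every $1\le i\le k$, land in $\{n \colon T^n x \in V\}$. Choosing $b_1,\ldots,b_k$ to be any $k$ distinct elements of $B$, every nonempty $J\subset\{1,\ldots,k\}$ gives $\sum_{i\in J} b_i$, a sum of $|J|\le k$ distinct elements of $B$, so (1) holds.

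The converse (1)$\Rightarrow$(2) is the real content, and I would route it through the Hall--Petresco measure $\rho_x$ on $\HP_\ell(X)$ with $\ell=\max_i \ell_i$.

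\textbf{Step 1: translate (1) into a statement about $\rho_x$.} I would show that (1) is equivalent to $y$ lying, in every coordinate, in the support of the relevant projections of $\rho_x$. The cube-completion property of Host--Kra--Maass says that (1) is equivalent to the existence of a cube in the Host--Kra cube space $X^{[k]}$ with vertex $0$ equal to $x$ and all other vertices equal to $y$. Projecting such cubes along the map $\epsilon \mapsto |\epsilon|$ gives points of $\HP_k(X)$ of the form $(x,y,\ldots,y)$. By the good projection property (\cref{sec good projections}), the projection of $\rho_x$ to the coordinates $\ell_1,\ldots,\ell_k$ depends only on the $(k-1)$-step factor. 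I would then show that its support contains $(y,\ldots,y)$, using the algebraic description of $\HP$ groups and the fact that the point $(x,y,\ldots,y)$ is realizable. Consequently
\[
\rho_x\bigl(\{w\colon w_{\ell_1}\in V,\ldots,w_{\ell_k}\in V\}\bigr)>0
\]
for every neighborhood $V$ of $y$.

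\textbf{Step 2: invoke progressiveness.} Next I would apply the theorem that $\rho_x$ is $(\P-1)$-progressive for every $x$, not merely almost every $x$. This is the statement resting on \cref{thrm rho is unif progressive}, the fiberwise continuous ergodic decomposition, and the Green--Tao--Ziegler uniformity of $\Lambda$. In the form of \eqref{eq progressive measures intro}, with the open sets $V_{\ell_i}=V$ and all other $V_j = X$, it produces an infinite $B\subset\P-1$ whose $\ell_i$-fold sums all lie in $\{n\colon T^nx\in V\}$. That is exactly (2).

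The main obstacle is Step 1: converting the finite cube condition into positivity of $\rho_x$ at the specific point $x$, rather than for almost every $x$. The difficulty is that $\rho_x$ is a fiber Haar measure, so I must check that the completed configuration lies in the fiber subnilmanifold $\{w\in\HP_\ell(X)\colon w_0=x\}$ whose Haar measure is $\rho_x$. That membership is what gives full support near the configuration. I would try to prove it by lifting the cube to $G^{[k]}$, mapping it into $\HP_\ell(G)$ via the Leibman description of $\HP_\ell(G)$ through iterated commutator filtrations, and using that $\rho_x$ is the Haar measure of that fiber.
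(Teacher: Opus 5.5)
Your argument for (2)$\Rightarrow$(1) is correct. The gap is in Step 2 of (1)$\Rightarrow$(2): $\rho_x$ is \emph{not} $(\P-1)$-progressive in general. The introduction phrases this loosely, but the paper proves prime-progressiveness only for the $\tilde T^\kappa$-ergodic components $\rho_x^\circ$ of \cref{prop kappa rho_x}, in \cref{uniformity-measures-in-primes}.

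The obstruction is congruential, and it appears as soon as $\supp\rho_x$ is disconnected. Take the minimal nilsystem $X=\Z/2$, $Tx=x+1$, $\ell=3$, $x=0$. Then $\HP_3(X)=\{(z,z+n,z,z+n)\colon z,n\in\Z/2\}$ and $\rho_0=\frac12(\delta_{(0,0,0,0)}+\delta_{(0,1,0,1)})$. With $V=\{1\}$ and $L=\{1,3\}$ you get $\rho_0(X\times V\times X\times V)=\frac12>0$, i.e.\ $(1,1)\in\supp\rho_0^L$. Yet no infinite $B\subset\P-1$ consists of odd numbers, because $1$ is the only odd shifted prime. So the principle ``$(y,\ldots,y)\in\supp\rho_x^L$ plus progressiveness of $\rho_x$ along $\P-1$'' gives false conclusions. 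This does not contradict the theorem: here (1) fails as well, since $\pi_1(0)\neq\pi_1(1)$. Note also that $\kappa=2$ and $\rho_0^\circ=\delta_{(0,0,0,0)}$, so $(1,1)\notin\supp(\rho_0^\circ)^L$. Separately, \eqref{eq progressive measures intro} only produces $B\subset\N$; the prime version is \cref{prime-EPs-implies-sumsets}.

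What is missing is an argument placing the configuration in $\supp(\rho_x^\circ)^L$, not merely in $\supp\rho_x^L$. For that you need the hypothesis in the form $\pi_{k-1}(x)=\pi_{k-1}(y)$. A point $(x,y,\ldots,y)\in\HP_k(X)$ is not enough: it only concerns $L=\{1,\ldots,k\}$, and lifting cubes into $\HP_\ell(G)$ for general $\ell_1<\cdots<\ell_k$ is not a natural operation. The example also shows that support membership is strictly weaker than equality in $Z_{k-1}$.

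The paper's route is as follows.
\begin{enumerate}
\item Host--Kra--Maass gives $(1)\iff\pi_{k-1}(x)=\pi_{k-1}(y)$.
\item The good projection property, together with the fully supported continuous disintegration over $Z_{k-1}$, shows that $\supp\rho_x^L$ is saturated by $\pi_{k-1}$-fibers. Hence the whole fiber $(G_kx)^k$ over the diagonal lies in $\supp\rho_x^L$ (\cref{cor supports}, \cref{cor contention of the image in the support}).
\item Since $G_k$ is connected, this fiber is a connected set containing $(x,\ldots,x)$. It therefore lies in the connected component $\supp(\rho_x^\circ)^L$ of $\supp\rho_x^L$ through the diagonal point (\cref{cor contention of the image in the support primes}).
\item \cref{uniformity-measures-in-primes} and \cref{prime-EPs-implies-sumsets} then produce $B\subset\P-1$.
\end{enumerate}
The connectedness of the fiber is what lets you pass from $\rho_x$ to the totally ergodic pieces $\rho_x^\circ$, and that is the idea absent from your proposal.
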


\section*{Acknowledgments}

The authors thank Bryna Kra for many useful comments on a former draft of this document. The authors also thank Axel Álvarez, whose conversations helped sharpen some results in \cref{sec top consequences}. 

\noindent \textbf{Use of technology:} No AI tools were used to produce any of the results. ChatGPT use was limited to proof checking and correcting grammar.

\section{Preliminaries} \label{sec preliminaries}
We follow the notation and framework of the earlier work of the second author in \cite{radic2026Uniformity}. For completeness, we recall the needed definitions, as well as introducing the new tools related to Gowers norms that we  need for the results related to primes.
 \subsection{Dynamical systems}
 A \emph{topological dynamical system} is a pair $(X,T)$ where $X$ is a compact metric space and $T:X\to X$ is a homeomorphism. A topological dynamical system $(X,T)$ is minimal if for every point $a\in X$ we have that its orbit $\overline{O_T(a)}= \overline{\{T^n a | n\in \N\} }$ is equal to $X$. A \emph{measure preserving system} (or system for short) is a triple $(X,\mu,T)$ if $(X,T)$ is a topological dynamical system and $\mu$ is a $T$-invariant Borel probability measure. We say that $(X,\mu,T)$ is \emph{ergodic} if for every measurable set $A\subseteq X$, $\mu(A\Delta T^{-1}A)=0$ implies $\mu(A)=0$ or $1$. We say that $T$ is \emph{totally ergodic} if $(X,\mu,T^k)$ is ergodic for every $k\in \N$. The definitions for $\Z^d$-actions are analogous, so we skip them and refer the reader to \cite{Glasner_ergodic_theory_joinings:2003} for further details.

A \emph{F\o lner sequence} $\Phi=(\Phi_N)_{N\in \N}$ is a sequence of finite subsets of the natural numbers for which  $|\Phi_N\Delta (\Phi_N+t)|/|\Phi_N|$ tends to zero as $N\to \infty$ for every $t\in \N$. 
For a system $(X,\mu,T)$, we say that a point $a\in X$ is \emph{generic along a F\o lner sequence $\Phi$}, denoted as $a\in \gen(\mu,\Phi)$, if for all continuous functions $f\in C(X)$
$$ \lim_{N\to \infty} \frac{1}{|\Phi_N|} \sum_{n\in \Phi_N} f(T^na) = \int_X f d\mu.  $$
The set of generic points has full measure when $\mu$ is ergodic. 

For a Borel map $\theta:X\to Y$ between topological spaces $X$ and $Y$, if $\mu$ is a Borel probability measure on $X$, then the pushforward measure $\nu$ on $Y$, denoted by $\nu=\theta_*\mu$, is given by $\nu(A)=\mu(\theta^{-1}(A))$ for all measurable sets $A\subseteq Y$. Given two measure preserving systems $(X,\mu,T)$ and $(Y,\nu,S)$, we say that $(Y,\nu,S)$ is a \emph{factor} of $(X,\mu,T)$ (or analogously that $(X,\mu,T)$ is a extension of $(Y,\nu,S)$) if there exists a measurable map $\pi:X\to Y$, called a \emph{factor map}, such that $\pi_*\mu=\nu$ and $\pi\circ T= S\circ \pi $, up to null-sets. If the factor map $\pi:X\to Y$ is continuous and surjective, we say that $\pi$ is a \emph{continuous factor map} and $Y$ is a \emph{topological factor} of $X$. We  abuse notation by using the same letter $T$ for the transformation in $X$ and in its factor or extension $Y$. For a factor map $\pi:X\to Y$ and a function $f\in L^2(\mu)$, we denote by $\E(f|Y)$ the conditional expectation $\E(f | \pi^{-1}(\mathcal{B}(Y)))$, where $\mathcal{B}(Y)$ is the Borel $\sigma$-algebra on $Y$. Since $\mu$-a.e. $\E(f|Y)(x)$ depends only in its image $\pi(x)$ in $Y$, we make the abuse of notation of also writing $\E(f|Y) (y)$ and thinking of $\E(f|Y)$ as a function in $L^2(\nu)$.

To finish this subsection, we define the notion of \emph{inverse limit}. For a sequence $(X_j,\mu_j,T)_{j\in \N}$ of systems for which there is a factor map $\theta_j:X_{j+1}\to X_j$ for each $j\in \N$, we say that $(X,\mu,T)$ is the inverse limit of this sequence if for each $j\in \N$ there exists a factor map $\pi_j:X\to X_j$ satisfying that $\pi_{j}=\theta_j\circ \pi_{j+1}$ for every $j\in \N$ and that
$$ \bigcup_{j\in \N} \{ f\circ \pi_j : f\in L^2(\mu_j)\}  $$
is dense in $L^2(\mu)$. We also define the analogous notion for topological dynamical system, by replacing factor maps by continuous factor maps, and $f\in L^2(\mu_j)$ by $f\in C(X_j)$.

\subsection{Nilmanifolds, nilsystems and Host-Kra norms}\label{preliminaries-2}
Let $G$ be a $k$-step nilpotent Lie group and $\Gamma$ a uniform lattice in $G$ (this is, a discrete cocompact subgroup). We call the compact manifold $X=G/\Gamma$ a \emph{$k$-step nilmanifold}, or just \emph{nilmanifold} if the step is understood by context. We endow $X=G/\Gamma$ with its Haar measure $m_X$, this is, its unique left-translation invariant Borel probability measure. A subgroup $H\subseteq G$ is call \emph{$\Gamma$-rational}, or simply \emph{rational}, if $H\Gamma$ is closed in $G$. In particular, rational groups are closed (see \cite[Lemma 14, Chapter 10]{Host_Kra_nilpotent_structures_ergodic_theory:2018}). We denote by $G^\circ$ the connected component of the identity $e_G$ in $G$. Then, $G^\circ$ is an open normal subgroup of $G$. We  assume without loss of generality that $G^\circ$ is simply connected (see \cite[Lemma 7, Chapter 10]{Host_Kra_nilpotent_structures_ergodic_theory:2018})

Consider $X=G/\Gamma$ a nilmanifold. Let $T:X\to X$ be the action induced by left translation of a fixed element $\tau \in G$. We say that $(X,m_X,T)$ is a $k$-step nilsystem. We generalize this to $\Z^d$-actions in the natural way: $(X,\mu,T_1,\ldots,T_d)$  is a $k$-step $\Z^d$-nilsystem where $\tau_1,\ldots,\tau_d\in G$ commute and $T_i:X\to X$ are their respective induced maps by left translation. As it is customary (see for example \cite[Chapter 10, Lemma 7 and Theorem 12]{Host_Kra_nilpotent_structures_ergodic_theory:2018}), we  assume without loss of generality that $G$ is generated by $G^\circ$ and the elements $\tau_1,\ldots,\tau_d$. 

An inverse limit of $k$-step nilsystems is called a \emph{$k$-step pronilsystem}. An \emph{$\infty$-step pronilsystem} is an inverse limit of nilsystem (i.e. with no restriction in the step). We say that a system is a pronilsystem if it is a $k$-step pronilsystems for some $k\in \N\cup \{\infty\}$.

Properties like minimality, ergodicity and unique ergodicity are equivalent in pronilsystems, see for example \cite{Host_Kra_nilpotent_structures_ergodic_theory:2018}. Moreover, if $(X,T_1,\ldots,T_d)$ is a pronilsystem, then for every $x\in X$ the orbit $$\overline{O_{T_1,\ldots,T_d}(x)}=\overline{  \{ T_1^{n_1} \cdots T_d^{n_d} x | (n_1,\ldots,n_d)\in \Z^d \} }$$
is minimal and uniquely ergodic by \cite{Leibman_pointwise_conv_polynomial_nil:2005}. 

Let $(X, \mu, T)$ be an ergodic measure preserving system. For every $k \in \mathbb{N} \cup\{\infty\}$, $(X, \mu, T)$ admits a maximal factor isomorphic to a $k$-step pronilsystem, called the \emph{$k$-step pronilfactor}, denoted by $Z_k(X)$ or just $Z_k$ depending on the context. In what follows, we always consider the topological model of $Z_k$ which is a $k$-step pronilsystem. For $k=1$, this factor is also known as the Kronecker factor and can be characterized as the factor spanned by the eigenfunctions of $(X, \mu, T)$, that is the functions $f \in L^2(\mu)$ such that $T f=\lambda f$ for some $\lambda \in \mathbb{S}^1$. An ergodic system $(X, \mu, T)$ is weak mixing if $\lambda=1$ is the unique eigenvalue. If $(X, \mu, T)$ is weak mixing then $Z_k(X)$ is the trivial factor for all $k \in \mathbb{N}$. Similarly, if for some $k \in \mathbb{N}, Z_{k+1}=Z_k$, then $Z_s=Z_k$ for all $s \geq k$.

In \cite{Host_Kra_nonconventional_averages_nilmanifolds:2005}, Host and Kra showed that these factors can be characterized by seminorms that we refer as Host–Kra seminorms.

\begin{definition}
    Let $(X, \mu,T)$ be an ergodic system and $f \in L^{\infty}(\mu)$. The $k$-uniformity seminorms of $f$, $\norm{f}_{U^{k}(X,\mu,T)}$, are defined inductively as follows: 
    \begin{align*}
        \norm{f}_{U^0(X, \mu,T)} &=  \int f d\mu \text{ and } \\
                \norm{ f}_{U^{k+1}(X, \mu,T)}^{2^{k+1}} &= \lim_{H \to \infty }\frac{1}{H} \sum_{h \leq H} \norm{f \cdot \overline{T^hf}}_{U^k(X, \mu,T)}^{2^k}.
    \end{align*}
\end{definition}
    The function $\norm{ \cdot }_{U^{k}(X, \mu,T)}$ defines a seminorm when $k \geq 1$. The main theorem in \cite{Host_Kra_nonconventional_averages_nilmanifolds:2005} shows that the $k$-step pronilfactor, $Z_{k}$, of an ergodic system $(X, \mu,T)$ is characterized by the property 
    \begin{equation*} 
        \norm{f}_{U^{k+1}(X, \mu,T)} = 0 \iff \E(f \mid Z_{k}) =0,
    \end{equation*}
    for any $f\in L^{\infty}(\mu)$.  We say that $(X,\mu,T)$ has \emph{topological pronilfactors} if the factor map $\pi_k:X\to Z_k$ is a topological factor map for every $k\in \N$.

We now define our notions of uniform and Nil-Bohr sets.
\begin{definition}\label{uniform-sets}
    We say that a measurable set $E\subseteq X$ is \emph{$U^k(X,\mu,T)$-uniform} for some $k\in \N$, if $\mu(E)>0$ and $\norm{\ind{E}-\mu(E)}_{U^k(X,\mu,T)}=0$. This is equivalent to $\E(\ind{E}| Z_{k-1})(x)=\mu(E)$ for $\mu$-a.e. $x\in X$.  
\end{definition}
 
\begin{definition} \label{def nilbohr}
 A set $D \subset \mathbb{N}$ is a $\nilbohr{s}_0$ set for $s \in \mathbb{N}$ if there exists an $s$-step nilsystem $(Z, T)$, a point $z_0 \in Z$ and an open neighborhood $U \subset X$ of $z_0$ such that $$D\supset \left\{n \in \mathbb{N} \mid T^n z_0 \in U\right\}.$$ 
 We say that $D\subseteq \N$ is a $\nilbohr{s}$ set if there a $\nilbohr{s}_0$ set $S$ and $t\in \N$ such that $S-t= D$.  
  We say that $D \subset \mathbb{N}$ is a $\nilbohr{}_0$ set (resp. $\nilbohr{}$ set) if it is a $\nilbohr{s}_0$ set (resp. $\nilbohr{s}$ set) for some $s \in \mathbb{N}$.
\end{definition}

\subsection{Local uniformity seminorms and Furstenberg correspondence principle} \label{sec local unif semi and f correspondence}

  A bounded sequence $\phi \colon \N \to \C$ \emph{admits correlations along the F\o lner sequence} $\Phi$ if for all $h_1, . . . , h_k \in \N$ and $\epsilon_1, \ldots, \epsilon_k \in \{0,1\}$, the limit
\begin{equation*}
    \lim_{N \to \infty} \frac{1}{|\Phi_N|} \sum_{n \in \Phi_N} \prod^k_{i=1} C^{\epsilon_i} \phi(n + h_i)
\end{equation*}
exists, where for a complex number $z$, $C z$ denotes the complex conjugation. For any bounded sequence $\phi\colon \N \to \C$ and any F\o lner sequence $\Phi$, there is a sub-F\o lner sequence $\Psi$ such that $\phi$ admits correlations along $\Psi$.  

Assume that a sequence $\phi \colon \N \to \C$ admits correlations along a given F\o lner sequence $\Phi$ we define, inductively in $k$, the \emph{local uniformity seminorms of $\phi$ along $\Phi$} by
\begin{align}
    &\norm{\phi}_{U^0(\Phi)} = \lim_{N \to \infty} \frac{1}{|\Phi_N|} \sum_{n \in \Phi_N} \phi(n) \nonumber \\
    &\norm{\phi}_{U^{k+1}(\Phi)}^{2^{k+1}} = \lim_{H \to \infty}\frac{1}{|H|} \sum_{h =1}^H \norm{\Delta_h \phi }^{2^k}_{U^k(\Phi)}  \label{eq local uniformity norms} 
\end{align}
where $(\Delta_h \phi)(n) = \phi(n) \overline{\phi(n+h)}$ for all $n,h \in \N$. This seminorms were introduced in \cite{Host_Kra09} and the existence of the limits in \eqref{eq local uniformity norms} was proved in that paper too. Similarly to \cref{uniform-sets}, a set $A \subset \N$ is $U^k(\Phi)$\emph{-uniform} if the indicator function $\1_A \colon \N \to \C$ admits correlations along $\Phi$, $\diff_{\Phi}(A) > 0$ and $\norm{\1_A - \diff_{\Phi}(A) }_{U^k (\Phi)} =0$.

   The local uniformity seminorm share many similarities with the Host-Kra seminorms. For instance, for $k \in \N$, $f \in L^\infty(\mu)$ in a measure preserving system $\Xmt$ and $\phi \in \ell^\infty (\N)$ admitting correlations along $\Phi$
  \begin{equation*}
      \norm{f}_{U^k\Xmt} \leq \norm{f}_{U^{k+1}\Xmt} \quad \text{ and } \quad  \norm{\phi}_{U^k(\Phi)} \leq \norm{\phi}_{U^{k+1}(\Phi)}. 
  \end{equation*}
  This similarities are directly linked with the Furstenberg correspondence principle. This was originally proved by Furstenberg in \cite{Furstenberg77} and here we use a version tailored for our purposes.

\begin{proposition}[{\cite[Corollary 2.11]{radic2026Uniformity}}] \label{furstenberg correspondence for sets}
    Let $k \in \N$ and let $A_1, \ldots, A_r \subset \N$ be $U^k(\Phi)$-uniform sets for $i =1, \ldots,r$. There exist  an ergodic system $(X, \mu,T) $ with topological pronilfactors, a F\o lner sequence $\Psi = (\Psi_N)_{N \in \N}$, a point $a \in \gen(\mu, \Psi)$ and $U^k(X, \mu,T)$-uniform clopen sets $E_i$ such that $\mu(E_i) = \diff_\Phi (A_i) >0$ and
    \begin{equation} \label{eq correspondence sequences}
        T^n a \in E_i \iff n \in A_i 
    \end{equation}
    for all $n \in \N$, $i=1, \ldots,r$.
\end{proposition}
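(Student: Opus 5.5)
The plan is the standard symbolic construction, followed by two upgrades: one to obtain ergodicity, and one to obtain topological pronilfactors. Let $\Omega=(\{0,1\}^r)^{\N}$ with the left shift $T$, and let $a\in\Omega$ be the point $a(n)=(\1_{A_1}(n),\ldots,\1_{A_r}(n))$. Set $E_i=\{\omega\in\Omega\colon \omega(0)_i=1\}$. Each $E_i$ is a clopen cylinder, and by construction $T^na\in E_i\iff n\in A_i$. Since the $\1_{A_i}$ admit correlations along $\Phi$, after passing to a sub-F\o lner sequence $\Phi'$ the empirical measures $\frac{1}{|\Phi'_N|}\sum_{n\in\Phi'_N}\delta_{T^na}$ converge weak$^*$ to a $T$-invariant $\mu$. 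Then $a\in\gen(\mu,\Phi')$ and $\mu(E_i)=\diff_\Phi(A_i)$. Moreover, every correlation of $\1_{A_i}$ along $\Phi$ is an integral $\int\prod_j T^{h_j}\1_{E_i}\diff\mu$, because products of cylinder indicators are continuous. Hence the local seminorms coincide with the Host--Kra seminorms: $\norm{\1_{A_i}-\diff_\Phi(A_i)}_{U^k(\Phi)}=\norm{\1_{E_i}-\mu(E_i)}_{U^k(\Omega,\mu,T)}$, where the right side is defined by the same inductive formula even for non-ergodic $\mu$ (as in \cite{Host_Kra09}). So each $E_i$ is $U^k$-uniform for $\mu$.

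Next I pass to an ergodic component. Let $\mu=\int\mu_\omega\diff\mu(\omega)$ be the ergodic decomposition. Since $k\ge 1$, the $U^k$ seminorm dominates the $U^1$ seminorm. The $U^1$ seminorm of $f=\1_{E_i}-\mu(E_i)$ equals the $L^2$ norm of $\E(f\mid\mathcal I)$, the conditional expectation onto the invariant $\sigma$-algebra. So $\mu_\omega(E_i)=\mu(E_i)$ for $\mu$-a.e.\ $\omega$. Likewise, the $U^k$ seminorm of $f$ for $\mu$ is the $L^{2^k}$-average over components of the $U^k$ seminorms for $\mu_\omega$. Hence for a.e.\ $\omega$ every $E_i$ is $U^k(\Omega,\mu_\omega,T)$-uniform with $\mu_\omega(E_i)=\diff_\Phi(A_i)$. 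I then invoke the KMRR-type lemma: if $a\in\gen(\mu,\Phi')$, then for $\mu$-a.e.\ ergodic component $\mu_\omega$ there is a F\o lner sequence $\Psi$ with $a\in\gen(\mu_\omega,\Psi)$. Choosing $\omega$ in the intersection of these two full-measure sets gives an ergodic system together with $a$, $\Psi$ and the $E_i$. Since $a$ was not changed, the equivalence $T^na\in E_i\iff n\in A_i$ is preserved.

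The final step, which I expect to be the main obstacle, is securing topological pronilfactors without breaking genericity of $a$ or the clopenness of the sets. I would use the KMRR construction: take the joining of $(\Omega,\mu_\omega,T)$ with the topological model of its pronilfactors $Z_\infty$, that is, push $\mu_\omega$ forward under $\omega'\mapsto(\omega',\pi_j(\omega'))_j$. Then find a point $\tilde a=(a,z)$ that is generic for the joining along a further sub-F\o lner sequence, which is possible because $a$ is generic and the fiber over $a$ can be chosen via the characteristic factor property. The sets $\tilde E_i=E_i\times Z_\infty$ are clopen preimages. They keep their measure and uniformity, since the pronilfactors of the extension are the same as those of the original system, so $\E(\1_{\tilde E_i}\mid Z_{k-1})$ is still constant. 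The relation $T^n\tilde a\in\tilde E_i\iff n\in A_i$ holds because membership depends only on the first coordinate. The delicate point is choosing $z$ so that $\tilde a$ is generic, and here I would rely on the existing KMRR/\cite{radic2026Uniformity} argument rather than reprove it.
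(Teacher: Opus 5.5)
Your proposal follows the paper's route. Your first two steps (the symbolic correspondence, then passing to an ergodic component, where $U^k$-uniformity and the value $\mu_\omega(E_i)=\mu(E_i)$ survive for almost every component) reconstruct what the paper imports from \cite[Corollary 2.11]{radic2026Uniformity}. Your third step, joining with the pronilfactors and lifting the generic point $a$, is exactly the paper's appeal to \cite[Lemma 5.8]{kmrr1}, and like the paper you cite rather than prove the lifting.

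One correction to that last step: do not claim that $(a,z)$ is generic along a subsequence of your F\o lner sequence. This can fail, for instance when $a$ follows the Kronecker rotation with two different phases on alternating long blocks. The conclusion is unaffected, since the statement only asks for some F\o lner sequence $\Psi$, and a new F\o lner sequence suffices for the lift.
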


We clarify that the statement in \cite[Corollary 2.11]{radic2026Uniformity} does not include the fact that the system $(X, \mu,T) $ has topological pronilfactors, but this extra assumption can be incorporated after \cite[Lemma 5.8]{kmrr1}.

\section{Ergodic decompositions in nilsystems} \label{sec ergodic decomposition}

\begin{definition} \label{defi cont desintegration}
  Let $(X, \mu,T)$ be a system and $(Y, \nu, S)$ be a factor with factor map $\pi\colon X \to Y$. We say that the map $y \mapsto \mu_y$ from $Y \to \cM(X)$ is a disintegration of $\mu$ over $\nu$ if 
  \begin{itemize}
      \item $y \mapsto \mu_y (E)$ is measurable for every Borel $E \subset X$
      \item $\mu_y(\pi^{-1}(y)) =1$ for $\nu$-almost every $y \in Y$,
      \item For every $f \in L^\infty(\mu)$
      \begin{equation*}
          \int_X f \diff \mu = \int_Y \int_X f \diff \mu_y \diff \nu(y).
      \end{equation*}
  \end{itemize}
\end{definition}
 As a consequence we have that $\int f \diff \mu_y = \E(f\mid Y) (y)$ for $\nu$-almost every $y \in Y$. Using the factor map we abuse notation and say that the map $x \mapsto \mu_{\pi(x)}$ for $x \in X$ is the disintegration of $\mu$ over $\nu$. Of special interest is when $Y$ is the factor given by the invariant $\sigma$-algebra $\cI(T)$, in that case we just write $x \mapsto \mu_x$ and we call it \emph{ergodic decomposition}. For an ergodic decomposition $x \mapsto \mu_x$, the measure $\mu_x$ is ergodic $\mu$-almost surely for $x \in X$.

 In \cite{kmrr1} and \cite{radic2026Uniformity} the authors applied a continuous version of these notions. In the case of the continuous ergodic decomposition if $x \mapsto \mu_x$ is continuous then we say that $\Xmt$ has \emph{continuous ergodic decomposition}. Similarly, for two dynamical system $\Xmt$ and $(Y, \nu,S)$, if the factor map $\pi \colon X \to Y$ is continuous and $y \mapsto \mu_y $ is continuous, then we call it a \emph{continuous disintegration over} $\nu$. 

\subsection{Continuous ergodic decomposition in nilsystems}  \label{sec cont ergodic decomp}

A key element in \cite{kmrr1} is the existence of a continuous ergodic decomposition in nilsystem. In this paper, we use a more detailed version of this theorem using recent results developed by the first author in \cite{hernandez2026nonergodicNil}.   

Fixing notation, in this section any nilpotent Lie group $G$ is generated by $G^\circ$ the connected component of the identity, and $d+1$ commuting elements $\tau, \sigma_1, \ldots, \sigma_d \in G$ (we later use it for two commuting elements $\tau$ and $\sigma$). We denote $\pi_\Gamma \colon G \to G / \Gamma =X$ the natural quotient map and $e_X = \pi_\Gamma(e_G)$. $(X, \mu, T, S_1, \ldots, S_d)$ is a nilsystem, where $T, S_1, \ldots, S_d \colon X \to X$ are nilrotation given by $Tx = \tau \cdot x$ and $S_i x = \sigma_i \cdot x$ for all $x \in X$, $i =1, \ldots, d$ respectively.

Using the convention of \cite{Host_Kra_nilpotent_structures_ergodic_theory:2018}, we say that $Y$ is a subnilmanifold of $X$ if there is a closed subgroup $H$ of $G$ and a point $y \in X$ such that $Y= Hy$. If $Y$ is also closed, we say that $H$ is a rational subgroup.

\begin{proposition} \label{Prop G_tau}
    Let  $G_\tau = \langle G^\circ , \tau\rangle $, then $G_\tau$ is normal rational group in $G$. Moreover, if $X_\tau = G_\tau e_X \subset X$ then there is a finite set $D \subset \Z^d$ such that 
    \begin{equation} \label{eq partition Xtau}
        X = \bigsqcup_{h \in D} S^{h_1}_1 \cdots S^{h_d}_d X_\tau. 
    \end{equation}
    In particular, \eqref{eq partition Xtau} holds true for any translate of $D$ in $\Z^d$.
\end{proposition}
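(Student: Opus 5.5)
The plan is to use three facts: $G^\circ$ is open and normal in $G$, $G$ is generated by $G^\circ$ together with the commuting elements $\tau,\sigma_1,\ldots,\sigma_d$, and $X=G/\Gamma$ is compact.

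\emph{Normality of $G_\tau$.} First observe that $G^\circ$ is normal, so $G_\tau = G^\circ\langle \tau\rangle$. To show $G_\tau$ is normal in $G$, it suffices to check that it is stable under conjugation by the generators $\sigma_i$ and by $G^\circ$. Conjugation by $\sigma_i$ fixes $\tau$ because $\sigma_i$ and $\tau$ commute, and it preserves $G^\circ$. For $g\in G^\circ$ we have $g\tau g^{-1}=\tau\,(\tau^{-1}g\tau)g^{-1}\in \tau G^\circ$, since $G^\circ$ is normal. Hence $G_\tau\trianglelefteq G$.

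\emph{Rationality of $G_\tau$.} Since $G_\tau\supseteq G^\circ$, it is open, so $G_\tau\Gamma$ is a union of open cosets. Its complement in $G$ is also a union of cosets $gG_\tau\Gamma$, because $G_\tau\Gamma$ is a subgroup: $G_\tau$ is normal, so $G_\tau\Gamma=\Gamma G_\tau$. An open subgroup is closed, so $G_\tau\Gamma$ is closed and $G_\tau$ is rational. In particular $X_\tau=G_\tau e_X$ is a closed and open subset of $X$.

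\emph{The partition.} The quotient $G/G_\tau$ is an abelian group generated by the images of $\sigma_1,\ldots,\sigma_d$. Indeed, $G^\circ$ and $\tau$ map to the identity, and the $\sigma_i$ commute. Consider the finite set $G/(G_\tau\Gamma)$, which is finite because $X$ is compact and the sets $gX_\tau$ form an open cover of $X$ by disjoint translates. Since $G_\tau$ is normal, the translates $gX_\tau=gG_\tau\Gamma/\Gamma$ are either equal or disjoint. Every $g\in G$ can be written as $g=\sigma_1^{h_1}\cdots\sigma_d^{h_d}u$ with $u\in G_\tau$, so every translate has the form $S_1^{h_1}\cdots S_d^{h_d}X_\tau$. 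Choosing one $h$ per distinct translate gives a finite set $D\subset\Z^d$ and the disjoint union \eqref{eq partition Xtau}.

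\emph{Translates of $D$.} For any $v\in\Z^d$, applying the homeomorphism $S_1^{v_1}\cdots S_d^{v_d}$ to \eqref{eq partition Xtau} gives the decomposition for $D+v$. The sets stay disjoint and still cover $X$. Here we use that the $S_i$ commute, so $S^{v}S^{h}=S^{h+v}$.

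\emph{Expected obstacle.} The main subtlety is rationality, namely that $G_\tau\Gamma$ is closed. I expect this to be handled entirely by openness of $G_\tau$. The count of distinct translates being finite then follows from compactness of $X$, since the translates are disjoint open sets covering $X$.
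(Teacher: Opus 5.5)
Your proof is correct. The normality argument and the construction of $D$ are essentially the paper's. The paper phrases finiteness as $G_\tau\backslash G/\Gamma$ being a finite quotient $\Z^d/\Lambda$ and takes $D$ to be a fundamental domain, so translates of $D$ are automatically again fundamental domains. Your disjoint-open-cover argument and your application of the homeomorphism $S_1^{v_1}\cdots S_d^{v_d}$ make the same two points explicitly.

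The genuine difference is in the rationality step. The paper shows $X_\tau$ is closed by proving $X_\tau=\bigsqcup_{i=0}^{r}\tau^iX_0$, where $X_0=G^\circ\Gamma$ is closed. For this it uses that $X$ has finitely many connected components, that $\mu(X_0)>0$, and a pigeonhole argument with the $T$-invariance of $\mu$ to obtain $\tau^{r+1}X_0=X_0$. You instead observe that $G_\tau\supseteq G^\circ$ is open, so $G_\tau\Gamma$ is an open subgroup and hence closed.

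Your route is shorter and purely topological: it needs neither the invariant measure nor the component count. It also shows directly that $X_\tau$ is open, which is exactly what your compactness argument for the partition uses. The paper's route yields, in addition, an explicit description of $X_\tau$ as finitely many $\tau$-translates of the identity component $X_0$.
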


\begin{proof}
    For the normality, since $G = \langle G^\circ , \tau, \sigma_1, \ldots, \sigma_d\rangle$ it is enough to prove that $\sigma_i G_\tau \sigma_i^{-1} = G_\tau$ for all $i =1, \ldots, d$. Fix $i\in [d]$. Using that $G^\circ$ is normal in $G$ and that $\tau$ commutes with $\sigma_i$ we get that
    \begin{align*}
        \sigma_i G_\tau \sigma_i^{-1} = \sigma_i \langle \tau, G^\circ \rangle \sigma_i^{-1} =  \langle \sigma_i \tau \sigma_i^{-1}, \sigma_i G^\circ \sigma_i^{-1} \rangle = \langle \tau, G^\circ \rangle = G_\tau.
    \end{align*}
    For the rationality, it is enough to notice
    $X_\tau$ is closed. Indeed, we  actually prove that if $X_0=G^\circ\Gamma$, then
\begin{equation}\label{eq-descriving-X-tau}
        X_\tau= \bigsqcup_{i=0}^r \tau^i X_0
    \end{equation}
    and since $X_0$ is closed (see \cite[Chapter 10]{Host_Kra_nilpotent_structures_ergodic_theory:2018}), then so is $X_\tau$. For doing this, since $X$ has finitely many isomorphic and disjoint connected components (see \cite[Section 1.1]{LEIBMAN_2007}) we have that $\mu(X_0)>0$. Since $\mu$ is $T$-invariant, by pigeonhole principle there is $d\in \N$ such that $T^{d+1}X_0\cap X_0\neq \emptyset$, and since $T$ maps connected components to connected components we get that $T^{d+1}X_0=X_0$. With this we deduce \eqref{eq-descriving-X-tau}  for $r$ the minimum such that $\tau^{r+1}X_0=X_0$. 
    
    
    Finally, $G_\tau \backslash G  $ is a discrete countable abelian group, so by compactness of $G/ \Gamma$, $G_\tau \backslash G /\Gamma$ is equivalent to a finite abelian group $\Z^d / \Lambda$ for some subgroup $\Lambda \leq \Z^d$.  In particular, $\sigma^{h_1}_1 \cdots \sigma^{h_d}_d G_\tau$ with $h \in \Z^d$, induces a finite partition of the form $S^{h_1}_1 \cdots S^{h_d}_d X_\tau$ for $h \in D$ where $D$ is a fundamental domain in $\Z^d$ of the finite group $\Z^d / \Lambda$.
\end{proof}

\begin{theorem}\label{Continuous-ergodic-decomp-with-Leibman-group}
    Let $(X = G/\Gamma, \mu, T, S_1, \ldots, S_d)$ be a nilsystem. Then there exists a continuous $T$-ergodic decomposition $x \mapsto \mu_x$ of $\mu$. Moreover, there is $H$ a rational subgroup which is normal in $G_\tau$ such that
    \begin{itemize}
        \item If $x \in X_\tau$, $\mu_x = m_{Hx}$ the Haar measure of the nilmanifold $Hx$.
        \item More generally, for $D$ as in \cref{Prop G_tau} with $0 \in D$, if $x = S_1^{h_1} \cdots S_d^{h_d} y$ for $y \in X_\tau$ and $h \in D$ then $\mu_x = S_1^{h_1} \cdots S_d^{h_d} \mu_y = S_1^{h_1} \cdots S_d^{h_d} m_{Hy}$. 
    \end{itemize}
\end{theorem}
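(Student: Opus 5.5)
The plan is to reduce to the $T$-invariant sub-nilmanifold $X_\tau$. On $X_\tau$ the acting group $G_\tau = \langle G^\circ, \tau\rangle$ is generated by its identity component and the single element $\tau$, so Leibman's description of orbit closures applies uniformly. We then transport the resulting decomposition to the other pieces of the partition \eqref{eq partition Xtau} using the commuting maps $S_i$.

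\textbf{Step 1: the group $H$.} Following Leibman \cite{LEIBMAN_2007} (and the refinement in \cite{hernandez2026nonergodicNil}), we take $H$ to be the smallest rational subgroup containing $\tau$ and $[G_\tau,G_\tau]$. More precisely, we consider the projection of $X_\tau$ onto its maximal factor torus $G_\tau/([G_\tau,G_\tau]G^\circ{}')$. There the closure of $\{\tau^n\}$ acting on the base is a closed subgroup $K$. We let $H$ be the preimage in $G_\tau$ of the corresponding rational subgroup, enlarged by $[G_\tau,G_\tau]$ and made rational.

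Because $H$ contains $[G_\tau,G_\tau]$, it is normal in $G_\tau$. Leibman's theorem then says that for $x\in X_\tau$ the orbit closure $\overline{\{T^n x\}}$ equals $Hx$. In fact this holds for every $x$, not just almost every $x$, since the $T$-orbit closure of $gx$ equals $g\,\overline{\{g^{-1}\tau g\,T^n x\}}$, and $g^{-1}\tau g\in \tau [G_\tau,G_\tau]\subset \tau H$. Moreover the nilrotation $T$ on $Hx$ is uniquely ergodic with measure $m_{Hx}$, because orbit closures in nilsystems are minimal and uniquely ergodic \cite{Leibman_pointwise_conv_polynomial_nil:2005}.

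\textbf{Step 2: continuity and decomposition on $X_\tau$.} We define $\mu_x=m_{Hx}$ for $x\in X_\tau$. Writing $x=g e_X$, normality of $H$ gives $Hx=gHe_X$, so $m_{Hx}=g_*m_{He_X}$. Hence $x\mapsto m_{Hx}$ is the composition of a continuous local section $X_\tau\to G_\tau$ with the continuous map $g\mapsto g_*m_{He_X}$. This is well defined because $\gamma\in\Gamma$ with $g\gamma$ acting gives $Hg\gamma e_X=Hge_X$, so the measure depends only on $x$.

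Continuity on $X_\tau$ follows; $X_\tau$ is clopen by \eqref{eq-descriving-X-tau}. The measures $m_{Hx}$ are $T$-invariant and ergodic. By $G_\tau$-invariance of $\mu|_{X_\tau}$, integrating $g\mapsto g_*m_{He_X}$ against Haar measure recovers the normalized $\mu|_{X_\tau}$. Finally, $x\mapsto \mu_x$ is $T$-invariant because $H\tau x=\tau Hx$. So this is a genuine ergodic decomposition, i.e. measurable with respect to $\cI(T)$.

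\textbf{Step 3: other pieces of the partition.} For $x=S^{h}y$ with $h\in D$ and $y\in X_\tau$, we set $\mu_x=S^h_*\mu_y$. The sets $S^hX_\tau$ are clopen and disjoint, and each $S^h$ commutes with $T$. Therefore the extended map is continuous, consists of $T$-invariant ergodic measures, is $T$-invariant along orbits, and integrates to $\mu$ because $\mu=\sum_{h\in D}S^h_*(\mu|_{X_\tau})$.

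\textbf{Main obstacle.} The hardest point is in Step 1: showing that the orbit-closure group is the \emph{same} $H$ for every $x\in X_\tau$, rather than varying with $x$. This is what forces the choice of $H$ to contain $[G_\tau,G_\tau]$. We also need $H$ rational and not merely closed, which is where we would invoke the results of \cite{hernandez2026nonergodicNil}.
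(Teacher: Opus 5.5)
Your Steps 2 and 3 follow the paper's route: restrict to $X_\tau$, set $\mu_x=m_{Hx}$, and transport to the pieces $S_1^{h_1}\cdots S_d^{h_d}X_\tau$. Your continuity argument, using normality and $m_{Hge_X}=g_*m_{He_X}$, is a fine substitute for \cite[Chapter 10, Proposition 16]{Host_Kra_nilpotent_structures_ergodic_theory:2018}. The gap is Step 1, which is the real content of the theorem, and it fails in two ways.

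\textbf{First, the correct $H$ need not contain $[G_\tau,G_\tau]$.} Take $G=H_3(\R)\times\R$, $\Gamma=H_3(\Z)\times\Z$ and $\tau=(e,\alpha)$ with $\alpha$ irrational. Since $G$ is connected, $G_\tau=G$. The map is $T(y,t)=(y,t+\alpha)$, so its ergodic components are $\delta_y\otimes m_{\T}$, which corresponds to $H=\{e\}\times\R$. Any $H\supseteq[G,G]=Z(H_3(\R))\times\{0\}$ instead gives measures $m_{Hx}$ whose projection to the first coordinate is Haar measure on a central circle, not a Dirac mass. Since $T$ fixes the first coordinate, none of your $\mu_x$ is ergodic. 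So normality cannot be obtained by enlarging by the commutator subgroup. Normality of the Leibman group in $G_\tau$ is a genuine result, which the paper imports from \cite[Proposition 3.6]{hernandez2026nonergodicNil}.

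\textbf{Second, the claim that $\overline{O_T(x)}=Hx$ for \emph{every} $x\in X_\tau$ is false.} In the Heisenberg group with $(a,b,c)(a',b',c')=(a+a',b+b',c+c'+ab')$, take $\tau=(1,0,0)\in\Gamma$; then $T(a,b,c)\Gamma=(a,b,c+b)\Gamma$.
\begin{itemize}
\item For irrational $b$, the orbit closure is the central circle through the point.
\item For rational $b$, the orbit is finite; in particular $e_X$ is a fixed point.
\end{itemize}
So orbit closures genuinely vary with the point. Your conjugation argument only gives the inclusion $\overline{O_T(gx)}\subseteq gHx$: the orbit of $x$ under $g^{-1}\tau g=\tau c$ with $c\in H$ stays inside $Hx$, but it need not be dense there.

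\textbf{What is true suffices.} By Leibman's generic-orbit theorem \cite{LEIBMAN_2007}, refined in \cite[Proposition 3.6]{hernandez2026nonergodicNil}, there is a rational $H$, generated by $\tau$ and $H^\circ$ and normal in $G_\tau$, with $\overline{O_T(x)}=Hx$ for $m_{X_\tau}$-almost every $x$. Then $m_{Hx}$ is $T$-invariant for all $x$ (because $\tau\in H$), ergodic for almost every $x$, continuous in $x$, and integrates to $m_{X_\tau}$. This is all an ergodic decomposition requires, and it is exactly how the paper proceeds.
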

Following the notation from \cite{hernandez2026nonergodicNil}, we call $H$ the \emph{Leibman group.} 
\begin{remark*}
    The finite set $D \subset \Z^d$ given by the \cref{Prop G_tau} is not unique. However, the previous characterization does not depend on the chosen finite set $D \subset \Z^d$, because the continuous ergodic decomposition is unique. 
\end{remark*}

\begin{proof}
    Consider $X_\tau = G_\tau e_X$ with Haar measure $m_{X_\tau}$, then by \cite[Proposition 3.6]{hernandez2026nonergodicNil}, there exists $H \leq G_\tau$ a rational subgroup such that for $m_{X_\tau}$-almost every $x \in X_\tau$, $\overline{O_T(x)} = Hx$. In particular, if we define $\mu_x=m_{Hx}$ for $x \in X_\tau$, then $\mu_x$ is ergodic $m_{X_\tau}$-almost surely. Moreover, $H$ can be taken to be generated by $\tau$ and $H^\circ$ the connected component of $H$ containing $e_G$, in particular $H$ is a closed subgroup in $G_\tau$. With this $Hx$ is a $T$-invariant subnilmanifold and $T \mu_x = \mu_{x}$ for all $x \in X_\tau$.  By the same proposition, $H$ is normal in $G_\tau$ (the original statement in \cite[Proposition 3.6]{hernandez2026nonergodicNil} assumes $G = G_\tau$, so adapting the proof to our context we get this version). 

    By \cite[Chapter 10, Proposition 16]{Host_Kra_nilpotent_structures_ergodic_theory:2018}, the map $x \mapsto m_{Hx}$ from $X_\tau$ to $\cM(X_\tau) \subset \cM(X)$ is continuous for the weak$^*$ topology and $\int_{X_\tau} \mu_x \diff m_{X_\tau}(x) = m_{X_\tau}$. Therefore, using the previous paragraph, $x \mapsto \mu_x$ defines a continuous ergodic decomposition of $m_{X_\tau}$. 

    Fix $D \subset \Z^d$ as in \cref{Prop G_tau} such that $0 \in D$, then 
    \begin{equation} \label{eq disjoint part measure}
        \mu = \frac{1}{|D|} \sum_{h \in D} S_1^{h_1} \cdots S_d^{h_d} m_{X_\tau}. 
    \end{equation}
    Fix $h \in D$, since $S_1, \ldots, S_d \colon X \to X$ are homeomorphism, for every $x \in S_1^{h_1} \cdots S_d^{h_d} X_\tau $ there is a unique $y \in X_\tau$ such that $S_1^{h_1} \cdots S_d^{h_d} y = x$. We denote such element $\psi_h(x) = y$, where $\psi_h$ is simply the continuous inverse map of $S_1^{h_1} \cdots S_d^{h_d}$.  In particular, the map $x \mapsto S_1^{h_1} \cdots S_d^{h_d} \mu_{\psi_h (x)}$ from $S_1^{h_1} \cdots S_d^{h_d} X_\tau$ to $\cM(X)$ is continuous. Also, by commutativity of $T$ with $S_i$ for all $i=1, \ldots, d$,
    \begin{align*}
        \overline{O_T(x)} = \overline{O_T(S_1^{h_1} \cdots S_d^{h_d} \psi_h (x))} = S_1^{h_1} \cdots S_d^{h_d}  \overline{O_T(\psi_h (x))}
    \end{align*}
    which, by the first paragraph, for $S_1^{h_1} \cdots S_d^{h_d} m_{X_\tau}$-almost every $x$, equals 
    \begin{equation} \label{eq s1 s2 supp mu_x}
        S_1^{h_1} \cdots S_d^{h_d} H \psi_h(x) = S_1^{h_1} \cdots S_d^{h_d}  \supp \mu_{\psi_h(x)} = \supp \mu_x.
    \end{equation}
    Thus, for $S_1^{h_1} \cdots S_d^{h_d} m_{X_\tau}$-almost every $x$, $\mu_x$ is ergodic. 
    
    Taking the disjoint pieces we have defined a map $x \mapsto \mu_x$ from $X$ to $\cM(X)$ that is continuous and such that by \eqref{eq disjoint part measure} fulfils that $\mu_x$ is ergodic for $\mu$ almost every $x \in X$. We conclude by noticing that for an arbitrary $f \in C(X)$
    \begin{align*}
        \int_X f \diff \mu &= \frac{1}{|D|} \sum_{h \in D} \int_{X_\tau}  S_1^{h_1} \cdots S_d^{h_d} f \diff m_{X_\tau} \\
       &= \frac{1}{|D|} \sum_{h \in D} \int_{X_\tau} \bigg( \int_X  f \diff [S_1^{h_1} \cdots S_d^{h_d}  \mu_y] \bigg) \diff m_{X_\tau}(y) 
        =\int_X \bigg( \int_X f \diff \mu_x \bigg) \diff \mu(x)
    \end{align*}
    concluding the proof. 
\end{proof}

\begin{proposition} \label{prop of the ergodic decomposition in nilsystems}
    Let $(X, \mu,T)$ be  a nilsystem. The continuous ergodic decomposition $x \mapsto \mu_x$ fulfills that
    \begin{itemize}
        \item  for all $x \in X$, $x \in \supp \mu_x$ and
        \item for all $x,y \in X$ $\supp \mu_x \cap \supp \mu_y \neq \emptyset$ if and only if $\supp \mu_x = \supp \mu_y$ if and only if $\mu_x = \mu_y$. 
    \end{itemize}
\end{proposition}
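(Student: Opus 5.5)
We work with the explicit continuous ergodic decomposition from \cref{Continuous-ergodic-decomp-with-Leibman-group}, taking $d=0$ (no extra transformations), or more generally any $d$. Every $x\in X$ is written uniquely as $x = S^{h} y$ with $h\in D$ and $y \in X_\tau$, where $S^h = S_1^{h_1}\cdots S_d^{h_d}$. The theorem then gives $\mu_x = S^h m_{Hy}$, so $\supp \mu_x = S^h H y = H' x$. Here we use that $H$ is normal in $G_\tau$, and we want to write $S^h H y$ as a coset-type orbit through $x$.

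The first bullet should follow directly. Since $e_G\in H$, we have $y \in Hy = \supp m_{Hy}$, because the Haar measure of a closed subnilmanifold $Hy$ has full support on it. Hence $x = S^h y \in S^h Hy = \supp \mu_x$. In the case $d=0$ this is immediate, since $D=\{0\}$ and $X = X_\tau$.

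For the second bullet we prove a chain of implications.
\begin{itemize}
\item $\mu_x=\mu_y$ implies $\supp\mu_x=\supp\mu_y$ trivially.
\item Equal supports are non-empty, so they intersect.
\item The substantive step is that an intersection forces $\mu_x=\mu_y$.
\end{itemize}
For the last step, write $x=S^h x'$ and $y=S^{h'}y'$ with $x',y'\in X_\tau$. The sets $S^hX_\tau$ are disjoint pieces of the partition in \cref{Prop G_tau}, and $\supp\mu_x = S^h Hx' \subset S^h X_\tau$ because $H\le G_\tau$ and $X_\tau$ is $G_\tau$-invariant. So a nonempty intersection forces $h=h'$, and applying $S^{-h}$ reduces the problem to $x,y\in X_\tau$ with $Hx\cap Hy\neq\emptyset$. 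Orbits of a group action are either equal or disjoint, so $Hx = Hy$. The Haar measure on the nilmanifold $Hx$ is the unique $H$-invariant probability measure on it, so $m_{Hx}=m_{Hy}$. Hence $\mu_x=\mu_y$.

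The main obstacle is justifying that $Hx$ really is an orbit of the group $H$ acting on $X$, and that $m_{Hx}$ depends only on the set $Hx$ and not on the base point. This requires that $Hx$ is a closed subnilmanifold for every $x\in X_\tau$, not just for $x = e_X$. I would obtain this from the normality of $H$ in $G_\tau$. Writing $x = g e_X$ with $g\in G_\tau$ gives $Hx = g(g^{-1}Hg)e_X = gHe_X$, a translate of the closed set $He_X$ (closed since $H$ is rational). The Haar measure is then $g_*m_{He_X}$. If $Hx=Hy$, then $y = h x$ with $h\in H$, and $m_{Hy} = (hg)_* m_{He_X}$. This equals $g_*(g^{-1}hg)_*m_{He_X} = g_* m_{He_X}$ by $H$-invariance of $m_{He_X}$ and normality, which confirms that the measure is well defined.
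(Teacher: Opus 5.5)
Your proposal is correct and follows essentially the same route as the paper. Both arguments read off $\supp \mu_x = S_1^{h_1}\cdots S_d^{h_d} H x'$ from \cref{Continuous-ergodic-decomp-with-Leibman-group}, use disjointness of the pieces $S_1^{h_1}\cdots S_d^{h_d}X_\tau$ to force $h=h'$, and conclude $Hx'=Hy'$ because $H$-orbits are either equal or disjoint. Your additional check, via normality of $H$ in $G_\tau$, that $m_{Hx}$ depends only on the set $Hx$ is a correct detail that the paper treats as immediate.
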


\begin{proof}
We still assume that  $X=G/\Gamma$ with $G = \langle G^\circ, \tau,\sigma_1,\ldots,\sigma_d \rangle$. By \cref{Continuous-ergodic-decomp-with-Leibman-group}, if $H$ denotes the Leibman group associated to $T$, we know that $H$ is $G_\tau=\langle G^\circ,\tau\rangle$-normal and that there is a finite set $D\subseteq \Z^d$ such that $\{\sigma_1^{h_1}\cdots \sigma_d^{h_d} X_\tau\}_{h\in D}$ partitions $X$ and if $x=S_1^{h_1}\cdots S_d^{h_d}x'$ with $x'\in X_\tau$ then $\mu_x= S_1^{h_1}\cdots S_d^{h_d}m_{Hx'}$ and therefore $\supp \mu_x = \sigma_1^{h_1}\cdots \sigma_{d}^{h_d}Hx'$. This immediately implies that  for all $x\in X$, $x\in \supp{\mu_x}$ and that $\supp{\mu_x}= \supp{\mu_y}$ if and only if $\mu_x=\mu_y$. The only thing left to prove thus is that if $\supp \mu_x \cap \supp \mu_y \neq \emptyset$, then $\supp \mu_x = \supp \mu_y .$  For $h,h'\in D$ write 
\begin{equation}\label{eq-support-measures}
  \supp{\mu_x}= \sigma_1^{h_1}\cdots \sigma_{d}^{h_d}Hx_0 ,\text{ and }\supp{\mu_y}= \sigma_1^{h_1'}\cdots \sigma_{d}^{h_d'}Hy_0   
\end{equation}
for where $x=S_1^{h_1}\cdots S_d^{h_d}x_0$ and $y=S_1^{h_1'}\cdots S_d^{h_d'}y_0$ with $x_0,y_0\in X_\tau$. Observe that $h=h'$ because $hX_\tau\cap h'X_\tau=\emptyset$ otherwise, which would contradict the hypothesis. Then, $\supp \mu_x \cap \supp \mu_y \neq \emptyset$ implies that 
$$Hx_0 \cap Hy_0\neq \emptyset. $$
Hence, we deduce that there are $h_x,h_y\in H$ such that $h_x x_0=h_yy_0$. In particular, we deduce that $y_0\in Hx_0$ and thus $Hy_0\subseteq Hx_0$. Similarly $Hx_0\subseteq Hy_0$ and then we have that $Hx_0=Hy_0$ and the conclusion of the proposition follows from \eqref{eq-support-measures}. 
\end{proof}

The following proposition is used later. 

\begin{proposition} \label{prop connected components kappa}
    Let $(X, \mu,T)$ be  a nilsystem. Let $x \mapsto \mu_x$ be the continuous ergodic decomposition of $\mu$ with respect to $T$. Then, there is $\kappa \in \N$ such that the number of connected components of $\supp \mu_x$ is $\kappa$ for all $x \in X$.
\end{proposition}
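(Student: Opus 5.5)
The plan is to use the explicit description of the ergodic decomposition from \cref{Continuous-ergodic-decomp-with-Leibman-group}. By that theorem there is a Leibman group $H \leq G_\tau$, rational and normal in $G_\tau$, generated by $H^\circ$ and $\tau$. For every $x \in X$ we can write $x = S_1^{h_1}\cdots S_d^{h_d} y$ with $y \in X_\tau$ and $h \in D$. Then
\[
\supp \mu_x = \sigma_1^{h_1}\cdots\sigma_d^{h_d} H y .
\]
Since $S_1^{h_1}\cdots S_d^{h_d}$ is a homeomorphism of $X$, it maps connected components of $Hy$ bijectively onto those of $\supp\mu_x$. So it suffices to show that the number of connected components of $Hy$ is the same for all $y \in X_\tau$.

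First I would count the components of $Hy$ for a fixed $y = g\Gamma$. Since $H^\circ$ is connected and normal in $H$, the sets $\tau^i H^\circ y$ are connected, and they cover $Hy$ because $H = \bigcup_i \tau^i H^\circ$ (this uses that $\tau$ normalizes $H^\circ$). The nilmanifold $Hy$ has finitely many components, and each is an orbit of $H^\circ$. Hence the number of components equals the least $r_y \geq 1$ with $\tau^{r_y} H^\circ y = H^\circ y$. Note also that $H^\circ y = H^\circ g\Gamma$.

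Next I would show $r_y$ does not depend on $y \in X_\tau$. Write $y = u\,\tau^j e_X$ with $u \in G^\circ$; this is possible by \eqref{eq-descriving-X-tau}, since every $y \in X_\tau$ lies in some $\tau^j X_0$ with $X_0 = G^\circ e_X$. The condition $\tau^{r} H^\circ y = H^\circ y$ reads $\tau^r H^\circ u \tau^j \Gamma = H^\circ u \tau^j\Gamma$. Because $H$ is normal in $G_\tau$, $H^\circ$ is normal in $G_\tau$ as well: conjugation preserves $H$ and its identity component. So $H^\circ u = u H^\circ$, and the condition becomes
\[
(u^{-1}\tau^r u)\, H^\circ \tau^j \Gamma = H^\circ \tau^j \Gamma .
\]
Now $u^{-1}\tau^r u = \tau^r [\tau^{-r}, u^{-1}]$, and the commutator lies in $G^\circ$. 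So we must compare $\tau^r c\, H^\circ \tau^j\Gamma$ with $\tau^r H^\circ\tau^j\Gamma$ for $c \in G^\circ$.

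This comparison is the main obstacle. I would handle it by continuity and connectedness. The map $y \mapsto \mu_y$ on $X_\tau$ is continuous, and the component structure is locally constant in $y$. Concretely, the set of $y$ with $\tau^r H^\circ y = H^\circ y$ is the set of $y$ such that $\tau^r$ maps the $H^\circ$-orbit of $y$ to itself. On each connected piece $\tau^j X_0$ of $X_\tau$ one can move $y$ along a path in $G^\circ$. The $H^\circ$-orbits are all closed subnilmanifolds of the same dimension, and $\tau^r$ permutes the finitely many components of $Hy$. A continuity argument, plus the fact that $r_y$ takes values in a bounded set, should then show that $y \mapsto r_y$ is locally constant, hence constant on each $\tau^j X_0$. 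Finally, $T$ maps $\tau^j X_0$ onto $\tau^{j+1}X_0$ and satisfies $\mu_{Ty} = T\mu_y = \mu_y$. So $r_y$ is also constant across the different pieces, and the common value $\kappa := r_y$ works for all $x \in X$.

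If the local constancy turns out to be delicate, I would fall back on an algebraic route. Pass to the nilmanifold $G_\tau/H^\circ\Gamma$, or work in $H\backslash G_\tau$ using that the quotient $H^\circ\backslash H$ is cyclic generated by $\tau$. There $r_y$ is read off as the order of $\tau$ modulo $H^\circ(g\Gamma g^{-1}\cap H)$, and one can check that this order is invariant under $g \mapsto ug$ for $u \in G^\circ$.
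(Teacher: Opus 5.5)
Your reduction to points $y\in X_\tau$ (via the $S$-translates) and your description of the components of $Hy$ as the sets $\tau^iH^\circ y$ are fine. The gap is at the step you yourself flag as the main obstacle: you never prove that $r_y$ is independent of $y$. The continuity argument is only sketched (``should then show''). Its ingredients also do not suffice. Weak$^*$ continuity of $y\mapsto\mu_y$ together with boundedness of $r_y$ does not give local constancy of $r_y$. If $\mu_{y_n}\to\mu_y$ weak$^*$, this only says that points of $\supp\mu_y$ are limits of points of $\supp\mu_{y_n}$, which says nothing about the number of components. So some algebraic input is needed. The fallback route is likewise only asserted (``one can check'').

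The missing idea is to use the normality of $H$ itself, not just of $H^\circ$, in $G_\tau$. For $y\in X_\tau$, write $y=ge_X$ with $g=u\tau^j\in G_\tau$. Then $Hy=Hg e_X=gHe_X$. Left translation by $g$ is a homeomorphism of $X$ carrying $He_X=\supp\mu_{e_X}$ onto $Hy=\supp\mu_y$, so the component count is the same for every $y\in X_\tau$. The $S$-translates then handle all $x\in X$. This is exactly the paper's argument.

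If you prefer to stay inside your own computation, the obstacle also dissolves directly. The element $c=\tau^{-r}u^{-1}\tau^r u$ lies in $H$, because $\tau\in H$ and $H\trianglelefteq G_\tau$. It depends continuously on $u$ in the connected group $G^\circ$ and equals $e_G$ at $u=e_G$, so $c\in H^\circ$. Hence $\tau^r cH^\circ\tau^j\Gamma=\tau^rH^\circ\tau^j\Gamma$. Using $H^\circ\tau^j=\tau^jH^\circ$, your condition reduces to $\tau^rH^\circ\Gamma=H^\circ\Gamma$, which is independent of both $u$ and $j$.
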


    \begin{proof}
        Taking, $H \leq G$ the Leibman group we get that $\supp \mu_{e_X} = H e_X$ and take $\kappa \in \N$ the number of connected components of $H e_X$.
        
        Using the notation from \cref{Continuous-ergodic-decomp-with-Leibman-group}, for $x \in X_\tau $ consider $i \in \N$ and $g \in G^\circ$ such that $x=\tau^i g e_X$, then by normality of $H$ on $\langle G^\circ , \tau \rangle $, 
        \begin{align*}
            \supp \mu_x = H x = \tau^i g H e_X = \tau^i g \supp \mu_{e_X}, 
        \end{align*}
        and since $\tau, g \in G$ induce homeomorphisms in $X$ we get that the number of connected components $ \supp \mu_x$ equals the one of $ \supp \mu_{e_X}$. For the general case, we conclude in a similar way using \eqref{eq s1 s2 supp mu_x}. 
    \end{proof}

\subsection{Continuity of ergodic decomposition in a subnilmanifold}
In this section, we show that continuity almost everywhere of the ergodic decomposition still holds for the measure of a subnilmanifold, under the right hypothesis. 

We  use the following result from Leibman.
\begin{theorem}[{ \cite[Theorem 2.2]{LEIBMAN_2007}}]\label{Leibman-generic-orbit}
Let $(X,\mu)$ be a nilmanifold with $X=G/\Gamma$
    Let $V$ be a closed connected subnilmanifold of $X$, let $K$ be a connected component of $\pi_\Gamma^{-1}(V)$ and $A$ be a closed subgroup of $G$. There exists a closed subnilmanifold $J_{V, A}$ of $X$ such that
    \begin{enumerate}
        \item for any $x \in V$ one has $\overline{\{ax : a\in A\}} \subseteq g J_{V, A}$ whenever $g \in K$ is such that $ \pi_\Gamma(g)=x$,  
        \item  there exists a zero $\mu_K$-measure set $P \subset K$-where $\mu_K$ denotes a Haar measure on $K$-such that for any $x \in V\setminus \pi_\Gamma(P)$ one has $\overline{\{ax : a\in A\}} =g J_{V, A}$ whenever $g \in K, \pi_\Gamma(g)=x$.
    \end{enumerate}
We call the subnilmanifold $J_{V, A}$ the generic orbit for $A$ on $V$; in the case $V=X$ the nilmanifold $J_{V, A}$ corresponds to the generic orbit for $A$ and is denoted by $J_A$.
\end{theorem}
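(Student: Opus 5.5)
This is Leibman's result \cite[Theorem 2.2]{LEIBMAN_2007}, which the paper cites rather than proves. If I had to prove it, I would use an \emph{algebraic candidate plus countable exceptional set} argument. I would work upstairs in $G$.

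First, for $g\in G$ with $\pi_\Gamma(g)=x$,
$$\overline{\{ax: a\in A\}} = g\,\overline{(g^{-1}Ag)\Gamma}/\Gamma .$$
So the orbit closure of $x$ under $A$ is the translate by $g$ of the orbit closure of $e_X$ under the conjugate group $A_g := g^{-1}Ag$. By the structure theory of orbit closures of subgroup actions on nilmanifolds (Ratner/Leibman for nilmanifolds, the same input as \cite[Chapter 10]{Host_Kra_nilpotent_structures_ergodic_theory:2018}), $\overline{A_g\Gamma}/\Gamma = L_g e_X$ for a closed rational subgroup $L_g\supseteq A_g$. I would then define $L$ to be the smallest closed rational subgroup of $G$ containing $A_g$ for every $g\in K$. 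This makes sense because intersections of rational subgroups containing a fixed set are again rational, and $G$ is one such subgroup. Set $J_{V,A}:=Le_X$.

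Part (1) is then immediate. $L\Gamma$ is closed and contains $A_g\Gamma$, so $\overline{A_g\Gamma}\subseteq L\Gamma$. Hence the orbit closure of $x$ is contained in $gJ_{V,A}$ for every lift $g\in K$ of $x$.

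For part (2) I would use countability. A nilmanifold has only countably many rational subgroups, because each is determined by a finitely generated subgroup of $\Gamma$ together with its Mal'cev-type data. Fix a rational subgroup $L'\subsetneq L$ and set
$$K_{L'}:=\{g\in K:\ A_g\subseteq L'\}.$$
$K$ is a connected component of $\pi_\Gamma^{-1}(V)$. Since $V$ is a connected closed subnilmanifold, $K$ is a coset of a connected closed subgroup, hence a connected real-analytic manifold. For fixed $a$, the map $g\mapsto g^{-1}ag$ is analytic, and $L'$ is a closed analytic subgroup. So $K_{L'}$ is an analytic subset of $K$, being an intersection over $a\in A$ of preimages of $L'$. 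If $K_{L'}=K$, then $L'$ would contain every $A_g$ with $g\in K$, contradicting the minimality of $L$. Thus $K_{L'}$ is a proper analytic subset of a connected analytic manifold and has $\mu_K$-measure zero.

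Take $P=\bigcup_{L'}K_{L'}$, a countable union of null sets. For $g\in K\setminus P$, the rational group $L_g$ contains $A_g$ and $L_g\subseteq L$ by part (1), so $L_g=L$. Therefore the orbit closure equals $gJ_{V,A}$. To state the conclusion for points $x\in V\setminus\pi_\Gamma(P)$ and \emph{every} lift $g$, I would take $P$ to be saturated under the relevant right $\Gamma\cap\mathrm{Stab}(K)$-action. That action preserves the sets $K_{L'}$ up to replacing $L'$ by a conjugate rational subgroup, so saturation keeps $P$ null.

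The main obstacle is the structural input: that each $\overline{A_g\Gamma}$ is of the form $L_g\Gamma$ with $L_g$ \emph{rational}, and that rational subgroups form a countable family. When $A$ is not connected, some care is needed with the finitely many components of the orbit closure. I would cite Leibman's and Ratner-type orbit-closure theorems for this step rather than reprove them; the measure-zero analytic argument above is then routine.
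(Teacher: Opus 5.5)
The paper gives no proof of this statement. It is quoted from Leibman and used as a black box, and only for the cyclic group generated by $\tau$ (in \cref{connected-centered-case}), so there is no internal argument to compare with. Your outline is a sound self-contained route:
pass to the orbit of $e_X$ under $A_g=g^{-1}Ag$;
let $L$ be the smallest rational subgroup containing every $A_g$ with $g\in K$;
discard a countable union of proper analytic ``bad'' sets.
It also shows why the exceptional set is null, which the citation hides.

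All of the depth sits in three facts you import: orbit closures of $A$ are subnilmanifolds, rational subgroups form a countable family, and intersections of rational subgroups are rational (so the smallest rational subgroup containing a set exists). For the paper's application the first fact is just the classical orbit-closure theorem for a single nilrotation. Your one-line reason for countability is not right as stated: $\Z$ and $\tfrac12\Z$ in $\R$, with $\Gamma=\Z$, have the same intersection with $\Gamma$. Cite that fact rather than sketch it.

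Three local repairs are needed.

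First, the step ``$L_g\subseteq L$ by part (1)'' does not follow. Part (1) only gives $L_g\Gamma\subseteq L\Gamma$. An arbitrary rational $L_g\supseteq A_g$ with $L_ge_X=\overline{A_ge_X}$ need not lie in $L$: for $A=\{e_G\}$ one has $L=\{e_G\}$, yet $L_g=\Gamma$ also qualifies.
\begin{itemize}
\item Instead, define $L_g$ as the smallest rational subgroup containing $A_g$, so $L_g\subseteq L$ by minimality.
\item Then $L_g\Gamma=\overline{A_g\Gamma}$ still holds. Let $S=\{h\in G: h\overline{A_g\Gamma}=\overline{A_g\Gamma}\}$ be the stabilizer; it is closed and contains $A_g$.
\item Since $e_G\in\overline{A_g\Gamma}$, you get $S\Gamma\subseteq\overline{A_g\Gamma}$.
\item Your structural input gives $\overline{A_g\Gamma}=M\Gamma$ for some subgroup $M\subseteq S$, so $S\Gamma=\overline{A_g\Gamma}$. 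Hence $S$ is rational.
\item Therefore $\overline{A_g\Gamma}\subseteq L_g\Gamma\subseteq S\Gamma=\overline{A_g\Gamma}$.
\end{itemize}

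Second, $K_{L'}$ is an intersection over possibly uncountably many $a$, so do not claim it is analytic; argue one element at a time. Each $Z_a=\{g\in K: g^{-1}ag\in L'\}$ is a closed analytic subset of the connected manifold $K$, because $L'$ is a closed embedded analytic submanifold. Hence $Z_a$ is either $K$ or $\mu_K$-null. If $K_{L'}\neq K$, then some $Z_a\neq K$, and $K_{L'}\subseteq Z_a$ is null.

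Third, the saturation step is unnecessary. If $x\notin\pi_\Gamma(P)$, then every lift $g\in K$ of $x$ already lies outside $P$, which is exactly what part (2) requires.
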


Now we present the first case where we can ensure that the ergodic decomposition is continuous almost everywhere in a subnilmanifold. We start by assuming that the nilmanifold is connected, but we later relax those assumptions (see \cref{theorem cross ergodic decompositions}). 
\begin{proposition}\label{connected-centered-case}
  Let $(X=G/\Gamma,\mu,T)$ be a connected nilsystem and $Y=H e_X$ be a closed and connected subnilmanifold such that the orbit of $Y$ through $T$ is dense in $X$. Let $\nu$ be the Haar measure of $Y$ and let $x\mapsto \mu_x$ be the continuous ergodic decomposition of $\mu$ with respect to $T$. Then, $\nu$-a.e.  $x\in Y$, $\mu_x$ is ergodic.   
\end{proposition}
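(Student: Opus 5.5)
We apply Leibman's \cref{Leibman-generic-orbit} twice: once on $X$, to identify the ergodic decomposition, and once on $Y$, to control $\nu$-almost every orbit closure. Since $X$ is connected, $G = G^\circ$ up to the standing reduction, so $G_\tau = G$ and $X_\tau = X$. \cref{Continuous-ergodic-decomp-with-Leibman-group} then gives a rational subgroup $L$, normal in $G$, with $\mu_x = m_{Lx}$ for all $x \in X$. Moreover, for $\mu$-a.e.\ $x$ we have $\overline{O_T(x)} = Lx$. Since $\mu_x = m_{Lx}$ is the Haar measure of the nilmanifold $Lx$ and $T$ acts on it by the translation $\tau \in L$, the measure $\mu_x$ is ergodic exactly when the $T$-orbit of $x$ is dense in $Lx$. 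So it suffices to show that for $\nu$-a.e.\ $x \in Y$ we have $\overline{O_T(x)} = Lx$.

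Apply \cref{Leibman-generic-orbit} with $V = Y$ (closed and connected), $K$ the connected component of $\pi_\Gamma^{-1}(Y)$ containing $H^\circ$, and $A = \overline{\langle \tau \rangle}$. This produces a generic orbit $J = J_{Y,A}$ with $\overline{O_T(x)} \subseteq gJ$ for every $x = \pi_\Gamma(g)$, $g \in K$, and with equality off a $\mu_K$-null set. Since $\pi_\Gamma$ pushes $\mu_K$ to a multiple of $\nu$, equality holds for $\nu$-a.e.\ $x \in Y$. Always $\overline{O_T(x)} \subseteq Lx$, because $Lx$ is closed and $T$-invariant. It therefore remains to show the reverse inclusion $Lx \subseteq gJ$ for generic $g \in K$; equivalently, that $J \supseteq L e_X$ (after translating, with $e_G \in K$).

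This reverse inclusion is the crux, and it is where the hypothesis that $\bigcup_n T^n Y$ is dense in $X$ enters. The plan is as follows.

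\textbf{Step 1 (orbit closures over $Y$ fill $X$).} The set $\bigcup_{x \in Y} \overline{O_T(x)}$ contains $\bigcup_n T^nY$, so its closure is $X$. By the first conclusion of Leibman's theorem, each $\overline{O_T(x)} \subseteq gJ$. Hence $\overline{K J} = X$, in the sense that the translates $gJ$, $g \in K$, sweep out a dense subset of $X$.

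\textbf{Step 2 (dimension count).} Write $J = M e_X$ with $M$ a closed subgroup containing $\tau$. One wants $M \supseteq L$. The natural route is to compare with the generic orbit on all of $X$: by \cref{Leibman-generic-orbit} with $V = X$, $J_A = L e_X$, and $J_A$ is characterized by the normal subgroup $L$, namely the subgroup generated by $\tau$ and the connected group determined by the horizontal torus data of $\tau$ on $X$. Using normality of $L$ in $G$, the family $\{gJ\}_{g\in K}$ can be analyzed through the quotient $G/L$, on which $\tau$ acts trivially on the relevant abelian quotient. Alternatively, one can pass to the maximal torus factor $G/[G^\circ,G^\circ]\Gamma$, where Leibman's criterion for orbit closures is tested via characters.

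\textbf{Step 3 (character argument).} The first attempt would be the characters route. A horizontal character $\chi$ trivial on $J$ but nontrivial on $L$ would force $\chi(\tau^n g)$ to be constant along every orbit from $Y$. Combined with connectivity of $Y$ and density of $\bigcup_n T^nY$, this would make $\chi\circ\tau$ rational on the torus. That contradicts the fact that $L$ already absorbs all directions in which $\tau$ is rational, by the defining property of the Leibman group. I expect making this character argument rigorous — turning density of the $T$-translates of $Y$ into the statement that generic orbits on $Y$ are as large as generic orbits on $X$ — to be the main obstacle. Once it is done, $\overline{O_T(x)} = Lx = \supp \mu_x$ for $\nu$-a.e.\ $x \in Y$, and hence $\mu_x$ is ergodic.
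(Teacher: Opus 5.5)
Your reduction is correct and matches the first half of the paper's proof. Leibman's theorem on $X$ identifies $\mu_x$ with the Haar measure of $Lx = gLe_X$ (using normality of $L$). Leibman's theorem on $Y$ gives $J$ with $\overline{O_T(y)} = gJ$ for $\nu$-a.e.\ $y = ge_X$, and everything reduces to the inclusion $Le_X \subseteq J$. Your Step 1 is also the first half of the paper's Claim 1. (A small inaccuracy: connectedness of $X$ gives $G = G^\circ\Gamma$, not $G = G^\circ$; but $X_\tau = G_\tau e_X = X$ still holds, so nothing is lost.)

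The genuine gap is that the inclusion $Le_X \subseteq J$, which is the whole content of the proposition, is never established. Steps 2 and 3 only list strategies, and the character argument as written does not go through:
\begin{itemize}
\item Since $\tau \in M$, any character trivial on $M$ is automatically constant along $T$-orbits, so that observation is vacuous.
\item The passage from connectivity of $Y$ and density of $\bigcup_n T^nY$ to a rationality statement is asserted, not argued.
\item Characters of the horizontal torus of $X$ govern density in $X$, whereas the comparison between $J$ and $Le_X$ takes place inside the typically proper subnilmanifold $Le_X$.
\end{itemize}
Note also that the density $\overline{KJ} = X$ from Step 1 is too weak for what is needed next.

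The missing idea is a positive-measure argument.
\begin{enumerate}
\item Upgrade Step 1 to the equality $X = KJ$ by showing $KJ$ is closed. Write $K = C\Gamma_K$ with $C$ compact and $\Gamma_K = \Gamma\cap K$. For a generic point $he_X$ of $Y$ and $\gamma \in \Gamma_K$, both $h$ and $h\gamma$ are lifts in $K$ of the same point, so Leibman's theorem gives $hJ = h\gamma J$. Hence $\gamma J = J$, and $KJ = CJ$ is compact.
\item Write $J = Me_X$ and cover $J$ by finitely many open pieces $fM^\circ e_X$, $f \in F$. Using normality of $M^\circ$ in $M$, you get $G = KM\Gamma = KM^\circ F\Gamma$ with $F\Gamma$ countable. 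So $KM^\circ \subseteq G^\circ$ has positive Haar measure.
\item Since Haar-almost every $g \in G^\circ$ satisfies $\overline{O_T(ge_X)} = gLe_X$, you can choose $k \in K$ and $m \in M^\circ$ with $x = kme_X$ generic.
\item Then $x \in kJ$, which is closed and $T$-invariant, so $kmLe_X = \overline{O_T(x)} \subseteq kJ$. That is, $Le_X \subseteq m^{-1}J = J$.
\end{enumerate}
This is exactly where the density hypothesis enters, and no character computation is required.
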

 \begin{proof}
    Since $X$ is connected, $G^\circ \Gamma = G$ because $G/\Gamma$ is connected.  For the continuous ergodic decomposition $x \mapsto \mu_x$, if $J' \subset X$ denotes the generic orbit from \cref{Leibman-generic-orbit} for $V= X$ (and connected component $G^\circ$), then $\mu_x = m_{gJ'}$ for $g \in G^\circ $ with $\pi_\Gamma(g)= x$ where $m_{gJ'}$ denotes the Haar measure of the nilmanifold $gJ'$. 

    Now, consider $Y = H e_X$ and let $K \subset G^\circ$ be a connected  component of $H$ containing $e_G$. Again $H = K \Gamma_H$ for $\Gamma_H = \Gamma \cap H$, because $Y$ is connected.  By \cref{Leibman-generic-orbit} applied to $V=Y$ (and connected component $K$), we have that there is a set $Y_0 \subset Y$ with $\nu(Y_0)=1$ such that for all $y\in Y_0$, $\overline{ O_T(y)}= gJ$ where $J$ is a subnilmanifold of $X$, and $\pi_\Gamma(g) =y$ with $g \in K$. By minimality of $\overline{ O_T(y)}$ we have that for all $y \in Y_0$, $g J \subset g J'$ whenever $\pi_\Gamma(g) = y$. With this $J \subset J'$,  To conclude the proposition we need to prove that $J = J'$, so that $g J = gJ'$ and with that for $y \in Y_0$ if $\pi_\Gamma(g) = y$, then measure $\mu_y = m_{gJ'} = m_{gJ}$ which is ergodic by construction. For that end we first prove the following claims,

    \underline{Claim 1:} $X= K J$.

    Notice that, for all $y \in Y$, $\overline{ O_T(y)} \subset gJ$ if $\pi_\Gamma(g) = y$ and $g \in K$ (this is again from \cref{Leibman-generic-orbit}).  Since the orbit of $Y$ along $T$ is dense in $X$ we get that 
    $$X=\overline{O_T(Y)}\subseteq \overline{KJ}. $$
    Thus, to conclude the claim it is enough to prove that $KJ$ is closed. For that, let $\Gamma_K=\Gamma\cap K$.  
    Since $K$ is rational, there is $C\subseteq K$ compact such that $K=C\Gamma_K$ (see \cite[Chapter 10, Lemma 14]{Host_Kra_nilpotent_structures_ergodic_theory:2018}). Moreover, pick $h \in C$ such that $h\Gamma$ has generic orbit, and pick $\gamma\in \Gamma_K$. Then  $hJ=\overline{O_T(h\Gamma)}=\overline{O_T(h\gamma \Gamma)} =h\gamma J$ and thus $J=\gamma J$. 
    We then get that $KJ = C \Gamma_K J = C J$ which is compact since $C$ and $J$ are. In particular, $KJ$ is closed and therefore $X=KJ$.

    For the second claim, let $L \subset G$ be a subgroup such that $J=L e_X$. Such $L$ exists because $J$ is a nilmanifold containing $e_X$.

    \underline{Claim 2:}  If we denote $L^\circ$ the connected component of $e_G$ in $L$, then $m_{G^\circ}(KL^\circ)>0$.

    Since $J=L e_X$ is compact and $L^\circ e_X$ open in $J$ (see \cref{preliminaries-2}), we get that $\{ l L^\circ e_X \colon l \in L\} $ is an open cover of $J$, then there is a finite $F \subset L$ such that  $\{lL^\circ e_X \}_{l\in F}$  cover $Le_X$. Moreover, since $L^\circ$ is normal in $L$ (see \cref{preliminaries-2}), we have that 
$$G=KL\Gamma = KFL^\circ \Gamma= KL^\circ F\Gamma. $$
Notice that $F\Gamma$ is countable, so if $KL^\circ$ has zero $m_{G}$-measure, then so does $KL^\circ F\Gamma=G$, which is not possible. Thus $m_{G}(KL^\circ)>0$,  and since $K L^\circ \subset G^\circ$, we deduce the claim.

Finally, using the characterization of $J'$ given by \cref{Leibman-generic-orbit} and since $m_{G^\circ}(KL^\circ)>0$, there are $k\in K$ and $l\in L^\circ$ such that $x=kl e_X$ has generic orbit $\overline{O_T(x)} = klJ'$. Notice that by definition $l e_X \in J$, hence $x = k l e_X \in k J$ and since $kJ$ is $T$-invariant we conclude that $klJ' = \overline{O_T(x)} \subset kJ$. Thus, $lJ' \subset J$ and since $J$ is shift invariant we deduce that $J' \subset J$, concluding the connected case.    
    \end{proof}

\begin{corollary} \label{cor con cambio de base}
  Let $(X=G/\Gamma,\mu,T)$ be a connected nilsystem and $Y=H y_0$ be a connected closed subnilmanifold such that the orbit of $Y$ through $T$ is dense in $X$. Let $\nu$ be the Haar measure of $Y$ and let $x\mapsto \mu_x$ be the continuous ergodic decomposition of $\mu$ with respect to $T$. Then, $\nu$-a.e.  $x\in Y$, $\mu_x$ is ergodic.   
\end{corollary}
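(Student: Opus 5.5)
The plan is to reduce to \cref{connected-centered-case} by a change of base point. Since $X$ is connected, $G=G^\circ\Gamma$, so $y_0=g_0 e_X$ for some $g_0\in G^\circ$. Then $Y=Hy_0=g_0(g_0^{-1}Hg_0)e_X$. Set $H'=g_0^{-1}Hg_0$ and $Y'=H'e_X=g_0^{-1}Y$.

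The first step is to check that $Y'$ satisfies the hypotheses of \cref{connected-centered-case} for a suitable nilsystem. Translation by $g_0^{-1}$ is a homeomorphism of $X$, so $Y'$ is closed and connected. Its Haar measure is $\nu'=g_0^{-1}\nu$, since $\nu'$ is $H'$-invariant. However, translation by $g_0^{-1}$ does not commute with $T$. To fix this, conjugate the dynamics: consider the nilsystem $(X,\mu,T')$ with $T'x=\tau' x$, where $\tau'=g_0^{-1}\tau g_0$. The map $\phi(x)=g_0^{-1}x$ then satisfies $\phi\circ T=T'\circ\phi$, and $\phi_*\mu=\mu$ because Haar measure is $G$-invariant. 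The group $G$ is still generated by $G^\circ$ and $\tau'$, because $\tau'\in\tau G^\circ$ (as $G^\circ$ is normal). Since $\phi$ conjugates $T$ to $T'$, the $T'$-orbit of $Y'$ is $\phi$ of the $T$-orbit of $Y$, and is therefore dense.

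Next, transport the ergodic decomposition. If $x\mapsto\mu_x$ is the continuous $T$-ergodic decomposition of $\mu$, then $x\mapsto \phi_*\mu_{\phi^{-1}x}$ is a continuous $T'$-ergodic decomposition of $\mu$. This holds because $\phi$ is a measure-preserving conjugacy and a homeomorphism, so continuity and the disintegration identity transfer directly. By uniqueness of the continuous ergodic decomposition (the remark after \cref{Continuous-ergodic-decomp-with-Leibman-group}), this is the decomposition $x\mapsto\mu'_x$ used in \cref{connected-centered-case}. Applying that proposition to $(X,\mu,T')$ and $Y'$ gives that $\mu'_{x'}$ is $T'$-ergodic for $\nu'$-a.e. $x'\in Y'$. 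Since $\mu_{x}=\phi^{-1}_*\mu'_{\phi x}$ and $\phi$ conjugates $T'$ back to $T$, $\mu_x$ is $T$-ergodic for $\nu$-a.e. $x\in Y$.

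The main point to verify is the uniqueness claim, namely that the continuous decomposition is determined. If one prefers to avoid it, the transported decomposition is still continuous, and two continuous ergodic decompositions agree $\mu$-a.e. and hence on $\supp\mu=X$ by continuity. The proof of \cref{connected-centered-case} uses only the generic-orbit description of the continuous decomposition, which holds for any connected nilsystem, so the change of generator $\tau\to\tau'$ causes no difficulty.
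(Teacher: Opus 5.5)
Your proposal is correct and is essentially the paper's argument. The paper's one-line proof is exactly a change of base point reducing to \cref{connected-centered-case}, and your checks fill in its details correctly: closedness and connectedness of $Y'$, $\nu'=g_0^{-1}\nu$, density of the $T'$-orbit, transport of the continuous decomposition, and uniqueness via continuity on $\supp\mu=X$.

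The only difference is cosmetic. The Host--Kra change of base point cited by the paper replaces $\Gamma$ by $g_0\Gamma g_0^{-1}$ and keeps $\tau$ fixed, so the ergodic decomposition need not be transported, whereas you keep $\Gamma$ and conjugate $\tau$. The two presentations differ only by the automorphism $g\mapsto g_0^{-1}gg_0$ of $G$.
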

\begin{proof}
The proof follows from doing a change of base point, see \cite[Chapter 10, section 2.4]{Host_Kra_nilpotent_structures_ergodic_theory:2018} for further details.

\end{proof}

We also show that we can remove the hypothesis of connectedness over $X$.
 \begin{corollary}\label{disconnected-connected-case}
  Let $(X=G/\Gamma,\mu,T)$ be a nilsystem and $Y=H y_0$ be a rational connected subnilmanifold such that the orbit of $Y$ through $T$ is dense in $X$. Let $\nu$ be the Haar measure of $Y$ and let $x\mapsto \mu_x$ be the continuous ergodic decomposition of $\mu$ with respect to $T$. Then, $\nu$-a.e.  $x\in Y$, $\mu_x$ is ergodic.       
 \end{corollary}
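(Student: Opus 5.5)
The plan is to reduce to \cref{cor con cambio de base} by working inside a single connected component of $X$. Since $Y=Hy_0$ is connected, it lies in one connected component $X'$ of $X$. By \cref{Prop G_tau} applied with $d=0$ (only $T$), we have $X_\tau=\bigsqcup_{i=0}^r \tau^i X_0$, and more generally the connected components of $X$ are permuted cyclically by $T$. Let $r+1$ be the minimal period with $T^{r+1}X'=X'$. Because the $T$-orbit of $Y$ is dense in $X$, the $T^{r+1}$-orbit of $Y$ is dense in $X'$. Indeed, $\overline{O_T(Y)}=\bigcup_{i=0}^{r}T^i\overline{O_{T^{r+1}}(Y)}$, and the pieces $T^i\overline{O_{T^{r+1}}(Y)}\subset T^iX'$ lie in disjoint clopen components. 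The component $X'$ is itself a connected nilmanifold of the form $G'/\Gamma'$, up to a change of base point, with $G'\supseteq G^\circ$ the stabilizer of $X'$ in $G$. Moreover, $T^{r+1}$ acts on it as the nilrotation by $\tau^{r+1}$, which lies in that stabilizer.

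First I apply \cref{cor con cambio de base} to the connected nilsystem $(X', m_{X'}, T^{r+1})$ and the subnilmanifold $Y$. This gives that for $\nu$-a.e.\ $y\in Y$, the measure $\mu'_y$ is $T^{r+1}$-ergodic, where $y\mapsto \mu'_y$ is the continuous $T^{r+1}$-ergodic decomposition of $m_{X'}$.

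The second step relates $\mu'_y$ to $\mu_y$. By \cref{Continuous-ergodic-decomp-with-Leibman-group} and its proof, for $y\in X'$ one has $\mu_y=m_{Hy}$ for the Leibman group $H$ of $T$, up to the translations in the partition. I claim that
\[
\mu_y=\frac{1}{r+1}\sum_{i=0}^{r}T^i\mu'_y
\]
for all $y\in X'$. Both sides are continuous in $y$, since the $T^i$ are homeomorphisms. Both are also $T$-invariant decompositions of $\mu$ restricted to the $T$-orbit of $X'$. They agree $m_{X'}$-a.e., because a.e.\ $T$-ergodic measure $\mu_y$ restricted to $X'$ and renormalized is a.e.\ $T^{r+1}$-ergodic. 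The reason is that $T^{r+1}$ fixes $X'$, and the ergodic components of $T^{r+1}$ inside a $T$-ergodic measure that is cyclically spread across the $r+1$ components are exactly its restrictions to those components. By continuity, and since $m_{X'}$ has full support on $X'$, the identity holds everywhere on $X'$.

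Finally, whenever $\mu'_y$ is $T^{r+1}$-ergodic, the averaged measure $\frac{1}{r+1}\sum_i T^i\mu'_y$ is $T$-ergodic. A $T$-invariant set restricted to $X'$ is $T^{r+1}$-invariant, so it has $\mu'_y$-measure $0$ or $1$, and then its measure is determined on each translate. Together with the first step this gives the corollary.

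The main obstacle is the a.e.\ identification in the second step: checking that the $T$-ergodic components restricted to a component are precisely the $T^{r+1}$-ergodic components. If this is delicate, I would instead argue directly with supports. By \cref{Continuous-ergodic-decomp-with-Leibman-group}, $\supp\mu_y=Hy$, and the Leibman group of $T^{r+1}$ on $X'$ can be compared with $H$ through $\overline{O_T(y)}=\bigcup_i T^i\overline{O_{T^{r+1}}(y)}$.
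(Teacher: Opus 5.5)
Your proposal is correct and follows essentially the same route as the paper. You restrict to the connected component containing $Y$, apply \cref{cor con cambio de base} to the power of $T$ that fixes this component, and identify $\mu_y$ with the average $\frac{1}{r+1}\sum_i T^i\mu'_y$ using continuity and uniqueness of the continuous ergodic decomposition. You also make explicit two points the paper leaves implicit: the a.e.\ identification (cleanest via the ergodic theorem, since for $y\in X'$ one has $T^ny\in X'$ iff $(r+1)\mid n$) and the $T$-ergodicity of the averaged measure.
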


\begin{proof}
    First notice that $T \colon X \to X$ sends connected component of $X$ to connected components of $X$. By compactness $X$ has a finite number $r\in \N$ of connected components. Since $Y$ is connected then it is contained in one of this connected components $X_0$ whose Haar measure is denoted $\mu_0$. Moreover, since $\overline{O_T(Y)} = X$ then $\{T^i Y \colon 0 \leq i <r  \}$ meets every connected component once. In particular $X = \bigcup_{i=0}^{r-1} T^i X_0$. Similarly, $\overline{O_{T^r}(Y)} = X_0$ and since $X_0$ is connected we can use \cref{cor con cambio de base} to get that for $\nu$-almost every $y \in Y$, $\mu_{0,y}$ is $T^r$-ergodic, where $x \mapsto \mu_{0,x}$ is the continuous ergodic decomposition of $\mu_0$ with respect to $T^r$ in $X_0$. 

    Finally, since $\mu = \frac{1}{r} \sum_{i=0}^{r-1} T^{i}\mu_0$ the map $x =T^jx_0 \mapsto \mu_x =  \frac{1}{r} \sum_{i=0}^{r-1} T^{i}\mu_{0,x_0}$ where $x_0\in X_0$, is a continuous ergodic decomposition of $\mu$ that fulfills that for $\nu$-almost every $y \in Y$, $\mu_{y}$ is $T$-ergodic. 
\end{proof}

\begin{theorem} (Fiberwise continuous ergodic decomposition) \label{theorem cross ergodic decompositions}
    Let $(X=G/\Gamma, \mu,T,S)$ be an ergodic nilsystem. Let $x \mapsto \mu^T_x$ and $x \mapsto\mu^S_x$ be the continuous ergodic decomposition of $\mu$ for $T$ and $S$ respectively. For every $x \in X$ and $\mu^S_x$-almost every $y \in \supp \mu^S_x$ the measure $\mu^T_y$ is $T$-ergodic.  
\end{theorem}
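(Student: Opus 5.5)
The plan is to reduce the statement to \cref{disconnected-connected-case}, applied with $Y$ a connected component of $\supp \mu^S_x$. By \cref{Continuous-ergodic-decomp-with-Leibman-group}, applied with the roles of $T$ and $S$ exchanged (so $d=1$ and $\sigma_1=\tau$), there is a Leibman group $H_S$, rational and normal in $G_\sigma=\langle G^\circ,\sigma\rangle$, such that $\supp \mu^S_x = T^{h} H_S x'$ and $\mu^S_x = T^h m_{H_S x'}$ for suitable $h\in D$ and $x'\in X_\sigma$. Since $T$ is a homeomorphism commuting with $S$, and $\mu^T_{T^h y}=T^h\mu^T_y$ is ergodic if and only if $\mu^T_y$ is (equivariance of the continuous ergodic decomposition, which follows from uniqueness), we may assume $h=0$. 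Then $\mu^S_x$ is the Haar measure of the subnilmanifold $Z=H_S x'$.

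By \cref{prop connected components kappa}, $Z$ has finitely many connected components $Z_1,\ldots,Z_\kappa$. Each of them is a rational connected subnilmanifold of the form $Z_j = H_S^\circ g_j x'$, and $m_Z$ is the average of the Haar measures $m_{Z_j}$. It therefore suffices to prove that for each $j$, $\mu^T_y$ is ergodic for $m_{Z_j}$-almost every $y$.

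The key step is to verify the density hypothesis of \cref{disconnected-connected-case}, namely $\overline{O_T(Z_j)} = X$. Here I would use ergodicity of $\mu$ under the joint $\Z^2$-action. The closure $\overline{O_T(Z)}$ is $S$-invariant, since $Z$ is, and $T$-invariant. Moreover $Z$ is the orbit closure of a generic point for $S$, so $\overline{O_{T,S}(z)}\subset \overline{O_T(Z)}$ for such a point $z$. Since the nilsystem is minimal for the joint action (ergodicity is equivalent to minimality for $\Z^2$-nilsystems, as recalled in \cref{preliminaries-2}), this gives $\overline{O_T(Z)}=X$. To pass from $Z$ to a single component $Z_j$, I would note that $S$ permutes the components cyclically. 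Hence $\overline{O_T(Z_j)}$ is a closed $T$-invariant set, and the finite union of its $S$-translates equals $X$. By Baire, $\overline{O_T(Z_j)}$ has nonempty interior. Then a connectedness and component argument as in the proof of \cref{disconnected-connected-case} should give that $\overline{O_T(Z_j)}$ is a union of connected components of $X$ that is invariant under both $T$ and $S$, hence all of $X$ by minimality. This also requires $\overline{O_T(Z_j)}$ to be $S$-invariant. I would try to obtain this from the fact that $S$-translates of $Z_j$ lie in $H_S$-translates of $Z_j$, and $H_S$ commutes with $\tau$ modulo $G^\circ$.

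This last point, showing that the $T$-orbit of a single connected component $Z_j$ is dense (equivalently, that $\overline{O_T(Z_j)}$ is $S$-invariant), is the step I expect to be the main obstacle. If the direct argument fails, I would first try replacing $S$ by $S^\kappa$, which fixes each $Z_j$, and check that the argument still goes through. The concern is that ergodicity of $(T,S^\kappa)$ may fail, in which case one restricts to an ergodic component of the $(T,S^\kappa)$-action. On that component $\mu^T$ agrees with the decomposition of the restricted system, and one applies \cref{disconnected-connected-case} inside it. Once density holds, \cref{disconnected-connected-case} gives that $\mu^T_y$ is $T$-ergodic for $m_{Z_j}$-almost every $y$. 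Averaging over $j$ and undoing the $T^h$ reduction yields the theorem for every $x\in X$.
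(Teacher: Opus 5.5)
Your reduction matches the paper: you move $\supp\mu^S_x$ to $H_Sx'$, split it into connected components $Z_j$, and aim at \cref{disconnected-connected-case}. However, your primary argument for the key density step is false, not merely incomplete. The set $\overline{O_T(Z_j)}$ need not be $S$-invariant, and need not equal $X$.

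For instance, take $X=\T\times\Z/2\Z$ with $T(a,b)=(a+\alpha,b)$, $\alpha$ irrational, and $S(a,b)=(a,b+1)$. The joint action is ergodic and $\mu^S_{(a,b)}=\delta_a\times\frac12(\delta_0+\delta_1)$. Its support has components $Z_j=\{(a,j)\}$, and $\overline{O_T(Z_j)}=\T\times\{j\}\neq X$. So neither a Baire/component-counting argument nor any fact about $H_S$ commuting with $\tau$ modulo $G^\circ$ can give $\overline{O_T(Z_j)}=X$. In general $\overline{O_T(Z_j)}$ is only $S^\kappa$-invariant, so \cref{disconnected-connected-case} cannot be applied with ambient space $X$.

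Your fallback is the correct route and is exactly what the paper does, with $X'=\overline{O_{\sigma^l,T}(e_X)}$ and $Y'=H^\circ z$, where $l$ is the number of components. To turn it into a proof you need the following four points.
(i) Let $X'$ be the $(T,S^\kappa)$-orbit closure of a point of $Z_j$. It is a minimal sub-nilsystem, as orbit closures in nilsystems are. The translates $S^iX'$, $0\le i<\kappa$, cover $X$ by $(T,S)$-minimality, and any two of them coincide or are disjoint. Hence $X'$ is clopen, and by unique ergodicity $\mu$ is the uniform average of $S^im_{X'}$ over the distinct translates.
(ii) $Z_j\subset X'$, because $Z_j$ is connected and meets the clopen set $X'$.
(iii) $\overline{O_T(Z_j)}$ is closed and invariant under $T$ and $S^\kappa$, since $S^\kappa Z_j=Z_j$. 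By minimality it equals $X'$, which is the density hypothesis of \cref{disconnected-connected-case} inside $X'$.
(iv) Transport the continuous $T$-ergodic decomposition of $m_{X'}$ to each $S^iX'$. This gives a continuous ergodic decomposition of $\mu$, and it coincides with $\mu^T$ everywhere, because two continuous decompositions that agree $\mu$-a.e. agree on $\supp\mu=X$.

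Once you drop the primary density argument and write out these points, your proof is the paper's.
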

\begin{proof}
    By standard reductions we assume that $G$ is generated by $G^\circ$, $\tau$ and $\sigma$, where $\tau$ and $\sigma$ are the elements inducing $T$ and $S$ respectively. 
    
    We want to reduce the proof to a simpler case where $x\in X_0=G^\circ e_X$, that is, the connected component for $e_X$ in $X$. To this end, consider $x \in X$ an arbitrary point, there are $g\in G^\circ$ and $i,j\in \Z$ such that $x=\tau^j\sigma^ig\Gamma$. By $S$-invariance, $\mu_{x}^S = \mu_{\tau^j g e_X}^S$. By \cref{Continuous-ergodic-decomp-with-Leibman-group}, we have that for any $z \in X$, $\mu_{\tau^j z}^S = T^j \mu_{z}^S$. This way, we get $\mu_{x}^S = T^j \mu_{ g e_X}^S$. Thus, using \cref{prop of the ergodic decomposition in nilsystems}, for $y\in \supp{\mu_{x}^S} $, we get that $T^{-j}y\in \supp{\mu_{g e_X}^S}$. This together with the fact that $\mu_{T^{-j}y}^T = \mu_{y}^T$ imply that showing $\mu_y^T$ is ergodic for 
    $\mu_x^S$-almost every $y$ is equivalent to showing that $\mu_{g e_X}^S$-almost every $y$, $\mu_y^T$ is ergodic. With this we have showed that the statement is reduced to prove the following claim

    \underline{Claim :} Under the assumptions of the theorem, for every $x \in X_0=G^\circ e_X$ and $\mu^S_x$-almost every $y \in \supp \mu^S_x$ the measure $\mu^T_y$ is $T$-ergodic.

    By assumption, $\supp{\mu_{e_X}^S}=H e_X$ with $H$ rational subgroup of $G$ that is normalized by $\langle G^\circ, \sigma\rangle$. By \cref{Continuous-ergodic-decomp-with-Leibman-group} we have that for every $x\in X_0$, $\supp{\mu_x^S}=Hx$.
    
    Fix $x\in X_0$ such that $\mu_x^S$ is ergodic. There exists $l\in \N$ such that $H^\circ x \sqcup \sigma H^\circ x  \sqcup \cdots \sqcup \sigma^{l-1} H^\circ x = Hx$. Let $z\in X_0$ be an arbitrary element. Then, by \cref{prop connected components kappa}, the support $\supp{\mu_z^S}$ has always $l$ connected components given by $H^\circ z ,\sigma H^\circ z  ,\cdots ,\sigma^{l-1} H^\circ z$. Set $Y'=H^\circ z$. Setting $X'$ as the ergodic nilsystem equal to $\overline{O_{\sigma^l,T}(e_X)}$, we have that $X'$ is clopen, as it is closed and finitely many shifts of $X'$ by $\sigma$ partition $X$ by minimality of $(T,S)$. We have that $Y'$ is a connected rational subnilmanifold of $X'$ which is $S^l$-invariant. In particular, we deduce then that the orbit of $Y'$ through $T$ is dense in $X'$. Then, we can apply \cref{disconnected-connected-case} to deduce that if $\nu_{Y'}$ is the Haar measure in $Y'$, then $\nu_{Y'}$-almost every $y\in Y'$, $m_y^{T}$ is ergodic where $x \mapsto m_x^T$ denotes the continuous $T$-ergodic decomposition for the Haar measure $m_{X'}$ of $X'$. Now, as $X'$ is clopen, for all $i =0,\ldots , l-1$, $S^i X'$ is a clopen set and therefore a finite union of connected components of $X$. Since $S$ maps connected components of $X$ to connected components of $X$, then the number of connected components of $S^i X'$ does not depend on $i =0,\ldots , l-1$ and moreover if $S^i X' \cap S^j X' \neq \emptyset$ then $S^i X' = S^j X'$. With this there is $m \in \N$ dividing $l$ such that 
    \begin{align*}
        X = \bigsqcup_{i=0}^{m-1} S^i X'
    \end{align*}
    which implies that $\mu = \frac{1}{m} \sum_{i=0}^{m-1} S^i m_{X'}$. Thus for $\nu_{Y'}$-almost every $y \in Y'$ we get that $\mu^T_y =  m_{y}^T$ is ergodic. We conclude the claim (and hence the proof) by noticing that $\mu_z^S=\frac{1}{l}\sum_{i=0}^{l-1} S^{i}\nu_{Y'}$.   
\end{proof}

\section{Hall-Petresco pronilsystem}
Using the terminology of \cite{kmrr25} (see also \cite{huang_shao_ye2026polynomial}), for a measure preserving system $\Xmt$ and $\ell \in \N$, we define the Furstenberg joining as the measure $\lambda \in \cM (X^{\ell+1})$ given by 
\begin{equation} \label{eq Furstenberg Joining}
    \int_{X^{\ell+1}} f_0 \otimes f_1 \otimes \cdots \otimes f_\ell \diff \lambda = \lim_{N \to \infty} \frac{1}{|\Phi_N|} \sum_{n\in \Phi_N} \int_X f_0 \cdot T^{n} f_1 \cdots T^{\ell n } f_\ell \diff \mu
\end{equation}
for all $f_0, f_1 , \ldots, f_\ell \in L^\infty(\mu)$ and a F\o lner sequence $\Phi$. Host and Kra proved in \cite{Host_Kra_nonconventional_averages_nilmanifolds:2005} that the choice of the F\o lner sequence does not change the measure $\lambda$. More importantly, they proved that one can replace the functions $f_i \in L^\infty(\mu)$ by $\E(f_i \mid Z_{\ell-1}) $ for $i =0, \ldots, \ell$, where $Z_{\ell-1}$ denotes the $(\ell-1)$-step pronilfactor of $(X,\mu,T)$. 

This measure is related to what we call the \emph{Hall-Petresco pronilsystem}. For $\ell \in \N$ fixed, if $(Z,m,T)$ is a minimal pronilsystem, we define the $\ell$-Hall-Petresco pronilsystem as
\begin{equation}\label{HP-nilsystem}
    \HP_\ell(Z) = \overline{\{ (Id \times T \times \cdots \times T^\ell)^n \Delta_Z \mid n \in \Z \} } \subset Z^{\ell+1}.
\end{equation}
If the choice of $\ell$ is explicit, we  refer to \eqref{HP-nilsystem} as simple the Hall-Petresco pronilsystem. 
This system has a unique $\langle \tilde T, T_\Delta \rangle$-invariant measure that we denote $m_{\HP_\ell (Z)}$, where we are denoting $\tilde T =Id \times T \times \cdots \times T^\ell$ and $T_\Delta = T \times \cdots \times T$.  

Notice that by construction and the mentioned result of Host and Kra (see also Ziegler \cite{ziegler2005nonconventional}) we get that for any ergodic system $\Xmt$, 
\begin{equation} \label{eq corollary of HK}
    \int_{X^{\ell+1}} f_0 \otimes  \cdots \otimes f_\ell \diff \lambda  = \int_{\HP_\ell (Z_{\ell-1})} \E(f_0 \mid Z_{\ell-1}) \otimes   \cdots \otimes \E(f_\ell \mid Z_{\ell-1}) \diff m_{\HP_\ell (Z_{\ell-1})}. 
\end{equation}
for all $f_0, \ldots, f_\ell \in L^\infty(\mu)$. This motivates the following lemma/definition.
\begin{lemma}
    Suppose that $\Xmt$ is an ergodic system with topological pronilfactors. Fix $\ell \in \N$ and consider $\lambda \in \cM(X^{\ell+1})$ as in \eqref{eq Furstenberg Joining} then $\lambda$ is supported in 
    \begin{equation*}
        \mathbb{A}_\ell (X) = (\pi_{\ell-1} \times \cdots \times \pi_{\ell-1})^{-1}( \HP_\ell(Z_{\ell-1})). 
    \end{equation*}
    In particular, $(\mathbb{A}_\ell (X), \lambda, Id\times T \times \cdots \times T^\ell, T \times \cdots \times T)$ is a measure preserving dynamical system. 
\end{lemma}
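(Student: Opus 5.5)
The plan is to pass from $\lambda$ to its image on the pronilfactor. Write $\pi = \pi_{\ell-1}\times\cdots\times\pi_{\ell-1}\colon X^{\ell+1}\to Z_{\ell-1}^{\ell+1}$. Since $X$ has topological pronilfactors, $\pi$ is continuous, so $\mathbb{A}_\ell(X)=\pi^{-1}(\HP_\ell(Z_{\ell-1}))$ is closed, because $\HP_\ell(Z_{\ell-1})$ is closed by definition \eqref{HP-nilsystem}. Hence it suffices to show that the pushforward $\pi_*\lambda$ is supported on $\HP_\ell(Z_{\ell-1})$, since then
\[
\lambda(\mathbb{A}_\ell(X)) = \pi_*\lambda(\HP_\ell(Z_{\ell-1})) = 1 .
\]

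\textbf{Identifying $\pi_*\lambda$.} For $g_0,\ldots,g_\ell\in C(Z_{\ell-1})$, put $f_i=g_i\circ\pi_{\ell-1}$. Then $\E(f_i\mid Z_{\ell-1})=g_i$, and \eqref{eq corollary of HK} gives
\[
\int g_0\otimes\cdots\otimes g_\ell \diff \pi_*\lambda=\int_{\HP_\ell(Z_{\ell-1})} g_0\otimes\cdots\otimes g_\ell \diff m_{\HP_\ell(Z_{\ell-1})}.
\]
Linear combinations of tensor products of continuous functions are uniformly dense in $C(Z_{\ell-1}^{\ell+1})$ by Stone--Weierstrass. Therefore $\pi_*\lambda=m_{\HP_\ell(Z_{\ell-1})}$, which is supported on $\HP_\ell(Z_{\ell-1})$. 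This proves the first claim.

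A detail to check here is that $\HP_\ell(Z_{\ell-1})$ and its measure make sense when $Z_{\ell-1}$ is a pronilsystem rather than a nilsystem. The measure is defined as the unique invariant measure stated before \eqref{eq corollary of HK}, and \eqref{eq corollary of HK} is taken as given. So no further work is needed.

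\textbf{Invariance.} Set $\tilde T=Id\times T\times\cdots\times T^\ell$ and $T_\Delta=T\times\cdots\times T$. We need that $\mathbb{A}_\ell(X)$ is invariant under both maps and that $\lambda$ is invariant under both.

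\begin{itemize}
\item \emph{Invariance of the set.} Since $\pi_{\ell-1}\circ T=T\circ\pi_{\ell-1}$, the map $\pi$ intertwines $\tilde T$ and $T_\Delta$ on $X^{\ell+1}$ with the corresponding maps on $Z_{\ell-1}^{\ell+1}$. By \eqref{HP-nilsystem}, $\HP_\ell(Z_{\ell-1})$ is invariant under $\tilde T$, because it is the closed orbit of $\Delta_Z$ under $\tilde T$. It is invariant under $T_\Delta$ because $T_\Delta$ commutes with $\tilde T$ and preserves $\Delta_Z$. Hence $\mathbb{A}_\ell(X)$ is invariant under both maps.
\item \emph{$T_\Delta$-invariance of $\lambda$.} This follows from the $T$-invariance of $\mu$ in \eqref{eq Furstenberg Joining}.
\item \emph{$\tilde T$-invariance of $\lambda$.} Replacing $f_i$ by $T^i f_i$ in \eqref{eq Furstenberg Joining} amounts to shifting $n$ to $n+1$ in the average. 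The Følner property makes the resulting difference vanish in the limit.
\end{itemize}

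The main obstacle is the measurability and continuity issue in the first step. The equality \eqref{eq corollary of HK} is stated for $L^\infty$ functions. We only use it for functions lifted from continuous functions on the factor, where the conditional expectation is exactly $g_i$. Measure-theoretic support is then meaningful because $\mathbb{A}_\ell(X)$ is closed, which is where the hypothesis of topological pronilfactors is essential.
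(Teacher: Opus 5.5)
Your proof is correct and follows the same route as the paper. In both, $\mathbb{A}_\ell(X)$ is closed by continuity of $\pi_{\ell-1}$, it inherits $\tilde T$- and $T_\Delta$-invariance from $\HP_\ell(Z_{\ell-1})$ via the factor property, and $\lambda(\mathbb{A}_\ell(X)) = m_{\HP_\ell(Z_{\ell-1})}(\HP_\ell(Z_{\ell-1})) = 1$ by \eqref{eq corollary of HK}. You also spell out two steps the paper leaves implicit: the identification $\pi_*\lambda = m_{\HP_\ell(Z_{\ell-1})}$ using lifted continuous functions and Stone--Weierstrass, and the $\tilde T$- and $T_\Delta$-invariance of $\lambda$ itself.
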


The set $\mathbb{A}_\ell (X)$ plays a similar role to $\mathbf{N}^{[\ell]}(X)$ in \cite{kmrr1}. 

\begin{proof}
    By continuity of $\pi_{\ell-1} \times \cdots \times \pi_{\ell-1}$, $\mathbb{A}_\ell (X)$ is closed. The fact that $\mathbb{A}_\ell (X)$ is $\langle \tilde T, T_\Delta\rangle$-invariant is direct from the invariance for $\HP_\ell(Z_{\ell-1})$ and the factor property coordinate-wise for $\pi_{\ell-1} \colon X \to Z_{\ell-1}$. Finally we have that
    \begin{equation*}
        \lambda( \mathbb{A}_\ell (X)) = \lambda((\pi_{\ell-1} \times \cdots \times \pi_{\ell-1})^{-1}\HP_\ell(Z_{\ell-1}))  = m_{\HP_\ell (Z_{\ell-1})} ( \HP_\ell (Z_{\ell-1}))=1. \qedhere
    \end{equation*} 
\end{proof}

\subsection{Continuous disintegration and ergodic decomposition in $\HP_\ell(X)$} \label{sec rho_z and sigma_z}

Recall from \cref{sec ergodic decomposition} the definition of continuous ergodic decomposition and continuous disintegration. Let $(X, \mu,T)$ be a system and $(Y,\nu,S)$ be a factor with continuous factor map $\pi \colon X \to Y$, if there is a continuous disintegration of $\mu$ over $\nu$, $y \mapsto \mu_y$, we say is {\em fully supported} if $x \in \supp \mu_y$ for all $x \in X$ with $\pi(x)=y$.  In particular, a continuous ergodic decomposition $x \mapsto \mu_x$ is fully supported if $x \in \supp \mu_x$ for all $x \in X$. This notion was extensively use in \cite{radic2026Uniformity}. 

\begin{remark} \label{rmk fully supp cont disintegration}
    In a fully supported continuous disintegration (and hence ergodic decomposition), $y \mapsto \supp \mu_y$ defines a partition of $X$, since $\supp \mu_y = \pi^{-1}(y)$ for all $y \in Y$.
\end{remark}

 \begin{lemma} \label{lemma ergodic fully supp decomposition}
     Every pronilsystem $(Z,m,T)$ admits a unique continuous and fully supported ergodic decomposition. 
 \end{lemma}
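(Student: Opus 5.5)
The plan is to establish existence for nilsystems using the results already proved, pass to pronilsystems through the inverse limit, and then argue uniqueness separately.

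\emph{Existence for a nilsystem.} If $(Z,m,T)$ is a nilsystem, \cref{Continuous-ergodic-decomp-with-Leibman-group} (with $d=0$) gives a continuous ergodic decomposition $z\mapsto m_z$. By \cref{prop of the ergodic decomposition in nilsystems} it satisfies $z\in\supp m_z$ for all $z$. It is also fully supported in the sense above: the relevant factor is the invariant $\sigma$-algebra, and the fibres are the sets $\supp m_z$, which by the same proposition partition $Z$. Concretely, $m_z$ is the Haar measure on the orbit closure $\overline{O_T(z)}$ for $z$ in $X_\tau$, and its translates elsewhere.

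\emph{Existence for a pronilsystem.} Write $Z=\varprojlim Z_j$ with continuous factor maps $p_j\colon Z\to Z_j$, and let $z\mapsto m^{(j)}_z$ be the decompositions on the nilsystems $Z_j$. I would avoid building compatibility by hand and instead identify the decomposition intrinsically. In a nilsystem, $m^{(j)}_z$ is the unique invariant measure on $\overline{O_T(z)}$, since orbit closures of pronilsystems are minimal and uniquely ergodic (\cite{Leibman_pointwise_conv_polynomial_nil:2005}). The claim to check is that $m^{(j)}_z$ coincides with the measure given by the \cref{Continuous-ergodic-decomp-with-Leibman-group} formula. This holds for all $z$ because $\supp m^{(j)}_z$ is closed, $T$-invariant and contains $z$, so it contains $\overline{O_T(z)}$. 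It is minimal because it is a translate of a nilmanifold $Hx$ on which $T$ acts and $m_{Hx}$ is invariant; unique ergodicity then forces the measure on $Hx$ to be the Haar measure there. Then I define $m_z$ on $Z$ as the unique $T$-invariant measure on the uniquely ergodic set $\overline{O_T(z)}\subset Z$. Its projection under $p_j$ is invariant and supported on $\overline{O_T(p_jz)}$, hence equals $m^{(j)}_{p_jz}$, so the family is compatible.

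\emph{Continuity, decomposition and support.} Continuity of $z\mapsto m_z$ follows because weak$^*$ convergence can be tested on the dense family $\bigcup_j\{f\circ p_j\}$, and on each such function $z\mapsto\int f\circ p_j\,dm_z=\int f\,dm^{(j)}_{p_jz}$ is continuous. The identity $\int m_z\,dm(z)=m$ also passes through the same dense family. Each $m_z$ is ergodic for every $z$, being uniquely ergodic, and $z\in\supp m_z=\overline{O_T(z)}$, so the decomposition is fully supported.

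\emph{Uniqueness.} Suppose $z\mapsto m'_z$ is another continuous, fully supported ergodic decomposition. For $m$-a.e.\ $z$, the measure $m'_z$ is ergodic and $z$ is generic for it. For $m$-a.e.\ $z$, $z$ is also generic for $m_z$ by unique ergodicity. Hence $m'_z=m_z$ almost everywhere. Since $m$ has full support in the minimal pronilsystem, this equality holds on a dense set, and continuity gives it everywhere.

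The main obstacle I expect is the step identifying the nilsystem decomposition of \cref{Continuous-ergodic-decomp-with-Leibman-group} with orbit-closure Haar measures for \emph{every} point, not just almost every point. The exceptional points, whose orbit closure is strictly smaller than $Hx$, threaten both the compatibility across the inverse limit and the full-support property. If that step fails, I would drop the orbit-closure description and prove compatibility of the formulas of \cref{Continuous-ergodic-decomp-with-Leibman-group} directly. Two facts should suffice for that: a factor map between nilsystems sends Leibman groups onto Leibman groups, and continuous decompositions agreeing almost everywhere agree everywhere by the continuity argument used in the uniqueness step.
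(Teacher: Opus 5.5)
Your nilsystem step matches the paper, which uses \cref{prop of the ergodic decomposition in nilsystems} there and then cites \cite[Lemma 3.9]{radic2026Uniformity} for the inverse limit. Your main inverse-limit argument, however, rests on a false claim: that for \emph{every} $z$ the measure $m^{(j)}_z$ of \cref{Continuous-ergodic-decomp-with-Leibman-group} is the unique invariant measure on $\overline{O_T(z)}$. The justification breaks at ``it is minimal because it is a translate of a nilmanifold $Hx$ on which $T$ acts and $m_{Hx}$ is invariant.'' Invariance of Haar measure says nothing about minimality. $(Hx,T)$ is minimal exactly when $m_{Hx}$ is $T$-ergodic, and that is guaranteed only for almost every $x$.

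Concretely, let $G=\Z\ltimes\R^2$ with $\Z$ acting by $(u,v)\mapsto(u,u+v)$, let $\Gamma=\Z\ltimes\Z^2$, and let $\tau$ be the generator of $\Z$. Then $G/\Gamma=\T^2$ and $T(x,y)=(x,y+x)$. The Leibman orbit of $(x,y)$ is $\{x\}\times\T$, so the continuous ergodic decomposition is $\mu_{(x,y)}=\delta_x\times m_{\T}$. For $x=p/q$, though, the orbit of $(x,y)$ is finite, and for $x=0$ every point is fixed. So your family ``$m_z=$ the invariant measure on $\overline{O_T(z)}$'' is discontinuous at every point with rational first coordinate. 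Your continuity step only seemed to work because it used the false identification. Likewise, the assertion that every $m_z$ is ergodic fails for the correct decomposition. This is exactly the $\sigma_z$ versus $\rho_z$ distinction the paper insists on: the two agree only almost everywhere, and the exceptional set is nonempty.

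Your fallback is the right fix, but it does not need, and you do not prove, that factor maps send Leibman groups to Leibman groups. Compatibility follows from the a.e.-plus-continuity argument alone. Take a bonding map $\theta\colon Z_k\to Z_j$ and $f\in C(Z_j)$. For $m_k$-a.e.\ $w$, both $\int f\circ\theta\,d\mu^{(k)}_w$ and $\int f\,d\mu^{(j)}_{\theta(w)}$ equal $\lim_N\frac1N\sum_{n<N}f(T^n\theta(w))$, by Birkhoff's theorem and the disintegration property, using $\theta_*m_k=m_j$. Running over a countable dense set of $f$, the continuous maps $w\mapsto\theta_*\mu^{(k)}_w$ and $w\mapsto\mu^{(j)}_{\theta(w)}$ agree a.e. They therefore agree everywhere, since Haar measure has full support.

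Now define $m_z$ by $\int f\circ p_j\,dm_z=\int f\,dm^{(j)}_{p_jz}$. Continuity and $\int m_z\,dm(z)=m$ follow on the dense family as you wrote. The property $z\in\supp m_z$ follows from the neighborhood basis $p_j^{-1}(U)$ and the nilsystem case. Ergodicity holds only for $m$-a.e.\ $z$ (those with $p_jz$ good for every $j$), not for all $z$.

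In the uniqueness step, drop ``minimal.'' The lemma is used for non-ergodic systems such as $(\HP_\ell(Z),\tilde T)$, and the ergodic case is trivial. All you need is that $m$ has full support and that any ergodic decomposition agrees a.e.\ with the Birkhoff limits.
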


\begin{proof}
    This is direct from \cref{prop of the ergodic decomposition in nilsystems} and the fact that fully supported continuous ergodic decomposition is preserved under inverse limits \cite[Lemma 3.9]{radic2026Uniformity}.
\end{proof}

\begin{definition} \label{first def rho}
    Consider $(Z,m,T)$ a pronilsystem and $(\HP_\ell(Z), m_{\HP_\ell(Z)}, T_\Delta, \tilde T)$ the Hall Petresco pronilsystem. We define $w \mapsto \rho_{w} $ for $ w \in \HP_\ell(Z)$ the continuous ergodic decomposition of $m_{\HP_\ell(Z)}$ for the transformation $\tilde T$. We also denote, for $z \in Z$, $\rho_z = \rho_{(z,\ldots, z)}$.
\end{definition}

For a measure $\eta$ in a product space and a set of indices $L \subset \{0, \ldots,\ell\}$ we denote $\eta^L$ the projection of the measure to those coordinates and $L=\{i\}$ we simply denote it $\eta^i$. For the special case of $L = \{1, \ldots, \ell\}$, we denote $\eta^* = \eta^L$. Also for two measures $\eta, \gamma$ in a measurable space $\Omega$, we denote $\eta \leq C \gamma$ for some constant $C >0$ if $\eta(E) \leq C \gamma (E)$ for all measurables $E \subset \Omega$.  Recall that $\tilde T = Id \times T \times \cdots \times T^\ell$.

\begin{proposition} \label{prop marginals}
    Let $(Z,m,T)$ be an ergodic pronilsystem.   For all $w \in  \HP_\ell(Z)$, $\rho_{w}^0 = \delta_{w_0}$ and $\rho_{w}^i \leq i m$ for all $i = 1, \ldots, \ell$. 
    \end{proposition}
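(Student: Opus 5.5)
The plan has two parts: the zeroth marginal, which follows from support considerations, and the bound on the other marginals, which follows from an averaging argument.

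\textbf{Zeroth marginal.} Since $\tilde T = Id \times T \times \cdots \times T^\ell$ acts trivially on the $0$-th coordinate, the orbit closure $\overline{O_{\tilde T}(w)}$ lies in $\{w_0\} \times Z^\ell$. By \cref{lemma ergodic fully supp decomposition}, applied to the pronilsystem $(\HP_\ell(Z),\tilde T)$, the decomposition $w \mapsto \rho_w$ is fully supported and continuous. In the nilsystem case, \cref{Continuous-ergodic-decomp-with-Leibman-group} and \cref{prop of the ergodic decomposition in nilsystems} show more: $\supp \rho_w$ is a $\tilde T$-orbit closure of a generic point $w'$ with $w \in \supp\rho_w$. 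For the zeroth marginal I only need $\supp \rho_w \subset \{w_0\}\times Z^\ell$. I would argue it as follows. The set $E_a = \{w : \rho_w(\{w_0\}\times Z^\ell) = 1\}$ has full $m_{\HP_\ell(Z)}$-measure, since $\rho_w$ is $\tilde T$-ergodic and the $0$-th coordinate is $\tilde T$-invariant, so it is a.s.\ constant and equal to $w_0$ by the disintegration property. Continuity of $w\mapsto \rho_w$, together with the closedness of the condition $\rho_w^0=\delta_{w_0}$ (the map $w\mapsto (\rho_w^0, w_0)$ is continuous and the diagonal of point masses is closed), extends this from a dense full-measure set to every $w$, because $m_{\HP_\ell(Z)}$ has full support. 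Hence $\rho^0_w = \delta_{w_0}$ for all $w$.

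\textbf{The bound $\rho_w^i \le i\,m$: a.e.\ version.} I would first prove $\int_{\HP_\ell(Z)} \rho_w^i\, d\eta(w) \le i\, m$ for suitable measures $\eta$ and then pass to points. Each $\rho_w$ is $\tilde T$-invariant, so $\rho_w^i$ is $T^i$-invariant on $Z$, and $\rho^i_w$ is then a $T^i$-invariant measure for each $w$. Here I use the structure of $T^i$-invariant measures on $Z$. The $T^i$-ergodic components of $m$ are $m_j$, $j$ in a finite set of size $d\mid i$; these are Haar measures of clopen pieces, with $m = \frac1d\sum_j m_j$, so $m_j \le d\, m \le i\, m$. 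The cleaner route is through the $T^i$-ergodic decomposition: the $T^i$-ergodic components of $m$ (via \cref{Continuous-ergodic-decomp-with-Leibman-group} applied to $T^i$) are Haar measures of orbit closures, and each is dominated by $i\,m$ because $m = \frac1{d}\sum_{j<d} T^j m_0$. So I must show that $\rho^i_w$ is a Haar measure of one of these finitely many $T^i$-minimal pieces, or a convex combination of them.

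\textbf{Identifying $\rho_w^i$.} Let $u$ be a $\tilde T$-generic point of $\rho_w$. Then $\rho_w^i$ is the empirical measure of $(T^{in}u_i)_n$. In a pronilsystem the $T^i$-orbit closure of any point is one of the finitely many clopen $T^i$-minimal components, and it is uniquely ergodic with Haar measure $m_j \le i\,m$. This holds for $\rho_w$-a.e.\ $u$ in the ergodic case. For every $w$, I would use continuity again. The set of measures $\nu$ with $\nu \le i\,m$ is weak$^*$ closed (test against nonnegative continuous functions), and $w\mapsto \rho_w^i$ is continuous. So it suffices to establish the bound for $w$ in a dense set, for instance the full-measure set where $\rho_w$ is ergodic, where the argument above applies.

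\textbf{Main obstacle.} The step I expect to be delicate is justifying that for ergodic $\rho_w$ the $i$-th marginal is exactly the Haar measure of a single $T^i$-minimal component. This requires the inverse-limit reduction to nilsystems, and the fact that orbit closures of $T^i$ in a minimal nilsystem are unions of connected components permuted cyclically by $T$.
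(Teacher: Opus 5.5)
Your argument is correct and is essentially the paper's proof. Both claims are first established for $m_{\HP_\ell(Z)}$-almost every $w$: on that set $\rho_w$ is ergodic and uniquely ergodic on its support, so its $i$-th marginal is the $T^i$-empirical measure of a generic point. Both are then extended to every $w$ using continuity of $w\mapsto\rho_w$ and weak$^*$-closedness of the conditions. Your choice of a $\rho_w$-generic $u$ instead of $w$ itself, and of the disintegration property for the invariant $0$-th coordinate, are only cosmetic differences.

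The step you flag as the main obstacle is routine: each of the finitely many clopen $T^i$-minimal pieces of a minimal pronilsystem is uniquely ergodic. This also means every $T^i$-invariant probability on $Z$ is a convex combination of the $m_j$, hence $\le d\,m\le i\,m$. So your observation that each $\rho_w^i$ is $T^i$-invariant already gives the bound for all $w$ directly, without the continuity step.
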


\begin{proof}
    Notice that  $m$-almost every $w$, $\rho_w$ is ergodic, hence by properties of pronilsystems $(\supp \rho_w, \tilde T)$ is a minimal system (see \cref{preliminaries-2}). In particular, if $\tilde w \in \supp \rho_w$, then $\tilde w_0 = w_0$. This already ensures that $\rho_w^0 = \delta_{w_0}$ for $\lambda$-almost all $w \in \HP_\ell(Z) $ but, by continuity of the ergodic decomposition, we conclude it for all $ w \in \HP_\ell(Z)$. Similarly, since for $\lambda$-almost every $w \in \HP_\ell(Z)$, $(\supp \rho_w,\tilde{T})$ is uniquely ergodic, we get that for those $w \in \HP_\ell(Z)$ 
    \begin{equation} \label{eq casi def of sigma}
        \rho_w = \lim_{N \to \infty } \frac{1}{N} \sum_{n =1}^N \tilde T^n \delta_w  \implies \rho_w^i = \lim_{N \to \infty } \frac{1}{N} \sum_{n =1}^N T^{in} \delta_{w_i}
    \end{equation}
    for all $i =1, \ldots, \ell$, in particular $\rho_{w}^i \leq i m$. Again, by the continuity of the ergodic decomposition we derive that $\rho_{w}^i \leq i m$ for all $w \in \HP_\ell(Z)$. 
\end{proof}

From \eqref{eq casi def of sigma}, one is tempted to define $\rho_w$ directly as that limit. However, that equality only holds $\lambda$-almost surely and it is false in general. 
Examples of this can be found in \cite[Appendix A]{hernandez_kousek_radic2025density_Hindman} and \cite[Section 8]{radic2026Uniformity}. In any case, measures defined as in \eqref{eq casi def of sigma} play a special role in the theory and in this work. 

\begin{definition} \label{defi xi}
    Let $(Z, m,T)$ be a pronilsystem and let $z \in Z$, then we define 
    \begin{equation*}
        \sigma_z=\lim_{N \to \infty}\frac{1}{N} \sum_{n = 1}^N \tilde T^n (\delta_{z} \times \cdots \times \delta_z).
    \end{equation*}
    Notice that $\supp \sigma_z \subset \supp \rho_z$ for all $z \in Z$ (see \cref{lemma ergodic fully supp decomposition}). 
\end{definition}

The following theorem was originally proved for the nilsystem case (see \cite[Theorem 7 Chapter 15]{Host_Kra_nilpotent_structures_ergodic_theory:2018}), here we adapt the argument for general pronilsystem and we simplify the proof using the continuity of $z \mapsto \rho_z$. 


\begin{theorem} \label{thrm new cont ergodic decomp} Let $(Z,m,T)$ be an ergodic pronilsystem and $\ell \in \N$,
    \begin{enumerate}[i)]
        \item \begin{equation} \label{eq second ergodic decomposition}
            m_{\HP_\ell(Z)} = \int_Z \rho_z \diff m(z),
        \end{equation}
        \item \label{thrm point 2} for every $z \in Z$ and $w \in \HP_\ell(Z)$, $\rho_z = \rho_w$ if and only if $w_0 = z$, and
        \item \label{thrm point 3} for $m$-almost every $z \in Z$, $\rho_z = \sigma_z$. In particular,  $\supp \rho_z$ is uniquely ergodic with $\tilde T$-invariant measure $\rho_z$ for $m$-almost every $z \in Z$.  
    \end{enumerate}
\end{theorem}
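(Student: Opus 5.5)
We use three ingredients: the structure of $\HP_\ell(Z)$ as a $\langle \tilde T, T_\Delta\rangle$-orbit closure, the continuity and full support of $w\mapsto\rho_w$ (\cref{lemma ergodic fully supp decomposition}), and the marginal computation of \cref{prop marginals}. We prove \ref{thrm point 2} first, then i), then \ref{thrm point 3}.

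\textbf{Step 1: part \ref{thrm point 2}.} By \cref{prop marginals}, $\rho_w^0=\delta_{w_0}$. Since the decomposition is fully supported, $w\in\supp\rho_w$, and by \cref{rmk fully supp cont disintegration} the supports partition $\HP_\ell(Z)$. Hence $\rho_z=\rho_w$ forces $w\in\supp\rho_z\subset\{z\}\times Z^\ell$, so $w_0=z$.

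For the converse, fix $z$ and set
\[
F_z=\{w\in\HP_\ell(Z): w_0=z\}.
\]
We must show that $F_z$ is a single $\tilde T$-ergodic-decomposition class, i.e.\ that the continuous map $w\mapsto\rho_w$ is constant on $F_z$. Since the supports partition the space, each class is closed and contained in some $F_z$.

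The route is to show that $\{\rho_w : w\in\HP_\ell(Z)\}$ is parametrized by $Z$: the classes are permuted by $T_\Delta$, and we want the number of classes in $F_z$ to be one. The cleanest approach I see is an invariance argument on the factor $\HP_\ell(Z)\to Z$, $w\mapsto w_0$.
\begin{itemize}
\item The $\tilde T$-invariant $\sigma$-algebra contains $\sigma(w_0)$.
\item Conversely, a $\tilde T$-invariant function is $T_\Delta$-equivariant in its class structure, and the action of $T_\Delta$ on the invariant factor is ergodic, since $m_{\HP_\ell(Z)}$ is $\langle\tilde T,T_\Delta\rangle$-ergodic.
\end{itemize}
From this I would expect the invariant factor to be an ergodic rotation extension of $Z$ whose fibres over $Z$ are trivial. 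To show triviality, I would use the description of $\HP_\ell(G)$ for nilsystems (Leibman): the Leibman group of $\tilde T$ acting on the $\HP$-nilmanifold should act transitively on $\{w_0=z\}$. Concretely, the fibre over $e$ is $\HP_\ell(G)\cap(\{e\}\times G^\ell)$ applied to the diagonal, and this group should coincide with the orbit-closure group of $\tilde\tau=(e,\tau,\dots,\tau^\ell)$ modulo connected components.

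Once this holds for nilsystems, it passes to pronilsystems via inverse limits, using \cite[Lemma 3.9]{radic2026Uniformity}. Identifying this group is the main obstacle. It would be handled using the fact that $\HP_\ell(G)$ is generated by $\tau^{\Delta}$, $\tilde\tau$ and the connected group $\HP_\ell(G^\circ)$, whose fibre subgroup is connected. Points in a connected fibre lie in one class, because the classes are clopen within the fibre. This can be seen from finitely many components, as in \cref{prop connected components kappa}.

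\textbf{Step 2: part i).} With \ref{thrm point 2} in hand, we have $\rho_w=\rho_{w_0}$. Integrating the ergodic decomposition
\[
m_{\HP_\ell(Z)}=\int\rho_w\,dm_{\HP_\ell(Z)}(w)
\]
and pushing forward by $w\mapsto w_0$, whose image measure is $m$ (the $0$-th marginal of the $T_\Delta$-invariant Haar measure is $T$-invariant, hence equals $m$ by unique ergodicity), gives \eqref{eq second ergodic decomposition}.

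\textbf{Step 3: part \ref{thrm point 3}.} For $m_{\HP}$-a.e.\ $w$, $\rho_w$ is ergodic, so $(\supp\rho_w,\tilde T)$ is a minimal pronilsystem and hence uniquely ergodic. It remains to pass from almost every $w$ to almost every diagonal point.

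For this, take $E$ to be the set of $w$ for which $\rho_w$ is ergodic and $w$ is $\tilde T$-generic for $\rho_w$. By i), for $m$-a.e.\ $z$ we have $\rho_z(E)=1$. Since $\supp\rho_z$ is then uniquely ergodic and contains $(z,\dots,z)$ by full support, every point of $\supp\rho_z$ is generic. In particular,
\[
\sigma_z=\lim_{N\to\infty}\frac1N\sum_{n=1}^N\tilde T^n\delta_{(z,\dots,z)}=\rho_z,
\]
which is the claim.
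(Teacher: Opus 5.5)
Your forward direction of part ii), your deduction of i) from ii) in Step 2, and your genericity argument in Step 3 are all sound. The problem is that the whole chain rests on the converse of part ii): that every $w$ with $w_0=z$ satisfies $\rho_w=\rho_z$. There you do not give a proof. The claims that the Leibman group of $\tilde\tau$ acts transitively on $\{w_0=z\}$, and that the fibre subgroup of $\HP_\ell(G)$ ``should coincide'' with an orbit-closure group, are left open, as you note yourself.

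The fallback ``points in a connected fibre lie in one class because the classes are clopen within the fibre'' also fails. The classes $\supp\rho_w$ are closed, but nothing you have shows that only finitely many of them meet $F_z$. A connected compact set can be partitioned into uncountably many closed classes, none of them open: for example, the supports $\{x\}\times\T$ of the continuous ergodic decomposition of $(x,y)\mapsto(x,y+x)$ on $\T^2$. Moreover, \cref{prop connected components kappa} counts components inside a single class, not classes inside a fibre, and you have not shown that $F_z$ is connected. Since you derive i) from ii) and then use i) in Step 3, nothing in the proposal is actually established beyond the forward direction.

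The missing idea is to reverse the order. Part i) has a direct soft proof, and ii) then follows from it with no structure theory. Put $\nu=\int_Z\rho_z\diff m(z)$; it is $\tilde T$-invariant and supported on $\HP_\ell(Z)$. Since $T_\Delta$ commutes with $\tilde T$ and preserves $m_{\HP_\ell(Z)}$, the map $w\mapsto T_\Delta^{-1}\rho_{T_\Delta w}$ is again a continuous, fully supported ergodic decomposition. By uniqueness, $\rho_{T_\Delta w}=T_\Delta\rho_w$. Together with the $T$-invariance of $m$ this gives $T_\Delta\nu=\nu$, and unique ergodicity of $\HP_\ell(Z)$ under $\langle\tilde T,T_\Delta\rangle$ forces $\nu=m_{\HP_\ell(Z)}$.

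For ii), note that if $w\in\supp\rho_z$ then $\rho_w=\rho_z$ by the partition property and $w_0=z$ by \cref{prop marginals}. Hence the closed set $\{w\in\HP_\ell(Z):\rho_w=\rho_{(w_0,\ldots,w_0)}\}$ contains $\supp\rho_z$ for every $z$. By i) it therefore has full $m_{\HP_\ell(Z)}$-measure, and since $m_{\HP_\ell(Z)}$ has full support, it is all of $\HP_\ell(Z)$.

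With i) established this way, your Step 3 goes through. It takes a different route from the paper: the paper checks that $z\mapsto\sigma_z$ is a (non-continuous) ergodic decomposition of $m_{\HP_\ell(Z)}$ and appeals to almost-everywhere uniqueness. You instead use directly that $(z,\ldots,z)$ lies in the uniquely ergodic set $\supp\rho_z$. Both are valid.
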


\begin{proof}
    First, set $\nu = \int_Z \rho_z \diff m(z)$. We have to prove that $\nu= m_{\HP_\ell(Z)} $. Clearly, $\supp \nu \subset \HP_\ell(Z)$ and since $\rho_z$ is $\tilde T$-invariant for all $z \in Z$, so is $\nu$. Using the $T$-invariance of $m$
    \begin{equation*}
        \nu = \int_Z \rho_z \diff m (z) = \int_Z \rho_{Tz} \diff m (z) = \int_Z T_\Delta \rho_{z} \diff m (z) = T_\Delta \nu.
    \end{equation*}
    Thus $\nu$ is $\langle T_\Delta, \tilde T\rangle$-invariant and supported in $\HP_\ell(Z)$ which is $\langle T_\Delta, \tilde T\rangle$-uniquely ergodic, so we conclude the equality. 

 For \eqref{thrm point 2}, if $\rho_z=\rho_w$ for $z\in Z$, $w\in HP_\ell(Z)$, then by \cref{prop marginals} we get $w_0=z$. For the other implication, notice that, by \cref{rmk fully supp cont disintegration} and \cref{lemma ergodic fully supp decomposition}, for $w, w' \in \HP_\ell (Z)$, $w \in \supp \rho_{w '}$ if and only if $\rho_w = \rho_{w'}$. Thus, since by \cref{prop marginals}, for all $w \in \HP_\ell(Z)$, $$\supp \rho_w \subset \{ u \in \HP_\ell(Z) \colon u_0 = w_0\},$$ we get that $\rho_z$-almost every $w$ we have that $\rho_w=\rho_{P_0(w)}$, where $P_0 \colon HP_\ell(Z) \to Z$ is the projection to the $0$ coordinate. 
 Integrating and using \eqref{eq second ergodic decomposition}, we conclude that $m_{\HP_\ell(Z)}$-almost every $w$, we have $\rho_w=\rho_{P_0(w)}$ and by continuity this equality holds for every $w\in \HP_\ell(Z)$.


    For \eqref{thrm point 3}, first notice that with the previous point, $z \mapsto \rho_z$ defines a continuous ergodic decomposition. Let $W_z = \overline{\cO_{\tilde T}(z,\ldots, z)} $ with its unique $\tilde T$-invariant measure $\sigma_z$. 
    By unique ergodicity in $(W_z,\tilde{T})$ and the dominated convergence theorem, we have that, for any $f_0, \ldots, f_\ell \in C(Z)$
    \begin{align*}
       \int_Z &\int_{\HP_\ell(Z)}  \bigotimes_{j=0}^\ell f_j \diff \sigma_z \diff m(z) = \int_Z \lim_{N \to \infty} \frac{1}{N} \sum_{n=0}^{N-1} \prod_{j=0}^\ell f_j(T^{jn}z)  \diff m(z) \\
       &= \lim_{N \to \infty} \frac{1}{N} \sum_{n=0}^{N-1} \int_Z  \prod_{j=0}^\ell f_j(T^{jn}z) \diff m(z) = \int_{\HP_\ell(Z)}  \bigotimes_{j=0}^\ell f_j \diff m_{\HP_\ell(Z)}.
    \end{align*}
    Therefore, $z \mapsto \sigma_z$ defines a (non-continuous) ergodic decomposition of $m_{\HP_\ell(Z)}$ with this $\rho_z = \sigma_z$ for $m$-almost all $z \in Z$ concluding the proof. 
\end{proof}

\subsection{Good projection for pronilsystems} \label{sec good projections}

In this section we  prove that in a pronilsystem $(Z, m,T)$, for a fixed $\ell \in \N$ and $z \in Z$, if $\rho_z \in \Lambda_\ell(Z)$ is the measure defined in \cref{first def rho}, then for any $L = \{\ell_1, \ldots, \ell_k\} \subset \{1,\ldots, \ell\}$ the measure $\rho^L_z$ can be defined as a lift of a measure in $Z_{k-1}^k$ where $Z_{k-1}$ is the $(k-1)$-step pronilfactor of $Z$. For that, we use the following lemmas originally proved in \cite{radic2026Uniformity}.

\begin{lemma}[{\cite[Propositions 3.2 and 3.6]{radic2026Uniformity}}]\label{prop cont disintegration and conditional expectation}
    If $\Xmt$ admits a continuous disintegration $y \mapsto \mu_y$ over a factor system $(Y,\nu,T)$, then for every $f \in C(X)$ the conditional expectation $\E(f \mid Y)$ agrees $\nu$-almost surely with a continuous function.
\end{lemma}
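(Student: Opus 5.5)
The plan is to show that the conditional expectation is, almost surely, the function $y \mapsto \int_X f \diff \mu_y$, and that this function is continuous on $Y$.

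Fix $f \in C(X)$ and define $F \colon Y \to \C$ by $F(y) = \int_X f \diff \mu_y$. First we show that $F$ is continuous. If $y_n \to y$ in $Y$, continuity of $y \mapsto \mu_y$ into $\cM(X)$ with the weak$^*$ topology gives $\int f \diff \mu_{y_n} \to \int f \diff \mu_y$, because $f$ is continuous and bounded. Hence $F \in C(Y)$. Composing with the continuous factor map, $F \circ \pi$ is continuous on $X$.

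Next we show that $F$ represents $\E(f \mid Y)$. By the remark after \cref{defi cont desintegration}, any disintegration satisfies $\int f \diff \mu_y = \E(f\mid Y)(y)$ for $\nu$-almost every $y$. To make this self-contained, we check the defining property of conditional expectation. Take $g \in L^\infty(\nu)$. Since $\mu_y(\pi^{-1}(y)) = 1$ for $\nu$-a.e. $y$, the function $g \circ \pi$ equals $g(y)$ $\mu_y$-almost surely. The integration identity in the definition, applied to the bounded function $f \cdot (g\circ \pi)$, then gives
\begin{equation*}
\int_X f \cdot (g \circ \pi) \diff \mu = \int_Y \int_X f\cdot (g\circ\pi) \diff \mu_y \diff \nu(y) = \int_Y g(y) F(y) \diff \nu(y).
\end{equation*}
The function $F$ is $\mathcal{B}(Y)$-measurable, since it is continuous. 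It therefore satisfies the characterizing property of $\E(f\mid Y)$, and uniqueness of conditional expectation gives $\E(f \mid Y) = F$ $\nu$-almost surely.

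The only delicate point is that the fiber condition $\mu_y(\pi^{-1}(y))=1$ holds only $\nu$-almost surely. This is harmless, because it is used only inside an integral against $\nu$. Note also that continuity of $f$ is essential: for a general $f \in L^\infty(\mu)$, weak$^*$ continuity of $y\mapsto \mu_y$ does not yield continuity of $F$.
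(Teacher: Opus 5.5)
Your argument is correct and is essentially the argument behind the cited result. The paper notes right after the definition of disintegration that $\E(f\mid Y)(y)=\int_X f\diff\mu_y$ for $\nu$-a.e.\ $y$, and for $f\in C(X)$ the map $y\mapsto\int_X f\diff\mu_y$ is continuous by the weak$^*$ continuity of $y\mapsto\mu_y$; your self-contained check of the conditional-expectation identity via $f\cdot(g\circ\pi)$ only makes the first step explicit.
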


\begin{lemma}[{\cite[Corollary 3.12]{radic2026Uniformity}}]\label{prop cont k-step max pronilfactors}
    Let $(Z, m,T)$ be an ergodic $s$-step pronilsystem, then for every $k \leq s$, $(Z, m,T)$ admits a fully supported continuous disintegration over $Z_{k}$.
\end{lemma}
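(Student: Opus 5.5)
The plan is to prove the statement first for an ergodic $s$-step nilsystem $Z=G/\Gamma$ by an explicit construction, and then pass to pronilsystems by an inverse limit argument.

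\textbf{Nilsystem case.} As in \cref{preliminaries-2}, assume $G=\langle G^\circ,\tau\rangle$, where $\tau$ induces $T$.

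First I would recall the standard description of the $k$-step pronilfactor: for $k\le s$, $Z_k$ is, topologically and measurably, the nilmanifold $G/(G_{k+1}\Gamma)$. Here $G_{k+1}$ is the $(k+1)$-th term of the lower central series. Its rationality (closedness of $G_{k+1}\Gamma$) is in \cite[Chapter 10--11]{Host_Kra_nilpotent_structures_ergodic_theory:2018}. The factor map $\pi_k(g\Gamma)=gG_{k+1}\Gamma$ is then continuous. Because $G_{k+1}$ is normal in $G$, the fiber over $y=\pi_k(g\Gamma)$ is
\[
\pi_k^{-1}(y)=gG_{k+1}\Gamma=G_{k+1}\,g\Gamma,
\]
which is a closed subnilmanifold.

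Next I would define $\mu_y=m_{G_{k+1}x}$, the Haar measure of the subnilmanifold $G_{k+1}x$, for any $x\in\pi_k^{-1}(y)$. This does not depend on the choice of $x$, since $G_{k+1}x=G_{k+1}x'$ whenever $\pi_k(x)=\pi_k(x')$. By \cite[Chapter 10, Proposition 16]{Host_Kra_nilpotent_structures_ergodic_theory:2018}, the map $x\mapsto m_{G_{k+1}x}$ is weak$^*$ continuous on $Z$. Since $\pi_k$ is a continuous open surjection between compact spaces, this continuity descends to $y\mapsto\mu_y$.

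Full support is immediate, because $\supp\mu_y=G_{k+1}x=\pi_k^{-1}(y)$.

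For the disintegration identity, I would set $\nu=\int_{Z_k}\mu_y\,\diff m_{Z_k}(y)$ and check that $\nu$ is $G$-invariant. Indeed, $g\,m_{G_{k+1}x}=m_{G_{k+1}gx}$ by normality, and $m_{Z_k}$ is $G$-invariant. Uniqueness of Haar measure then gives $\nu=m$. Measurability of $y\mapsto\mu_y(E)$ follows from continuity by a monotone class argument.

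\textbf{Pronilsystem case.} Write $Z=\varprojlim Z^{(j)}$ with each $Z^{(j)}$ an ergodic $s$-step nilsystem. The $k$-step factors are compatible: $Z_k(Z)=\varprojlim Z_k(Z^{(j)})$, and the connecting maps send $G^{(j+1)}_{k+1}$-orbits onto $G^{(j)}_{k+1}$-orbits. I would check that the fiber Haar measures are compatible, i.e.\ the push-forward of $\mu^{(j+1)}_y$ is $\mu^{(j)}$ of the image point. This holds because the image of a Haar measure on an orbit of a normal subgroup under a surjective nil-homomorphism is the Haar measure on the image orbit. With compatibility in hand, I would invoke the preservation of fully supported continuous disintegrations under inverse limits, in the same spirit as \cite[Lemma 3.9]{radic2026Uniformity} used for \cref{lemma ergodic fully supp decomposition}.

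\textbf{Main obstacle.} I expect the hardest step to be the identification of $Z_k$ with $G/(G_{k+1}\Gamma)$, as a \emph{topological} factor and compatibly along the inverse system. This needs ergodicity and the normalization $G=\langle G^\circ,\tau\rangle$; without them, $G_{k+1}$ may be the wrong group and the quotient may fail to be the maximal $k$-step factor. I would handle it by quoting the Host--Kra structure theory, which identifies $Z_k$ via the $U^{k+1}$ seminorm. The remaining steps are routine.
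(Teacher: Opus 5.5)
Your argument is correct. Note that the paper gives no proof of this lemma: it is quoted from \cite[Corollary 3.12]{radic2026Uniformity}, so there is no in-paper argument to compare against, and what you have written is a self-contained substitute. It follows the same two-stage pattern the paper uses for \cref{lemma ergodic fully supp decomposition}: an explicit nilsystem case, then passage to inverse limits as in \cite[Lemma 3.9]{radic2026Uniformity}. Its merit is that it isolates the single nontrivial input, namely the Host--Kra identification of $Z_k$ with $G/G_{k+1}\Gamma$ as a topological factor when $X$ is ergodic and $G=\langle G^\circ,\tau\rangle$. After that, normality of $G_{k+1}$ gives $m_{G_{k+1}g\Gamma}=g_*m_{G_{k+1}\Gamma}$. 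This yields continuity and full support on $\pi_k^{-1}(y)=gG_{k+1}\Gamma$, and uniqueness of the $G$-invariant measure gives the disintegration identity, exactly as you say.

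The one step you should justify differently is compatibility in the inverse limit. You derive $p_*\mu^{(j+1)}_y=\mu^{(j)}_{\bar p(y)}$ from the connecting maps being induced by surjective homomorphisms of the underlying groups, but the definition of an inverse limit does not give you that. It is also unnecessary.

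For $f\in C(Z^{(j)})$, the function $f-\E(f\mid Z_k(Z^{(j)}))$ has zero $U^{k+1}$ seminorm in $Z^{(j)}$, hence also in $Z^{(j+1)}$. Therefore $\E(f\circ p\mid Z_k(Z^{(j+1)}))=\E(f\mid Z_k(Z^{(j)}))\circ\bar p$ almost everywhere. So the continuous maps $y\mapsto p_*\mu^{(j+1)}_y$ and $y\mapsto\mu^{(j)}_{\bar p(y)}$ agree almost everywhere, hence everywhere, since the Haar measure of $Z_k(Z^{(j+1)})$ has full support.

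Continuity of $\bar p$, needed for the second map to be continuous, follows similarly. The map $\pi^{(j)}_k\circ p$ is continuous and almost surely a function of $\pi^{(j+1)}_k$. By full support of the fiber measures and continuity of $y\mapsto\mu^{(j+1)}_y$, it is constant on every fiber of $\pi^{(j+1)}_k$.

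With compatibility in hand, the remaining steps go through as you indicate: the extension to $Z$, continuity tested on cylinder functions, and full support on $\pi_k^{-1}(y)=\bigcap_j p_j^{-1}\big((\pi^{(j)}_k)^{-1}(y_j)\big)$.
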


\begin{proposition} (Good projection property)  \label{prop good projection properties}
    Let $(Z, m ,T)$ be an ergodic pronilsystem, $2 \leq k \leq \ell$ fixed natural numbers and $Z_{k-1}$ its $(k-1)$-step pronilfactor. Denote $\rho_z \in \HP_\ell(Z)$ and $\eta_u \in \HP_\ell (Z_{k-1})$ the continuous ergodic decomposition of, respectively, $m_{HP_\ell(Z)}$ and $m_{HP_\ell(Z_{k-1})}$ for the respective transformation $\tilde{T}$ (see \cref{first def rho}). For all $L = \{ \ell_1, \ldots, \ell_k \} \subset \{1, \ldots, \ell\}$, $z \in Z$ and $f_1, \ldots, f_k \in C(Z)$ 
    \begin{equation}  \label{eq good projection}
        \int_{Z^k} f_1 \otimes \cdots \otimes f_k \diff \rho_z^L = \int_{Z^k_{k-1}} \E(f_1\mid Z_{k-1}) \otimes \cdots \otimes \E(f_k \mid Z_{k-1}) \diff \eta^L_{\pi_{k-1}(z)}
    \end{equation}
\end{proposition}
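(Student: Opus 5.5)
Both sides of \eqref{eq good projection} are continuous in $z$, so the plan is to prove the identity for $m$-almost every $z$ and then extend it to all $z$ by continuity. The left-hand side is continuous because $z\mapsto\rho_z$ is continuous (\cref{first def rho}, \cref{lemma ergodic fully supp decomposition}). For the right-hand side, \cref{prop cont k-step max pronilfactors} gives a fully supported continuous disintegration of $m$ over $Z_{k-1}$. By \cref{prop cont disintegration and conditional expectation}, each $\E(f_j\mid Z_{k-1})$ then has a continuous version, and we fix these versions. Since $z\mapsto \eta_{\pi_{k-1}(z)}$ is also continuous, the right-hand side is continuous in $z$.

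For the almost-everywhere statement, use \cref{thrm new cont ergodic decomp}\eqref{thrm point 3}: for $m$-a.e. $z$ we have $\rho_z=\sigma_z$, and for $m_{k-1}$-a.e. $u$ we have $\eta_u=\sigma_u$. The left side then becomes
\[
\lim_{N\to\infty}\frac1N\sum_{n=1}^N \prod_{j=1}^k f_j(T^{\ell_j n}z).
\]
Because $\pi_{k-1}$ pushes $m$ to $m_{k-1}$, the set of good $z$ for both equalities still has full measure. The task is thus to show that, for $m$-a.e. $z$, this average equals the same average with each $f_j$ replaced by $\E(f_j\mid Z_{k-1})\circ\pi_{k-1}$. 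Write $f_j = \E(f_j\mid Z_{k-1})\circ\pi_{k-1} + g_j$ with $\E(g_j\mid Z_{k-1})=0$. Expanding the product telescopically, it suffices to show that every average containing at least one $g_j$ tends to $0$ for a.e. $z$. The limits exist pointwise everywhere on nilsystems, and on pronilsystems after approximation.

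The key step, and the main obstacle, is this vanishing. The relevant tool is that the averages $\frac1N\sum_n \prod_{j=1}^k h_j(T^{\ell_j n}z)$, with distinct nonzero $\ell_j$, are controlled in $L^2(m)$ by the $U^{k}$ seminorm of any $h_j$. This is the Host–Kra characteristic factor theorem with $k$ distinct multipliers $\ell_1,\dots,\ell_k$. Here $k$ functions suffice, since no $f_0$ appears, or equivalently $f_0=1$. Because $\E(g_j\mid Z_{k-1})=0$ we have $\|g_j\|_{U^k}=0$, so the average tends to $0$ in $L^2(m)$. Combined with pointwise convergence, the limit equals $0$ for $m$-a.e. $z$. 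Some care is needed because the functions $g_j$ are only bounded measurable. I would handle this by approximating $g_j$ in $L^2$ by continuous functions, using the bound $\rho_z^{i}\le i\,m$ from \cref{prop marginals}. This bound also controls the error terms uniformly over the measures $\sigma_z$ after integrating in $z$. Thus the identity holds $m$-a.e., and hence on the dense set $\supp m=Z$.

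One more point remains: the right-hand side uses the measures $\eta_u$ on $\HP_\ell(Z_{k-1})$, where $Z_{k-1}$ carries the transformation induced by $T$. For $u=\pi_{k-1}(z)$ we have $\pi_{k-1}(T^{\ell_j n}z)=T^{\ell_j n}u$, so the averages with $\E(f_j\mid Z_{k-1})\circ\pi_{k-1}$ are exactly $\int \bigotimes_j\E(f_j\mid Z_{k-1})\,d\sigma_u^L$. For a.e. $z$ this equals $\int\bigotimes_j\E(f_j\mid Z_{k-1})\,d\eta_u^L$, which is the right-hand side of \eqref{eq good projection}.
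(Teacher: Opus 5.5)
Your proposal is correct and follows essentially the same route as the paper: for almost every $z$ you identify $\rho_z^L$ and $\eta^L_{\pi_{k-1}(z)}$ with the orbit-average measures via \cref{thrm new cont ergodic decomp}, use the Host--Kra characteristic factor theorem to replace each $f_j$ by $\E(f_j\mid Z_{k-1})\circ\pi_{k-1}$ in $L^2(m)$, and then extend to every $z$ by continuity of both sides. The only difference is that the paper phrases the almost-everywhere step as an equality of measures using a countable dense family, while you argue for each fixed tuple $f_1,\ldots,f_k$, which is equivalent.

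Two minor remarks. First, once you fix continuous versions of $\E(f_j\mid Z_{k-1})$, each $g_j$ is itself continuous, so the extra approximation step for merely measurable $g_j$ is unnecessary. Second, $U^k$-control with arbitrary distinct multipliers $\ell_j$ genuinely needs $k\ge 2$: for $k=1$ the invariant factor of $T^{\ell_1}$ is an obstruction, which is why that hypothesis appears in the statement.
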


\begin{proof}

    As previously, for $z \in Z$ and $u \in Z_{k-1}$, let $ \sigma_z $ and $\nu_u$ be the unique invariant measures of $\overline{\cO_{\tilde T}(z, \ldots,z)}$ and $\overline{\cO_{\tilde T}(u, \ldots,u)}$ respectively. Then by unique ergodicity and by the main result of Host and Kra in \cite{Host_Kra_nilpotent_structures_ergodic_theory:2018}, for all $f_1, \ldots, f_k \in C(Z)$, we get that
    \begin{align*}
        \norm{ \lim_{N \to \infty}  \frac{1}{N} \sum_{n=0}^{N-1} \prod_{i=1}^k T^{\ell_in} f_i(\bullet)   - \int_{Z^L} \bigotimes_{i=1}^k f_i  \diff \sigma_{\bullet}^L}_{L^2(m)} =0 \\
        \norm{ \lim_{N \to \infty}\frac{1}{N}  \sum_{n=0}^{N-1}  \prod_{i=1}^k T^{\ell_in} f_i(\bullet)  - \lim_{N \to \infty} \frac{1}{N}  \sum_{n=0}^{N-1}  \prod_{i=1}^k T^{\ell_in} \E(f_i \mid Z_{k-1})(\pi_{k-1}(\bullet)) }_{L^2(m)} =0 \\
        \norm{ \lim_{N \to \infty} \frac{1}{N}  \sum_{n=0}^{N-1}  \prod_{i=1}^k T^{\ell_in} \E(f_i \mid Z_{k-1})(\pi_{k-1}(\bullet)) - \int_{Z^L_{k-1}} \bigotimes_{i=1}^k \E(f_i \mid Z_{k-1})  \diff \nu_{\pi_{k-1}(\bullet)}^L}_{L^2(m)} =0
    \end{align*}
    where we represent the integration variable with $\bullet$ for clarity, and in the last equality we are using that $u \mapsto \E(f_i \mid Z_{k-1})(u)$ agrees with a continuous function in a set of full measure by Lemmas \ref{prop cont disintegration and conditional expectation} and \ref{prop cont k-step max pronilfactors}. Then for a countable dense family $\cF$ of functions in $C(Z)$ we get that there is a set $Z' \subset Z$ with $m(Z')=1$ such that for all $z \in Z'$ and all $f_1, \ldots , f_k \in \cF$
    \begin{equation} \label{eq equality aux measure}
        \int_{Z^L} \bigotimes_{i=1}^k f_i  \diff \sigma_z^L = \int_{Z^L_{k-1}} \bigotimes_{i=1}^k \E(f_i \mid Z_{k-1})  \diff \nu_{\pi_{k-1}(z)}^L.  
    \end{equation}
    For $u \in Z_{k-1}$ we define $\breve \nu_u, \breve \eta_u \in \cM(Z^k)$ the measures defined by 
    \begin{align*}
        \int_{Z^L} \bigotimes_{i=1}^k f_i  \diff \breve \nu_u = \int_{Z^L_{k-1}} \bigotimes_{i=1}^k \E(f_i \mid Z_{k-1})  \diff \nu_{u}^L \\
        \int_{Z^L} \bigotimes_{i=1}^k f_i  \diff \breve \eta_u = \int_{Z^L_{k-1}} \bigotimes_{i=1}^k \E(f_i \mid Z_{k-1})  \diff \eta_{u}^L
    \end{align*}
    for all $f_1, \ldots, f_k \in C(Z)$. Then by \eqref{eq equality aux measure}, $\sigma_z^L = \breve \nu_{\pi_{k-1}(z)}$ for $m$-almost every $z \in \Z$. Moreover, by  \cref{thrm new cont ergodic decomp}, for $z$ almost every $z \in Z$, $\rho_z^L = \sigma_z^L$ and $\breve \nu_{\pi_{k-1}(z)} = \breve \eta_{\pi_{k-1}(z)}$, hence $\rho_z^L =\breve \eta_{\pi_{k-1}(z)}$ for $m$-almost every $z \in Z$. 
    
    Using again that $u \mapsto \E(f_i \mid Z_{k-1})(u)$ and that $u \mapsto \eta_u$ are continuous map, we get that $u \mapsto \breve \eta_u$ is continuous and since $\rho_z^L =\breve \eta_{\pi_{k-1}(z)}$ in a dense set of $Z$, by continuity we conclude that $\rho_z^L =\breve \eta_{\pi_{k-1}(z)}$ for all $z \in Z$.   
\end{proof}

\subsection{Progressive measures}\label{progressive measures}

Following \cite{kmrr25}, we say that a measure $\tau \in \cM(X^{\ell+1})$ is progressive if for all open sets $E_1, \ldots, E_\ell \subset X$ with
\begin{equation*}
    \tau(X \times E_1 \times \cdots \times E_\ell) >0,
\end{equation*}
then there are infinitely many $n \in \N$ such that
\begin{equation*}
    \tau((X \times E_1 \times \cdots \times E_\ell ) \cap T_{\Delta}^{-n}(E_1 \times \cdots \times E_\ell \times X)) >0. 
\end{equation*}

Extending this notion we define:
\begin{definition}
     We say that a  measure $\tau \in \cM(X^{\ell+1})$ is uniformly progressive along $\Phi$ if for every $\eta >0$ there exists $\delta >0$ such that for every continuous and non-negative functions $f_1, \ldots, f_\ell \in C(X)$ bounded by $1$ whenever
    \begin{equation*}
    \int_{X^{\ell+1}}\1 \otimes f_1 \otimes \cdots \otimes f_\ell \diff \tau>\eta,
\end{equation*}
then 
\begin{equation}\label{conclusion-Uniform-progressiveness}
     \limsup_{N \to \infty }\frac{1}{|\Phi_N|} \sum_{n \in \Phi_N} \int_{X^{\ell+1}}(\1 \otimes f_1 \otimes \cdots \otimes f_\ell) \cdot T_{\Delta}^n (f_1 \otimes \cdots \otimes f_\ell \otimes \1) \diff \tau \geq \delta.
\end{equation}   
\end{definition}

\begin{theorem} \label{theorem key}
    Fix $s,\ell \in \N$. Let $Z$ be an ergodic pronilsystem. For $(\HP_\ell(Z), m_{\HP_\ell(Z)}, \tilde T, T_\Delta)$ consider $z \mapsto \rho_z$ and $w \mapsto \lambda_w$ the continuous $\tilde T$-ergodic and $T_\Delta$-ergodic decompositions of $m_{\HP_\ell(Z)}$ respectively. For any $z \in Z$ and $\rho_z$-almost every $w \in \HP_\ell(Z)$, the measure $\lambda_w$ is ergodic.  
\end{theorem}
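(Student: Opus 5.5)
The statement is essentially \cref{theorem cross ergodic decompositions} applied to the $\Z^2$-system $(\HP_\ell(Z), m_{\HP_\ell(Z)}, T_\Delta, \tilde T)$, with $T_\Delta$ in the role of $T$ and $\tilde T$ in the role of $S$. The plan is to reduce to the nilsystem case, apply that theorem there, and pass to the inverse limit.

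\textbf{Nilsystem case.} Suppose first that $Z = G/\Gamma$ is an ergodic nilsystem with $T$ induced by $\tau$. Then $\HP_\ell(Z) = \HP_\ell(G)\cdot(e_Z,\ldots,e_Z)$ is a nilmanifold on which $\tilde T$ and $T_\Delta$ act by the commuting elements $\tilde\tau = (e,\tau,\ldots,\tau^\ell)$ and $\tau_\Delta = (\tau,\ldots,\tau)$. The joint action $\langle \tilde T, T_\Delta\rangle$ is minimal and uniquely ergodic with measure $m_{\HP_\ell(Z)}$, so the system is an ergodic $\Z^2$-nilsystem. After the usual reduction, the ambient group is generated by its identity component together with $\tilde\tau$ and $\tau_\Delta$.

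\cref{theorem cross ergodic decompositions} then gives the following: for every $w' \in \HP_\ell(Z)$ and $\rho_{w'}$-almost every $w$, the measure $\lambda_w$ is $T_\Delta$-ergodic. By \cref{thrm new cont ergodic decomp}\eqref{thrm point 2}, $\rho_z = \rho_{(z,\ldots,z)}$, so this is exactly the claim for every $z \in Z$. The uniqueness in \cref{lemma ergodic fully supp decomposition} guarantees that the decompositions named in the statement coincide with those produced in \cref{theorem cross ergodic decompositions}.

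\textbf{Pronilsystem case.} Let $Z = \varprojlim Z_j$ with continuous factor maps $p_j \colon Z \to Z_j$. Then $\HP_\ell(Z) = \varprojlim \HP_\ell(Z_j)$ via $p_j^{\times(\ell+1)}$. The fully supported continuous ergodic decompositions are compatible with these maps, by the inverse-limit stability in \cite[Lemma 3.9]{radic2026Uniformity} together with uniqueness; that is,
\[
p_{j*}\rho_z = \rho^{(j)}_{p_j z}, \qquad p_{j*}\lambda_w = \lambda^{(j)}_{p_j w}.
\]

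Let $E_j = \{ w : \lambda^{(j)}_{p_j w} \text{ is ergodic}\}$. By the nilsystem case, $\rho_z(E_j) = \rho^{(j)}_{p_j z}(p_j E_j) = 1$ for each $j$, so $E = \bigcap_j E_j$ has full $\rho_z$-measure.

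\textbf{Main obstacle.} It remains to show that for $w \in E$ the measure $\lambda_w$ is $T_\Delta$-ergodic. Since functions lifted from the $\HP_\ell(Z_j)$ are dense in $C(\HP_\ell(Z))$, it suffices to check, for each such $f$, that the $T_\Delta$-ergodic averages of $f$ converge in $L^2(\lambda_w)$ to a constant. Those averages depend only on $p_{j*}\lambda_w = \lambda^{(j)}_{p_j w}$, which is ergodic. Hence $f$ has constant $T_\Delta$-invariant projection, and $\lambda_w$ is ergodic.

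The step most likely to need care is verifying that the nilsystem $\HP_\ell(Z)$ with the pair $(\tilde\tau, \tau_\Delta)$ genuinely fits the hypotheses of \cref{theorem cross ergodic decompositions} (ergodicity of the joint action, and the reduction on generators), rather than the inverse-limit bookkeeping.
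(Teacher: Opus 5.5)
Your proposal is correct and follows essentially the same route as the paper. The paper also gets the nilsystem case directly from \cref{theorem cross ergodic decompositions}, using that $\HP_\ell(G)$ is generated by its identity component together with $\tilde\tau$ and $\tau_\Delta$. It then passes to the inverse limit via the full-measure set $\bigcap_i \tilde\theta_i^{-1}(W_i)$ and the identification $(\tilde\theta_i)_*\lambda_w=\lambda^{Z_i}_{\tilde\theta_i(w)}$. Your $L^2$ mean-ergodic phrasing of the final step (every finite-level projection of $\lambda_w$ is ergodic, hence $\lambda_w$ is) states the same conclusion a bit more cleanly than the paper's pointwise genericity argument.
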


\begin{proof}
    The case $Z = G/\Gamma$ being and $s$-step nilsystem is direct from \cref{theorem cross ergodic decompositions} and the fact that  $\HP_\ell(G)=\langle\HP_\ell(G)^\circ, \tilde \tau, \tau_\Delta \rangle$, (see \cite[proof of Theorem 5, Chapter 15]{Host_Kra_nilpotent_structures_ergodic_theory:2018}). 
    
    For the inverse limit case, let $(Z_i, m_i, T)$ be the sequence of $s$-step nilsystem such that $(Z, m, T) = \lim_{\leftarrow} (Z_i, m_i, T)$ and denote $\theta_i \colon Z \to Z_i$ the factor map. We also denote $\tilde \theta_i = \theta_i \times \cdots \times \theta_i$ that maps $\HP_\ell(Z)$ to $\HP_\ell(Z_i)$.
    Fix $z \in Z$. Since the span of $$\{ f \circ \tilde \theta_i \colon f \in C(\HP_\ell(Z_i)), i \in \N\}$$ is dense in $C(\HP_\ell(Z))$ and $C(\HP_\ell(Z_i))$ is separable, it is enough to prove that for $\rho_z$-almost every $w \in \HP_\ell(Z)$, if $f \in C(\HP_\ell(Z_i))$ then $\lim_{N \to \infty} \frac{1}{N} \sum_{n =1}^N f\circ \tilde \theta_i(T^n_\Delta w) = \int f \circ \tilde \theta_i \diff \lambda_w$. 
    
    For $i \in \N$, denote $u \in Z_i \mapsto \rho^{Z_i}_u$ the $\tilde T$-continuous ergodic decomposition of $m_{\HP_\ell(Z_i)}$. Similarly for $\lambda_w^{Z_i}$.  Let $W_i \subset \HP_\ell(Z_i)$ be the measurable set with $\rho_{\theta_i(z)}^{Z_i}(W_i) =1$ such that $\lambda_w^{Z_i}$ is ergodic for all $w \in W_i$. Notice that $\rho_{\theta_i(z)}^{Z_i}(W_i) = \rho_{z}(\tilde{\theta}_i^{-1}(W_i))$ and therefore the set $$ W= \bigcap_{i \in \N} \tilde{\theta}_i^{-1}(W_i)$$ has full measure for $\rho_z$. Then for $w \in W$ and $f \in C(\HP_\ell(Z_i))$ for some $i\in \N$
    \begin{equation*}
        \lim_{N \to \infty} \frac{1}{N} \sum_{n \leq N} f\circ \tilde \theta_i(T^n_\Delta w) = \lim_{N \to \infty} \frac{1}{N} \sum_{n \leq N} f(T^n_\Delta \tilde\theta_i( w)) = \int_{\HP_\ell(Z_i)} f \diff \lambda_{\tilde\theta_i(w)}^{Z_i} = \int_{\HP_\ell(Z)} f \circ \tilde \theta_i \diff \lambda_{w}.
    \end{equation*}
    In particular, we have that $\lambda_{\tilde\theta_i(w)}^{Z_i}= (\tilde\theta_i)_*\lambda_{w}$ from which the ergodicity of $\lambda_w$ follows, concluding the proof. 
\end{proof}

To conclude, we cite the following result from \cite{hernandez2025infinite}.

\begin{proposition}[{\cite[Proposition 5.5]{hernandez2025infinite}}]\label{uniform-Szemeredi-0}  Let $\ell,s\in \N$. Let $Z$ be an ergodic $s$-step pronilsystem. The measure $\sigma_z \in \cM(Z^{\ell+1})$ is uniformly progressive along any F\o lner sequence for all $z \in Z$. Moreover, the constant associated to progressiveness in \eqref{conclusion-Uniform-progressiveness} depends only in $\ell$ and $\eta>0$. 
\end{proposition}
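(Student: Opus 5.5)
The plan is to unfold both sides of \eqref{conclusion-Uniform-progressiveness} into averages along polynomial orbits of the nilsystem, and then get the lower bound from a Cauchy--Schwarz (van der Corput type) positivity argument. The constant obtained this way is $\delta=\eta^2$, which depends only on $\eta$ (hence only on $\ell$ and $\eta$). By the inverse limit structure it suffices to treat an $s$-step nilsystem $Z=G/\Gamma$, with constants independent of $Z$. Fix $z\in Z$. By \cref{defi xi} and unique ergodicity of $W_z=\overline{\cO_{\tilde T}(z,\ldots,z)}$, for continuous $f_1,\ldots,f_\ell$ we have
\begin{equation*}
\int \1\otimes f_1\otimes\cdots\otimes f_\ell \diff\sigma_z=\lim_{M\to\infty}\frac1M\sum_{m=1}^M\prod_{i=1}^\ell f_i(T^{im}z).
\end{equation*}
Similarly, the integrand in \eqref{conclusion-Uniform-progressiveness} at time $n$ equals
\begin{equation*}
\lim_{M\to\infty}\frac1M\sum_{m=1}^M\prod_{i=1}^\ell f_i(T^{im}z)\,f_i(T^{(i-1)m+n}z).
\end{equation*}
The sequence $(m,n)\mapsto (T^{im}z,T^{(i-1)m+n}z)_i$ is a polynomial orbit in $Z^{2\ell}$. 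By Leibman's equidistribution theorem for polynomial sequences on nilmanifolds, the double averages over $m$ and $n\in\Phi_N$ converge, and the limit does not depend on the F\o lner sequence $\Phi$. So the $\limsup$ is a genuine limit, independent of $\Phi$, and the iterated limit may be computed in any order.

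The key step is to reparametrize $n=m+r$. Then $(i-1)m+n=im+r$, and the product becomes $h(x_m)\,h(T_\Delta^r x_m)$, where $h=f_1\otimes\cdots\otimes f_\ell$ on $Z^\ell$ and $x_m=(T^{m}z,T^{2m}z,\ldots,T^{\ell m}z)$. For each fixed $m$, the average over $r$ of $h(x_m)h(T_\Delta^r x_m)$ converges to $h(x_m)\int h\diff\lambda_{x_m}$, where $x\mapsto\lambda_x$ is the continuous $T_\Delta$-ergodic decomposition on $Z^\ell$ from \cref{Continuous-ergodic-decomp-with-Leibman-group}. Using the equidistribution of $(x_m)$ with respect to $\sigma_z^*$ and the continuity of $x\mapsto\lambda_x$, I expect the double limit to equal
\begin{equation*}
\int h(x)\Big(\int h\diff\lambda_x\Big)\diff\sigma_z^*(x).
\end{equation*}
Now I would use that $\sigma_z^*$ is $T_\Delta$-invariant: indeed, $T_\Delta$ on the last $\ell$ coordinates agrees with $T_\Delta \tilde T^{-1}\tilde T$, and $W_z$ is $T_\Delta$-invariant inside $\HP_\ell(Z)$. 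Granting this invariance, the quantity above equals $\int(\int h\diff\lambda_x)^2\diff\sigma_z^*(x)$, after decomposing $\sigma_z^*$ into its $T_\Delta$-ergodic components. By Cauchy--Schwarz this is at least $(\int h\diff\sigma_z^*)^2>\eta^2$, so $\delta=\eta^2$ works.

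The main obstacle is the last identification. Its core issue is that the $T_\Delta$-ergodic decomposition of $\sigma_z^*$ must agree with the continuous decomposition $x\mapsto\lambda_x$ at $\sigma_z^*$-almost every point. A single exceptional $z$ could in principle place $W_z$ inside the bad set of the decomposition, which is exactly the phenomenon behind \cref{theorem key} and \cref{theorem cross ergodic decompositions}. I would try to prove the needed identity $\lim_r\frac1R\sum_{r\le R}h(T_\Delta^r x)=\int h\diff\lambda_x$ for $\sigma_z^*$-a.e.\ $x$ directly in this setting, by applying \cref{theorem cross ergodic decompositions} to the commuting pair $(\tilde T,T_\Delta)$ restricted to the sub-nilmanifold $W_z$. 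If that fails, the fallback is to avoid ergodic decompositions entirely and use the mean ergodic theorem for $T_\Delta$ on $L^2(\sigma_z^*)$. That gives the lower bound $\|\E(h\mid\cI_{T_\Delta})\|_{L^2(\sigma_z^*)}^2\ge(\int h\diff\sigma_z^*)^2$ without any pointwise claim, at the cost of justifying the exchange of the $m$ and $r$ limits via Leibman's theorem.
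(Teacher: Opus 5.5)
Your argument breaks at the claimed $T_\Delta$-invariance of $W_z$ and of $\sigma_z^*$, and everything after it depends on that claim. This includes:
\begin{itemize}
\item rewriting $\int h(x)\big(\int h\diff\lambda_x\big)\diff\sigma_z^*(x)$ as $\int\big(\int h\diff\lambda_x\big)^2\diff\sigma_z^*(x)$;
\item the Cauchy--Schwarz step;
\item the fallback through the mean ergodic theorem on $L^2(\sigma_z^*)$;
\item applying \cref{theorem cross ergodic decompositions} to $(\tilde T,T_\Delta)$ on $W_z$.
\end{itemize}
Every point of $W_z$ has $0$-th coordinate $z$, while every point of $T_\Delta W_z$ has $0$-th coordinate $Tz\neq z$. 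Likewise, $\sigma_z^*$ is invariant under $T\times T^2\times\cdots\times T^\ell$, not under $T\times T\times\cdots\times T$. Already for $Z=\T$, $Tx=x+\alpha$ with $\alpha$ irrational, $\ell=2$ and $z=0$, $\sigma_0^*$ is Haar measure on $\{(t,2t)\colon t\in\T\}$. Its image under $T_\Delta$ is Haar measure on the disjoint coset $\{(t,2t-\alpha)\colon t\in\T\}$.

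This cannot be patched, because the bound $\delta=\eta^2$ is itself false. In the same example the hypothesis reads $\int f_1(t)f_2(2t)\diff t>\eta$. By unique ergodicity of the rotation, the quantity in \eqref{conclusion-Uniform-progressiveness} equals $\iint f_1(t)f_2(2t)f_1(s)f_2(t+s)\diff s\diff t$. For $f_1\approx\1_I$ and $f_2\approx\1_{2I}$ this measures the pairs $(t,s)\in I^2$ with $t+s\in 2I$, that is, three-term progressions in $I$.

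Here is a concrete choice.
\begin{itemize}
\item Let $B\subset[1,P/3]$ contain no nontrivial three-term progression.
\item Put $I_a=[a/P,a/P+\frac{1}{10P}]$ and $I=\bigcup_{a\in B}I_a$.
\item Take continuous $0\le f_1\le\1_I$ and $0\le f_2\le\1_{2I}$ with $\int f_1(t)f_2(2t)\diff t$ close to $\epsilon=|I|=|B|/(10P)$.
\end{itemize}
If $t\in I_a$, $s\in I_b$ and $t+s\in 2I_j$, then $a+b=2j$, hence $a=b=j$. So the quantity is at most $\sum_{a\in B}|I_a|^2=\epsilon/(10P)$. For a Behrend set $B$ this is smaller than any fixed power of $\epsilon$ once $P$ is large.

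Hence no Cauchy--Schwarz or positivity argument can give a uniform $\delta$; the lower bound has to come from counting arithmetic progressions. That is the missing ingredient, and it is what the cited proof uses. There $\delta(\ell,\eta)$ is obtained from the uniform Szemer\'edi theorem \cite[Theorem A.2]{bergelson_Kulaga_Lemanczyk_Richter2019rationally}, whose constant does not depend on the underlying system. This is also why $\delta$ depends only on $\ell$ and $\eta$.
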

\begin{remark} \cite[Proposition 5.5]{hernandez2025infinite} is proven for nilsystems, but an approximation argument gives the statement for pronilsystems. Similarly, in \cite[Proposition 5.5]{hernandez2025infinite} it is not explicit that the  constant depends only in $\eta$ and $\ell$, but it follows immediately from the proof, as it relies solely in the use of uniform Szemerédi's theorem \cite[Theorem A.2]{bergelson_Kulaga_Lemanczyk_Richter2019rationally} and not in the properties of the subjacent nilsystem.

\end{remark}

\begin{theorem} \label{thrm rho uniformly progresive} \label{thrm rho is unif progressive}
   Let $\ell,s\in \N$. Let $(Z,m,T)$ be an ergodic $s$-step pronilsystem. The measure $\rho_z \in \cM(\HP_\ell(Z))$ is uniformly progressive for all $z \in Z$.  
\end{theorem}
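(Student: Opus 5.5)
The plan is to transfer the uniform progressiveness of $\sigma_u$ from \cref{uniform-Szemeredi-0} to $\rho_z$. We disintegrate $\rho_z$ along the $T_\Delta$-direction, so that it becomes an average of measures that, up to a full-measure exceptional set, coincide with measures of the form $\sigma_u$. \cref{theorem key} guarantees that this exceptional set is $\rho_z$-null for \emph{every} $z$.

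Fix $z\in Z$ and $\eta>0$, and let $f_1,\ldots,f_\ell\in C(Z)$ be non-negative, bounded by $1$, with $\int \1\otimes f_1\otimes\cdots\otimes f_\ell \diff\rho_z>\eta$. Write $F=\1\otimes f_1\otimes\cdots\otimes f_\ell$ and $F'=f_1\otimes\cdots\otimes f_\ell\otimes\1$.

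\textbf{Step 1 (reduce to the ergodic averages of $\tilde T$).} Since $\rho_z$ is $\tilde T$-invariant, we may replace $\rho_z$ by $\tilde T^m\rho_z$ freely. More importantly, $\rho_z$-almost every $w$ should be $\tilde T$-generic for $\rho_z$. This holds because $\rho_z$ is ergodic for $\tilde T$, or can be arranged via \cref{thrm new cont ergodic decomp} together with continuity. We will not really need this, however. The key idea is instead to use the $T_\Delta$-decomposition $w\mapsto\lambda_w$ of $m_{\HP_\ell(Z)}$. By \cref{theorem key}, for $\rho_z$-a.e.\ $w$ the measure $\lambda_w$ is $T_\Delta$-ergodic, so the points $w$ are $T_\Delta$-generic for $\lambda_w$. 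Then, by the mean ergodic theorem applied to $T_\Delta$ acting on $(\HP_\ell(Z),\rho_z)$, which is not invariant, we cannot directly proceed. Instead we will reorganize $\rho_z$ as follows. By \cref{thrm new cont ergodic decomp}, $\rho_u=\sigma_u$ for $m$-a.e.\ $u$. The measure $\rho_z$ is the $\tilde T$-ergodic component through $(z,\ldots,z)$, and its points $w$ satisfy $w_0=z$ and $\rho_w=\rho_z$ by \eqref{thrm point 2}.

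\textbf{Step 2 (the key comparison).} The quantity in \eqref{conclusion-Uniform-progressiveness} equals $\limsup_N \frac{1}{|\Phi_N|}\sum_{n\in\Phi_N}\int F\cdot F'\circ T_\Delta^n\diff\rho_z$. By dominated convergence this is at least $\int \E_{\lambda_w}(F')\,F(w)\diff\rho_z(w)$, where the limit exists pointwise for $\rho_z$-a.e.\ $w$ by genericity, and this genericity is exactly where \cref{theorem key} (valid for every $z$) enters. Next, approximate $z$ by points $u$ in the full-measure set where $\rho_u=\sigma_u$ and $u$ is nice. For such $u$, \cref{uniform-Szemeredi-0} gives the bound $\geq\delta(\ell,\eta/2)$ for the analogous expression with $\sigma_u=\rho_u$. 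To pass the bound to $z$, use that the limiting functional $w\mapsto \int F'\diff\lambda_w$ is continuous, since $w\mapsto\lambda_w$ is a continuous decomposition. Hence $u\mapsto\int F\cdot\int F'\diff\lambda_\bullet\diff\rho_u$ is continuous, because $u\mapsto\rho_u$ is continuous. We also need that for these good $u$ the $\limsup$ of Cesàro averages over $\Phi$ coincides with $\int F\cdot \int F'\diff\lambda_\bullet\diff\rho_u$. This again follows from \cref{theorem key} applied at $u$ and dominated convergence; it also requires that the limit for $\sigma_u$ along $\Phi$ is computed by the same formula.

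\textbf{Step 3 (conclude).} Choose good $u$ close to $z$ with $\int F\diff\rho_u>\eta/2$, which is possible by continuity of $u\mapsto\rho_u$ and density of the good set. Then $\int F\cdot\int F'\diff\lambda_\bullet\diff\rho_u\ge\delta$, and letting $u\to z$ gives the same lower bound for $z$. The resulting $\delta$ depends only on $\ell,\eta$. Combining this with the lower bound from Step 2 (in fact an equality, since the limit exists) yields \eqref{conclusion-Uniform-progressiveness} for $\rho_z$.

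The main obstacle is Step 2 at the exceptional point $z$. Without \cref{theorem key}, $\rho_z$-typical points need not be $T_\Delta$-generic, so the Cesàro limit need not be given by the continuous functional, and continuity could fail to transfer the bound.
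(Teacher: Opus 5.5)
Your argument is correct and is essentially the paper's proof. \cref{theorem key} makes $\rho_u$-a.e.\ $w$ generic for $\lambda_w$ at every $u$, so by dominated convergence the Ces\`aro limit equals the continuous functional $u\mapsto\int F(w)\int F'\diff\lambda_w\diff\rho_u(w)$, and the bound from \cref{uniform-Szemeredi-0} on the dense full-measure set where $\rho_u=\sigma_u$ passes to $z$ by continuity; the only cosmetic difference is that the paper keeps $\int F\diff\rho_u>\eta$ on an open neighborhood instead of using your $\eta/2$ margin. The aside in Step 1 that $\rho_z$ is $\tilde T$-ergodic is false at exactly the exceptional points where $\rho_z\neq\sigma_z$, but you discard it and never use it, so it does no harm.
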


\begin{proof}
    Fix $f_1, \ldots, f_\ell \in C(Z)$ continuous non-negative functions bounded by $1$ and $z \in Z$ with $$\int_{\HP_\ell (Z)} \1 \otimes f_1 \otimes \cdots \otimes f_\ell \diff \rho_z  > \eta.$$ Using \cref{thrm new cont ergodic decomp}, consider $D \subset Z$ the dense set with $m(D) =1$ such that $\rho_u = \sigma_u$ for all $u \in D$. If $z \in D$ then we conclude by \cref{uniform-Szemeredi-0}. 
    
    Suppose then that $z \not \in D$. By continuity of $z \mapsto \rho_z$, there is an open neighborhood $V$ of $z$ such that $$\int_{\HP_\ell (Z)} \1 \otimes f_1 \otimes \cdots \otimes f_\ell \diff \rho_u  > \eta$$ for all $u \in V$. Using that $D \cap V$ is dense in $V$ (and thus non-empty), by \cref{uniform-Szemeredi-0} for all $u \in D \cap V$  
    \begin{equation} \label{eq limit progresive}
        \liminf_{N \to \infty} \frac{1}{|\Phi_N|} \sum_{n \in \Phi_N} \int_{\HP_\ell(Z)} (\1 \otimes f_1 \otimes \cdots \otimes f_\ell ) T_{\Delta}^n (f_1 \otimes \cdots \otimes f_\ell \otimes \1) \diff \rho_u \geq \delta.
    \end{equation}

    \underline{Claim} The map from $Z$ to $\R$ given by
    \begin{equation} \label{eq limit progresive 2} 
        u \mapsto \liminf_{N \to \infty} \frac{1}{|\Phi_N|} \sum_{n \in \Phi_N} \int_{\HP_\ell(Z)} (\1 \otimes f_1 \otimes \cdots \otimes f_\ell ) T_{\Delta}^n (f_1 \otimes \cdots \otimes f_\ell \otimes \1) \diff \rho_u 
    \end{equation}
    is continuous. 

    To prove the claim fix $u \in Z$ arbitrary. By \cref{theorem key}, for $\rho_u$ almost every $w \in \HP_\ell(Z)$, $\lambda_w$ is ergodic and therefore uniquely ergodic in its support, so by dominated convergence theorem the limit in \eqref{eq limit progresive 2} equals
    \begin{align*}
          &\int_{\HP_\ell(Z)} (\1 \otimes f_1 \otimes \cdots \otimes f_\ell ) \lim_{N \to \infty} \frac{1}{|\Phi_N|} \sum_{n \in \Phi_N} T_{\Delta}^n (f_1 \otimes \cdots \otimes f_\ell \otimes \1 ) \diff \rho_u \\ 
         = & \int_{\HP_\ell(Z)} \Big(\1 \otimes f_1 \otimes \cdots \otimes f_\ell \Big)(w) \Big( \int_{\HP_\ell(Z)}  f_1 \otimes \cdots \otimes f_{\ell-1} \otimes f_\ell \otimes \1 \diff \lambda_w \Big) \diff \rho_u(w)
    \end{align*}
    Since $f_1 \otimes \cdots \otimes f_\ell \otimes \1$ is a continuous function the map $w \mapsto \int  f_1 \otimes \cdots  \otimes f_\ell \otimes \1 \diff \lambda_w $ is also continuous (by continuity of $w \mapsto \lambda_w$). By the same reason the map 
    \begin{equation*}
        u \mapsto \int_{\HP_\ell(Z)} \Big(\1 \otimes f_1 \otimes \cdots \otimes f_\ell \Big)(w) \Big( \int_{\HP_\ell(Z)}  f_1 \otimes \cdots \otimes f_\ell \otimes \1 \diff \lambda_w \Big) \diff \rho_u(w) 
    \end{equation*}
    is continuous, concluding the claim. 

    Finally, using the claim, since the function in \eqref{eq limit progresive} is continuous and bounded by $\delta$ for all $u \in D\cap V$ which is dense in $V$, we conclude the property for all $u \in V$ in particular for $z$.
\end{proof}

Another consequence from \cref{theorem cross ergodic decompositions}, used in the proof of \cref{main thrm nilborh}, is the following,

\begin{corollary} \label{cor to get shift t}
Let $\ell,s\in \N$. Let $(Z,m,T)$ be an ergodic $s$-step pronilsystem. For every $z \in Z$, F\o lner sequence $\Phi$ and $f_1, \ldots , f_\ell \in C(Z)$
    \begin{equation} \label{eq for finding t}
          \lim_{N \to \infty} \int_{\HP_\ell(Z)} \frac{1}{|\Phi_N|} \sum_{n \in \Phi_N}   T_{\Delta}^n (\1 \otimes f_1 \otimes \cdots \otimes f_\ell ) \diff \rho_z = \int_{\HP_\ell(Z)} (\1 \otimes f_1 \otimes \cdots \otimes f_\ell)  \diff m_{\HP_\ell(Z)}
    \end{equation}
\end{corollary}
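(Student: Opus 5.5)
The plan is to move the limit inside the integral using \cref{theorem key}, and then to identify the resulting integral via $T_\Delta$-invariance and the two ergodic decompositions. Set $F = \1 \otimes f_1 \otimes \cdots \otimes f_\ell \in C(\HP_\ell(Z))$, and let $w \mapsto \lambda_w$ be the continuous $T_\Delta$-ergodic decomposition of $m_{\HP_\ell(Z)}$. Fix $z \in Z$. By \cref{theorem key}, for $\rho_z$-almost every $w$ the measure $\lambda_w$ is ergodic. Since $\lambda_w$ is then the unique invariant measure of the minimal pronilsystem $\supp \lambda_w \ni w$, we get
\[
\frac{1}{|\Phi_N|}\sum_{n\in\Phi_N} F(T_\Delta^n w) \to \int F \diff \lambda_w
\]
along any F\o lner sequence. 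By dominated convergence, the left-hand side of \eqref{eq for finding t} therefore equals $\int_{\HP_\ell(Z)} \int F \diff\lambda_w \diff\rho_z(w)$. The function $g(w) := \int F\diff\lambda_w$ is continuous and $T_\Delta$-invariant.

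It then remains to show that $\int g \diff \rho_z = \int F \diff m_{\HP_\ell(Z)}$ for every $z$, not just almost every $z$. The map $u \mapsto \int g\diff\rho_u$ is continuous on $Z$, because $u\mapsto\rho_u$ is continuous and $g$ is continuous. It is also $T$-invariant: by \cref{thrm new cont ergodic decomp}\eqref{thrm point 2} we have $\rho_{Tu} = T_\Delta \rho_u$, since $T_\Delta \rho_u$ is a $\tilde T$-ergodic component with zeroth coordinate $Tu$. Using $g\circ T_\Delta = g$, we get $\int g \diff\rho_{Tu} = \int g\diff\rho_u$. A continuous $T$-invariant function on the minimal system $(Z,T)$ is constant, so $\int g \diff \rho_u$ equals its $m$-average $\int_Z\int g\diff\rho_u\diff m(u) = \int g \diff m_{\HP_\ell(Z)}$ by \eqref{eq second ergodic decomposition}. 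Finally, $\int g\diff m_{\HP_\ell(Z)} = \int F \diff m_{\HP_\ell(Z)}$ because $w\mapsto \lambda_w$ is an ergodic decomposition of $m_{\HP_\ell(Z)}$.

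The main obstacle is the first step: interchanging the limit and the integral for an arbitrary $z$. This is exactly where the fiberwise statement of \cref{theorem key} is needed. Without it, the pointwise limit $\frac{1}{|\Phi_N|}\sum F(T^n_\Delta w)$ could differ from $\int F\diff\lambda_w$ on a $\rho_z$-positive set when $\rho_z$ is singular with respect to $m_{\HP_\ell(Z)}$. A secondary point to check is the equivariance $\rho_{Tu}=T_\Delta\rho_u$. It follows from uniqueness of the continuous ergodic decomposition: the map $w\mapsto T_\Delta\rho_{T_\Delta^{-1}w}$ is again a continuous $\tilde T$-ergodic decomposition, because $T_\Delta$ commutes with $\tilde T$ and preserves $m_{\HP_\ell(Z)}$.
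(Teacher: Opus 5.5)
Your argument is correct. Its first step matches the paper's: both use the fiberwise ergodicity of \cref{theorem key} to pass the limit inside the integral, so the left side of \eqref{eq for finding t} becomes $\int g\diff\rho_z$ with $g(w)=\int F\diff\lambda_w$, which is continuous in $z$.

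The two routes differ in how they identify the value. The paper imports \eqref{eq for finding t} from \cite[Theorem 5.2]{kmrr25} for the dense set of $z$ with $\rho_z$ ergodic, and then extends it to all $z$ by continuity. You instead combine the equivariance $\rho_{Tu}=T_\Delta\rho_u$, which your uniqueness argument justifies properly, with the $T_\Delta$-invariance of $g$. This shows $u\mapsto\int g\diff\rho_u$ is continuous and $T$-invariant, hence constant on the minimal system $Z$, and you read off the constant from \eqref{eq second ergodic decomposition}. Your route gives a self-contained proof with no external input.

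In fact your own ingredients make \cref{theorem key} unnecessary here, so the ``main obstacle'' you flag is not one. Nothing multiplies the averaged term in \eqref{eq for finding t}, so equivariance gives $\int T_\Delta^nF\diff\rho_z=\int F\diff\rho_{T^nz}$. The average is therefore just the ergodic average of the continuous function $u\mapsto\int F\diff\rho_u$ along the orbit of $z$. Unique ergodicity of $(Z,T)$ and \eqref{eq second ergodic decomposition} then give the limit $\int F\diff m_{\HP_\ell(Z)}$ along every F\o lner sequence. The fiberwise statement is genuinely needed only when a fixed factor such as $\1\otimes f_1\otimes\cdots\otimes f_\ell$ multiplies the average, as in \cref{thrm rho is unif progressive}.
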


\begin{proof}
    By \cite[Theorem 5.2]{kmrr25}, \eqref{eq for finding t} is already known for $z \in Z$ such that $\rho_z$ is ergodic. Notice that the right hand side does not depend on $z$, and similarly to the proof \eqref{eq limit progresive 2}, by \cref{theorem cross ergodic decompositions}, the function from $Z$ to $\R$ given by
    \begin{equation} \label{eq cont}
        u \mapsto \lim_{N \to \infty} \int_{\HP_\ell(Z)} \frac{1}{|\Phi_N|} \sum_{n \in \Phi_N}   T_{\Delta}^n (\1 \otimes f_1 \otimes \cdots \otimes f_\ell ) \diff \rho_u
    \end{equation}
    is continuous. Using that the points $z \in Z$ such that $\rho_z$ is ergodic are dense in $Z$ and the continuity of the map in \eqref{eq cont}, we conclude the equality \eqref{eq for finding t} for all $z \in Z$. 
\end{proof}

\subsection{Lifting progressive measures}

With the two families of measures in $\HP_\ell(Z)$, $z \mapsto \rho_z$ and $z\mapsto \sigma_z$, we define two families of measures in $\A_\ell(X)$ by lifting them.
\begin{definition} \label{def rho and sigma tilde}
    Let $\Xmt$ be an ergodic system with topological pronilfactors and $a \in X$. Considering $Z = Z_{\ell-1}(\mu)$ in Definitions \ref{first def rho} and \ref{defi xi}, for all $f_0, f_1, \ldots, f_\ell \in C(X)$ we define the measures $\tilde \rho_a$ and $\tilde \sigma_a$ given by
    \begin{align*}
        \int_{\A_{\ell}(X)} f_0 \otimes f_1 \otimes \cdots \otimes f_\ell \diff \tilde \rho_a = f_0(a) \int_{Z_{\ell-1}^\ell} \E(f_1| Z_{\ell-1 }) \otimes \cdots \otimes \E(f_\ell| Z_{\ell-1 }) \diff \rho_{\pi_{\ell-1}(a)}^* \\
        \int_{\A_{\ell}(X)} f_0 \otimes f_1 \otimes \cdots \otimes f_\ell \diff\tilde \sigma_a = f_0(a) \int_{Z_{\ell-1}^\ell} \E(f_1| Z_{\ell-1 }) \otimes \cdots \otimes \E(f_\ell| Z_{\ell-1 }) \diff \sigma_{\pi_{\ell-1}(a)}^*
    \end{align*}
\end{definition}

\begin{theorem}\label{progressive}
    Let $\Xmt$ be an ergodic system with topological pronilfactors and let $\ell \in \N$. Given a F\o lner sequence $\Phi$ and $a \in \gen(\mu,\Phi)$, the measure $\tilde{\rho}_a \in  \cM(\A_\ell (X))$ for $a \in \gen(\mu, \Phi)$ is uniformly progressive along $\Phi$. Moreover, the constant $\delta >0$ in the definition only depends on $\eta >0$ and $\ell$, but does not depend on the system $\Xmt$.
\end{theorem}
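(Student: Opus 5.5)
The plan is to reduce uniform progressiveness of $\tilde\rho_a$ to that of $\rho_{z}$ on $\HP_\ell(Z)$, with $Z=Z_{\ell-1}$ and $z=\pi_{\ell-1}(a)$, which is \cref{thrm rho is unif progressive}. The reduction goes through the genericity of $a$ and the continuous disintegration of $\mu$ over $Z$. The first step is to rewrite the hypothesis. Set $g_i=\E(f_i\mid Z)$. By \cref{prop cont disintegration and conditional expectation} and \cref{prop cont k-step max pronilfactors} (using that $X$ has topological pronilfactors), we may take each $g_i$ to be a continuous function on $Z$, still with values in $[0,1]$. By \cref{def rho and sigma tilde}, the hypothesis $\int \1\otimes f_1\otimes\cdots\otimes f_\ell\diff\tilde\rho_a>\eta$ becomes $\int \1\otimes g_1\otimes\cdots\otimes g_\ell\diff\rho_z>\eta$.

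The second step is to compute the averaged correlation in \eqref{conclusion-Uniform-progressiveness}. For each $n$, the integrand against $\tilde\rho_a$ is
\[
f_1(T^na)\int (f_2\cdot T^nf_1)\otimes\cdots\otimes(f_\ell\cdot T^nf_{\ell-1})\otimes T^nf_\ell\diff\tilde\rho_a^*.
\]
Expanding through \cref{def rho and sigma tilde}, it equals $f_1(T^na)$ times a $\rho_z^*$-integral of conditional expectations of products $f_{i+1}\cdot T^nf_i$. These conditional expectations are not simply $g_{i+1}\cdot T^ng_i$, so here I would exploit the definition of $\A_\ell(X)$ and write $\tilde\rho_a$ as $\delta_a\times\int\bigotimes_{i\ge1}\mu_{u_i}\diff\rho^*_z(u)$, where $u\mapsto\mu_u$ is the continuous disintegration of $\mu$ over $Z$. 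The average over $n$ then becomes an average of $f_1(T^na)\,F(T_\Delta^n\cdot)$-type expressions. To handle it, I would use the $\lambda_w$-decomposition from \cref{theorem key}, as in the Claim in the proof of \cref{thrm rho is unif progressive}. Since $a$ is generic, the sums $\frac1{|\Phi_N|}\sum_{n\in\Phi_N} f_1(T^na)\,\psi(T^n_\Delta w)$ should be comparable to a joining. I would then apply Cauchy--Schwarz (equivalently a van der Corput-type estimate) to bound the $X$-level average from below by the corresponding average with the $g_i$ on $\HP_\ell(Z)$ against $\rho_z$, up to a positive power. This uses that the orthogonal parts $f_i-g_i$ contribute zero, because $Z_{\ell-1}$ is characteristic for these $\ell$-term correlations (Host--Kra, \eqref{eq corollary of HK}).

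The third step is to invoke \cref{thrm rho is unif progressive} with $\eta$, which gives $\delta'(\eta,\ell)$, and then to take $\delta$ as an explicit function of $\delta'$ (for example $\delta'^2$ after Cauchy--Schwarz). By \cref{uniform-Szemeredi-0}, $\delta'$ depends only on $\eta$ and $\ell$, so $\delta$ does as well, which gives the system-independence claim.

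The main obstacle is the second step: justifying the passage from $f_i$ to $\E(f_i\mid Z)$ inside an average that is evaluated at the specific point $a$ and integrated against $\rho_z$, rather than against $m$. If a direct characteristic-factor argument fails pointwise, I would first handle the case where $\pi_{\ell-1}(a)$ lies in the full-measure set where $\rho=\sigma$. In that case the statement is the known progressiveness of $\tilde\sigma_a$ from \cite{kmrr25}, \cite{hernandez2025infinite}. I would then extend to general $a$ by a continuity argument like the one in \cref{thrm rho is unif progressive}, using continuity of $u\mapsto\rho_u$ and of the $\lambda_w$-integrals.
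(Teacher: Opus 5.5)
There is a genuine gap: your second step, which carries the whole argument, is not supplied, and the tool you propose for it does not apply. After unfolding \cref{def rho and sigma tilde}, the quantity to bound from below is
\[
\liminf_{N\to\infty}\frac{1}{|\Phi_N|}\sum_{n\in\Phi_N} f_1(T^na)\int \bigotimes_{i=1}^{\ell}\E\big(f_i\cdot T^nf_{i+1}\mid Z\big)\diff\rho^*_{\pi(a)},\qquad f_{\ell+1}=\1 .
\]
Your display has the indices shifted: coordinate $i$ carries $f_i\cdot T^nf_{i+1}$, and coordinate $\ell$ carries $f_\ell$, not $T^n f_\ell$.

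To invoke \cref{thrm rho is unif progressive} you must make two replacements. First, each $\E(f_i\, T^nf_{i+1}\mid Z)$ must become $g_i\cdot T^n g_{i+1}$. Second, the weight $f_1(T^na)$ must become $g_1(T^n\pi(a))$, because the $0$-th coordinate in the progressiveness of $\rho_z$ is $g_1(T^nz)$. You never isolate this second replacement.

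The Host--Kra identity \eqref{eq corollary of HK} is about the $\mu$-integrated averages $\int f_0\,T^nf_1\cdots T^{\ell n}f_\ell\diff\mu$. It says nothing about an average weighted along the orbit of the single point $a$ and integrated against $\rho^*_{\pi(a)}$, which is not $T_\Delta$-invariant. An unspecified Cauchy--Schwarz or van der Corput step ``up to a positive power'' does not close that distance. This is exactly where the paper uses outside input. It applies \cite[Theorem 6.6]{kmrr25} for the first replacement inside the $n$-average. For the second, it uses \cite[Lemma 6.11]{kmrr25} with Fatou's lemma, which bounds the cost by $\norm{\E(f_1\mid Z)-g_1}_{L^1(m)}$.

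Two further points fail as stated.

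First, $\E(f_i\mid Z)$ need not be continuous. \cref{prop cont disintegration and conditional expectation} and \cref{prop cont k-step max pronilfactors} concern a pronilsystem over its own lower-step factor. Here $X$ is only assumed to have continuous factor maps onto its pronilfactors. A Sturmian system over its circle rotation already gives a discontinuous $\E(f\mid Z_1)$ for $f$ the indicator of a cylinder. So you need continuous approximants $g_i$ together with an error estimate. The paper gets that estimate from the marginal bound $\rho^i_z\le i\,m$ of \cref{prop marginals} and the $T$-invariance of $m$.

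Second, your fallback does not get around the gap. The point $a$ is fixed, points near $a$ need not lie in $\gen(\mu,\Phi)$, and the weighted averages are not continuous in $a$. So no continuity argument is available at the level of $X$. Once the reduction to $\rho_{\pi(a)}$ is established, no case split is needed at all, since \cref{thrm rho is unif progressive} already holds for every $z\in Z$ and its proof contains the continuity argument.

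Your third step, including the system-independence of $\delta$, is fine once the reduction is in place.
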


\begin{proof}
    Consider $f_1, \ldots, f_\ell \in C(X)$ non-negative continuous functions bounded by $1$. Assume that we have that
    \begin{equation*}
        \int_{\A_\ell (X)} \1 \otimes f_1 \otimes \cdots \otimes f_\ell \diff \tilde \rho_a > 2\eta. 
    \end{equation*}
    By definition of the measure $\tilde \rho_a$, for $Z=Z_{\ell-1}$ the $(\ell-1)$-step pronilfactor of $X$,
    \begin{equation*}
        \int_{\HP_\ell (Z)} \1 \otimes \E(f_1|Z) \otimes \cdots \otimes \E(f_\ell|Z) \diff \rho_{\pi(a)} =\int_{\A_\ell (X)} \1 \otimes f_1 \otimes \cdots \otimes f_\ell \diff \tilde \rho_a > 2\eta. 
    \end{equation*}
    where $\pi \colon X \to Z$ is the continuous factor map. Consider $0< \delta < \eta$ the constant of uniform progressiveness for the family $\rho_z \in \cM(Z^{\ell+1})$ depending on $\eta>0$ and $\ell$. By density, we can consider $g_1, \ldots, g_\ell \in C(Z)$ non-negative continuous functions bounded by $1$ such that $\norm{\E(f_i | Z) - g_i}_{L^2 (m)} \leq \delta/(4\ell^2)$,  for all $i = 1, \ldots, \ell$. We can also suppose that $\norm{\E(f_1 | Z) - g_1}_{L^1 (m)} \leq \delta/4$. With this,we get that
    \begin{equation*}
         \int_{\HP_\ell (Z)} \1 \otimes g_1 \otimes \cdots \otimes g_\ell \diff \rho_{\pi(a)} > \eta.
    \end{equation*}
   and hence,
    \begin{align*}
        &\liminf_{N \to \infty} \frac{1}{|\Phi_N|}\sum_{n \in \Phi_N}\int_{\A_\ell(X)} (1\otimes f_1 \otimes \cdots \otimes f_\ell) \cdot T_\Delta^n (f_1 \otimes \cdots \otimes f_\ell \otimes \1 ) \diff \tilde \rho_a \\
        &=\liminf_{N \to \infty} \frac{1}{|\Phi_N|} \sum_{n \in \Phi_N} f_1(T^na) \int_{\HP_\ell(Z)} \prod_{i=1}^\ell  \Big( \E(f_i \mid Z) \cdot T^n \E(f_{i+1} \mid Z) \Big)^{[i]}\diff \rho_{\pi(a)}  \\
        &\geq -\delta/2 + \liminf_{N \to \infty} \frac{1}{|\Phi_N|} \sum_{n \in \Phi_N} f_1(T^na) \int_{\HP_\ell(Z)} \prod_{i=1}^\ell  \Big( g_i \cdot T^n g_{i+1} \Big)^{[i]}\diff \rho_{\pi(a)}, 
    \end{align*}
    where we imposed $f_{\ell+1}=g_{\ell+1} = \1$, and we applied \cite[Theorem 6.6]{kmrr25} and the definition of $\tilde{\rho}_a$ in the first equality.
    To use uniform progressivity of $\rho_{\pi(a)}$ we notice that, by Fatou's lemma
    \begin{align*}
        &\bigg|\limsup_{N \to \infty} \frac{1}{|\Phi_N|} \sum_{n \in \Phi_N} (f_1(T^na)- g_1(T^n \pi(a))) \int_{\HP_\ell(Z)} \prod_{i=1}^\ell  \Big( g_i \cdot T^n g_{i+1} \Big)^{[i]}\diff \rho_{\pi(a)}  \bigg| \\
         &\leq  \int_{\HP_\ell(Z)} \bigg| \limsup_{N \to \infty} \frac{1}{|\Phi_N|} \sum_{n \in \Phi_N} (f_1(T^na)- g_1(T^n \pi(a))) \prod_{i=1}^\ell  \Big( g_i \cdot T^n g_{i+1} \Big)^{[i]} \bigg|\diff \rho_{\pi(a)} \\
        & \leq \norm{\E(f_1 \mid Z) - g_1}_{L^1 (m)} \leq \delta/4
    \end{align*} 
    where in the last step we are using \cite[Lemma 6.11]{kmrr25}. With this we get that
    \begin{align*}
        &\liminf_{N \to \infty} \frac{1}{|\Phi_N|}\sum_{n \in \Phi_N}\int_{\A_\ell(X)} (1\otimes f_1 \otimes \cdots \otimes f_\ell) \cdot T_\Delta^n (f_1 \otimes \cdots \otimes f_\ell \otimes \1 ) \diff \tilde \rho_a \\
        &\geq -3\delta/4 + \liminf_{N \to \infty} \frac{1}{|\Phi_N|} \sum_{n \in \Phi_N}  \int_{\HP_\ell(Z)}(1\otimes g_1 \otimes \cdots \otimes g_\ell) \cdot T_\Delta^n (g_1 \otimes \cdots \otimes g_\ell \otimes \1 )\diff \rho_{\pi(a)}  \geq \delta/4
    \end{align*}
    concluding the proof. 
\end{proof}

We end this section with the following result that is not used for the combinatorial applications, but it gives more information about the dynamics.
\begin{proposition}
    Let $\Xmt$ be an ergodic system with topological pronilfactor, then the map $X \to \cM(\A_\ell(X))$ given by $x \mapsto \tilde \rho_x$ is continuous. 
\end{proposition}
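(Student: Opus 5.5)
Since $\A_\ell(X)$ is compact metric, weak$^*$ continuity of $x\mapsto\tilde\rho_x$ amounts to showing that $x\mapsto\int F\,\diff\tilde\rho_x$ is continuous for every $F\in C(\A_\ell(X))$. By Stone--Weierstrass, linear combinations of restrictions of tensors $f_0\otimes f_1\otimes\cdots\otimes f_\ell$ with $f_i\in C(X)$ are uniformly dense in $C(\A_\ell(X))$, and all $\tilde\rho_x$ are probability measures. A standard $\varepsilon/3$ argument therefore reduces the claim to continuity of
\begin{equation*}
x\mapsto f_0(x)\int_{Z_{\ell-1}^\ell}\E(f_1\mid Z_{\ell-1})\otimes\cdots\otimes\E(f_\ell\mid Z_{\ell-1})\diff\rho^*_{\pi_{\ell-1}(x)}
\end{equation*}
for fixed $f_0,\dots,f_\ell\in C(X)$. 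The factor $f_0(x)$ is continuous, and $\pi_{\ell-1}$ is continuous because $\Xmt$ has topological pronilfactors. It is therefore enough to show that
\begin{equation*}
\Psi(z)=\int_{\HP_\ell(Z)}\1\otimes g_1\otimes\cdots\otimes g_\ell\diff\rho_z
\end{equation*}
is continuous on $Z=Z_{\ell-1}$, where $g_i$ is a version of $\E(f_i\mid Z)$.

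Next I fix the choice of $g_i$. Since $\Xmt$ has topological pronilfactors, the ergodic system $X$ admits a continuous disintegration over $Z$. This uses the fully supported continuous disintegration of the model (cf.\ \cref{prop cont k-step max pronilfactors} together with the construction in \cite{kmrr1,radic2026Uniformity}). Then \cref{prop cont disintegration and conditional expectation} gives a continuous $g_i\in C(Z)$ that agrees $m$-a.e.\ with $\E(f_i\mid Z)$. The definition of $\tilde\rho_x$ must not depend on which version is chosen. This holds because $\rho_z^i\le i\,m$ for $i\ge1$ by \cref{prop marginals}, so the marginals of $\rho_z^*$ are absolutely continuous with respect to $m$. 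Hence changing $g_i$ on an $m$-null set does not change the integral, and we may take the $g_i$ continuous.

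With $g_1,\dots,g_\ell$ continuous, $\1\otimes g_1\otimes\cdots\otimes g_\ell$ lies in $C(\HP_\ell(Z))$. Moreover, $z\mapsto\rho_z=\rho_{(z,\dots,z)}$ is the composition of the continuous diagonal map $Z\to\HP_\ell(Z)$ with the continuous ergodic decomposition $w\mapsto\rho_w$ of \cref{first def rho} (see \cref{lemma ergodic fully supp decomposition}). So $\Psi$ is continuous, and hence so is $x\mapsto\tilde\rho_x$.

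The main obstacle is the version issue in the second paragraph: the conditional expectations are defined only $m$-a.e., whereas the claim is made for every $x$. The bound $\rho_z^i\le i\,m$ is what makes the definition insensitive to null sets, and the existence of a continuous version of $\E(f_i\mid Z)$ needs the continuous disintegration of $X$ over $Z$, which is where the topological-pronilfactor hypothesis enters. If that disintegration is not directly available, a fallback is to approximate $\E(f_i\mid Z)$ in $L^1(m)$ by continuous functions. The bound $\rho_z^i\le i\,m$ then makes the corresponding integrals uniformly close in $z$ (the error is at most $i$ times the $L^1$ error, by telescoping with functions bounded by $\|f_j\|_\infty$). Continuity then follows as a uniform limit of continuous functions.
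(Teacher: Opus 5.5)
Your primary route has a genuine gap in the second paragraph. Having topological pronilfactors only means that $\pi_{\ell-1}\colon X\to Z_{\ell-1}$ is a continuous factor map. It does not give a continuous disintegration of $\mu$ over $Z_{\ell-1}$. Nor does \cref{prop cont k-step max pronilfactors} supply one: it concerns an ergodic pronilsystem disintegrated over its own lower-step pronilfactors, not a general $X$ over $Z_{\ell-1}$. The claim is in fact false. A Sturmian subshift with irrational rotation number $\alpha$ is uniquely ergodic, measurably isomorphic to the rotation by $\alpha$ on $\T$, and its coding map onto $\T$ is continuous. So it has topological pronilfactors, with $Z_k=\T$ for all $k\geq 1$. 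Yet for the clopen cylinder $E=\{x\colon x_0=1\}$ one has $\E(\1_E\mid Z_k)=\1_I$ almost everywhere for a proper nondegenerate arc $I\subset\T$, and this has no continuous version. Systems coming out of \cref{furstenberg correspondence for sets} are typically of this symbolic kind. Hence you cannot invoke \cref{prop cont disintegration and conditional expectation}, and you cannot take the $g_i$ continuous.

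Your fallback, however, is correct, and it is essentially the paper's proof. The paper defines $\Theta(g)(z)=\int\bigotimes_i g_i\diff\rho_z^*$ on $L^p(m)^\ell$ with $p=2\ell$. It then uses H\"older together with $\rho_z^i\le i\,m$ to bound $|\Theta(g)(z)-\Theta(\tilde g)(z)|$ by $L^p(m)$-norms uniformly in $z$. It observes that $\Theta(g)$ is continuous for $g\in C(Z)^\ell$ because $z\mapsto\rho_z$ is continuous, and concludes by density. Your telescoping estimate puts sup-norm bounds on the untouched factors and an $L^1(m)$ bound on the difference, transferred to $\rho_z^i$ via $\rho_z^i\le i\,m$. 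This is a leaner version that exploits the boundedness of $\E(f_i\mid Z)$. The paper's H\"older estimate gives continuity of $\Theta$ on all of $L^p(m)^\ell$, which is more than is needed here.

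Two small points should be made explicit. First, truncate the continuous approximants to modulus at most $\norm{f_i}_\infty$ so the telescoping bounds hold. Second, \cref{prop marginals} gives $\rho_z^i\le i\,m$ for every $z$, so the convergence is uniform on all of $Z$ rather than merely $m$-a.e.; this is exactly what yields continuity at every point. Your observation that $\rho_z^i\ll m$ makes $\tilde\rho_x$ independent of the chosen versions is also correct, and the paper uses it implicitly. Delete the continuous-version claim and promote the fallback to the main argument, and your proof is complete.
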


A similar proof can be found in \cite[Theorem 6.5]{kmrr1} 

\begin{proof}
    We denote $Z=Z_{\ell-1}(\mu)$ and $\pi = \pi_{\ell-1}$ the continuous factor map.  For $p= 2\ell$ we first prove that the function $\Theta \colon L^p (Z, m)^\ell \to L^\infty(Z,m)$ given by
    \begin{equation}
        \Theta(g)(z) = \int_{Z^\ell} \bigotimes_{i=1}^\ell g_i \diff \rho_z^* \quad \text{ for } g = g_1 \otimes \cdots \otimes g_\ell \in   L^p (Z, m)^\ell 
    \end{equation}
    is continuous. Notice that if $g, \tilde g \in L^p (Z, m)^\ell $ fulfill that there is $j \in \{1, \ldots, \ell \}$ such that $g_i = \tilde g_i$ for all $i \neq j$ then by Holder's inequality
    \begin{align*}
        \left| \Theta(g)(z) - \Theta(\tilde g)(z) \right| = \left| \int_{Z^\ell} (g_j - \tilde g_j)^{[j]}\cdot \prod_{i\neq j} g_i^{[i]} \diff \rho_z^* \right| \leq \norm{g_j - \tilde g_j}_{L^p(\rho_z^j)}  \prod_{i \neq j}  \norm{g_i}_{L^p(\rho_z^i)}.
    \end{align*}
    By \cref{prop marginals}, for each $i =1, \ldots, \ell$, the projection $\rho^i_z \leq i m$ for all $z \in Z$. In particular $\norm{f}_{L^p(\rho^i_z)} \leq i^{1/p} \norm{f}_{L^p(m)}$ for all $f \in L^p(m)$. Thus
    \begin{equation*}
        \norm{\Theta(g) - \Theta(\tilde g)}_{L^\infty (m)} \leq (\ell!)^{1/p} \cdot \norm{g_j - \tilde g_j}_{L^p(m)} \cdot \prod_{i \neq j}  \norm{g_i}_{L^p(m)}.
    \end{equation*}
    Using the previous inequality we deduce that the function $\Theta$ is continuous. 

    To conclude the proposition, we need to prove that for all continuous functions $f_0,f_1, \ldots, f_\ell \in C(X)$, $x \mapsto \int f_0 \otimes f_1 \otimes \cdots \otimes f_\ell \diff \tilde \rho_x$ is continuous. Since the zero coordinate is the Dirac delta at $x$, it suffices to show that
    \begin{equation*}
        x \mapsto \int_{Z^l} \E (f_1 \mid Z )\otimes \cdots \otimes \E(f_\ell \mid Z) \diff  \rho_{\pi(x)}^*
    \end{equation*}
    is continuous for all $f_1, \ldots, f_\ell \in C(X)$. Notice that for $f_1, \ldots, f_\ell \in C(X)$, $\E(f_i \mid Z) \in L^\infty (m) \subset L^p(m)$ for all $i =1, \ldots, \ell$ and that $\pi\colon X \to Z$ is continuous, so we only need to check that if $g \in L^p(m)^\ell$ then the function $\Theta(g)$ is continuous. 

    To prove this, since $z \mapsto \rho_z$ is a continuous ergodic decomposition, we get that if $g \in C(Z)^\ell$ then $z \mapsto \Theta(g)(z) = \int_{Z^{\ell+1}} \1 \otimes g \diff \rho_z $ is continuous. Since $C(Z)^\ell$ is dense in $L^p(m)^\ell$ and that $C(Z)$ is closed in $L^\infty(m)$, by the continuity of $\Theta \colon L^p(m)^\ell \to L^\infty(m)$, we conclude that for all $g \in L^p(m)^\ell $, $\Theta(g) \in C(Z)$, concluding the proof. 
\end{proof}

\section{Shifted primes progressive measures}
\subsection{Properties from progressive measures}

To deal with shifted primes is easier to work with totally ergodic measures. We introduce the family of measures $\rho^\circ_x$ that play a similar role to the measures $\rho_x$.  

\begin{proposition} \label{prop kappa rho_x}
    Let $\ell \geq 2$, $\Xmt$ be an ergodic $s$-step nilsystem, $x \mapsto \rho_x$ the continuous $\tilde T$-ergodic decomposition of the Haar measure $m_{\HP_\ell(X)}$. Let $\kappa \in \N$ be the number of connected components of $\supp \rho_x$ for any $x \in X$. If $w \mapsto \rho^\circ_w$ denotes the continuous $\tilde T^\kappa$-ergodic decomposition of $m_{\HP_\ell(X)}$ and $\rho^\circ_x = \rho^\circ_{(x, \ldots, x)}$ then 
    \begin{enumerate}[i)]
        \item $\rho_x = \frac{1}{\kappa} \sum^{\kappa-1}_{i=0} \tilde T^i \rho^\circ_x$ for all $x \in X$
        \item $\supp \rho^\circ_x$ is connected for all $x \in X$
        \item for $\mu$ almost every $x \in X$, $\rho^\circ_x$ is $\tilde T^\kappa$-totally ergodic.
    \end{enumerate}
\end{proposition}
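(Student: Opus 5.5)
The approach is to realize everything inside the nilmanifold $\HP_\ell(X)$, with group $\HP_\ell(G)=\langle \HP_\ell(G)^\circ,\tilde\tau,\tau_\Delta\rangle$, and to use the explicit description of continuous ergodic decompositions from \cref{Continuous-ergodic-decomp-with-Leibman-group} for both $\tilde T$ and $\tilde T^\kappa$. Write $H$ for the Leibman group of $\tilde T$. For $w$ in the piece $\HP_\ell(X)_{\tilde\tau}$ we then have $\supp\rho_w = Hw$, and by \cref{prop connected components kappa} this set has exactly $\kappa$ connected components, independently of $w$. I will use that $H=\langle H^\circ,\tilde\tau\rangle$, as in the proof of \cref{Continuous-ergodic-decomp-with-Leibman-group}. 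It follows that the components of $Hw$ are $\tilde\tau^iH^\circ w$ for $0\le i<\kappa$, and that $\tilde\tau^\kappa H^\circ w=H^\circ w$. This is the same structure used in the proof of \cref{theorem cross ergodic decompositions}.

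The first step is to identify $\rho^\circ_w$, the continuous $\tilde T^\kappa$-ergodic decomposition. I will show that $\rho^\circ_w = m_{H^\circ w}$ for $w\in \HP_\ell(X)_{\tilde\tau}$, extended to the other pieces of the partition in \cref{Prop G_tau} by translating with $T_\Delta$. The map $w\mapsto m_{H^\circ w}$ is continuous by \cite[Chapter 10, Proposition 16]{Host_Kra_nilpotent_structures_ergodic_theory:2018}, and each $m_{H^\circ w}$ is $\tilde T^\kappa$-invariant because $\tilde\tau^\kappa$ normalizes $H^\circ$ and preserves $H^\circ w$. Averaging gives
\[
\frac1\kappa\sum_{i<\kappa}\tilde T^i m_{H^\circ w}=m_{Hw}=\rho_w .
\]
Integrating this identity shows that $w\mapsto m_{H^\circ w}$ disintegrates $m_{\HP_\ell(X)}$. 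It remains to check ergodicity almost everywhere. For $\rho_w$-a.e.\ $w$ the orbit closure $\overline{O_{\tilde T}(w)}$ equals $Hw$, so $\overline{O_{\tilde T^\kappa}(w)}$ is a union of some components of $Hw$ containing $w$. These are permuted cyclically by $\tilde T$, and the $\tilde T^\kappa$-orbit is confined to $H^\circ w$, so $\overline{O_{\tilde T^\kappa}(w)}=H^\circ w$. Unique ergodicity of minimal nil-orbits then gives that $m_{H^\circ w}$ is $\tilde T^\kappa$-ergodic. By uniqueness of the continuous ergodic decomposition, $\rho^\circ_w=m_{H^\circ w}$. This proves i) at $w=(x,\dots,x)$, and ii) follows because $H^\circ w$ is connected.

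For iii), I use $m_{\HP_\ell(X)}=\int\rho_x\,d\mu(x)$ from \cref{thrm new cont ergodic decomp}. For $\mu$-a.e.\ $x$ the orbit closure of $(x,\dots,x)$ under $\tilde T$ is $\supp\rho_x$, and I claim that for such $x$ the connected subnilmanifold $H^\circ(x,\dots,x)$ is $\tilde T^{\kappa n}$-ergodic for every $n$. The reason is that a minimal nilrotation on a connected nilmanifold is totally minimal, since its Kronecker factor is a connected compact group and therefore has no rational eigenvalues. Applied to $\tilde T^\kappa$ on $H^\circ(x,\dots,x)$, this gives total ergodicity.

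I expect the main obstacle to be the first step: verifying that $m_{H^\circ w}$ is truly ergodic for almost every $w$, and that the component structure $\tilde\tau^iH^\circ w$ holds uniformly in $w$, including off the piece $\HP_\ell(X)_{\tilde\tau}$. My plan there is to mimic the reduction in the proof of \cref{theorem cross ergodic decompositions}, transporting by $T_\Delta$ and invoking normality of $H$ in $\langle \HP_\ell(G)^\circ,\tilde\tau\rangle$.
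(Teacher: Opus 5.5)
Your proposal is correct, but it takes a genuinely different route from the paper. The paper never identifies $\rho^\circ_w$.
\begin{itemize}
\item For i), it observes that $w\mapsto\frac{1}{\kappa}\sum_{i<\kappa}\tilde T^i\rho^\circ_w$ is itself a continuous $\tilde T$-ergodic decomposition of $m_{\HP_\ell(X)}$, and appeals to uniqueness. This argument uses nothing about $\kappa$ and works for any power of $\tilde T$.
\item For ii), it notes that the $\kappa$ components $C^x_j$ of $\supp\rho_x$ are $\tilde T^\kappa$-invariant. It places $\supp\rho^\circ_x$ inside one of them and uses the decomposition in i) to force equality.
\item For iii), it simply says that an ergodic measure with connected support is totally ergodic.
\end{itemize}
You instead compute $\rho^\circ_w=m_{H^\circ w}$ from the Leibman group $H=\langle H^\circ,\tilde\tau\rangle$, transporting by $T_\Delta$ off the base piece. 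All three items are then read off from this formula.

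What your route buys is a description valid at every point. It shows directly that $\supp\rho^\circ_w$ is a single $H^\circ$-orbit. This is what the paper's step $\supp\rho^\circ_x\subset C^x_j$ really needs at points where $\rho^\circ_x$ is not known to be ergodic. In iii) you also correctly pass through \cref{thrm new cont ergodic decomp} to reach diagonal points, which form an $m_{\HP_\ell(X)}$-null set.

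The price is extra structural input, which you should state explicitly. To use continuity of $w\mapsto m_{H^\circ w}$ you need two facts:
\begin{itemize}
\item $H^\circ$ is normal in $\langle\HP_\ell(G)^\circ,\tilde\tau\rangle$, which holds because it is characteristic in $H$.
\item $H^\circ w$ is closed, which holds because it is one of finitely many disjoint open $H^\circ$-orbits covering the compact set $Hw$.
\end{itemize}

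Two cosmetic fixes. First, the ergodicity step holds for $m_{\HP_\ell(X)}$-almost every $w$, not ``$\rho_w$-almost every $w$''. Second, in iii) connectedness alone gives total minimality: for a minimal homeomorphism $S$, the $S^n$-minimal subsets are clopen and cyclically permuted. So the detour through the Kronecker factor is unnecessary.
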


\begin{remark*}
    The fact that $\kappa$ is well defined and that it does not depend on $x$ is ensured by \cref{prop connected components kappa}. 
\end{remark*}

\begin{proof}

    Recall that $\rho_w$ is defined for all $w \in \HP_\ell (X)$ and that by \cref{thrm new cont ergodic decomp} we have that for $x \in X$, $\rho_w = \rho_x$ if $w_0 = x$. Consider $w \mapsto \nu_w =\frac{1}{\kappa} \sum^{\kappa-1}_{i=0} \tilde T^i \rho^\circ_w$ then it is clear that $\nu_w$ defines a continuous $\tilde T$-ergodic decomposition. Since this is unique we get that $\rho_w = \nu_w$ for all $w \in \HP_\ell (X)$ and therefore $\rho_x = \nu_{(x, \ldots, x)}$ for all $x \in X$, concluding the first point. 

    By assumption, for all $x \in X$, $\supp \rho_x$ has $\kappa$ connected components $C^x_1, \ldots, C_{\kappa}^x$. 
    Take $x \in X$ and notice that each connected components are $\tilde T^\kappa$-invariant. So, on the one hand, $\supp \rho_x^\circ \subset C_j^x$ for some $j = 1, \ldots, \kappa $. On the other hand, by the previous part we have that,
    \begin{equation*}
        \supp \rho_x =  \bigsqcup_{i=0}^{\kappa-1} \tilde T^i \supp \rho_x ^\circ \subset  \bigsqcup_{i=1}^{\kappa} C_j^x = \supp \rho_x
    \end{equation*}
   which forces $\supp \rho_x^\circ = C_j^x$, so $\supp \rho_x^\circ$ is connected for for all $x \in X$.  

     We conclude the last part by noticing that for $\mu$-almost every $x \in X$, $\rho_x^\circ$ is $\tilde T^\kappa$-ergodic and $\supp \rho_x^\circ$ is connected, hence it is also $\tilde T^\kappa$-totally ergodic.  
\end{proof}


\begin{proposition}\label{uniform-Szemeredi} Let $\ell \in \N$. For every $\eta>0$, there exists $\delta=\delta(\ell,\eta)>0$ satisfying that: 

For every ergodic $s$-step nilsystem $(X,\mu,T)$, $x \mapsto \rho_x$ the continuous $\tilde T$-ergodic decomposition of $m_{\HP_\ell(X)}$, $f_1,\ldots,f_\ell\in C(X)$ non-negative functions bounded by $1$ and $\kappa \in \N$ as in \cref{prop kappa rho_x}, for all  $W\in \N$ and $x \in X$, if   
$$ \int_{\HP_\ell(X)} \1 \otimes  f_1 \otimes \cdots \otimes f_\ell  \diff \rho_x^\circ>\eta, $$
    then
    \begin{equation}
        \label{eq W progresive}
        \liminf_{N\to \infty} \frac{1}{N} \sum_{n\leq N} \int_{\HP_\ell(X)} (\1 \otimes  f_1 \otimes \cdots \otimes f_\ell )  \cdot T_{\Delta}^{Wn} ( f_1 \otimes \cdots \otimes f_\ell \otimes \1)\diff \rho^\circ_x \geq \delta/\kappa . 
    \end{equation} 
\end{proposition}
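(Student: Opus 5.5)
The plan is to reduce \eqref{eq W progresive} to the uniform progressiveness of $\rho_x$ (\cref{thrm rho is unif progressive}, with constants from \cref{uniform-Szemeredi-0}) applied to an auxiliary system. I first treat points $x$ for which $\rho^\circ_x$ is $\tilde T^\kappa$-totally ergodic; by \cref{prop kappa rho_x} these have full $\mu$-measure, hence form a dense set. I then pass to all $x$ using continuity, as in the proof of \cref{thrm rho is unif progressive}.

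\textbf{Step 1 (totally ergodic points).} Fix $x$ with $\rho^\circ_x$ totally ergodic for $\tilde T^\kappa$ and with $\rho_x=\sigma_x$. The latter holds $\mu$-a.e.\ by \cref{thrm new cont ergodic decomp}, and the two conditions hold together on a full-measure set. Fix $W$. Total ergodicity makes $\rho^\circ_x$ ergodic for $\tilde T^{\kappa W}$. Since $(\supp\rho_x,\tilde T)$ is uniquely ergodic, its sub-orbit closures are as well. Hence $\rho^\circ_x$ is the unique $\tilde T^{\kappa W}$-orbit measure of some $w$ in its support, and $\rho^\circ_x$ coincides with the $\sigma$-type measure of the point $x$ for the nilsystem $(X,\mu',T^{\kappa W})$, where $\mu'$ is the Haar measure of the $T^{\kappa W}$-orbit closure component of $x$.

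In this picture the relevant joining is generated by $\tilde T^{\kappa W}$ and $T_\Delta^{W}$ acting on the points $(x,\ldots,x)$, i.e.\ $(Id\times T^{\kappa W}\times\cdots)$ together with the diagonal $T^{W}$ action. Here I would apply the proof of \cref{uniform-Szemeredi-0}. That proof rests only on the uniform Szemerédi theorem \cite[Theorem A.2]{bergelson_Kulaga_Lemanczyk_Richter2019rationally}, applied to sets of the form $\{n: T^{\kappa W n}\text{-pattern}\}$. It yields that the average over $n$ of the $T_\Delta^{Wn}$-correlation against $\rho^\circ_x$ is at least $\delta(\ell,\eta)$ times the proportion of steps that return into the correct component.

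\textbf{Step 2 (the $1/\kappa$ loss).} $T_\Delta^{W n}$ may move $\supp\rho^\circ_x$ to a different component $\tilde T^i\supp\rho^\circ_x$. Using \cref{prop kappa rho_x}(i), I would restrict to $n$ in a fixed residue class modulo $\kappa$ (for example $n\equiv 0$). On that class the integrand has the same structure as in Step 1, since $T_\Delta^{\kappa}$ commutes with $\tilde T$ and preserves the component decomposition, up to relabeling. Discarding the other classes by positivity costs exactly the factor $1/\kappa$, giving the bound $\delta/\kappa$ with $\delta$ independent of $W$, $x$ and the system.

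\textbf{Step 3 (all $x$).} As in the Claim inside the proof of \cref{thrm rho is unif progressive}, I would show that the map $u\mapsto$ (the limit in \eqref{eq W progresive} for $\rho^\circ_u$) is continuous. The ingredients are \cref{theorem cross ergodic decompositions} applied to $\tilde T^\kappa$ and $T_\Delta^{W}$, which ensures $\rho^\circ_u$-a.e.\ ergodicity of the $T_\Delta^W$-decomposition, and continuity of $u\mapsto\rho^\circ_u$. I would then use openness of the hypothesis $\int\cdots\diff\rho^\circ_u>\eta$ and density of the good points to transfer the bound to every $x$.

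The main obstacle is Step 1. One must check carefully that the uniform Szemerédi input applies along the dilated progression $Wn$ with constants independent of $W$. Total ergodicity is what prevents $W$ from interacting with the periodic (component) structure, and Step 2 accounts for the remaining periodicity.
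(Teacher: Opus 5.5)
Your argument is correct in substance and shares the paper's core.

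\textbf{Common core.} You treat $\rho^\circ_x$ as the Hall--Petresco ergodic component of the auxiliary nilsystem $(Y,\mu_Y,T^{\kappa W})$. Here $Y$ is the connected component of $x$, which is your $T^{\kappa W}$-orbit closure. You then apply the system-independent constant $\delta(\ell,\eta)$ of \cref{uniform-Szemeredi-0} to that system, and lose $1/\kappa$ by discarding the nonnegative terms with $\kappa\nmid n$.

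\textbf{Two clarifications.} First, the $W$-independence you list as the main obstacle is automatic, since you only need the statement of \cref{uniform-Szemeredi-0} for $T^{\kappa W}$, not its proof. The claim in Step 1 about the ``proportion of steps that return into the correct component'' can simply be dropped. Second, your explanation of the $1/\kappa$ is off. $T_\Delta^{Wn}$ does not permute the sets $\tilde T^i\supp\rho^\circ_x$, because it moves the $0$-th coordinate from $x$ to $T^{Wn}x$. The loss comes only from discarding the residue classes.

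\textbf{Where you differ: passing from almost every $x$ to every $x$.}
\begin{itemize}
\item The paper transfers an identity of measures. Let $\rho^S$ denote the decomposition for $(Y,T^{\kappa W})$. Then $\rho^\circ_y=\rho^S_y$ holds $\mu_Y$-a.e.\ by your Step 1 reasoning: $\tilde T^{\kappa W}$-ergodicity of $\rho^\circ_y$, $(y,\ldots,y)\in\supp\rho^\circ_y$, and unique ergodicity. It therefore holds for every $y\in Y$ by continuity of both sides.
\item The paper then applies \cref{thrm rho is unif progressive} to $(Y,\mu_Y,T^{\kappa W})$ verbatim. Its continuity Claim rests on \cref{theorem key}, which is available because $\langle\tilde T^{\kappa W},T_\Delta^{\kappa W}\rangle$ acts ergodically on $\HP_\ell(Y)$.
\item You instead re-run that Claim for the pair $(\tilde T^\kappa,T_\Delta^W)$ on $\HP_\ell(X)$. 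This works, but \cref{theorem cross ergodic decompositions} assumes joint ergodicity. This finite-index subaction need not be ergodic on $\HP_\ell(X)$ when $X$ is disconnected; for instance, it can preserve component labels.
\item So you must first split $\HP_\ell(X)$ into its finitely many clopen ergodic pieces and apply the theorem on each. Each $\supp\rho^\circ_u$ is connected, so it lies in a single piece.
\end{itemize}
What the paper's route buys is avoiding this bookkeeping, by transferring the measure identity and reusing an already-proved everywhere statement.
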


\begin{proof}
Let $W\in \N$ and $x\in X$ be fixed. Let $Y$ be the connected component
of $X$ containing $x$, and let $\mu_Y$ be its Haar
measure. Thus, the system $(Y,\mu_Y,T^{W\kappa})$ is totally ergodic. 

Denote by $\rho_y^S$ the measure defined in \cref{first def rho} for $(Y,\mu_Y,T^{W\kappa})$. By \cref{prop kappa rho_x}, for $\mu_Y$-almost every $y\in Y$,
the measure $\rho_y^\circ$ is $\tilde T^{W\kappa}$-ergodic.  Since $(y,\ldots,y)\in\operatorname{supp}\rho_y^\circ$,
unique ergodicity on its support gives
$\rho_y^\circ=\rho_y^S$
for $\mu_Y$-almost every $y\in Y$. Hence, by continuity we conclude that $\rho_y^\circ=\rho_y^S$ for every $y\in Y$.  

This way, we can use \cref{thrm rho is unif progressive} with $(Y,\mu_{Y},T^{W\kappa} ) $ and $\rho^\circ_x$ to get the desired conclusion that 
        $$\liminf_{N\to \infty} \frac{1}{N} \sum_{n\leq N} \int_{\HP_\ell(X)} (\1 \otimes  f_1 \otimes \cdots \otimes f_\ell )  \cdot T_{\Delta}^{W \kappa n} ( f_1 \otimes \cdots \otimes f_\ell \otimes \1)\diff \rho^\circ_x \geq \delta. $$ 
        This last inequality implies \eqref{eq W progresive}.  
\end{proof}

\subsection{Properties of the von Mangoldt function}
In what follows, we use the following notation: For $N\in\N$, we
write $[N]=\{1,\ldots,N\}$. For $a,b\in\R$ with $a\leq b$,
we denote the discrete interval $\{n\in\N:a\leq n\leq b\}$
by $[a,b]$. For functions $f,g:\N\to\C$, we write $f(n)=O(g(n))$, or
equivalently $f(n)\ll g(n)$, if there exists a constant $C>0$
such that $|f(n)|\leq C|g(n)|$ for all sufficiently large $n$.
Subscripts, as in $f\ll_{s_1,\ldots,s_k}g$, indicate the dependency of the constant $C$ on given parameters $s_1,\ldots,s_k$. We write $f=o_n(g)$ if $f(n)/g(n)\to 0$ as $n\to\infty$,
assuming that $g(n)\neq 0$ for all sufficiently large $n$.

We  use the \emph{Gowers uniformity norms}, which were introduced in \cite{gowers2001newszemeredi} and we define now. 
\begin{definition}[Local Gowers uniformity norm]
Let $d\in \N$ and $N\in \N$. Set
$$ \mathcal{C}_d(N)= \left\{
(x,h_1,\dots,h_d)\in\mathbb{Z}^{d+1}
:
x+\omega\cdot h\in[N]
\text{ for every }\omega\in \{0,1\}^d
\right\}.$$
For a function $F:\N\to \C$ we define
\begin{equation}\label{eq:gowers-local}
   \|F\|_{U^d[N]}^{2^d} =\frac{1}{|\mathcal{C}_d(N)|}\sum_{(x,h)\in\mathcal{C}_d(N)}
\prod_{\omega\in\{0,1\}^d}
\mathcal{C}^{|\omega|}
F(x+\omega\cdot h),
\end{equation}
where $\mathcal{C}$ denotes complex conjugation.
\end{definition}

The \text{von Mangoldt function} $\Lambda:\Z\to \R$ is defined by $\Lambda(n)=\log{p}$ if $n=p^m$ for some $m\in \N$ and $p\in \mathbb{P}$, and 0 otherwise. One could alternatively define 
$$ \Lambda'(n)=1_{\mathbb{P}}(n)\Lambda(n)$$
for $n\in \N$. In this article the role of $\Lambda$ and $\Lambda'$ are interchangeable by the classical estimate:
$$ \frac{1}{N} \sum_{n=1}^N |\Lambda(n)-\Lambda'(n)|= O(N^{-1/2}). $$

The following is a classical lemma. 
\begin{lemma}[{\cite[Lemma 1]{frantzikinakis_host2023multiple}}]\label{prime-to-vm}
    If $|a_n|\leq 1$ for $n\in \N$, then
    $$\left| \frac{1}{\pi(N)} \sum_{p\in \mathbb{P},p\leq N} a(p) - \frac{1}{N} \sum_{n=1}^N \Lambda'(n) a(n) \right| =o_N(1). $$
\end{lemma}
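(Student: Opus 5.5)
The plan is to split $\Lambda'$ into its main term and an error term, and to control the error term by partial summation.

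Write $\Lambda'(n)=\1_{\P}(n)\log n$, and let $\theta(N)=\sum_{p\le N}\log p$. By Abel summation, applied to the weights $a(p)$ and the smooth function $\log t$,
\begin{equation*}
\frac1N\sum_{n\le N}\Lambda'(n)a(n)=\frac{\log N}{N}\sum_{p\le N}a(p)-\frac1N\int_2^N \frac{1}{t}\Big(\sum_{p\le t}a(p)\Big)\diff t .
\end{equation*}

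For the integral term, use the trivial bound $|\sum_{p\le t}a(p)|\le \pi(t)\ll t/\log t$, valid since $|a_n|\le1$. This gives $\int_2^N \pi(t)/t\,\diff t\ll N/\log N$. So after dividing by $N$ the second term is $O(1/\log N)=o_N(1)$.

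For the main term, the prime number theorem gives $\pi(N)=\frac{N}{\log N}(1+o(1))$. Hence
\begin{equation*}
\frac{\log N}{N}\sum_{p\le N}a(p)=\frac{1}{\pi(N)}\sum_{p\le N}a(p)\,(1+o(1)).
\end{equation*}
Because $\frac{1}{\pi(N)}|\sum_{p\le N}a(p)|\le 1$, the $o(1)$ factor contributes only $o_N(1)$ in absolute terms.

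Combining the two estimates yields the lemma, uniformly over all sequences with $|a_n|\le 1$.

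The only nontrivial input is the prime number theorem, which the paper may simply cite. Apart from that, no step is a real obstacle: the one thing to check is that the error terms are absolute and independent of the sequence $a$, and this holds because we only ever used $|a(p)|\le 1$. Alternatively, one can avoid Abel summation by comparing $\log p$ with $\log N$. Primes $p\le N/\log^2 N$ contribute $O(1/\log N)$ after normalization. For the remaining primes, $\log p=\log N+O(\log\log N)$, so replacing $\log p$ by $\log N$ costs $O(\log\log N/\log N)$, and the prime number theorem finishes as above.
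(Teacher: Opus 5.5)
Your proof is correct, and the error terms are uniform in $a$. Abel summation against $\log t$ is set up correctly, the Chebyshev bound handles the integral term, and the prime number theorem converts $\frac{\log N}{N}$ into $\frac{1+o(1)}{\pi(N)}$. The paper gives no proof of its own and simply cites Frantzikinakis--Host, where the lemma is the same standard consequence of the prime number theorem that you give (comparing $\log p$ with $\log N$, as in your alternative argument, or equivalently partial summation).
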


Given a $w\in \N$ and $r\in \Z$, if we set
$$W=W(w)=\prod_{p\in \mathbb{P},p<w}p, $$
then for $n\in N$ we define 
$$ \Lambda'_{w,r}(n) = \frac{\phi(W)}{W} \cdot \Lambda'(Wn+r),$$
where $\phi$ is the Euler function. For $M\in \N$ denote $R(M)=\{r\in [M]: (r,M)=1\}$. We will crucially use the following results. 
\begin{lemma}[{ \cite[Lemma 3.5]{Frantzikinakis_Host_Kra13}}]\label{Gowers-Control}
Let $\ell \in \mathbb{N},(X, \mathcal{X}, \mu)$ be a probability space, $T_1, \ldots, T_{\ell}: X \rightarrow X$ be commuting invertible measure preserving transformations, $f_1, \ldots, f_\ell \in L^{\infty}(\mu)$ be functions bounded by 1 , and $c_{1},\ldots, c_\ell\in \Z$ be constants. Let $a: \mathbb{N} \rightarrow \mathbb{C}$ be a sequence of complex numbers satisfying $a(n) / n^c \rightarrow 0$ for every $c>0$. Then there exists $d \in \mathbb{N}$, depending only on $\ell$ , such that
$$
\left\|\frac{1}{N} \sum_{n=1}^N a(n) \cdot   \prod_{i=1}^{\ell}T_i^{c_{  i}n}  f_i  \right\|_{L^2(\mu)} 
\ll_d\left\|a \right\|_{U_d\left([N]\right)}+o_N(1) .
$$

Furthermore, the implicit constant is independent of the sequence $(a(n))_{n \in \mathbb{N}}$, and the $o_N(1)$ term depends only on the integer $d$ and on the sequence $(a(n))_{n \in \mathbb{N}}$.
\end{lemma}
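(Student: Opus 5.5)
The statement is \cite[Lemma 3.5]{Frantzikinakis_Host_Kra13}, and I would follow the standard "generalized von Neumann" strategy. The plan is to remove the functions $f_1,\ldots,f_\ell$ one at a time by repeated applications of the van der Corput lemma (equivalently, Cauchy--Schwarz), until only correlations of the weight $a$ along parallelepipeds remain. Those correlations are exactly the terms defining $\|a\|_{U^d[N]}$.

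\textbf{Step 1: normalisation.} Put $A_N=\frac1N\sum_{n\le N}a(n)\prod_i T_i^{c_in}f_i$. Since the $T_i$ commute and preserve $\mu$, composing with $T_\ell^{-c_\ell n}$ inside the integrals that arise lets me treat the $\ell$-th function as fixed, at the cost of replacing $T_i^{c_i}$ by $S_i=T_i^{c_i}T_\ell^{-c_\ell}$. This is the usual reduction of the number of "moving" functions. I would also dispose of the degenerate cases (some $c_i=0$, or $c_i=c_j$) by merging functions.

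\textbf{Step 2: van der Corput induction.} Apply van der Corput to the $L^2(\mu)$-valued sequence $u_n=a(n)\prod_iT_i^{c_in}f_i$. This gives
\[
\|A_N\|_{L^2}^2\ll \frac1H+\frac1H\sum_{|h|<H}\Big|\frac1N\sum_{n}a(n)\overline{a(n+h)}\int\prod_i T_i^{c_in}(f_i\,\overline{T_i^{c_ih}f_i})\,d\mu\Big| .
\]
After the change of variables from Step 1, each inner average is an average of the same shape with one fewer function and weight $\Delta_ha(n)=a(n)\overline{a(n+h)}$. Iterating $\ell$ times and using Cauchy--Schwarz in $h$ at each stage leaves averages over $(n,h_1,\ldots,h_\ell)$ of $\prod_{\omega}\mathcal C^{|\omega|}a(n+\omega\cdot h)$, with the $h_j$ ranging over $[-H,H]$. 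Taking $H=\varepsilon N$ and averaging over $n\in[N]$ turns these, after one or two more Cauchy--Schwarz steps, into $\|a\|_{U^d[N]}^{2^d}$ for some $d=d(\ell)$, typically $d=\ell+1$. The implicit constants depend only on $\ell$ and the $c_i$, never on $a$ or the system. Throughout, boundedness of the $f_i$ by $1$ is what lets me drop the function factors.

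\textbf{Step 3: error terms.} The losses come from the $1/H$ terms, from the shifts $n\mapsto n+h$ leaving $[N]$, and from the mismatch between the boxes produced by van der Corput and the cube set $\mathcal C_d(N)$. Each such term involves only $O(H/N)$ proportion of the terms. The growth hypothesis $a(n)/n^c\to0$ gives $\max_{n\le CN}|a(n)|=N^{o(1)}$, so these contributions are $o_N(1)$ after choosing $H=H(N)$ suitably, for instance $H$ slightly smaller than $N$ with $N/H$ growing slowly. This is also why the $o_N(1)$ term depends only on $d$ and on $a$.

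\textbf{Main obstacle.} I expect the hardest step to be the bookkeeping in Steps 2--3. Because $a$ is unbounded (the relevant case is $a=\Lambda'_{w,r}-1$), the errors cannot be absorbed by trivial bounds. Instead, every Cauchy--Schwarz must be arranged so that $a$ appears only through averages whose total mass is controlled, for example by using $\frac1N\sum_{n\le N}|a(n)|^2\le N^{o(1)}$. The local cube averages must also be matched exactly with $\|a\|_{U^d[N]}$. If this matching proves awkward, my fallback is the standard trick of embedding $[N]$ into $\Z/N'\Z$ with $N'$ a prime in $[2^{d}cN, 2^{d+1}cN]$, and extending $a\cdot\ind{[N]}$ by zero there. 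On $\Z/N'\Z$ the van der Corput steps are exact, and the resulting cyclic Gowers norm is comparable to $\|a\|_{U^d[N]}$.
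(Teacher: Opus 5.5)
The paper does not reprove this lemma. It cites \cite[Lemma 3.5]{Frantzikinakis_Host_Kra13}, where the bound is stated with $\|a\ind{[N]}\|_{U_d(\Z_{dN})}$, and only remarks that this cyclic norm and $\|a\|_{U^d([N])}$ agree up to a constant depending on $d$. Your van der Corput strategy is the right one, but Steps 2--3, where the matching with $\|a\|_{U^d([N])}$ happens, do not work as you describe.

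\textbf{The gap.} The $h$-variables in $\mathcal C_d(N)$ range over scale $N$.
\begin{itemize}
\item If $H/N\to0$, as in your ``$N/H$ growing slowly'', you end up with short-range averages $H^{-(d-1)}\sum_{h\in[H]^{d-1}}\bigl|\frac1N\sum_n\Delta_h a(n)\bigr|^2$. These are not controlled by $\|a\|_{U^d([N])}$. For example, take $a$ constant on blocks of length $H=N/\log N$ with independent random signs: then $\|a\|_{U^2([N])}\to0$, while its short-range average stays near $1/3$.
\item If $H=\varepsilon N$, as in your Step 2, the boundary and box-mismatch terms are an $\varepsilon$-proportion, not an $o(1)$-proportion. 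The bound $|a|\le N^{o(1)}$ does not make them negligible: for $a=\Lambda'_{w,1}-1$ they carry factors up to $(\log N)^{2^d}$.
\end{itemize}
So no choice of $H$ makes your error analysis go through.

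\textbf{The fix.} Avoid boundary terms altogether. Put $b=a\ind{[N]}$, sum over all of $\Z$, and use the exact finite van der Corput inequality with $H=N$, valid for any $L^2(\mu)$-valued $(u_n)$ supported in $[N]$:
\[
\Big\|\frac1N\sum_{n\in\Z}u_n\Big\|^2\le\sum_{|h|<N}w(h)\,\Big|\frac1N\sum_{n\in\Z}\langle u_{n+h},u_n\rangle\Big|,\qquad w(h)=\frac{(2N-1)(N-|h|)}{N^3}\le\frac2N .
\]
Take $u_n=b(n)\prod_iT_i^{c_in}f_i$. Composing with $T_\ell^{-c_\ell n}$ and applying Cauchy--Schwarz bounds each inner term by an average of the same shape, with $\ell-1$ functions and weight $\Delta_hb$, still supported in $[N]$. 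Induction and H\"older (note $\sum_h w(h)<2$) give
\[
\|A_N\|^{2^\ell}\ll_\ell\sum_{h\in\Z^\ell}\prod_jw(h_j)\Big|\frac1N\sum_n\Delta_hb(n)\Big| .
\]
One more Cauchy--Schwarz, together with positivity and $w\le2/N$, bounds this by
\[
\ll_\ell\Big(N^{-(\ell+2)}\sum_{(n,h)\in\Z^{\ell+2}}\prod_\omega\mathcal C^{|\omega|}b(n+\omega\cdot h)\Big)^{1/2}.
\]
Since $b$ vanishes off $[N]$, the last sum is exactly $|\mathcal C_{\ell+1}(N)|\,\|a\|_{U^{\ell+1}([N])}^{2^{\ell+1}}$, and $|\mathcal C_{\ell+1}(N)|\le N^{\ell+2}$. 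This proves the lemma with $d=\ell+1$. In this linear setting it needs neither the growth hypothesis nor the $o_N(1)$ term.

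\textbf{Smaller points.}
\begin{itemize}
\item Your $\Z/N'\Z$ fallback is incomplete. The sequence $n\mapsto\prod_iT_i^{c_in}f_i$ is not $N'$-periodic, so van der Corput cannot be run ``exactly'' there without an extra transference step, such as averaging along orbits.
\item Degenerate cases need no merging. Merging for $c_i=c_j$ is not even legitimate when $T_i\neq T_j$.
\item The constants do not depend on the $c_i$, since $T_i^{c_i}$ can be absorbed into $T_i$.
\end{itemize}
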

 \begin{remark}
     The original statement in \cite{Frantzikinakis_Host_Kra13} has \cref{Gowers-Control} with $\left\|a  \ind{[N]}\right\|_{U_d\left(\Z_{dN}\right)}$ in place of $\left\|a \right\|_{U_d\left([N]\right)}$. Our version follows from the fact that these quantities differ only by a constant depending solely in $d$.
 \end{remark}
\begin{definition}
    If $X=G/\Gamma$ is an $s$-step nilmanifold, $f\in C(X)$, and $g\in G$, then the sequence $(f(g^ne_X))_{n\in \N}$ is an $s$-step nilsequence. In addition, a $0$-step nilsequence is a constant sequence. 
\end{definition}

\begin{proposition}[{\cite[Proposition 10.1]{Green_Tao10} and \cite[Theorem 1.3]{Green_Tao_Ziegler12}}]\label{Green-Tao-inverse-theorem}
  For any $0<\delta \leqslant 1$ and any $C \geqslant 20$, there exists a finite collection $\mathcal{M}_{s, \delta, C}$ of nilmanifolds $G / \Gamma=\left(G / \Gamma, d_{G / \Gamma}\right)$ with the following property. Let $N \geqslant 1$. Suppose that $N^{\prime} \in[C N, 2 C N]$ is a prime, that $v: \mathbb{Z}_{N^{\prime}} \rightarrow \mathbb{R}^{+}$is an $(s+2) 2^{s+1}$ pseudorandom measure\footnote{See \cite[Definitions 6.2 and 6.3]{Green_Tao10}}, that $f:[N] \rightarrow \mathbb{R}$ is a function with $|f(n)| \leqslant v(n)$ for all $n \in[N]$ and that $\|f\|_{U^{s+1}[N]} \geqslant \delta$. Then there exists $G / \Gamma \in \mathcal{M}_{s, \delta, C}$ together with a 1 -bounded $s$-step nilsequence $\left(F\left(g^n x\right)\right)_{n \in \mathbb{Z}}$ with Lipschitz constant $O_{s, \delta, C}(1)$, such that
$$
\left|\mathbb{E}_{n \leqslant N} f(n) F\left(g^n x\right)\right| \gg_{s, C, \delta} 1 .
$$  
\end{proposition}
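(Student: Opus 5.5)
This proposition is quoted from \cite[Proposition 10.1]{Green_Tao10} and \cite[Theorem 1.3]{Green_Tao_Ziegler12}. If I had to prove it, I would combine the inverse theorem for \emph{bounded} functions with a transference argument. The bounded input is the Green--Tao--Ziegler theorem: a $1$-bounded $g\colon[N]\to\C$ with $\norm{g}_{U^{s+1}[N]}\geq\delta'$ correlates, at level $\gg_{s,\delta'}1$, with a $1$-bounded Lipschitz $s$-step nilsequence from a finite family of nilmanifolds depending only on $s,\delta'$. So the task is to pass from $|f|\leq v$ to a bounded function. I would work in $\Z_{N'}$, extending $f$ by zero outside $[N]$. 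Choosing $N'\in[CN,2CN]$ prime means $[N]$ embeds without wraparound, and $\norm{f}_{U^{s+1}[N]}$ and $\norm{f\ind{[N]}}_{U^{s+1}(\Z_{N'})}$ agree up to a factor depending only on $s$ and $C$. This is where the dependence on $C$ in the final constants enters.

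I would argue by contradiction. Let $\mathcal{F}$ be the class of $1$-bounded $s$-step nilsequences on the finite family of nilmanifolds with Lipschitz constant at most $K$, with $K$ to be chosen in terms of $s,\delta,C$. Suppose $|\E_{n\leq N} f(n)F(g^nx)|\leq\epsilon$ for every $F(g^n x)\in\mathcal F$. Apply the dense model theorem (the Hahn--Banach/energy-increment argument of \cite{Green_Tao10}) to the class $\mathcal{F}$ closed under bounded products, treating the positive and negative parts of $f$ separately. This gives a function $\tilde f$ with $|\tilde f|\leq 1+o(1)$ such that $f-\tilde f$ has correlation at most $\epsilon'$ with every element of $\mathcal{F}$. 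The pseudorandomness of $v$ is used here: its linear forms condition makes $\E v\approx1$ and lets $\mathcal F$-correlations of $v$ match those of $1$, which is what allows the model to be bounded.

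Next I would show that $\tilde f$ inherits the Gowers norm, namely $\norm{\tilde f}_{U^{s+1}}\geq\delta/2$. I would use the generalized von Neumann inequality and the fact that dual functions $\mathcal{D}(f)$ of $v$-bounded functions are uniformly approximable by bounded combinations of nilsequences of bounded complexity. In the other direction, $\tilde f$ has no $\mathcal F$-correlation beyond $\epsilon+\epsilon'$. Applying the bounded inverse theorem to $\tilde f$ with $\delta'=\delta/2$, and taking $K$ at least the Lipschitz constant it produces, then gives a contradiction once $\epsilon,\epsilon'$ are small in terms of $s,\delta,C$. The finiteness of $\mathcal{M}_{s,\delta,C}$ and the $O_{s,\delta,C}(1)$ Lipschitz bound are inherited from the bounded theorem.

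The main obstacle is the transfer of the lower bound on the $U^{s+1}$ norm from $f$ to $\tilde f$. This needs the correlation condition on $v$ to control $\norm{f-\tilde f}_{U^{s+1}}$ through its inner product with the dual function $\mathcal{D}(f-\tilde f)$. The step to verify first is that such dual functions lie, up to small $L^1(v)$-error, in the algebra generated by $\mathcal F$, since that is precisely what makes the dense model orthogonality strong enough.
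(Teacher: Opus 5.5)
The paper gives no proof of this statement. It imports it from Green--Tao's Proposition 10.1, which transfers the bounded inverse conjecture $\mathrm{GI}(s)$ to $\nu$-bounded functions, combined with Green--Tao--Ziegler's proof of $\mathrm{GI}(s)$.

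Your overall architecture (transference plus the bounded inverse theorem) is the right one. However, the transference step as you set it up is circular. Running the dense model theorem with nilsequences as the test family only tells you that $f-\tilde f$ is nearly orthogonal to nilsequences. That says nothing about $\norm{f-\tilde f}_{U^{s+1}}$. To get $\norm{\tilde f}_{U^{s+1}}\geq\delta/2$ you then invoke, as a ``fact'', that $\mathcal{D}f$ is approximable in $L^1(\nu+1)$ by bounded combinations of nilsequences of bounded complexity, uniformly over $\nu$-bounded $f$. No such fact is available independently of the proposition, and it already implies the proposition on its own. Since $|f|\leq\nu$ and $\langle f,\mathcal{D}f\rangle=\norm{f}_{U^{s+1}(\Z_{N'})}^{2^{s+1}}\gg_{s,C}\delta^{2^{s+1}}$, a decomposition $\mathcal{D}f=\sum_{i\leq M}c_iF_i+e$ with $|c_i|\leq M$ and $\E\,\nu|e|$ small forces $|\langle f,F_i\rangle|\gg_{s,C,\delta,M}1$ for some $i$. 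That is exactly the desired conclusion, reached with no dense model and no bounded inverse theorem for $\tilde f$. So the step you single out as ``the main obstacle'' is the entire content of the statement, and your sketch gives no argument for it.

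The missing idea is to run the dense model theorem with the dual functions $\mathcal{D}g$, $|g|\leq\nu+1$, as the test family, treating $f_+$ and $f_-$ separately.
\begin{itemize}
\item Its hypothesis is that $\langle\nu-1,\psi\rangle=o(1)$ for every product $\psi$ of boundedly many such dual functions. This is precisely the estimate for which Green and Tao use the correlation condition in the definition of a pseudorandom measure.
\item The output then satisfies $\norm{f_\pm-\tilde f_\pm}_{U^{s+1}}^{2^{s+1}}=\langle f_\pm-\tilde f_\pm,\mathcal{D}(f_\pm-\tilde f_\pm)\rangle=o(1)$. Hence $\tilde f$ inherits the Gowers lower bound, up to the $C$-dependent normalization between $[N]$ and $\Z_{N'}$, and the bounded inverse theorem applies to it.
\item To move the resulting nilsequence correlation back to $f$, use that a Lipschitz $s$-step nilsequence is, up to $\epsilon$ in $L^\infty$, a function of $U^{s+1}$-dual norm $O_\epsilon(1)$. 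Then $|\langle f-\tilde f,F\rangle|\leq O_\epsilon(1)\norm{f-\tilde f}_{U^{s+1}}+\epsilon\,\E(\nu+2)$.
\end{itemize}
This same dual-norm fact, together with $\norm{\nu-1}_{U^{s+1}}=o(1)$, is also what justifies your unproved assertion that $\nu$ has the same nilsequence correlations as $1$.
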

\cref{Green-Tao-inverse-theorem} is particularly useful for averages including the von Mangoldt function because of the following proposition.
\begin{proposition}[{See \cite[Proposition 6.4]{Green_Tao10}}]\label{vmf-bounded-by-srm}
Let $D>1$ be arbitrary. Then there is a constant $C_0=C_0(D)$ such that the following is true. Let $C \geqslant C_0$, and suppose that $N^{\prime} \in[C N, 2 C N]$. Let $r\in R(W)$. Then there exists a $D$-pseudorandom measure $\nu: \mathbb{Z}_{N^{\prime}} \rightarrow \mathbb{R}^{+}$ which obeys the pointwise bounds
$$
1+\Lambda_{w,r}^{\prime}(n)  \ll_{D, C} \nu(n)
$$
for all $n \in\left[N^{3 / 5}, N\right]$, where we identify $n$ with an element of $\mathbb{Z}_{N^{\prime}}$ in the obvious manner.
\end{proposition}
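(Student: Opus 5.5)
This statement is imported from \cite[Proposition 6.4]{Green_Tao10}, and I would follow the Green--Tao construction of an enveloping sieve (a Goldston--Y{\i}ld{\i}r{\i}m type truncated divisor sum). Fix a small parameter $\gamma=\gamma(D)>0$ and set $R=N^{\gamma}$. Define
\begin{equation*}
\Lambda_R(m)=\sum_{d\mid m,\ d\le R}\mu(d)\log(R/d).
\end{equation*}
On $\mathbb{Z}_{N'}$, identified with $\{1,\ldots,N'\}$, the candidate measure is
\begin{equation*}
\nu(n)=\tfrac12+\tfrac12\,\frac{\phi(W)}{W}\,\frac{\Lambda_R(Wn+r)^2}{\log R}\quad\text{for } n\in[N],
\end{equation*}
and $\nu(n)=1$ otherwise. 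The two tasks are the pointwise majorization and the verification of the pseudorandomness axioms of \cite[Definitions 6.2 and 6.3]{Green_Tao10}, namely the linear forms condition and the correlation condition, up to the order $D$.

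The pointwise bound is the easy step. Take $n\in[N^{3/5},N]$ with $\Lambda'(Wn+r)\neq0$. Then $Wn+r=p$ is a prime with $p>N^{3/5}>R$. Hence the only divisor $d\le R$ of $p$ is $d=1$, so $\Lambda_R(p)=\log R$. This gives
\begin{equation*}
\Lambda'_{w,r}(n)=\frac{\phi(W)}{W}\log p\le \frac{\phi(W)}{W}\log(2WN)\ll_\gamma \frac{\phi(W)}{W}\frac{(\log R)^2}{\log R}\ll \nu(n).
\end{equation*}
Here $W$ is fixed relative to $N$ (or $w$ grows slowly), and the constant depends only on $\gamma$, hence on $D$. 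The bound $1\le 2\nu(n)$ is trivial, so $1+\Lambda'_{w,r}(n)\ll_{D,C}\nu(n)$.

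For pseudorandomness I would expand products $\prod_j\nu(\psi_j(x))$ along systems of at most $D$ affine-linear forms with bounded coefficients. The $\tfrac12$ parts contribute harmless cross terms. The main term becomes a multiple sum over $d_j,d'_j\le R$ of $\mu(d_j)\mu(d'_j)\log(R/d_j)\log(R/d'_j)$, weighted by the density of solutions to $d_j d'_j\mid W\psi_j(x)+r$. Because $r\in R(W)$ and $W$ contains all primes below $w$, the local densities at small primes are exactly cancelled by the $\phi(W)/W$ normalization; this is the $W$-trick. I would then write the sums as contour integrals of Euler products, following Goldston--Y{\i}ld{\i}r{\i}m, and extract the main term $1+o(1)$. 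Choosing $\gamma$ small enough in terms of $D$ keeps $\prod d_jd'_j\le R^{2D}$ well below $N$, so the error from counting lattice points in boxes is negligible. Taking $C\ge C_0(D)$ ensures that the forms $\psi_j$ do not wrap around modulo $N'$ on the relevant ranges, so the cyclic group computations agree with those on $[N]$.

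The main obstacle is the correlation condition. There one needs
\begin{equation*}
\mathbb{E}\prod_{j}\nu(x+h_j)\le\sum_{i<j}\tau(h_i-h_j)
\end{equation*}
for a weight $\tau$ with bounded moments. The local factors at primes $p\mid h_i-h_j$ with $p\ge w$ do not cancel and must be controlled by $\prod_{p\mid \Delta}(1+O(p^{-1/2}))$-type factors. I would handle this exactly as in the Green--Tao argument: bound these Euler factors by a function of the differences and verify its moment bounds directly.
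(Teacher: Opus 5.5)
Your construction is the enveloping sieve behind the cited Green--Tao result; the paper gives no argument of its own, only the reference. Your pointwise majorization on $[N^{3/5},N]$ is fine. The gap is in the pseudorandomness step, in the regime you explicitly allow: $W$ fixed while $N\to\infty$. This is also the regime in which the paper later applies the statement (fixed $w_i$, $N_{i,j}\to\infty$).

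The $W$-trick only cancels the local factors at primes $p<w$. At $p\ge w$ the Goldston--Y{\i}ld{\i}r{\i}m main term is $\prod_{p\ge w}\beta_p$, where $\beta_p=(1-1/p)^{-m}$ times the proportion of $x\in\mathbb{Z}_p^t$ with $p\nmid W\psi_j(x)+r$ for all $j$. These factors satisfy $\beta_p=1+O_D(p^{-2})$, but they are not equal to $1$. For the forms $x,\,x+y,\,x+2y$ one gets
\begin{equation*}
\beta_p=\Big(1-\frac1p\Big)^{-3}\Big(1-\frac1p\Big)\Big(1-\frac2p\Big)=1-\frac{1}{(p-1)^2},
\end{equation*}
so with $w$ fixed the average of $\nu(x)\nu(x+y)\nu(x+2y)$ does not tend to $1$. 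Your claimed main term $1+o(1)$ is correct only when $w=w(N)\to\infty$, which is Green--Tao's convention. In that case $\prod_{p\ge w}\beta_p=1+O_D(1/w)=1+o(1)$.

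No other choice of $\nu$ repairs this: for fixed $w$ the statement is false once $D\ge 4$. The linear forms condition gives $\|\nu-1\|_{U^2(\mathbb{Z}_{N'})}=o(1)$. Fix $w'>w$, set $M=\prod_{w\le q<w'}q$, and let $G=\{n\in[N]:(Wn+r,M)=1\}$. After smoothing the cutoff to $[N]$, $\nu$ gives $G$ only its proportional share of mass, a fraction $\prod_{w\le q<w'}(1-1/q)$. But $\Lambda'_{w,r}$ puts essentially all of its mass on $G$. Hence the implied constant in $1+\Lambda'_{w,r}\ll\nu$ would be at least $\prod_{w\le q<w'}(1-1/q)^{-1}$ for every $w'$, which is unbounded.

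So you have two options. You can let $w\to\infty$ with $N$, and then you are proving exactly Green--Tao's Proposition 6.4. Or you can settle for the linear forms condition with error $O_D(1/w)+o_N(1)$ and carry that error through the relative inverse theorem, which is what a fixed-$w$ application actually needs.

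Separately, the correlation condition you single out as the main obstacle is not needed. In the cited Definitions 6.2--6.3 of the linear-equations paper, a $D$-pseudorandom measure is one satisfying the $(D,D,D)$-linear forms condition only, so that part of your plan can be dropped.
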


For the next proposition, we follow the notation from \cite{Bergelson_Leibman_Son14}. We  denote by $\mathcal{N}_{s,d}$ the universal free nilmanifold of nilpotency class $s$ with $d$ continuous and $d$ discrete generators. This nilmanifold satisfies that any nilmanifold of nilpotency class $\leq s$ and with $\leq d$ continuous and discrete generators respectively, is a factor of $\mathcal{N}_{s,d}$. Given $s,d\in \N$ and $M>0$, we  denote by $\mathcal{L}_{s,d,M}$ the set of basic nilsequences $f(g^ne_X)$ where $f\in C(\mathcal{N}_{s,d})$ is Lipschitz with constant $M$ and $|f|\leq M$. We remark that a smooth metric on each nilmanifold $\mathcal{N}_{s,d}$ is assumed to be chosen. The following proposition is a softer version of \cite[Proposition 10.2]{Green_Tao10} and \cite{Green_Tao12Mobius}, stated by Bergelson, Leibman, and Son \cite[Proposition 7.3]{Bergelson_Leibman_Son14} which is particularly useful to us as we need the $w$ to not depend on $N$. 
\begin{proposition}\label{uniformity-vmf}
 For any $s, d \in \mathbb{N}$ and $M>0$, the quantity
$$
 \sup _{\substack{\eta \in \mathcal{L}_{s, d, M} \\ r \in R(W(w))}}\left|\frac{1}{N} \sum_{n=1}^N\left(\Lambda_{w, r}^{\prime}(n)-1\right) \eta(n)\right|
$$
converges to $0$ as $N\to \infty$ and then $w\to \infty$. 
\end{proposition}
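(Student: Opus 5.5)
This proposition is quoted from \cite[Proposition 7.3]{Bergelson_Leibman_Son14}. The plan is to recover it from the Green--Tao machinery for the von Mangoldt function. Fix $s,d\in\N$, $M>0$ and $\varepsilon>0$. The goal is a $w_0$ such that, for every $w\geq w_0$, the supremum over $\eta\in\mathcal{L}_{s,d,M}$ and $r\in R(W(w))$ is eventually at most $\varepsilon$ as $N\to\infty$.

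\textbf{Step 1: reduction to a polynomial orbit.} Every $\eta\in\mathcal{L}_{s,d,M}$ lives on the single nilmanifold $\mathcal{N}_{s,d}$ and has Lipschitz constant and sup norm at most $M$. Hence the family is compact modulo the choice of $g$, and it suffices to control
\[
\frac{1}{N}\sum_{n\le N}\big(\Lambda'(Wn+r)\tfrac{\phi(W)}{W}-1\big)F(g^n x)
\]
uniformly in $g\in G$, $x\in\mathcal{N}_{s,d}$ and $r\in R(W)$. Equivalently, $n\mapsto g^{n}$ is a polynomial sequence in $G$.

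\textbf{Step 2: apply the factorization theorem.} I would apply the Green--Tao factorization theorem to the sequence $g^n x$ on $[N]$, with a growth function depending only on $\varepsilon,s,d,M$. This writes $g^n=\beta(n)\,g'(n)\,\gamma(n)$ with three factors:
\begin{itemize}
\item $\beta$ is smooth, so it is essentially constant on intervals of length $\varepsilon' N$;
\item $g'$ is totally equidistributed in a rational subnilmanifold;
\item $\gamma$ is $q$-periodic, with $q\le Q(\varepsilon)$ bounded independently of $N$.
\end{itemize}
Splitting $[N]$ into short intervals and residue classes mod $q$ leaves two contributions.

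\emph{Equidistributed part.} Here $F$ can be replaced by $F-\int F$ on the subnilmanifold. The resulting average against $\Lambda'(Wn+r)$ is small by the Type I/II (Vaughan identity) estimates of \cite{Green_Tao12Mobius}, which control correlations of $\Lambda$ with totally equidistributed nilsequences, uniformly in the linear change of variables $n\mapsto Wn+r$.

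\emph{Mean part.} What remains is the averages
\[
\frac{1}{|P|}\sum_{n\in P}\Lambda'_{w,r}(n)-1
\]
over progressions $P$ of common difference $q$ and length $\gg N/Q$, weighted by bounded coefficients.

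\textbf{Step 3 (main obstacle): uniformity in $w$ independent of $N$.} Write $q=q_1q_2$, where $q_1$ has all prime factors $<w$ and $q_2$ has all prime factors $\ge w$. By Siegel--Walfisz (in fact the prime number theorem in progressions with bounded modulus $Wq$ suffices, since $q\le Q(\varepsilon)$), the progression average of $\Lambda'_{w,r}$ tends to $\frac{\phi(W)q}{W\phi(Wq/\gcd)}$ as $N\to\infty$, restricted to classes coprime to $Wq$.

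This limit equals $1$ exactly when the prime factors of $q$ are $<w$. For the factor $q_2$ the mean is $\prod_{p\mid q_2}\frac{p}{p-1}$ on a proportion $\prod_{p\mid q_2}(1-1/p)$ of classes and $0$ on the rest. Its correlation with a bounded $q$-periodic weight is therefore at most $\sum_{p\mid q_2}O(1/p)=O(\log Q/w)$.

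Choosing $w_0\gg\varepsilon^{-1}\log Q(\varepsilon)$ makes this $\le\varepsilon$. Note that it is exactly here that $w$ can be taken independent of $N$, and I expect this bookkeeping to be the delicate point. Finally, letting $N\to\infty$ first, and absorbing the $O(\varepsilon)$ losses from the factorization and the Type I/II bounds, gives the claimed double limit.
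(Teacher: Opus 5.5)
The paper gives no proof here. It imports the statement from \cite[Proposition 7.3]{Bergelson_Leibman_Son14}, which repackages Green--Tao's orthogonality of the $W$-tricked von Mangoldt function to nilsequences \cite[Proposition 10.2]{Green_Tao10}; that result in turn rests on the M\"obius--nilsequences theorem of \cite{Green_Tao12Mobius}, with its arbitrary $\log$-power savings.

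Your self-contained route has a genuine gap where Steps 2 and 3 meet. Step 3 works only because you take the period of $\gamma$ to be at most some $Q(\varepsilon)$ independent of $N$ and $w$. That means running the factorization theorem with $M_0$ and the growth function fixed. This in turn requires an equidistributed-case estimate of the following form: if the orbit is totally $\delta$-equidistributed with $\delta=\delta(\varepsilon)$ fixed, then its mean-subtracted correlation with $\frac{\phi(W)}{W}\Lambda'(W(qm+a)+r)$ is at most $\varepsilon$ for all large $N$, uniformly in $W,r,q,a$.

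Vaughan-type Type I/II bounds do not give this.
\begin{itemize}
\item Total $\delta$-equidistribution on $[N]$ only controls progressions of length at least $\delta N$, i.e.\ of difference at most $1/\delta$.
\item The Type I sums, however, run over all moduli $d\le N^{c}$, with total weight $\sum_{d\le N^{c}}1/d\asymp\log N$.
\item The Cauchy--Schwarz step for the Type II sums costs further powers of $\log N$, from $\sum_{k\sim K}\Lambda(k)^2\asymp K\log K$ and from the $\asymp\log N$ dyadic ranges.
\end{itemize}
The bounds one actually obtains have the shape $\delta^{c}\log^{O(1)}N$, which is why such arguments are run with $\delta$ a negative power of $\log N$. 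With $\delta$ fixed they do not tend to $0$. You would also still owe the uniformity in $W$, which you need precisely because your growth function must not depend on $w$.

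If instead you let $\delta$, and hence $M_0$, grow with $N$, the period $q$ is only bounded by $\log^{O(1)}N$. Your estimate $\sum_{p\mid q_2}O(1/p)$ then becomes $O(\log\log N/(w\log w))$, which does not vanish as $N\to\infty$ with $w$ fixed. So the two halves of the argument pull in opposite directions, and the iterated limit is never reached.

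Closing the gap needs a genuinely new input. One option is a bound, uniform in $q_2$, showing that the local factor $a\mapsto\prod_{p\mid q_2}\frac{p}{p-1}\1_{p\nmid Wa+r}-1$ correlates weakly with the $q$-periodic mean weights produced by a bounded-complexity Lipschitz nilsequence; the crude $L^1$ bound you use cannot do this. Another is a treatment of the small-divisor part of $\Lambda=\mu*\log$ that exploits cancellation of $\mu(d)$ over the exceptional moduli, rather than bounding them in absolute value. Otherwise, take the route the paper takes: invoke the Green--Tao result as packaged in \cite{Bergelson_Leibman_Son14}.
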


We are now in position of proving the following lemma.
 \begin{lemma}\label{pre-strong-antiuniformity}
Let $d,s,M_0,m\in \N$. Then, we have that 
$$\sup_{\psi\in \mathcal{L}_{s,m,M_0}} \norm{(\Lambda_{w,1}'-1)\psi}_{U^d([N])} $$
converges to $0$ as $N\to \infty$ and then $w\to \infty$.  
 \end{lemma}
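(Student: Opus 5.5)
We argue by contradiction, combining the Green--Tao inverse theorem (\cref{Green-Tao-inverse-theorem}), the pseudorandom majorant (\cref{vmf-bounded-by-srm}) and the nilsequence orthogonality (\cref{uniformity-vmf}). Suppose the statement fails. Then there are $\varepsilon>0$, a sequence $w_j\to\infty$, and for each $j$ infinitely many $N$ (in particular arbitrarily large $N$) together with $\psi_{j,N}\in\mathcal{L}_{s,m,M_0}$ such that
\[
\norm{(\Lambda'_{w_j,1}-1)\psi_{j,N}}_{U^d([N])}\geq \varepsilon .
\]
Set $f=f_{j,N}=(\Lambda'_{w_j,1}-1)\psi_{j,N}/(2M_0)$ restricted to $[N]$. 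By \cref{vmf-bounded-by-srm} with $D=(d+1)2^{d}$ and $C\ge\max(C_0,20)$ fixed, and using Bertrand's postulate to pick a prime $N'\in[CN,2CN]$, there is a $D$-pseudorandom measure $\nu$ with $|f(n)|\ll_{D,C}\nu(n)$ on $[N^{3/5},N]$. The initial segment $[1,N^{3/5}]$ contributes $o_N(1)$ to the $U^d[N]$ norm, since $\Lambda'_{w,1}\ll \log N$ there and the segment has relative density $N^{-2/5}$. So after rescaling $\nu$ by a constant and discarding this segment, \cref{Green-Tao-inverse-theorem} applies with $s+1=d$ and $\delta\gg_{M_0,C,D}\varepsilon$.

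This produces a nilmanifold from the finite family $\mathcal{M}_{d-1,\delta,C}$ and a $1$-bounded $(d-1)$-step nilsequence $F(g^nx)$ with Lipschitz constant $O(1)$ such that
\[
\Big|\frac1N\sum_{n\le N}(\Lambda'_{w_j,1}(n)-1)\,\psi_{j,N}(n)\,F(g^nx)\Big|\gg 1 .
\]
The key observation is that the product $\psi_{j,N}(n)F(g^nx)$ is itself a basic nilsequence on the product nilmanifold $\mathcal{N}_{s,m}\times(G/\Gamma)$. Its step is at most $s'=\max(s,d-1)$, and its number of generators is bounded uniformly, because the family $\mathcal{M}_{d-1,\delta,C}$ is finite and independent of $N$ and $w$. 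Its Lipschitz constant and sup norm are bounded by some $M'$ depending only on $M_0,\delta,C$. Since every such nilmanifold is a factor of $\mathcal{N}_{s',d'}$ for suitable $d'$, the product lies in $\mathcal{L}_{s',d',M''}$ for fixed $s',d',M''$ depending only on $d,s,m,M_0,\varepsilon$. Note that $1\in R(W(w))$ always.

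\cref{uniformity-vmf} then says that the supremum over $\mathcal{L}_{s',d',M''}$ of the correlation above tends to $0$ as $N\to\infty$ and then $w\to\infty$. This contradicts the $\gg1$ lower bound holding for $w=w_j$ and arbitrarily large $N$, once $j$ is large.

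The main obstacle is the uniformity bookkeeping in the last step. We must ensure that the product nilsequence really lands in a single class $\mathcal{L}_{s',d',M''}$: lifting Lipschitz functions from the product nilmanifold to the universal $\mathcal{N}_{s',d'}$ while keeping a uniform Lipschitz bound needs fixed smooth metrics and the finiteness of $\mathcal{M}$. A secondary check is that the pseudorandom majorant hypothesis tolerates the bounded factor $\psi$ and the constant $-1$, which are absorbed via $|f|\ll 1+\Lambda'_{w,1}\ll\nu$.
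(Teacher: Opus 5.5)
Your proposal is correct and follows essentially the same route as the paper: argue by contradiction, discard the segment $[1,N^{3/5}]$, dominate by the pseudorandom majorant with $D=(d+1)2^{d}$, apply the Green--Tao inverse theorem with $s+1=d$, and absorb the product of $\psi$ with the inverse-theorem nilsequence into a single class $\mathcal{L}_{s',h,M}$ to contradict \cref{uniformity-vmf} (the paper bounds the step by $s+d$ where you use $\max(s,d-1)$, which is equally fine). One wording fix: absorb the majorant constant by dividing $f$ rather than by rescaling $\nu$, since a constant multiple of a pseudorandom measure is no longer pseudorandom; your dependence $\delta\gg_{M_0,C,D}\varepsilon$ already reflects exactly this.
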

\begin{proof}
After dividing by a constant depending on $\psi $, we may assume that the supremum is taken over nilsequences bounded by $1$.  

 Let us assume by contradiction that there are $\delta>0$, an increasing sequences $(w_i)_{i\in \N}$ and $(N_{i,j})_{j\in \N}$, and $\psi_{i,j}\in \mathcal{L}_{s,m,M_0}$ such that for each $i,j\in \N$,
$$\norm{(\Lambda_{w_i,1}'-1)\psi_{i,j} }_{U^d([N_{i,j}])}\geq \delta. $$

Fix $i\in \N$ from now on. By passing to a subsequence of $(N_{i,j})_{j\in \N}$, we can assume that for all $j\in \N$ 
$$\norm{\ind{[N_{i,j}^{3/5},N_{i,j}]}(\Lambda_{w_i,1}'-1)\psi_{i,j} }_{U^d([N_{i,j}])}\geq \delta/2,  $$
where we used that
\begin{align*}
        \norm{\ind{[N_{i,j}^{3/5}]}(\Lambda_{w_i,1}'-1)\psi_{i,j} }_{U^d([N_{i,j}])}&\leq  \norm{\ind{[N_{i,j}^{3/5}]} (\log{(N_{i,j})} +1)}_{U^d([N_{i,j}])}\\
        &= (\log{(N_{i,j})} +1)  \norm{\ind{[N_{i,j}^{3/5}]} }_{U^d([N_{i,j}])}\\
        &\leq (\log{(N_{i,j})} +1) \left(N_{i,j}^{-2/5}\right)^{(d+1)/2^d}.
\end{align*}

Then, \cref{vmf-bounded-by-srm} gives that for $D=(d+1)2^{d}$, there is $C=C(d)$ such that for each $N_{i,j}'\in [CN_{i,j},2CN_{i,j}]$ prime there is a $D$-pseudorandom measure $\nu:\Z_{N_{i,j}'}\to \R_+$ such that 
$$|\ind{[N_{i,j}^{3/5},N_{i,j}]}(n)(\Lambda_{w_i,1}'(n)-1) \psi_{i,j}|\leq |\ind{[N_{i,j}^{3/5},N_{i,j}]}(n)(\Lambda_{w_i,1}'(n)-1) |\ll_{d} \nu(n), $$
for each $n\in [N_{i,j}]$. Now we can use \cref{Green-Tao-inverse-theorem} to find a nilmanifold $G_{i,j}/\Gamma_{i,j}\in \mathcal{M}_{d-1,\delta/2,C}$ and $1$-bounded $(d-1)$-step nilsequence $\Psi_{i,j}$ of whose implicit Lipschitz constant is $M_1=M_1(d,\delta)$ such that for each $j\in\N$, 
$$\left| \frac{1}{N_{i,j}} \sum_{n\leq N_{i,j}} \ind{[N_{i,j}^{3/5},N_{i,j}]} (\Lambda_{w_i,1}'  -1) \psi_{i,j}  \Psi_{i,j} \right|\geq c' $$
for some $c'=c(d,\delta)'>0$. In particular, after passing to another subsequence of $(N_{i,j})_{j\in \N}$, we can assume that  
$$\left| \frac{1}{N_{i,j}} \sum_{n\leq N_{i,j}} (\Lambda_{w_i,1}'-1)   \psi_{i,j}  \Psi_{i,j} \right|\geq c $$
where $c=c'/2$.

The product $\psi_{i,j} \Psi_{i,j}$ is a 
$M=(M_0+M_1)$-Lipschitz nilsequence, over the product nilmanifold, which is an $(s+d)$-nilmanifold, with $h$ generators where $h=h(m,d,\delta)$ depends on $m,d,\delta$. Therefore, we have that for each $j$:
\begin{equation}\label{eq-vmf-against-nil}
    c\leq \left| \frac{1}{N_{i,j}} \sum_{n\leq N_{i,j}}(\Lambda_{w_i,1}'-1)   \psi  \Psi_{i,j} \right| \leq   \sup _{\substack{\eta \in \mathcal{L}_{s+d, h, M} \\ r \in R(w_i)}}\left|\frac{1}{N_{i,j}} \sum_{n=1}^{N_{i,j}}\left(\Lambda_{w_i, r}^{\prime}(n)-1\right) \eta(n)\right|.
\end{equation}
Nevertheless, by \eqref{uniformity-vmf} if we take $j\to \infty$ and then $i\to \infty$, the right-hand side of \eqref{eq-vmf-against-nil} goes to zero, which is a contradiction. Thus, the result follows.
\end{proof}
 \begin{corollary}\label{strong-antiuniformity}
 For every basic nilsequence $\psi_w(n)=f(g_w^n e_X)$ where $X=G/\Gamma$ is a nilmanifold, $f\in C(X)$ and $g_w\in G$ depends on $w$, we have that 
$$ \norm{(\Lambda_{w,1}'-1)\psi_w}_{U^d([N])}$$
converges to $0$ as $N\to \infty$ and then $w\to \infty$.
 \end{corollary}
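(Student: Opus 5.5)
The plan is to reduce the corollary directly to \cref{pre-strong-antiuniformity}. The point is that $\psi_w$ is a single fixed nilsequence shape $f(g^n e_X)$ on a fixed nilmanifold $X$: only the group element $g_w$ varies. The supremum in \cref{pre-strong-antiuniformity} is taken over all $g$ in the universal family, so it is uniform in $g_w$. We therefore only need to place every $\psi_w$ in a common class $\mathcal{L}_{s,m,M_0}$ up to an approximation error that is uniform in $w$.

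\textbf{Step 1 (embedding into a universal class).} Let $s$ be the step of $X=G/\Gamma$. We may assume that $G$ is generated by $G^\circ$ and finitely many elements, so $X$ has at most $m$ continuous and $m$ discrete generators for some $m$. By the universal property of $\mathcal{N}_{s,m}$ there is a factor map $\theta\colon \mathcal{N}_{s,m}\to X$ induced by a surjective homomorphism. Each $g_w$ lifts to some $\tilde g_w$, so $\psi_w(n)=(f\circ\theta)(\tilde g_w^n e)$. One technical point: the lifts must land in a form allowed by the definition of $\mathcal{L}_{s,m,M}$, namely $g$ arbitrary in the universal group. Since the class in \cref{pre-strong-antiuniformity} is a supremum over all group elements, no control on $g_w$ is needed.

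\textbf{Step 2 (Lipschitz approximation).} If $f$ is Lipschitz, then $f\circ\theta$ is Lipschitz for the chosen smooth metric, because $\theta$ is smooth on a compact manifold. Hence $\psi_w\in\mathcal{L}_{s,m,M_0}$ for a fixed $M_0$, and \cref{pre-strong-antiuniformity} gives the claim immediately.

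For general continuous $f$, fix $\varepsilon>0$ and choose a Lipschitz $f_\varepsilon$ with $\|f-f_\varepsilon\|_\infty<\varepsilon$. Write
\[
\psi_w=\psi_w^\varepsilon+e_w, \qquad \|e_w\|_\infty<\varepsilon .
\]
By the triangle inequality for the Gowers norm,
\[
\|(\Lambda'_{w,1}-1)\psi_w\|_{U^d[N]}\le \|(\Lambda'_{w,1}-1)\psi^\varepsilon_w\|_{U^d[N]}+\|(\Lambda'_{w,1}-1)e_w\|_{U^d[N]}.
\]
The first term tends to $0$ in the iterated limit by Step 1.

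\textbf{Main obstacle: the error term.} Here $\Lambda'_{w,1}$ is unbounded, so we cannot simply bound by $\varepsilon$ times a trivial norm. I would first try the standard domination $\|F\|_{U^d[N]}\ll_d \|F\|_{L^{p}}$ for some $p=p_d>1$ (a Young/Hausdorff--Young-type bound, for instance $p=2^d/(d+1)$). This gives
\[
\|(\Lambda'_{w,1}-1)e_w\|_{U^d[N]}\ll \varepsilon\,\|\Lambda'_{w,1}+1\|_{L^{p}[N]}.
\]
This is not uniformly bounded in $N$ because of the $\log$ factor. So instead I would use the pseudorandom majorant $\nu$ from \cref{vmf-bounded-by-srm}, as in the proof of \cref{pre-strong-antiuniformity}. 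The plan has two parts. First, the initial segment $[N^{3/5}]$ contributes $o_N(1)$, by the same computation as in that proof. Second, on the remaining range we have $|(\Lambda'_{w,1}-1)e_w|\le \varepsilon\, C\nu$. The linear forms condition for $\nu$ gives $\|\nu\|_{U^d}\ll 1$. Moreover, the Gowers norm is monotone under pointwise domination by a majorant only after expanding products, so what I would actually show is $\|\varepsilon\, g\|_{U^d}\ll \varepsilon^{c}$ for $|g|\le\nu$. This follows from the generalized von Neumann / linear forms estimate $\sum\prod\nu\ll |\mathcal{C}_d(N)|$.

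Letting $N\to\infty$, then $w\to\infty$, then $\varepsilon\to 0$ concludes. Alternatively, if one only needs the corollary for Lipschitz $f$, which suffices for the applications to nilsystems with continuous test functions after approximation at the dynamical level, the approximation step can be skipped entirely.
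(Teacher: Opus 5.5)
Your proposal is correct and follows the paper's argument. You approximate $f$ uniformly by a Lipschitz function, apply \cref{pre-strong-antiuniformity} to the Lipschitz part (uniformly in $g_w$ via the universal nilmanifold), and control the error by expanding the $U^d[N]$ norm and bounding the cube average of a majorant of $|\Lambda'_{w,1}-1|$. The only difference is the source of the bound $\frac{1}{|\mathcal{C}_d(N)|}\sum_{(x,h)\in\mathcal{C}_d(N)}\prod_{\omega}(\Lambda'_{w,1}(x+\omega\cdot h)+1)\ll_d 1$: the paper cites it from Tao--Ter\"av\"ainen, while you derive it from the pseudorandom majorant of \cref{vmf-bounded-by-srm} after peeling off $[1,N^{3/5})$ as in the proof of \cref{pre-strong-antiuniformity}, which is slightly more self-contained.
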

\begin{proof}
Let $X=G/\Gamma$ be an $s$-step nilmanifold on $m$ generators. Let $\psi_w(n) = f(g_w^n  e_X)$ be a nilsequence for $f\in C(X)$ and $g_w\in G$. For $\varepsilon>0$, take $F\in C(X)$ an $M_0$-Lipschitz function for some $M_0>0$ such that $\norm{f-F}_\infty\leq \varepsilon$. Then, we have that
\begin{align*}
&\norm{(\Lambda_{w,1}'-1)\psi_w}_{U^d([N])} \\ \leq& 
\norm{(\Lambda_{w,1}'-1)F(g_w^n e_X)}_{U^d([N])}  + \norm{(\Lambda_{w,1}'-1)(f(g_w^n e_X)-F(g_w^n e_X)) }_{U^d([N])} \\
\leq& \sup_{\eta \in \mathcal{L}_{s,m,M_0}}  \norm{(\Lambda_{w,1}'-1)\eta}_{U^d([N])}  + \norm{(\Lambda_{w,1}'-1)(f(g_w^n e_X)-F(g_w^n e_X)) }_{U^d([N])} .
\end{align*}
The first term in the last expressions converges to $0$ as $N\to \infty$ and then $w\to \infty$ by \cref{pre-strong-antiuniformity}. For the second term, we notice that:
\begin{align*}
 &\left|\norm{(\Lambda_{w,1}'-1)(f(g_w^n e_X)-F(g_w^n e_X)) }_{U^d([N])} \right|^{2^d}\\ &= \frac{1}{|\mathcal{C}_d(N)|}
\sum_{(n,h)\in\mathcal{C}_d(N)}
\prod_{\omega\in\{0,1\}^d}
\mathcal{C}^{|\omega|} \left(
(\Lambda_{w,1}'(n+\omega\cdot h)-1)(f(g_w^{(n+\omega\cdot h)} e_X)-F(g_w^{(n+\omega\cdot h)} e_X)) \right)\\
&\leq \varepsilon^{2^d }\frac{1}{|\mathcal{C}_d(N)|}
\sum_{(n,h)\in\mathcal{C}_d(N)}
\prod_{\omega\in\{0,1\}^d}
(\Lambda_{w,1}'(n+\omega\cdot h)+1) \ll_d \varepsilon^{2^d},
\end{align*}
where in the last inequality we used that 
$$\frac{1}{|\mathcal{C}_d(N)|}
\sum_{(n,h)\in\mathcal{C}_d(N)}
\prod_{\omega\in\{0,1\}^d}
(\Lambda_{w,1}'(n+\omega\cdot h)+1)\ll_d 1 $$
for this, see for example \cite[Theorem 1.6]{TaoTeravainen}. Thus, after taking $N\to \infty$, then $w\to \infty$, we can take $\epsilon\searrow 0$ to conclude that this last quantity goes to $0$, finishing the proof of the lemma.
\end{proof}

\subsection{The proof of the main result}
Now we prove the main result of this section. Given $k\in \N$, for a function $g:X\to \C$ in a set $X$, we  denote $g^{[i]}:X\to \C^k$ the function $1\otimes \cdots \otimes 1 \otimes g \otimes 1 \cdots \otimes 1 $ where $g$ is in the $i$th-coordinate. In what follows we prove that $\rho^\circ_x$ is $(\P-1)$-uniformly progressive. 
\begin{theorem}\label{uniformity-measures-in-primes}
    Let $\ell\geq 2$. Let $\kappa,s,d\in \N$ For every ergodic $s$-step nilsystem $(X,\mu,T)$ in $d$ generators, $\kappa$ is as in \cref{prop kappa rho_x}, and $f_1,\ldots,f_\ell\in C(X)$ nonnegative functions bounded by $1$, we have that for every $\eta>0$, there exists a constant 
    $$\delta=\delta(\ell,\kappa,\eta,s,d,f_1)>0$$
    such that if for some $x \in X$ 
    $$ \int_{\HP_\ell(X)} \1 \otimes  f_1 \otimes \cdots \otimes f_\ell  \diff \rho_x^\circ>\eta, $$
    then
    $$\liminf_{N\to \infty} \frac{1}{\pi(N)} \sum_{p\in \mathbb{P},p<N} \int_{\HP_\ell(X)} (\1 \otimes  f_1 \otimes \cdots \otimes f_\ell) \cdot T_\Delta^{p-1} (f_1 \otimes \cdots \otimes f_\ell \otimes \1) \diff \rho_x^\circ \geq \delta.  $$
\end{theorem}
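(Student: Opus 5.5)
We follow the standard $W$-trick route. First, by \cref{prime-to-vm}, it suffices to bound from below the averages $\frac{1}{N}\sum_{n\le N}\Lambda'(n)\,a(n-1)$, where
\[
a(n)=\int_{\HP_\ell(X)} (\1\otimes f_1\otimes\cdots\otimes f_\ell)\cdot T_\Delta^{n}(f_1\otimes\cdots\otimes f_\ell\otimes\1)\diff\rho_x^\circ .
\]
Fix $w$ (to be chosen depending only on the stated parameters) and $W=W(w)$. Split $n\le N$ into residue classes mod $W$. Residues not coprime to $W$ contribute $o_N(1)$. Since $a\ge 0$, we may keep only the class $r=1$, i.e.\ $n=Wm+1$, which gives $p-1=Wm$. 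The resulting average is, up to the factor $\frac{1}{\phi(W)}$ and $o_N(1)$, bounded below by
\[
\frac{1}{\phi(W)}\cdot\frac{1}{N/W}\sum_{m\le N/W}\Lambda'_{w,1}(m)\,a(Wm).
\]
Write $\Lambda'_{w,1}=1+(\Lambda'_{w,1}-1)$. The main term $\frac1M\sum_{m\le M}a(Wm)$ has $\liminf\ge\delta_0/\kappa$ by \cref{uniform-Szemeredi}, with $\delta_0=\delta(\ell,\eta)$ uniform in $W$. So it remains to show the error term is small for $w$ large.

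For the error term, note that $a(Wm)=\int F_0\cdot T_\Delta^{Wm}F_1\diff\rho_x^\circ$, where $\rho_x^\circ$ is a Haar measure on a subnilmanifold of $X^{\ell+1}$ (the support of $\rho_x^\circ$, invariant under $T_\Delta$ by \cref{prop kappa rho_x} and \cref{theorem key}-type structure). Apply \cref{Gowers-Control} with the single transformation $T_\Delta^W$ on the probability space $(\HP_\ell(X),\rho_x^\circ)$; alternatively, Cauchy–Schwarz the integral against $F_0$. This gives
\[
\Big|\tfrac1M\sum_{m\le M}(\Lambda'_{w,1}(m)-1)a(Wm)\Big|\ll_d \|\Lambda'_{w,1}-1\|_{U^{d}[M]}+o_M(1).
\]
However, the plain Gowers norm $\|\Lambda'_{w,1}-1\|_{U^d[M]}$ tends to $0$ as $M\to\infty$ and then $w\to\infty$ by \cite{Green_Tao10,Green_Tao_Ziegler12}. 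This is exactly the case of \cref{strong-antiuniformity} with $\psi\equiv1$, where the convergence is uniform because the Lipschitz class is fixed. A more robust version, which is why \cref{strong-antiuniformity} is stated with $w$-dependent nilsequences, handles the dependence of the system $T_\Delta^W$ on $W$. Expanding $a(Wm)$ for $\rho_x^\circ$-generic structure gives $a(Wm)$ as a nilsequence $\psi_w(m)=F(g_w^m e)$ on the fixed nilmanifold $\HP_\ell(X)$ of bounded step and generators. One then controls $\frac1M\sum(\Lambda'_{w,1}-1)\psi_w$ by $\|(\Lambda'_{w,1}-1)\psi_w\|_{U^1}$-type bounds, or directly via \cref{uniformity-vmf} after approximating $F$ by a Lipschitz function. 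The dependence of $\delta$ on $f_1$ enters here, through the Lipschitz approximation of the function $w\mapsto\int(\cdot)$ built from $f_1,\dots,f_\ell$ and its modulus of continuity.

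Choose $w$ so that $\limsup_M$ of the error is at most $\delta_0/(2\kappa)$. Then the liminf is at least $\frac{1}{\phi(W)}\cdot\frac{\delta_0}{2\kappa}$. Because $w$ depends only on $\ell,\kappa,\eta,s,d,f_1$, this is a valid $\delta$. The main obstacle is the uniformity of the error term in $x$ (and hence in the non-ergodic $\rho_x^\circ$) together with the $W$-dependence of the rotation. I would first try to realize $a(Wm)$ as a single nilsequence $F_x(g^{Wm}e)$ on $\HP_\ell(X)$, with $F_x$ continuous and uniformly Lipschitz-approximable in $x$ (using continuity of $x\mapsto\rho^\circ_x$). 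Then \cref{strong-antiuniformity} or \cref{uniformity-vmf} gives the required uniform decay.
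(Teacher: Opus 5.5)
Your reduction to the $W$-tricked average (keeping only the class $n=Wm+1$) and your treatment of the main term via \cref{uniform-Szemeredi} match the paper. The error-term step, however, has a genuine gap.

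\textbf{The invariance claim is false.} You apply \cref{Gowers-Control} with the transformation $T_\Delta^W$ on $(\HP_\ell(X),\rho_x^\circ)$, asserting that $\supp\rho_x^\circ$ is $T_\Delta$-invariant. By \cref{prop marginals} together with $\rho_x^\circ\le\kappa\rho_x$ from \cref{prop kappa rho_x}, every point of $\supp\rho_x^\circ$ has zeroth coordinate $x$. The zeroth marginal of $T_\Delta\rho_x^\circ$ is therefore $\delta_{Tx}\neq\delta_x$. The measure $\rho_x^\circ$ is only $\tilde T^\kappa$-invariant, so neither \cref{Gowers-Control} nor a Cauchy--Schwarz/van der Corput argument can be run along $T_\Delta$.

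\textbf{The fallback is unjustified.} You suggest that $a(Wm)$ is a nilsequence ``on the fixed nilmanifold $\HP_\ell(X)$.'' But $a(n)$ is a correlation against the Haar measure of a non-invariant subnilmanifold. Making it a nilsequence would need Leibman-type results on translates of subnilmanifolds, living on a larger nilmanifold. The resulting complexity would depend on $x$ and on all of $f_1,\dots,f_\ell$. That would not yield the stated $\delta(\ell,\kappa,\eta,s,d,f_1)$, which is uniform in $x$.

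\textbf{The missing idea is to use the frozen zeroth coordinate.} On $\supp\rho_x^\circ$,
$$T_\Delta^n(f_1\otimes f_2\otimes\cdots\otimes f_\ell\otimes\1)(w)=f_1(T^nx)\prod_{i=1}^{\ell-1}f_{i+1}(T^n w_i),$$
and on the $i$-th coordinate, $T^{n}$ is the action of $\tilde T^{n/i}$. Split $n$ into residue classes modulo $R=(\ell\kappa)!$. The product then becomes $\prod_{i=1}^{\ell-1}\tilde T^{(WR/i)n}(T^{Wj}f_{i+1})^{[i]}$. Each $WR/i$ is a multiple of $\kappa$, so these transformations are commuting powers of $\tilde T^\kappa$ and preserve $\rho_x^\circ$. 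The deterministic factor $f_1(T^{Wn}x)$ is absorbed into the weight. \cref{Gowers-Control} then bounds the error by $\|(\Lambda'_{w,1}-1)\psi_{j,W}\|_{U^D}$, where $\psi_{j,W}(m)=f_1(T^{Wm}x)1_{R\Z+j}(m)$, and \cref{strong-antiuniformity} finishes the argument.

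This is why \cref{strong-antiuniformity} is stated for $w$-dependent nilsequences, and why $\delta$ depends on $f_1$ but not on $f_2,\dots,f_\ell$. It is not, as you suggest, because of the $W$-dependence of the system: the implicit constant in \cref{Gowers-Control} is independent of the system.
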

We remark that our proof is similar to the proof of \cite[Theorem 4]{Frantzikinakis_Host_Kra07}.
\begin{proof}
    By definition we have that $\rho^\circ_x$ is $\tilde{T}^\kappa$-invariant. By \cref{prime-to-vm}, it is enough to prove the following claim
    
    \underline{Claim:} For $w$ big enough
    \begin{equation}\label{lower-bound-along-primes}
      \liminf_{N\to \infty} \frac{1}{N} \sum_{n\leq N} \Lambda'(n+1) \int_{\HP_\ell(X)} (\1 \otimes  f_1 \otimes \cdots \otimes f_\ell) \cdot T_\Delta^{n} (f_1 \otimes \cdots \otimes f_\ell \otimes \1) \diff \rho_x^\circ >\delta/W.  
    \end{equation}
For this purposes, it would be enough to prove that for $w$ big enough, we have that 
    \begin{equation*}
      \liminf_{N\to \infty} \frac{1}{N} \sum_{n\leq N} \Lambda'_{w,1}(n) \int_{\HP_\ell(X)} (\1 \otimes  f_1 \otimes \cdots \otimes f_\ell) \cdot T_\Delta^{Wn} (f_1 \otimes \cdots \otimes f_\ell \otimes \1) \diff \rho_x^\circ >\delta/W.  
    \end{equation*}
    for some $\delta$ to be determined. We claim that
    \begin{equation}\label{eq-1}
        \limsup_{N\to \infty} \left|\frac{1}{N} \sum_{n\leq N} (\Lambda'_{w,1}(n)-1) \int_{\HP_\ell(X)} (\1 \otimes  f_1 \otimes \cdots \otimes f_\ell) \cdot T_\Delta^{Wn} (f_1 \otimes \cdots \otimes f_\ell \otimes \1) \diff \rho_x^\circ  \right|
    \end{equation}
    goes to $0$ as $w\to \infty$.
    Indeed, to show this, denote $R=(\ell \kappa )!$ and $$a(n)=(\Lambda_{w,1}'(n)-1) f_1(T^{Wn}x)$$ for each $n\in \N$. Observe that in the limsup in \eqref{eq-1} we may restrict to cutoffs divisible by $R$, since changing the cutoff by
fewer than $R$ terms costs only $O_{w,R}(\log N/N)$.
Writing the cutoff as $RN$, the average can be written,
up to an error $o_N(1)$, as
    \begin{align*}
          &  \frac{1}{R}\sum_{j=0}^{R-1} \frac{1}{N} \sum_{n\leq N}  a(Rn+j) \int_{\HP_\ell(X)} (\1 \otimes  f_1 \otimes \cdots \otimes f_\ell) \cdot T_\Delta^{WRn+Wj} (\1 \otimes f_2 \otimes \cdots \otimes f_\ell \otimes \1) \diff \rho_x^\circ \\
          &= \frac{1}{R} \sum_{j=0}^{R-1} \int_{\HP_\ell(X)} (\1 \otimes  f_1 \otimes \cdots \otimes f_\ell) \left[ \frac{1}{N} \sum_{n=1}^N      a(Rn+j) \Big( \prod_{i=1}^{\ell-1} \tilde{T}^{(WR/i )n}(T^{ Wj}f_{i+1})^{[i]} \Big) \right]   \diff \rho_x^\circ.
          \end{align*}
 Dropping the error, by Cauchy-Schwarz the last is upper bounded by
    $$\frac{1}{R} \sum_{j=0}^{R-1}  \norm{\frac{1}{N} \sum_{n=1}^N  a(Rn+j) \prod_{i=1}^{\ell-1} \tilde{T}^{(WR/i )n} (T^{ Wj}f_{i+1})^{[i]}}_{L^2(\rho_x^\circ)} .$$

    By \cref{Gowers-Control} the last expression is $\ll_D$ than
    $$ o_{N\to \infty}(1) +\frac{1}{R} \sum_{j=0}^{R-1} \norm{a(R\cdot +j) }_{U^d([N])} , $$
    where $D$ only depends on $\ell$. We notice that for $n\in \N$, 
    $$a(Rn+j)= a(m) 1_{R\Z+ j}(m)= (\Lambda_{w,1}'(m)-1) f_1(T^{Wm}x)1_{R\Z+ j}(m)  $$
    for some $m\in \N$. Call $\psi_{j,W}(m)=f_1(T^{mW}x)1_{R\Z+ j}(m).  $
    We have that $\psi_{j,W}$ is a basic nilsequence. We thus have that 
   $$\limsup_{N\to \infty} \norm{a(R\cdot +j) 1_{[N]}}_{U^D([N])} \ll_{R,d}
   \limsup_{N\to \infty}\norm{(\Lambda_{w,1}'-1) \psi _{j,W}}_{U^D([RN+j])}.  $$
Finally, by \cref{strong-antiuniformity}, the right hand side in the previous equation   goes to $0$ as $w\to \infty$ concluding the claim. 

    To conclude the theorem, we observe that by \cref{uniform-Szemeredi} we have that 
    $$\liminf_{N\to \infty} \frac{1}{N} \sum_{n\leq N} \int_{\HP_\ell(X)} (\1 \otimes  f_1 \otimes \cdots \otimes f_\ell )  \cdot T_{\Delta}^{Wn} ( f_1 \otimes \cdots \otimes f_\ell \otimes \1)\diff \rho^\circ_x \geq \delta  $$
where $\delta$ only depends on $\eta,\ell ,\kappa$. Using this and the claim, we have that for $w$ big enough: 
\begin{align*}
  &\liminf_{N\to \infty} \frac{1}{N} \sum_{n\leq N} \Lambda_{w,1}'(n) \int_{\HP_\ell(X)} (\1 \otimes  f_1 \otimes \cdots \otimes f_\ell )  \cdot T_{\Delta}^{Wn} ( f_1 \otimes \cdots \otimes f_\ell \otimes \1)\diff \rho^\circ_x\\
  \geq& -\delta/2 + \lim_{N\to \infty} \frac{1}{N} \sum_{n\leq N} \int_{\HP_\ell(X)} (\1 \otimes  f_1 \otimes \cdots \otimes f_\ell )  \cdot T_{\Delta}^{Wn} ( f_1 \otimes \cdots \otimes f_\ell \otimes \1)\diff \rho^\circ_x \geq \delta/2. 
\end{align*}
We conclude the proof by observing that $\delta$ depends only in $\eta,\ell,\kappa$. However, the final bound that we get in \eqref{lower-bound-along-primes} depends in $w$, which depends on $s,d,f_1$ by \cref{strong-antiuniformity}. Thus, the $\tilde{\delta}=\delta/(2W(w))$ of the statement of the theorem depends on $\ell,\kappa,\eta,s,d,f_1$; concluding. 
\end{proof}

    \section{Dynamical consequences in nilsystems} \label{sec top consequences}

    In this section, we provide further consequences for points in pronilsystems. The main result is \cref{theorem summary} which extends \cite[Theorem 4.6]{radic2026Uniformity}.

\begin{proposition} \label{cor supports}
    Let $(Z, m ,T)$ be an ergodic pronilsystem, $k \leq \ell$ fixed natural numbers and $Z_{k-1}$ its $(k-1)$-step pronilfactor. Denote $\rho_z \in \cM ( \HP_\ell (Z))$ and $\eta_u \in \cM ( \HP_\ell (Z_{k-1}))$ the continuous ergodic decomposition of, respectively, $m_{HP_\ell(Z)}$ and $m_{HP_\ell(Z_{k-1})}$ from \cref{first def rho}. For all $L = \{ \ell_1, \ldots, \ell_k \} \subset \{1, \ldots, \ell\}$  and $z \in Z$,
    \begin{equation*} 
        \supp \rho_z^L = (\pi_{k-1} \times \cdots \times \pi_{k-1})^{-1} (\supp \eta_{\pi_{k-1}(z)}^L)
    \end{equation*}
\end{proposition}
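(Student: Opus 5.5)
The plan is to read the support identity off the good projection property (\cref{prop good projection properties}) together with the fully supported continuous disintegration of $Z$ over $Z_{k-1}$ (\cref{prop cont k-step max pronilfactors}). Write $\pi=\pi_{k-1}$, $\pi^{(k)}=\pi\times\cdots\times\pi$, and let $u\mapsto m_u$ be the fully supported continuous disintegration of $m$ over $Z_{k-1}$. Then $\supp m_u=\pi^{-1}(u)$ for every $u$ (\cref{rmk fully supp cont disintegration}), and $\E(f\mid Z_{k-1})(u)=\int f\diff m_u$ for continuous $f$ and every $u$, since this function is continuous by \cref{prop cont disintegration and conditional expectation}. Substituting into \eqref{eq good projection} gives, for all $f_1,\ldots,f_k\in C(Z)$,
\begin{equation*}
\int_{Z^k} f_1\otimes\cdots\otimes f_k \diff\rho_z^L=\int_{Z_{k-1}^k}\int_{Z^k} f_1\otimes\cdots\otimes f_k \diff(m_{u_1}\times\cdots\times m_{u_k})\diff\eta^L_{\pi(z)}(u).
\end{equation*}
By density of tensor products in $C(Z^k)$, this means $\rho_z^L=\int (m_{u_1}\times\cdots\times m_{u_k})\diff\eta^L_{\pi(z)}(u)$. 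In other words, $\rho_z^L$ is the lift of $\eta^L_{\pi(z)}$ through the product disintegration $u\mapsto m_{u_1}\times\cdots\times m_{u_k}$.

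\textbf{Inclusion $\subseteq$.} Each $m_{u_1}\times\cdots\times m_{u_k}$ is supported on $(\pi^{(k)})^{-1}(u)$. Hence $\rho_z^L$ is supported on $(\pi^{(k)})^{-1}(\supp\eta^L_{\pi(z)})$, which is closed because $\pi$ is continuous.

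\textbf{Inclusion $\supseteq$.} Take $v\in Z^k$ with $u_0:=\pi^{(k)}(v)\in\supp\eta^L_{\pi(z)}$, and an open product neighbourhood $V=V_1\times\cdots\times V_k$ of $v$. Since $v_i\in\supp m_{(u_0)_i}$, we have $m_{(u_0)_i}(V_i)>0$. By weak$^*$ continuity of $u\mapsto m_u$ and a continuous bump $0\le g_i\le \1_{V_i}$ with $\int g_i\diff m_{(u_0)_i}>0$, there is an open neighbourhood $O$ of $u_0$ on which $\prod_i\int g_i\diff m_{u_i}>c>0$. Then
\begin{equation*}
\rho_z^L(V)\ge\int_O \prod_i \int g_i\diff m_{u_i}\diff\eta^L_{\pi(z)}(u)\ge c\,\eta^L_{\pi(z)}(O)>0,
\end{equation*}
so $v\in\supp\rho_z^L$.

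The only delicate point is justifying that the disintegration formula holds for \emph{every} $u$ rather than almost every $u$. This matters because $\eta^L_{\pi(z)}$ may be singular with respect to $m_{k-1}^{\otimes k}$. I would handle it as follows: \eqref{eq good projection} is stated with $\E(f_i\mid Z_{k-1})$ meaning its continuous version (this is how the proof of \cref{prop good projection properties} uses it). The function $u\mapsto\int f\diff m_u$ is continuous and agrees almost everywhere with that version, so the two agree everywhere, since $m_{k-1}$ has full support.
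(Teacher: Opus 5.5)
Your proof is correct and is essentially the paper's argument. The inclusion $\supseteq$ is proved exactly as there, via positivity of $\E(f_i\mid Z_{k-1})(u)=\int f_i\diff m_u$ on the fibers $\pi_{k-1}^{-1}(u)=\supp m_u$ plus continuity, giving the lower bound $c\,\eta^L_{\pi_{k-1}(z)}(O)>0$; your lifted formula $\rho_z^L=\int m_{u_1}\times\cdots\times m_{u_k}\diff\eta^L_{\pi_{k-1}(z)}(u)$ and the everywhere identification of the continuous version of $\E(f\mid Z_{k-1})$ just make explicit what the paper treats as clear (the $\subseteq$ direction, and the use of the continuous version).

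One caveat: like the paper's proof, yours rests on \cref{prop good projection properties} and hence needs $k\ge 2$. For $k=1$ the identity genuinely fails: for $Z=\Z/2\Z$ with $Tz=z+1$, $\ell=2$, $L=\{2\}$, one gets $\rho_z^{L}=\delta_z$, whose support is not $Z=\pi_0^{-1}(\mathrm{pt})$.
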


\begin{proof}
    By \eqref{eq good projection}, it is clear that $\supp \rho_z^L \subset (\pi_{k-1} \times \cdots \times \pi_{k-1})^{-1} (\supp \eta_{\pi_{k-1}(z)}^L)$. For the converse, by \cref{prop cont k-step max pronilfactors}, if we denote the continuous disintegration of $m$ over $m_{k-1}$ by $u \mapsto \beta_u$, we also get that $\supp \beta_u = \pi_{k-1}^{-1}(u)$. Thus, for any $z \in Z$ and $ w \in  (\pi_{k-1} \times \cdots \times \pi_{k-1})^{-1} (\supp \eta_{\pi_{k-1}(z)}^L) $, if $f_1, \ldots, f_k \in C(Z)$ are nonnegative continuous functions with $f_i (w_i) >0$ for all $i = 1, \ldots, k$, then by fully supported in the fiber $\E(f_i \mid Z_{k-1})(\pi_{k-1}(w_i)) >0 $ for all $i=1, \ldots, k$. By continuity, there is a neighborhood $W$ of $w '=(\pi_{k-1}\times \cdots \times \pi_{k-1})(w)$ such that $\big(\bigotimes_{i=1}^k \E(f_i \mid Z_{k-1}) \big) (v) >\alpha$ for all $v \in W$ for some small constant $\alpha > 0$. Using again \eqref{eq good projection} we conclude that
    \begin{align*}  
        \int_{Z^k} f_1 \otimes \cdots \otimes f_k \diff \rho_z^L = \int_{Z^k_{k-1}} \E(f_1\mid Z_{k-1}) \otimes \cdots \otimes \E(f_k \mid Z_{k-1}) \diff \eta^L_{\pi_{k-1}(z)} \\
        \geq \int_{W} \E(f_1\mid Z_{k-1}) \otimes \cdots \otimes \E(f_k \mid Z_{k-1}) \diff \eta^L_{\pi_{k-1}(z)} \geq \alpha \eta^L_{\pi_{k-1}(z)}(W) >0, 
    \end{align*}
    where we know that $\eta^L_{\pi_{k-1}(z)}(W) >0$ because $w' \in \supp \eta^L_{\pi_{k-1}(z)}$. Since this is true for any non-negative continuous function $f_i \in C(Z)$ with $(f_1 \otimes \cdots \otimes f_k) (w) >0$ we conclude that $w \in \supp \rho_z^L$. 
\end{proof}

With this we recover the result of \cite[Theorem 4.3]{glasner_Huang_Shao_Weiss_Ye2025topological} for the special case of pronilsystem, but replacing the $G_\delta$ dense set $Z'$ by a full measure subset $Z'$.

\begin{corollary}
    Let $(Z, m ,T)$ be an ergodic pronilsystem, $k$ fixed natural numbers and $Z_{k-1}$ its $(k-1)$-step pronilfactor. There exists a set $Z' \subset Z$ of full measure such that for all distinct $\ell_1, \ldots, \ell_k \in \N$ and all $z \in Z'$
    \begin{equation}  \label{eq saturation}
        \overline{\{ (T^{\ell_1 n} z, \ldots, T^{\ell_k n} z) \colon n \in \Z\}  } = (\pi_{k-1}^{(k)})^{-1} \Big(\overline{\{ (T^{\ell_1 n} \pi_{k-1}(z), \ldots, T^{\ell_k n} \pi_{k-1}(z)) \colon n \in \Z\}  } \Big).
    \end{equation}
    where $\pi_{k-1}^{(k)}=\pi_{k-1} \times \cdots \times \pi_{k-1}$.
\end{corollary}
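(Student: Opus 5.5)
We plan to deduce the corollary from \cref{cor supports} combined with \cref{thrm new cont ergodic decomp}\eqref{thrm point 3}. Fix distinct $\ell_1,\ldots,\ell_k$ and set $\ell=\max_i \ell_i$, $L=\{\ell_1,\ldots,\ell_k\}\subset\{1,\ldots,\ell\}$. By \cref{thrm new cont ergodic decomp}\eqref{thrm point 3} applied to $Z$, there is a full measure set $Z'_\ell\subset Z$ with $\rho_z=\sigma_z$ for $z\in Z'_\ell$. We may also assume that $\pi_{k-1}(z)$ lies in the analogous full measure set for $Z_{k-1}$, so that $\eta_{\pi_{k-1}(z)}=\sigma_{\pi_{k-1}(z)}$. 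This is a full measure condition on $z$, since $(\pi_{k-1})_*m=m_{k-1}$. Since there are only countably many $\ell\in\N$, intersecting over $\ell$ gives a single full measure set $Z'$ that works simultaneously for all choices of $\ell_1,\ldots,\ell_k$.

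For $z\in Z'$, we use that $\sigma_z$ is the unique invariant measure on the minimal orbit closure $\overline{\cO_{\tilde T}(z,\ldots,z)}$, which is a closed subset of a pronilsystem. Its support is therefore the whole orbit closure. Projecting to the coordinates in $L$, the projection of a compact set is closed, and the projection of the orbit closure is the closure of the projected orbit. Hence
\[\supp \rho_z^L=\supp\sigma_z^L=\overline{\{(T^{\ell_1 n}z,\ldots,T^{\ell_k n}z)\colon n\in\Z\}}.\]
Here we use that the support of a pushforward under a continuous map from a compact space is the image of the support. The same argument in $Z_{k-1}$ identifies $\supp\eta^L_{\pi_{k-1}(z)}$ with the orbit closure of $(\pi_{k-1}(z),\ldots,\pi_{k-1}(z))$ projected to $L$.

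Plugging these two identifications into \cref{cor supports} yields \eqref{eq saturation} immediately. The main point to check is the case $k=1$, where $Z_0$ is trivial and \cref{cor supports} as stated requires $k\le\ell$ but was proven via the good projection property only for $k\geq 2$. For $k=1$ the claim reduces to the statement that the $T^{\ell_1}$-orbit closure of $z$ is all of $Z$. This holds for $m$-almost every $z$ whenever $T^{\ell_1}$ is ergodic. In general it can fail, since the $T^{\ell_1}$-orbit closure is only a union of finitely many components, so one should presumably interpret $Z_0$ as the appropriate rational factor, or restrict to $k\ge2$.

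The other delicate step is the identification of supports with orbit closures. This needs only minimality and unique ergodicity of the orbit closure on the relevant full measure set, which is exactly what \cref{thrm new cont ergodic decomp}\eqref{thrm point 3} provides.
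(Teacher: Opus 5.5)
Your proposal is correct and is essentially the paper's proof. For each fixed $\ell$ you combine \cref{cor supports} with the full-measure set from part (iii) of \cref{thrm new cont ergodic decomp}, applied to both $Z$ and $Z_{k-1}$; you identify the supports with the projected $\tilde T$-orbit closures via minimality and unique ergodicity, and you intersect over the countably many $\ell$. Your caveat about $k=1$ is also right: the statement and the argument genuinely need $k\ge 2$, because \cref{cor supports} rests on the good projection property, which is only proved for $2\le k\le\ell$ and fails for $k=1$ whenever $T^{\ell_1}$ is not ergodic.
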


\begin{proof}
    Fixing $\ell \in \N$ arbitrary, for all $L = \{\ell_1, \ldots, \ell_k\} \subset \{1, \ldots, \ell\}$, by \cref{cor supports}, $\supp \rho_z^L = (\pi_{k-1} \times \cdots \times \pi_{k-1})^{-1} (\supp \eta_{\pi_{k-1}(z)}^L)$ and there exists $Z^{(\ell)}\subset Z$ with $m(Z^{(\ell)})=1$ such that $\rho_z = \sigma_z$ and $\eta_{\pi_{k-1}(z)} = \nu_{\pi_{k-1}(z)}$, where as before $\sigma_z$ and $\nu_u$ denote the unique invariant measures of $\overline{\cO_{\tilde T}(z, \ldots,z)}$ and $\overline{\cO_{\tilde T}(u, \ldots,u)}$. By unique ergodicity we get that $\supp \nu_{\pi_{k-1}(z)} = \overline{\cO_{\tilde T}(\pi_{k-1}(z), \ldots,\pi_{k-1}(z))} $ and $\supp \sigma_{z} = \overline{\cO_{\tilde T}(z, \ldots,z)} $, so projecting to $L = \{\ell_1, \ldots, \ell_k\}$ we get that all $z \in Z^{(\ell)}$ fulfill \eqref{eq saturation}. 
    
    This proves the statement for all subsets $L \subset \{1, \ldots , \ell\}$ of size $k$, so we conclude by taking $Z'= \bigcap_{\ell=k}^\infty Z^{(\ell)}$. 
\end{proof}

\begin{corollary} \label{cor contention of the image in the support}
    Let $(Z, m ,T)$ be an ergodic pronilsystem, $k \leq \ell$ fixed natural numbers and $Z_{k-1}$ its $(k-1)$-step pronilfactor. Denote $\rho_z \in \cM ( \HP_\ell (Z))$ and $\eta_u \in \cM ( \HP_\ell (Z_{k-1}))$ the continuous ergodic decomposition of, respectively, $m_{HP_\ell(Z)}$ and $m_{HP_\ell(Z_{k-1})}$ from \cref{first def rho}. For all $L = \{ \ell_1, \ldots, \ell_k \} \subset \{1, \ldots, \ell\}$  and $z \in Z$,
    \begin{equation*} 
         (\pi_{k-1} \times \cdots \times \pi_{k-1})^{-1} (\pi_{k-1}(z) , \ldots, \pi_{k-1}(z)) \subset \supp \rho_z^L 
    \end{equation*}
\end{corollary}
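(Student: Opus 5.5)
By \cref{cor supports}, $\supp \rho_z^L = (\pi_{k-1} \times \cdots \times \pi_{k-1})^{-1} (\supp \eta_{\pi_{k-1}(z)}^L)$. It therefore suffices to show that the diagonal point $(u,\ldots,u)\in Z_{k-1}^k$, with $u=\pi_{k-1}(z)$, lies in $\supp \eta_u^L$. Taking preimages under $\pi_{k-1}\times\cdots\times\pi_{k-1}$ then yields the stated inclusion.

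To show $(u,\ldots,u)\in\supp\eta_u^L$, I use that the continuous ergodic decomposition is fully supported. By \cref{lemma ergodic fully supp decomposition} applied to the Hall--Petresco pronilsystem $\HP_\ell(Z_{k-1})$ with the transformation $\tilde T$, we have $w\in\supp\eta_w$ for every $w\in\HP_\ell(Z_{k-1})$. The diagonal point $\Delta_u=(u,\ldots,u)$ belongs to $\HP_\ell(Z_{k-1})$ by the definition \eqref{HP-nilsystem}, taking $n=0$. Since $\eta_u$ is by definition $\eta_{\Delta_u}$, we get $\Delta_u\in\supp\eta_u$.

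Next, projecting onto the coordinates in $L$ preserves membership in supports. If $P_L$ denotes the continuous coordinate projection, then $P_L(\supp\eta_u)\subset\supp (P_L)_*\eta_u=\supp\eta_u^L$. To see this, let $V$ be an open neighborhood of $P_L(\Delta_u)$. Then $P_L^{-1}(V)$ is an open neighborhood of $\Delta_u$, so $\eta_u^L(V)=\eta_u(P_L^{-1}V)>0$. Hence $(u,\ldots,u)\in\supp\eta_u^L$, and \cref{cor supports} gives the conclusion.

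The only delicate point is ensuring that full support applies at the specific point $\Delta_u$ rather than almost everywhere. This is exactly what \cref{lemma ergodic fully supp decomposition} provides, since it holds for every point of a pronilsystem. We also need that $\HP_\ell(Z_{k-1})$ is itself a pronilsystem, to which \cref{prop of the ergodic decomposition in nilsystems} and the inverse-limit argument apply. That is immediate, because it is a closed invariant subset (an orbit closure) of the pronilsystem $Z_{k-1}^{\ell+1}$ with the $\langle\tilde T,T_\Delta\rangle$ action.

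Alternatively, one could avoid the support identity and argue directly on $Z$. Applying full support to $\HP_\ell(Z)$ gives $(z,\ldots,z)\in\supp\rho_z$, hence $(z,\ldots,z)\in\supp\rho_z^L$. The set $\supp\rho_z^L$ is saturated with respect to $\pi_{k-1}^{(k)}$ by \cref{cor supports}. Therefore it contains the whole fiber over $(\pi_{k-1}(z),\ldots,\pi_{k-1}(z))$, which is the same argument phrased differently.
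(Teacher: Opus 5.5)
Your proof is correct and follows the paper's argument. You reduce via \cref{cor supports} to showing $(\pi_{k-1}(z),\ldots,\pi_{k-1}(z))\in\supp\eta^L_{\pi_{k-1}(z)}$, and you obtain this from the fully supported continuous ergodic decomposition of \cref{lemma ergodic fully supp decomposition}; your extra details (the diagonal lies in $\HP_\ell(Z_{k-1})$, and coordinate projection sends support points to support points) fill in steps the paper leaves implicit.
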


\begin{proof}
    By \cref{cor supports} we only need to show that $(\pi_{k-1}(z) , \ldots, \pi_{k-1}(z)) \in \supp \eta_{\pi_{k-1}(z)}^L$ which is a consequence of \cref{lemma ergodic fully supp decomposition}. 
\end{proof}

    \begin{proposition} \label{prop nbhds sumsets in pronil}
        Let $(Z, m, T)$ be an ergodic nilsystem, $z \in Z$ and $k \geq 2$.  For every $1 \leq \ell_1 < \cdots < \ell_k \leq \ell$, if $z_1, \ldots, z_k \in Z$ are points such that  $\pi_{k-1}(z) = \pi_{k-1}(z_i)$ and $V_i$ is a neighborhood of $z_i$ for $i=1, \ldots, k$, then there exists $B \subset \N$ infinite such that 
        \begin{equation*}
             \Big\{ \sum_{b \in I} b \colon I \subset B, |I| = \ell_i \Big\} \subset \{ n \in \N \mid T^n z \in V_i \} \quad \text{ for all } i=1, \ldots, k. 
        \end{equation*}
    \end{proposition}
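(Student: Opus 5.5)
We will apply the progressive-measure machinery to the measure $\rho_z$ on $\HP_\ell(Z)$, restricted to a suitable family of open sets, and then use the standard inductive construction from \cite{kmrr25} that turns a progressive measure into an infinite sumset. Set $E_{\ell_i} = V_i$ for $i=1,\ldots,k$ and $E_j = Z$ for $j \notin L=\{\ell_1,\ldots,\ell_k\}$.

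\textbf{Step 1: positivity of the starting mass.} By \cref{cor contention of the image in the support},
\[
(z_1,\ldots,z_k) \in (\pi_{k-1}\times\cdots\times\pi_{k-1})^{-1}(\pi_{k-1}(z),\ldots,\pi_{k-1}(z)) \subset \supp \rho_z^L .
\]
Hence
\[
\rho_z(Z\times E_1\times\cdots\times E_\ell) = \rho_z^L(V_1\times\cdots\times V_k)>0 .
\]
To work with continuous functions, we take $f_j$ continuous with $0\le f_j\le \1_{E_j}$: $f_j\equiv 1$ for $j\notin L$, and $f_{\ell_i}$ supported in $V_i$ and positive at $z_i$. Then
\[
\int \1\otimes f_1\otimes\cdots\otimes f_\ell \diff\rho_z=\eta>0 .
\]
Note that $\rho_z^0=\delta_z$ by \cref{prop marginals}, which ties the zeroth coordinate to the point $z$.

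\textbf{Step 2: inductive construction of $B=\{b_1<b_2<\cdots\}$.} We follow the scheme of \cite{kmrr25}, using \cref{thrm rho is unif progressive}. That theorem gives infinitely many $n$ with
\[
\rho_z\big((Z\times E_1\times\cdots\times E_\ell)\cap T_\Delta^{-n}(E_1\times\cdots\times E_\ell\times Z)\big)>0 .
\]
The key observation is this. If $w\in\supp\rho_z$ with $w_0=z$, the coordinate $w_j$ plays the role of ``$T^{s}z$ where $s$ is a sum of $j$ chosen elements'' plus a remainder. Progressiveness lets us pass from a sum of $j$ elements to a sum of $j+1$ elements by shifting with $T_\Delta^{n}$ and moving the indices one step.

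We maintain nested open sets $E^{(m)}_j\subset E_j$, one for each $j\le\ell$, with the following properties.
\begin{itemize}
\item $T^{s}z\in E_j$ whenever $s$ is a sum of exactly $j$ elements among $b_1,\ldots,b_m$, with the convention that the empty sum $s=0$ uses $E^{(m)}_{j}$.
\item The positivity condition $\rho_z(Z\times E^{(m)}_1\times\cdots\times E^{(m)}_\ell)>0$ persists.
\end{itemize}
To pass from stage $m$ to stage $m+1$, we choose $b_{m+1}>b_m$ from the infinitely many progressive return times. We then set
\[
E^{(m+1)}_j = E^{(m)}_j\cap T^{-b_{m+1}}E^{(m)}_{j+1},
\]
with the convention $E^{(m)}_{\ell+1}=Z$. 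The zeroth coordinate $\delta_z$ forces $T^{b_{m+1}}z\in E^{(m)}_1$. Sums of exactly $\ell_i$ elements that involve $b_{m+1}$ are then controlled by the index shift.

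\textbf{Step 3: the main obstacle.} The delicate point is that progressiveness has to be reapplied at every stage to the shrunken open sets. The measure $\rho_z$ for a non-generic $z$ is not a priori of the form $\sigma_z$; this is precisely why the paper proved \cref{thrm rho is unif progressive} for \emph{all} $z$.

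We would also need to check that the induction closes. Intersecting with preimages under $T_\Delta^n$ keeps us inside $\supp\rho_z$, since $\rho_z$ is invariant under $\tilde T$ but not under $T_\Delta$. The positivity of the intersected set is exactly what the progressive property guarantees, applied to the current open sets, which are open because $T$ is a homeomorphism.

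Finally, sums of sizes other than the $\ell_i$ land in $E_j=Z$ and impose no constraint. This yields the required infinite $B$.
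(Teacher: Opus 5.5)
Your proposal is correct and follows the paper's route. You get positivity of $\rho_z^L(V_1\times\cdots\times V_k)$ from \cref{cor contention of the image in the support} and progressiveness of $\rho_z$ for every $z$ from \cref{thrm rho is unif progressive}; your Step 2 re-derives by hand the construction of $B$ that the paper obtains by citing \cref{EPs-implies-sumsets}. (If you keep Step 2, the invariant to carry is that $T^{\sum_{b\in I} b}x\in E_{r+|I|}$ for all $x\in E^{(m)}_r$ and all $I\subset\{b_1,\ldots,b_m\}$ with $r+|I|\le \ell$, which your update rule $E^{(m+1)}_j=E^{(m)}_j\cap T^{-b_{m+1}}E^{(m)}_{j+1}$ preserves. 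Your remark that intersecting with $T_\Delta^{-n}$-preimages ``keeps us inside $\supp\rho_z$'' plays no role and can be dropped.)
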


    \begin{proof}
        This is direct from \cref{cor contention of the image in the support} and \cref{thrm rho is unif progressive}.
    \end{proof}

    In the case of nilsystem we have a similar consequence, but for shifted primes.

\begin{proposition} \label{cor contention of the image in the support primes}
    Let $(Z, m ,T)$ be an ergodic nilsystem, $2\leq k \leq \ell$ fixed natural numbers and $Z_{k-1}$ its $(k-1)$-step nilfactor. Denote $\rho_z^\circ \in \cM ( \HP_\ell (Z))$ as in \cref{prop kappa rho_x} and similarly for $\eta_u^\circ \in \cM ( \HP_\ell (Z_{k-1}))$. For all $L = \{ \ell_1, \ldots, \ell_k \} \subset \{1, \ldots, \ell\}$  and $z \in Z$,
    \begin{equation}  \label{eq fiber in rho circ}
         (\pi_{k-1} \times \cdots \times \pi_{k-1})^{-1} (\pi_{k-1}(z) , \ldots, \pi_{k-1}(z)) \subset \supp (\rho_z^\circ)^L 
    \end{equation}
    In particular, if $z \in Z$, $k \geq 2$ and $z_1, \ldots, z_k \in Z$ are points such that  $\pi_{k-1}(z) = \pi_{k-1}(z_i)$ and $V_i$ is a neighborhood of $z_i$ for $i=1, \ldots, k$, then there exists $B \subset \P-1$ infinite such that 
        \begin{equation} \label{eq nebhds and primes}
             \Big\{ \sum_{b \in I} b \colon I \subset B, |I| = \ell_i \Big\} \subset \{ n \in \N \mid T^n z \in V_i \} \quad \text{ for all } i=1, \ldots, k. 
        \end{equation}
\end{proposition}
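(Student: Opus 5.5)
We split the proof into the support statement \eqref{eq fiber in rho circ} and the combinatorial consequence \eqref{eq nebhds and primes}.

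\textbf{The support statement.} By \cref{prop kappa rho_x}, $\rho_z = \frac{1}{\kappa}\sum_{i=0}^{\kappa-1}\tilde T^i \rho^\circ_z$, so $\supp \rho_z^L = \bigcup_{i} (\tilde T^i)^L \supp (\rho^\circ_z)^L$. The pieces are the projections of the $\kappa$ connected components of $\supp\rho_z$. By \cref{cor contention of the image in the support}, the whole fiber $F=(\pi_{k-1}^{(k)})^{-1}(\pi_{k-1}(z),\ldots,\pi_{k-1}(z))$ lies in $\supp\rho_z^L$. It remains to show $F$ lies in the particular piece $\supp(\rho_z^\circ)^L$, namely the one containing the diagonal point $(z,\ldots,z)$.

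The first approach is to redo the proof of \cref{cor supports} with $\rho^\circ$ in place of $\rho$. This requires a good projection identity
\[
\int f_1\otimes\cdots\otimes f_k \diff (\rho^\circ_z)^L = \int \E(f_1\mid Z_{k-1})\otimes\cdots\otimes\E(f_k\mid Z_{k-1}) \diff (\eta^\circ_{\pi_{k-1}(z)})^L .
\]
To get it, pass to the connected component $Y$ of $Z$ containing $z$ with the totally ergodic transformation $T^{\kappa'}$, where $\kappa'$ is a common multiple of the relevant $\kappa$'s for $Z$ and $Z_{k-1}$. As in the proof of \cref{uniform-Szemeredi}, $\rho^\circ_y$ is then the $\rho$-measure of that system for every $y\in Y$, and $\eta^\circ$ corresponds likewise to the $\rho$-measure of the image of $Y$ in $Z_{k-1}$. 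Applying \cref{prop good projection properties} to this system gives the identity, once we check that the $(k-1)$-step factor of $(Y,T^{\kappa'})$ is the corresponding piece of $Z_{k-1}$. That check holds for connected nilsystems, whose nilfactors are compatible with passing to powers. With the identity in hand, the argument of \cref{cor supports} together with $(\pi_{k-1}(z),\ldots)\in\supp(\eta^\circ_{\pi_{k-1}(z)})^L$ (full support, \cref{lemma ergodic fully supp decomposition}) yields \eqref{eq fiber in rho circ}.

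\textbf{The combinatorial consequence.} The points $(z_1,\ldots,z_k)$ lie in $\supp(\rho^\circ_z)^L$, and we take $L=\{\ell_1,\ldots,\ell_k\}$. So there are open sets $E_{\ell_i}\subset V_i$, with $E_j=Z$ for $j\notin L$, such that $\rho^\circ_z(Z\times E_1\times\cdots\times E_\ell)>\eta>0$. Replace the indicators by continuous functions $f_j$ supported in $E_j$. Then \cref{uniformity-measures-in-primes} gives a positive lower density set of primes $p$ for which the intersection with $T_\Delta^{-(p-1)}$ has positive $\rho^\circ_z$-measure.

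Now run the standard inductive construction of \cite{kmrr25}, the same one used for \cref{prop nbhds sumsets in pronil}, but choosing each new element $b_{m+1}$ of the form $p-1$. At each step we keep a nested sequence of open sets of positive $\rho^\circ_z$-measure; the primes version of progressiveness can be applied at each stage because $\delta$ depends only on $\eta$, $f_1$ and the system. The zeroth coordinate of $\rho^\circ_z$ is $\delta_z$, which converts membership in the support into $T^{n}z\in V_i$ for $n$ a sum of exactly $\ell_i$ elements of $B$. This produces $B\subset\P-1$ satisfying \eqref{eq nebhds and primes}.

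\textbf{Main obstacle.} The hardest step is identifying $\rho^\circ$ and $\eta^\circ$ with the $\rho$-measures of a connected, totally ergodic system, compatibly with the factor map to $Z_{k-1}$. In particular, $\pi_{k-1}$ must carry the $\tilde T^\kappa$-ergodic components to those of $Z_{k-1}$, which requires the two $\kappa$'s to be compatible.
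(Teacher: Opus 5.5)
Your opening reduction is correct. By \cref{cor contention of the image in the support}, the fiber $F=(\pi_{k-1}\times\cdots\times\pi_{k-1})^{-1}(\pi_{k-1}(z),\ldots,\pi_{k-1}(z))$ lies in $\supp\rho_z^L=\bigcup_i(\tilde T^i)^L\supp(\rho_z^\circ)^L$. What remains is to show that $F$ lies in the piece through $(z,\ldots,z)$.

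The route you take from there is a good projection identity for $\rho^\circ_z$ against $\eta^\circ_{\pi_{k-1}(z)}$, and it is not established. It rests on two claims you only assert. The first is that the $(k-1)$-step nilfactor of $(Y,T^{\kappa'})$ is the corresponding piece of $Z_{k-1}$. This is what you need for \cref{prop good projection properties}, applied to $(Y,T^{\kappa'})$, to return $\E(\cdot\mid Z_{k-1})$, and you justify it only by a remark about connected nilsystems. The second, which you yourself flag as the main obstacle, is that $\pi_{k-1}$ carries $\rho^\circ_z$ to $\eta^\circ_{\pi_{k-1}(z)}$. That requires the $\kappa$ of $Z_{k-1}$ to be compatible with (divide) that of $Z$. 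Neither is carried out, so \eqref{eq fiber in rho circ} is not proved; what you have is a plan whose key step is left open.

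The missing idea is a short connectedness argument that avoids any measure-theoretic identity for $\rho^\circ$.

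\begin{itemize}
\item Write $Z=G/\Gamma$ with $G=\langle G^\circ,\tau\rangle$. The fiber is $F=G_kz\times\cdots\times G_kz$, which is connected because $G_k$ is connected for $k\ge 2$.
\item By \cref{prop kappa rho_x}, $\supp\rho_z^\circ$ is a connected component, hence a clopen subset, of $\supp\rho_z$.
\item The coordinate projection $\supp\rho_z\to\supp\rho_z^L$ is continuous and open onto its image. So $\supp(\rho_z^\circ)^L$ is clopen and connected in $\supp\rho_z^L$, i.e.\ a connected component of it.
\item The connected set $F\subset\supp\rho_z^L$ meets this component at $(z,\ldots,z)$, by full support of the continuous ergodic decomposition. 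Hence $F\subset\supp(\rho_z^\circ)^L$.
\end{itemize}

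This is the paper's proof. Your treatment of \eqref{eq nebhds and primes} matches the paper: open sets around the $z_i$, $(\rho_z^\circ)^0=\delta_z$, \cref{uniformity-measures-in-primes}, and the shifted-prime version of the KMRR construction (\cref{prime-EPs-implies-sumsets}).
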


\begin{proof}
    First notice that for all $z \in Z$, by \cref{prop kappa rho_x}, $\supp \rho_z^\circ$ is a connected component of $\supp \rho_z$ and hence $\supp (\rho_z^\circ)^L $ is a connected component of $ \supp \rho_z^L$ (since the coordinate projection is continuous and open
onto its image). Since $$(G_kz,\ldots,G_kz)= (\pi_{k-1} \times \cdots \times \pi_{k-1})^{-1} (\pi_{k-1}(z) , \ldots, \pi_{k-1}(z)) \subset Z^k$$ is connected because $G_k$ is connected, and $(z, \ldots, z) \in \supp (\rho_z^\circ)^L$ we get \eqref{eq fiber in rho circ} directly from \cref{cor contention of the image in the support}. We deduce \eqref{eq nebhds and primes} from the previous analysis, \cref{uniformity-measures-in-primes} and \cref{prime-EPs-implies-sumsets}.
\end{proof}

For a point $x \in X $ and an open set $V \subset X$, we denote $N(x,V) = \{n \in \N \colon T^n x \in V\}$. 

\begin{theorem} \label{theorem summary} 
    Let $k \geq 2$ be an integer, $(X,T)$  be a minimal pronilsystem. For $x,y \in X$, the following are equivalent
    \begin{enumerate}
        \item \label{summary theorem pt 0} $\pi_{k-1}(x) = \pi_{k-1}(y) $;
        \item \label{summary theorem pt 1} For every neighborhood $V$ of $y$ there exists $b_1, \ldots, b_k \in \N$ distinct natural numbers such that 
        $$\sum_{i \in I} b_i \in N(x,V) \quad \text{ for all } \quad I \subset \{1,\ldots, k\}, \ I \neq \emptyset; $$
        \item \label{summary theorem pt 2} For every neighborhood $V$ of $y$ there exists an infinite set $B \subset \N$ such that 
        $$ \Big\{ \sum_{b \in I} b \colon I \subset B, 1 \leq |I| \leq k  \Big\} \subset N(x,V);$$ 
    \item \label{summary theorem pt 3} For every neighborhood $V$ of $y$ and all integers $1 \leq\ell_1 < \cdots < \ell_k$, there exists an infinite set $B \subset \N$ such that
    $$ \Big\{ \sum_{b \in I} b \colon I \subset B, |I| = \ell_1, \ldots, \ell_k \Big\} \subset N(x,V);$$
        \item \label{summary theorem pt 4}  For every neighborhood $V$ of $y$ there exists $q_1, \ldots, q_k \in \P-1$ distinct natural numbers such that 
        $$\sum_{i \in I} q_i \in N(x,V) \quad \text{ for all } \quad I \subset \{1,\ldots, k\}, \ I \neq \emptyset; $$
        \item \label{summary theorem pt 5} For every neighborhood $V$ of $y$ there exists an infinite set $B \subset \P-1$ such that 
        $$\Big\{ \sum_{b \in I} b \colon I \subset B, 1 \leq |I| \leq k  \Big\}  \subset N(x,V);$$ 
    \item \label{summary theorem pt 6} For every neighborhood $V$ of $y$ and all integers $1 \leq \ell_1 < \cdots < \ell_k$, there exists an infinite set $B \subset \P-1$ such that
    $$\Big\{ \sum_{b \in I} b \colon I \subset B, |I| = \ell_1, \ldots, \ell_k \Big\}  \subset N(x,V).$$
\end{enumerate}
    
\end{theorem}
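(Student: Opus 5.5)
We will organize the proof as a cycle of implications. The trivial ones come first. Clearly \eqref{summary theorem pt 6} $\Rightarrow$ \eqref{summary theorem pt 5}, by taking $\ell_i = i$. Likewise \eqref{summary theorem pt 5} $\Rightarrow$ \eqref{summary theorem pt 4}: take any $k$ distinct elements of $B$; every nonempty sub-sum has at most $k$ summands. The same reasoning gives \eqref{summary theorem pt 3} $\Rightarrow$ \eqref{summary theorem pt 2} $\Rightarrow$ \eqref{summary theorem pt 1}, and \eqref{summary theorem pt 4} $\Rightarrow$ \eqref{summary theorem pt 1} holds since $\P-1\subset\N$ (after discarding $0=2-1$ if necessary). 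Also \eqref{summary theorem pt 6} $\Rightarrow$ \eqref{summary theorem pt 3} is immediate. So it remains to prove \eqref{summary theorem pt 0} $\Rightarrow$ \eqref{summary theorem pt 6} and \eqref{summary theorem pt 1} $\Rightarrow$ \eqref{summary theorem pt 0}.

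For \eqref{summary theorem pt 0} $\Rightarrow$ \eqref{summary theorem pt 6}, apply \cref{cor contention of the image in the support primes} with $z=x$ and $z_1=\cdots=z_k=y$, $V_i=V$, and $\ell$ larger than $\ell_k$. The hypothesis $\pi_{k-1}(x)=\pi_{k-1}(y)$ is exactly what is required, and \eqref{eq nebhds and primes} gives the infinite $B\subset\P-1$. That corollary is stated for nilsystems, while here $X$ is a minimal pronilsystem. To bridge this, I would pass to a nilsystem factor $\theta_j\colon X\to X_j$ such that $\theta_j^{-1}(V')\subset V$ for some neighborhood $V'$ of $\theta_j(y)$; this is possible by the inverse limit structure. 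Since $\pi_{k-1}$ commutes with factor maps, $\theta_j(x)$ and $\theta_j(y)$ still have the same image in $Z_{k-1}(X_j)$. Then $N(\theta_j x,V')\subset N(x,V)$, which reduces the claim to the nilsystem case.

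The main direction is \eqref{summary theorem pt 1} $\Rightarrow$ \eqref{summary theorem pt 0}. The plan is to project to the $(k-1)$-step pronilfactor $Z=Z_{k-1}(X)$, which is a topological factor, so \eqref{summary theorem pt 1} passes down to $u=\pi_{k-1}(x)$ and $v=\pi_{k-1}(y)$ with neighborhoods $V'$ of $v$.
\begin{enumerate}
\item Take a sequence of neighborhoods $V'$ of $v$ shrinking to $v$.
\item For each one, condition \eqref{summary theorem pt 1} produces $b_1,\dots,b_k$ with $T^{\sum_{i\in I}b_i}u\in V'$ for all nonempty $I$.
\item Put $\vec b=(b_1,\ldots,b_k)$. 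The resulting cube $(T^{\omega\cdot \vec b}u)_{\omega\in\{0,1\}^k}$ lies in the dynamical cube set $\mathbf{Q}^{[k]}(Z)$ of Host--Kra--Maass \cite{Host_Kra_Maass_nilstructure:2010}.
\item Pass to a limit. Since $\mathbf{Q}^{[k]}(Z)$ is closed, we obtain a point of it with the $0$-vertex equal to $u$ and all other $2^k-1$ vertices equal to $v$.
\end{enumerate}
By the unique completion property of $\mathbf{Q}^{[k]}$ in a $(k-1)$-step (pro)nilsystem \cite{Host_Kra_Maass_nilstructure:2010}, the cube with all vertices equal to $v$ is also in $\mathbf{Q}^{[k]}(Z)$. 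Uniqueness of the last coordinate then forces $u=v$, which is \eqref{summary theorem pt 0}.

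I expect the main obstacle to be justifying this cube argument in the pronilsystem setting and making sure the correct relation is used. Alternatively, this direction is essentially \cite[Theorem 4.6]{radic2026Uniformity}, which I would cite for the equivalence of \eqref{summary theorem pt 0}, \eqref{summary theorem pt 1} and \eqref{summary theorem pt 2}, so that only the prime-restricted statements need the new tools.
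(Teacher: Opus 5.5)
Your proposal is correct and follows the paper's proof. It uses the same trivial implications, the Host--Kra--Maass characterization for \eqref{summary theorem pt 1}$\Rightarrow$\eqref{summary theorem pt 0}, and \eqref{summary theorem pt 0}$\Rightarrow$\eqref{summary theorem pt 6} via \cref{cor contention of the image in the support primes} with the identical inverse-limit reduction (the paper only cites Host--Kra--Maass, whereas you sketch its unique-closing-of-parallelepipeds argument, which applies directly because $Z_{k-1}$ is a minimal $(k-1)$-step pronilsystem). Your explicit remark that $\theta_j(x),\theta_j(y)$ still have the same image in $Z_{k-1}(X_j)$ fills in a step the paper leaves implicit, and there is one harmless slip: $2-1=1$, so nothing needs discarding from $\P-1$.
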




\begin{proof}
    The equivalence between \eqref{summary theorem pt 0} and \eqref{summary theorem pt 1} is proved in \cite{Host_Kra_Maass_nilstructure:2010}. Likewise, for \eqref{summary theorem pt 0} and \eqref{summary theorem pt 2} in \cite[Theorem 4.6]{radic2026Uniformity}. 
    It is clear that \eqref{summary theorem pt 4} implies \eqref{summary theorem pt 1}, \eqref{summary theorem pt 5} implies \eqref{summary theorem pt 2} and \eqref{summary theorem pt 6} implies all \eqref{summary theorem pt 1} to \eqref{summary theorem pt 5}.
    
    So we are only left to check that \eqref{summary theorem pt 0} implies \eqref{summary theorem pt 6}. For nilsystems this is a special case of \cref{cor contention of the image in the support primes}. For general pronilsystems, $Z = \lim_{\leftarrow} W_j$ with $(W_j, m_{W_j},T)$ nilsystems and $p_j \colon Z \to W_j$, given the neighborhood $V$ of $y$ there is a neighborhood $U \subset V$ of $y$ such that $U = p_j^{-1}(U')$ for some $U' \subset W_j$ neighborhood of $p_j(y)$. Using that $N(p_j(x),U') = N(x,U) \subset N(x,V) $ we conclude with the nilsystem case. 
\end{proof}

\section{Proof of the combinatorial results} \label{sec proofs}

In this section we derive the combinatorial results. 

\subsection{Infinite sumset in $U^k(\Phi)$-uniform sets}

We  use the following lemma.
\begin{lemma}[{\cite[Lemma 2.2, Corollary 4.6 and Theorem 4.9]{kmrr25}}]\label{EPs-implies-sumsets}
    Let $a\in X$ be a point in a measure-preserving system $(X,\mu,T)$ and $\ell \in \N$. Suppose that $E_1,\ldots,E_\ell \subset X$ are open sets and $\tau \in \mathcal{M}(X^{\ell+1})$ is progressive with $\tau(\{a \} \times X^\ell) =1$ and $\tau(X \times E_1 \times \cdots \times E_\ell) >0$. There exists an infinite set $B\subseteq \N$ such that 
    \begin{equation*}
        \Big\{ \sum_{b \in I} b \colon I \subset B, |I| = i \Big\}   \subset \{n\in \N \mid T^na\in E_i\} \quad \text{ for all } i = 1, \ldots, \ell.
    \end{equation*}
\end{lemma}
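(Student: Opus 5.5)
This lemma is a citation of \cite{kmrr25}, so the plan is to reassemble their argument in three moves. First, use progressiveness of $\tau$ to set up an inductive scheme. Second, extract $B=\{b_1<b_2<\cdots\}$ greedily. Third, convert the resulting positivity into membership statements about $T^n a$, using that $\tau$ is concentrated on $\{a\}\times X^\ell$.

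\textbf{Inductive scheme.} Write $E_0 = X$ and $E_{\ell+1}=X$. I would build $b_1<b_2<\cdots$ together with open sets $E^{(m)}_1,\ldots,E^{(m)}_\ell$ and maintain three invariants at stage $m$.
\begin{itemize}
\item $\tau(X\times E^{(m)}_1\times\cdots\times E^{(m)}_\ell)>0$.
\item $E^{(m)}_i\subset E_i$ for every $i$.
\item For every $n\in\N$, every $i$, and every $I\subset\{b_1,\ldots,b_m\}$ with $|I|=j\le i$, we have $T^n x\in E^{(m)}_i \Rightarrow T^{n+\sum I}x\in E^{(m)}_{i-j}$, where $E^{(m)}_{0}$ is read as the corresponding target set, i.e.\ $E_{|I|}$ after shifting.
\end{itemize}
The cleanest formulation, following \cite[Theorem 4.9]{kmrr25}, is the recursion
\[E^{(m+1)}_i = E^{(m)}_i\cap T^{-b_{m+1}}E^{(m)}_{i+1}.\]
The base case $m=0$ is the hypothesis $\tau(X\times E_1\times\cdots\times E_\ell)>0$.

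\textbf{Inductive step.} Progressiveness applied to the open sets $E^{(m)}_i$ gives infinitely many $n$ with
\[\tau\bigl((X\times E^{(m)}_1\times\cdots\times E^{(m)}_\ell)\cap T_\Delta^{-n}(E^{(m)}_1\times\cdots\times E^{(m)}_\ell\times X)\bigr)>0.\]
I pick such an $n=b_{m+1}>b_m$. The set appearing here is exactly $X\times E^{(m+1)}_1\times\cdots\times E^{(m+1)}_\ell$, with $E^{(m+1)}_\ell=E^{(m)}_\ell$ because $E^{(m)}_{\ell+1}=X$. These sets are again open, since $T$ is a homeomorphism. Hence the positivity invariant passes to stage $m+1$, and this step is purely formal.

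\textbf{Extracting the sumset condition.} The difficulty I expect is converting positivity of $\tau$ into statements about the orbit of $a$. For this, one needs that the zeroth marginal is $\delta_a$ and that the coordinates $i\ge1$ are, in a suitable sense, limits of $T^{in}a$. A bare progressive measure with $\tau(\{a\}\times X^\ell)=1$ is not enough by itself. In \cite{kmrr25} this is handled by working with an \emph{Erdős progression} and \cite[Corollary 4.6]{kmrr25}: any point of $\supp\tau$ is an Erdős progression, i.e.\ lies in the closure of $\{(a,T^na,\ldots,T^{\ell n}a)\}$ along some sequence. So the plan is as follows.
\begin{itemize}
\item Note that $\tau(\{a\}\times E^{(m)}_1\times\cdots)>0$ yields a point $(a,x_1,\ldots,x_\ell)\in\supp\tau$ with $x_i\in E^{(m)}_i$.
\item Invoke \cite[Corollary 4.6]{kmrr25} to realize $(x_1,\ldots,x_\ell)$ as a limit along $n$ of $(T^na,\ldots,T^{\ell n}a)$. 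By openness, $T^{in}a\in E^{(m)}_i$ for infinitely many $n$.
\item Unwind the recursion. Membership $T^{c}a\in E^{(m)}_1$ means $T^{c+\sum I}a\in E_{1+|I|}$ for all $I\subset\{b_1,\ldots,b_m\}$.
\end{itemize}
The last step is the main bookkeeping issue. The bare recursion constrains sums that include a fixed offset, so to get sums purely of elements of $B$ I would interleave. At each stage I choose the new element $b_{m+1}$ itself from the return times, i.e.\ $T^{b_{m+1}}a\in E^{(m)}_1$. This is possible because the Erdős-progression points give infinitely many candidates, and intersecting them with the infinitely many progressive $n$ is exactly the content of \cite[Theorem 4.9]{kmrr25}. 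That guarantees $\sum_{b\in I}b\in\{n: T^na\in E_{|I|}\}$ for every finite $I\subset B$ with $|I|\le\ell$, which is the claimed conclusion.
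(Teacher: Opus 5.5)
Your recursion $E^{(m+1)}_i=E^{(m)}_i\cap T^{-b_{m+1}}E^{(m)}_{i+1}$ and your final unwinding are right. However, the one step that is not bookkeeping is left unproved. You need $b_{m+1}$ to witness progressiveness \emph{and} to satisfy $T^{b_{m+1}}a\in E^{(m)}_1$. You justify this only by ``intersecting'' the infinitely many return times with the infinitely many progressive $n$, and you defer that to \cite[Theorem 4.9]{kmrr25}. Two infinite sets of integers need not meet, so as written this is a genuine gap.

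The Erd\H{o}s-progression detour does not repair it. An Erd\H{o}s progression $(a,x_1,\ldots,x_\ell)$ is defined by $T^{c_n}x_{i-1}\to x_i$ along a single sequence $(c_n)$. It is not defined by $(T^{c_n}a,T^{2c_n}a,\ldots,T^{\ell c_n}a)\to(x_1,\ldots,x_\ell)$, and the latter would only give arithmetic-progression returns, not sums of distinct elements. Your claim that every point of $\supp\tau$ is an Erd\H{o}s progression is also not justified.

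The missing observation is that no intersection is needed. Contrary to your remark, a progressive $\tau$ with $\tau(\{a\}\times X^\ell)=1$ is enough by itself. The set produced by progressiveness is not $X\times E^{(m+1)}_1\times\cdots\times E^{(m+1)}_\ell$, as you wrote, but
\[
(X\times E^{(m)}_1\times\cdots\times E^{(m)}_\ell)\cap T_\Delta^{-n}(E^{(m)}_1\times\cdots\times E^{(m)}_\ell\times X)=T^{-n}E^{(m)}_1\times E^{(m+1)}_1\times\cdots\times E^{(m+1)}_\ell ,
\]
with $E^{(m+1)}_i$ built from $b_{m+1}=n$. Since $\tau$ is concentrated on $\{a\}\times X^\ell$, positivity of $\tau$ on this set forces $a\in T^{-n}E^{(m)}_1$. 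So every progressive witness $n$ automatically satisfies $T^na\in E^{(m)}_1$, and the same positivity propagates $\tau(X\times E^{(m+1)}_1\times\cdots\times E^{(m+1)}_\ell)>0$.

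State your third invariant as $E^{(m)}_i\subseteq T^{-\sum_{b\in I}b}E_{i+|I|}$ for $I\subseteq\{b_1,\ldots,b_m\}$ and $i+|I|\le\ell$. Your bullet has the indices decreasing, while your later unwinding correctly has them increasing. This invariant is preserved by the recursion, by splitting according to whether $b_{m+1}\in I$. Then for $I\ni b_{m+1}$ with $|I|\le\ell$ you get $T^{b_{m+1}}a\in E^{(m)}_1\subseteq T^{-\sum_{b\in I\setminus\{b_{m+1}\}}b}E_{|I|}$, which is exactly the conclusion.

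With this one line, your argument becomes a complete, self-contained proof from the stated definition of progressiveness, with no Erd\H{o}s progressions at all. The paper itself gives no proof and simply imports the statement from \cite{kmrr25}.
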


Now we prove one of the main results

\begin{proof}[Proof of \cref{main thrm uniform sets}]
    By \cref{furstenberg correspondence for sets}, there is an ergodic system $(X, \mu,T)$ with topological pronilfactors such that $a\in \gen(\mu,\Psi)$ and there are $U^k(X, \mu,T)$-uniform clopen sets $E_1, \ldots, E_k \subset X$ such that $T^na \in E_i$ if and only if $n \in A_i$ for all $i = 1, \ldots, k$. Take $\tilde \rho_a \in \cM( \mathbb{A}_\ell(X))$ as in \cref{def rho and sigma tilde} for $\ell = \ell_k$. By \cref{progressive}, $\tilde \rho_a$ is progressive so using  \cref{EPs-implies-sumsets}, we only need to prove that
    \begin{equation*}
        \int_{\A_\ell (X)} \prod_{i=1}^k \1_{E_i} ^{[\ell_i]} \diff \tilde\rho_a >0.
    \end{equation*}
    This inequality is a direct consequence of the good projection property (\cref{prop good projection properties}). Indeed, for $L=\{ \ell_1, \ldots, \ell_k\}$
    \begin{align*}
        \int_{\A_\ell (X)} \prod_{i=1}^k \1_{E_i} ^{[\ell_i]} \diff \tilde\rho_a& = \int_{Z_{\ell-1}^k} \bigotimes_{i=1}^k \E(\1_{E_i} \mid Z_{\ell-1}) \diff \rho_{\pi_{\ell-1}(a)}^L 
        \\
        &=\int_{Z^k_{k-1}} \bigotimes_{i=1}^k \E(\E(\1_{E_i} \mid Z_{\ell-1}) \mid Z_{k-1})   \diff \eta_{\pi_{k-1}(a)}^L .
    \end{align*}
     By nested conditional expectation, $\E(\E(\1_{E_i} \mid Z_{\ell-1}) \mid Z_{k-1})  = \E(\1_{E_i} \mid Z_{k-1}) \circ \pi_{k-1} = \mu(E_i) $ where in the last equality we are using the $U^k(X, \mu,T)$-uniformity. Thus,
    \begin{equation*}
        \int_{\A_\ell (X)} \prod_{i=1}^k \1_{E_i} ^{[\ell_i]} \diff \tilde\rho_a  = \prod_{i=1}^k \mu(E_i) >0
    \end{equation*}
    concluding the proof.
    \end{proof}

\subsection{Infinite prime sumsets}

\cref{uniformity-measures-in-primes} motivates then the following definition.
\begin{definition}
For a measure preserving system $(X,\mu,T)$ we say that a measure $\tau \in \cM(X^{\ell+1})$ is \emph{shifted prime-progressive} if for all open sets $E_1, \ldots, E_\ell \subset X$ with
\begin{equation*}
    \tau(X \times E_1 \times \cdots \times E_\ell) >0,
\end{equation*}
then there are infinitely many $n \in \P-1$ such that
\begin{equation*}
    \tau((X \times E_1 \times \cdots \times E_\ell ) \cap T_{\Delta}^{-n}(E_1 \times \cdots \times E_\ell \times X)) >0.
\end{equation*}

\end{definition}

We notice that the proof of \cref{EPs-implies-sumsets} generalizes trivially for shifted primes progressive measures, so we state it without proof. 
\begin{lemma}\label{prime-EPs-implies-sumsets}
        Let $a\in X$ be a point in a measure-preserving system $(X,\mu,T)$ and $\ell \in \N$. Suppose that $E_1,\ldots,E_\ell \subset X$ are open sets and $\tau \in \mathcal{M}(X^{\ell+1})$ is shifted primes-progressive with $\tau(\{a \} \times X^\ell) =1$ and $\tau(X \times E_1 \times \cdots \times E_\ell) >0$. There exists an infinite set $B\subseteq \P-1$ such that 
    \begin{equation*}
        \Big\{ \sum_{b \in I} b \colon I \subset B, |I| = i \Big\}   \subset \{n\in \N \mid T^na\in E_i\} \quad \text{ for all } i = 1, \ldots, \ell.
    \end{equation*}
\end{lemma}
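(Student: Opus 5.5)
We follow the proof of \cref{EPs-implies-sumsets} from \cite{kmrr25}, with the set of admissible return times restricted to $\P-1$. The plan is an inductive construction of $B=\{b_1<b_2<\cdots\}\subset \P-1$ together with a nested sequence of open sets in $X^{\ell+1}$, each of positive $\tau$-measure. The invariant we maintain encodes that every partial sum of the chosen elements is a return time to the appropriate $E_i$, and that extension remains possible.

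First we set up the induction. Put $U_0 = X\times E_1\times\cdots\times E_\ell$, so $\tau(U_0)>0$. Suppose $b_1<\cdots<b_m$ in $\P-1$ and an open set $U_m\subset U_0$ with $\tau(U_m)>0$ have been chosen. The invariant, as in \cite{kmrr25}, is that for every $w\in U_m$ with $w_0=a$ and every $0\le j\le \ell$, the point $w_j$ lies in
\[
\bigcap_{\substack{I\subset\{1,\ldots,m\}\\ |I|+j\le \ell,\ |I|+j\geq 1}} T^{-\sum_{b\in I}b}E_{|I|+j}
\]
(with $E_0=X$). Because $\tau(\{a\}\times X^\ell)=1$, the coordinate $w_0=a$ is forced on the support. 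Hence membership of $T^{\sum_I b}a$ in $E_{|I|}$ can be read off from the $j=0$ slot through the shift structure.

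Next comes the inductive step. Write $U_m=X\times V_1\times\cdots\times V_\ell$, shrinking to a product if necessary, which is legitimate since products of open sets form a base and positive measure is preserved by a suitable choice. Apply shifted prime-progressiveness to the open sets $V_1,\ldots,V_\ell$. This yields infinitely many $n\in\P-1$ such that
\[
\tau\big((X\times V_1\times\cdots\times V_\ell)\cap T_\Delta^{-n}(V_1\times\cdots\times V_\ell\times X)\big)>0 .
\]
Choose such an $n=b_{m+1}>b_m$. Then define $U_{m+1}$ as this intersection, again shrunk to an open product set of positive measure. For $w$ in this set, $T^{b_{m+1}}w_j\in V_{j+1}$, so each new partial sum containing $b_{m+1}$ moves the index up by one and lands in the right $E_i$. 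Checking this is pure bookkeeping, identical to \cite[Theorem 4.9]{kmrr25}; the only change is that $n$ is now drawn from $\P-1$.

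Finally we conclude. Since $\tau(U_m)>0$ and $\tau$ is concentrated on $\{a\}\times X^\ell$, there is a point $w\in U_m$ with $w_0=a$ for every $m$. Unwinding the invariant at $j=0$ gives $T^{\sum_{b\in I}b}a\in E_{|I|}$ for all $I\subset\{b_1,\ldots,b_m\}$ with $|I|\le\ell$. Letting $m\to\infty$ yields the infinite set $B\subseteq\P-1$.

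The main obstacle is the reduction that keeps $U_m$ a product of open sets with positive measure. We plan to handle it exactly as \cite[Corollary 4.6]{kmrr25} does: replace $\tau$ by its restriction, a measure dominated by $\tau$. That restriction inherits shifted prime-progressiveness only if we argue with the original $\tau$ and open sets throughout, so we will phrase each step directly with open product sets of positive $\tau$-measure.
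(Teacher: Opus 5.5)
Your proposal is correct and is essentially the paper's approach: the paper gives no separate proof, only remarking that the argument of \cite{kmrr25} behind \cref{EPs-implies-sumsets} goes through verbatim once the return times are drawn from $\P-1$. Your nested open-product induction is exactly that argument, with the intermediate Erd\H{o}s-progression step of \cite{kmrr25} folded into a single construction. The obstacle in your last paragraph is not real, and no restriction of $\tau$ is needed, since $(X\times V_1\times\cdots\times V_\ell)\cap T_\Delta^{-n}(V_1\times\cdots\times V_\ell\times X)=T^{-n}V_1\times(V_1\cap T^{-n}V_2)\times\cdots\times(V_{\ell-1}\cap T^{-n}V_\ell)\times V_\ell$ is already an open product, and its $0$th factor can be replaced by $X$ because $\tau$ is concentrated on $\{a\}\times X^\ell$.
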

 
Now we are in position to prove \cref{main thrm nilborh}.

\begin{proof}[Proof of \cref{main thrm nilborh}]
By the definition of $\nilbohr{}$ set, we have that there is an $k$-step nilsystem $(X,\mu,T)$, a point $a\in X$ and an open set $E\subseteq X$ such that
$$A\supset \{n\in \N \mid T^na \in E\}. $$
Consider $\kappa$ and $\rho_a^\circ$ as in \cref{prop kappa rho_x}. Using \cref{uniformity-measures-in-primes} and \cref{prime-EPs-implies-sumsets}, it is enough to prove  that there exists $t\in \N$ such that 
\begin{equation} \label{eq rho shifted positive}
    \rho_a^\circ(X\times T^{-t}E\times \cdots \times T^{-t}E)>0 .
\end{equation}
Notice that for any measurable set $U \subset X$, $\rho_a(X \times U \times X^{k-1}) = \mu(U)$, in particular $X$ has at most $\kappa$ connected components. With this let us define $\mu^\circ_a \in \cM(X)$ the Haar measure of the connected component of $X$ containing the point $a$ that we denote $X^\circ$. Since $0<\mu(E) = \frac{1}{\kappa} \sum_{s=0}^{\kappa-1}\mu^\circ_a (T^{-s}E) $, there is $0 \leq s < \kappa$ such that $\mu^\circ_a (T^{-s}E)>0$. Consider $\HP_k (X^\circ)$ the Hall-Petresco nilmanifold of the ergodic nilsystem $(X^\circ,\mu^\circ_a , T^\kappa)$.   Notice that for $x \in X^\circ$, $x \mapsto \rho^\circ_x$ coincides with the continuous $\tilde T ^\kappa$-ergodic decomposition of $m_{\HP_k (X^\circ)}$. 
Thus, by \cref{cor to get shift t}, there exists $t_0 \in \N$ such that
\begin{equation*}
    \rho_a^\circ(X\times T^{-(\kappa t_0 + s) }E\times \cdots \times T^{-(\kappa t_0 + s) }E)>\alpha/2
\end{equation*}
where $\alpha = m_{\HP_k (X^\circ)} (X\times T^{-s}E\times \cdots \times T^{-s}E)$ which is positive by Furstenberg multiple recurrence theorem \cite{Furstenberg77}. 
\end{proof}

We say that $A$ is a \textit{basic $\nilbohr{}$ set} if $A= \{ n \in \N \colon T^n x \in U\}$ for $(X,T)$ a minimal nilsystem, $U \subset X$ a non-empty open set and $x \in X$. Notice that, by definition, a $\nilbohr{}$ set is a set that contains a basic $\nilbohr{}$ set. 

\begin{remark} \label{remark single nilsystem}
    For a family of basic $\nilbohr{}$ sets $A_1, \ldots, A_k$, there is a single minimal nilsystem $(X,T)$ and point $x \in X$ such that $A_i= \{ n \in \N \colon T^n x \in V_i\}$ for open sets $V_i \subset X$. Indeed, consider $(X_i,T_i)$, $x_i \in X_i$ and $U_i \subset X_i$ from the definition of basic $\nilbohr{}$ set. Then we conclude by taking $x= (x_1, \ldots, x_k)$, $X = \overline{\{ (T_1 \times \cdots \times T_k)^n x \colon n \in \Z\}}$ and $V_i = \{ y \in X \colon y_i \in U_i\}$. 
\end{remark}

\begin{theorem}\label{main-theorem-2}
    Fix $k \geq 2$. Let $A_1, \ldots , A_k \subset \N$ be basic $\nilbohr{}$ sets and $U^k ( \Phi)$-uniform sets and let  $\ell_1, \ell_2, \ldots, \ell_k$ be distinct natural numbers. There exists $P \subset \P$ infinite such that
    \begin{equation}
        \Big\{ \sum_{p \in I} p \colon I \subset P, |I| = \ell_i \Big\}   \subset A_i \quad \text{ for all } i = 1, \ldots, k.
    \end{equation}
\end{theorem}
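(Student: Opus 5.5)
Write $P=B+1$ with $B\subset \P-1$ infinite. For $I\subset P$ with $|I|=\ell_i$ we have $\sum_{p\in I}p=\sum_{b\in I-1}b+\ell_i$. So it suffices to find an infinite $B\subset\P-1$ with
\begin{equation*}
\Big\{\sum_{b\in I}b \colon I\subset B,\ |I|=\ell_i\Big\}\subset A_i-\ell_i \quad\text{for all } i=1,\ldots,k .
\end{equation*}

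\emph{Reduction to one nilsystem.} By \cref{remark single nilsystem} there are a single minimal nilsystem $(X,T)$, a point $x\in X$ and open sets $V_i\subset X$ with $A_i=N(x,V_i)$. Then $A_i-\ell_i\supset N(x,W_i)$ with $W_i=T^{-\ell_i}V_i$ open. After reordering, assume $\ell_1<\cdots<\ell_k$ and put $\ell=\ell_k$; write $\pi=\pi_{k-1}\colon X\to Z_{k-1}$. The second part of \cref{cor contention of the image in the support primes} (equivalently the implication $(1)\Rightarrow(7)$ of \cref{theorem summary}, applied with different neighborhoods $V_i$, which its proof allows) gives such a $B\subset\P-1$ as soon as each $W_i$ contains a point $z_i$ with $\pi(z_i)=\pi(x)$. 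So the whole proof reduces to showing
\begin{equation*}
W_i\cap \pi^{-1}(\pi(x))\neq\emptyset \quad\text{for every } i .
\end{equation*}
This is where the $U^k(\Phi)$-uniformity enters.

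\emph{Using uniformity.} Shifting preserves local uniformity seminorms, so $A_i-\ell_i=N(x,W_i)$ is $U^k(\Phi)$-uniform, with density $d_i=\diff_\Phi(A_i)>0$. A vanishing $U^k(\Phi)$-seminorm makes the sequence orthogonal, along $\Phi$, to $(k-1)$-step nilsequences; this is the standard Host--Kra/van der Corput argument. In particular, for every nonnegative $g\in C(Z_{k-1})$,
\begin{equation*}
\lim_{N\to\infty}\frac{1}{|\Phi_N|}\sum_{n\in\Phi_N}\1_{W_i}(T^nx)\,g(\pi(T^nx))=d_i\int g\diff m_{k-1}.
\end{equation*}
Suppose $W_i$ missed the fiber $F=\pi^{-1}(\pi(x))$. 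Take $g$ supported in a small neighborhood $O$ of $\pi(x)$. Since $x$ is generic for Haar measure, the left side is at most $\int \1_{\overline{W_i}}\,g\circ\pi\diff m$. I then use the fully supported continuous disintegration $u\mapsto\beta_u$ over $Z_{k-1}$ (\cref{prop cont k-step max pronilfactors}); the map $u\mapsto\beta_u(\overline{W_i})$ is upper semicontinuous. This makes $\int \1_{\overline{W_i}}\,g\circ\pi\diff m$ of order $o\big(\int g\big)$ as $O$ shrinks, which contradicts $d_i>0$.

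\emph{Main obstacle.} As sketched, the argument only shows $\overline{W_i}\cap F\neq\emptyset$, not $W_i\cap F\neq\emptyset$. To close this gap I would try the following.
\begin{enumerate}
\item First, localize the identity above by also testing against continuous $h\ge 0$ on $X$ supported near a given fiber point. Then the same limit argument shows that the relative density of $W_i$ inside $\pi^{-1}(O)$ tends to $d_i$. Combined with openness of $\pi$ and the homogeneity of the fibers (translates of a fixed $G_k$-orbit), this should force a fiber point to lie in the interior $W_i$ itself.
\item Alternatively, I would run the argument inside the Furstenberg-correspondence system of \cref{furstenberg correspondence for sets}, where the relevant sets are clopen and there are no boundary issues. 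I would then transfer the conclusion back to $X$ using that $X$ is a topological factor of $x$'s orbit closure.
\end{enumerate}
Once the fiber condition holds for all $i$, \cref{cor contention of the image in the support primes} produces $B$, and $P=B+1\subset\P$ satisfies the conclusion of \cref{main-theorem-2}.
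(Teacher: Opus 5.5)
Your reduction is a valid \emph{sufficient} condition, but that condition is false in general. So there is a genuine gap, and neither of your repair strategies can close it. $U^k(\Phi)$-uniformity is a statement about densities, while the fiber $F=\pi_{k-1}^{-1}(\pi_{k-1}(x))$ is a null set, and nothing forces $W_i$ to meet it. Here is a concrete counterexample. Let $k=2$, $X=\T^2$, $T(y,z)=(y+\alpha,z+y)$ with $\alpha$ irrational, and $x=(0,0)$, so $\pi_1(y,z)=y$ and $F=\{0\}\times\T$.

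\begin{itemize}
\item Using density of forward orbits, choose $n_1<n_2<\cdots$ with gaps tending to infinity, $n_j\alpha\to 0$ in $\T$, and $\binom{n_j}{2}\alpha$ accumulating at every point of $\T$.
\item Then $C=\{T^{n_j}x\}_j\cup F$ is closed and $N(x,C)=\{n_j\}$.
\item Put $W=X\setminus C$ and $V_i=T^{\ell_i}W$. The sets $A_i=N(x,V_i)$ are basic $\nilbohr{}$ sets whose complements have upper Banach density zero, so they are $U^k(\Phi)$-uniform for every $k$ and every $\Phi$.
\end{itemize}

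In your notation $W_i=W$, which misses $F$. Worse, every neighborhood of every point of $F$ contains infinitely many $T^{n_j}x$, while $n_j\notin A_i-\ell_i$. So in this system no choice of $z_i\in F$ and neighborhood $V_i$ gives $N(x,V_i)\subset A_i-\ell_i$, and \cref{cor contention of the image in the support primes} cannot be used at all. Even the tamer choice $W=(\T\setminus\{0\})\times J$ misses $F$, although its return set $\{n\colon \binom{n}{2}\alpha\in J\}$ is $U^2$-uniform.

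The example also shows why your two fixes fail. For fix (1), $W$ has relative density $1$ in every $\pi^{-1}(O)$, yet it misses the fiber. Fix (2) would additionally need $(\P-1)$-progressiveness outside nilsystems, which is not available: \cref{uniformity-measures-in-primes} is a nilsystem statement. It also relies on a transfer back to $X$ that you have not justified.

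The missing idea is to stop localizing at the diagonal point of $\supp\eta^L_{\pi_{k-1}(x)}$. The paper uses the whole measure $\rho^\circ_x$ of \cref{prop kappa rho_x}. Its $L$-marginal sits over all of $\supp\eta^L_{\pi_{k-1}(x)}$; for a rotation factor this means all pairs $(u+\ell_1t,u+\ell_2t)$, not only $t=0$. By the good projection property (\cref{prop good projection properties}, which the paper carries over to $\rho^\circ_x$), once each $\E(\1_{W_i}\mid Z_{k-1})$ is constant, the mass of $\rho^\circ_x$ on the product of the $W_i$ in coordinates $\ell_1,\dots,\ell_k$ equals $\prod_i\mu(W_i)>0$. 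Then \cref{uniformity-measures-in-primes} and \cref{prime-EPs-implies-sumsets} produce an infinite $B\subset\P-1$, and $P=B+1$ works.

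Your orthogonality identity is exactly the input for that constancy. It gives $\E(\1_{W_i}\mid Z_{k-1})\le d_i\le\E(\1_{\overline{W_i}}\mid Z_{k-1})$ almost everywhere, hence equality when $\mu(\partial W_i)=0$. So you had the right ingredient, but you aimed it at a pointwise statement about a single fiber, which uniformity cannot deliver.
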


\begin{proof}
Since $A_i$ is a basic $\nilbohr{}$ set and $U^k ( \Phi)$-uniform set, the set $A_i - \ell_i$ is also a basic $\nilbohr{}$ set and $U^k ( \Phi)$-uniform set, for every $i =1, \ldots,k$.

Following steps similar to the proof of \cref{main thrm nilborh} and using \cref{remark single nilsystem}, there is an ergodic nilsystem $(X, \mu,T)$ such that $E_1, \ldots, E_k \subset X$ and $a \in X$ are open sets with $\{ n \in \N \colon T^n a \in E_i \} =  A_i - \ell_i $ and $E_i$ is $U^k(X, \mu,T)$-uniform, for all $i = 1, \ldots, k$.
Since $\E( \1_{E_i} \mid Z_{k-1})$ is constant $\mu$-almost surely and like for \cref{prop good projection properties}, we get that $\rho^\circ_a (X \times E_1 \times \cdots \times E_k) = \prod_{i=1}^k \mu(E_i)>0$. Using that $\rho^\circ_a$ is $(\P-1)$-progressive we get that there is $B \subset \P-1$ infinite such that
\begin{equation*}
    \Big\{ \sum_{b \in I} b \colon I \subset B, |I| = \ell_i \Big\} \subset \{ n \in \N \colon T^n a \in E_i \} \subset A_i - \ell_i \quad \text{ for all } i=1, \ldots, k.
\end{equation*}
Thus, taking $P= B+1$ we conclude the proof. 
\end{proof}

We finish proving \cref{main motivating thrm}.
\begin{proof}[{Proof of \cref{main motivating thrm}}]
Let $U\subseteq \T$ a non-empty interval and $Q$ a polynomial of degree $k \geq 2$ with irrational leading coefficient. By \cref{main-theorem-2}, it is enough to prove that the set 
    $$ A=\{n\in \N: Q(n)\in U\}$$
    is a basic $\nilbohr{}$ set and $U^k(\Phi)$-uniform for a F\o lner sequence $\Phi$. On one hand, the first statement comes from the classical construction in affine systems, see for example \cite[Proposition 3.11]{Furstenberg81}. On the other hand, if for $m \in \Z$, $\varphi_m(n) = \exp{(2\pi i m Q(n))}$, one can deduce by induction on the degree of $Q$ that $\norm{\varphi_m}_{U^k(\Phi)} =0$ for all F\o lner sequence $\Phi$ and all $m \neq 0$. We then deduce the uniformity of the set $A$ directly from \cite[Lemma 7.1]{radic2026Uniformity}.

    For the case $k=1$, $A= \{ n \in \N \colon n \alpha \in U\} $ is the return times of the point $a=0 \in \T$ under the irrational rotation by $\alpha$ $R$. Notice that $(\T,m, R)$ is a totally ergodic group rotation and in particular for all $\ell \in \N$ and $x \in X$, $\rho_x^\circ =\rho_x \in \cM(\HP_\ell(\T)) $. Therefore, again by total ergodicity, $\rho_a^\circ(X^\ell \times U) = \mu(U) >0$ concluding the proof.
\end{proof}

\footnotesize{
\bibliographystyle{abbrv}
\bibliography{refs}

}
\bigskip
\noindent
Felipe Hernández\\
\textsc{{\'E}cole Polytechnique F{\'e}d{\'e}rale de Lausanne} (EPFL)\par\nopagebreak
\noindent
\href{mailto:felipe.hernandezcastro@epfl.ch}
{\texttt{felipe.hernandezcastro@epfl.ch}}

\bigskip
\noindent
Trist\'an Radi\'c\\
\textsc{Northwestern University} \par\nopagebreak
\noindent
\href{tristan.radic@u.northwestern.edu}
{\texttt{tristan.radic@u.northwestern.edul}}

\end{document}